\newtheorem{theorem}{Theorem} %[section]
\newtheorem*{theorem*}{Theorem}
\newtheorem{lemma}[theorem]{Lemma}
\newtheorem{definition}[theorem]{Definition}
\newtheorem{proposition}[theorem]{Proposition}
\newtheorem{conjecture}[theorem]{Conjecture}
\newtheorem{corollary}[theorem]{Corollary}
\theoremstyle{remark}
\newtheorem{remark}[theorem]{Remark}
\numberwithin{equation}{section} \numberwithin{theorem}{section}
\newcommand{\la}{\lambda}
\newcommand{\GT}{\mathbb{GT}}
\newcommand{\N}{\mathbb N}
\newcommand{\R}{\mathbb R}
\newcommand{\D}{\mathcal D}
\newcommand{\pa}{\partial}
\newcommand{\ii}{{\mathbf i}}
\newcommand{\E}{\mathbb E}
\renewcommand{\c}{\mathbf c}
\renewcommand{\d}{\mathbf d}
\newcommand{\eps}{\varepsilon}
\newcommand{\Cov}{\mathrm{Cov}}
\renewcommand{\c}{\mathbf c}
\renewcommand{\d}{\mathbf d}
\newcommand{\p}{\mathfrak p}
\newcommand{\cov}{\mathfrak{cov}}
\newcommand{\Sym}{\mathbf{Sym}}
\newcommand{\x}{\mathbf x}
\newcommand{\K}{\mathcal K}
\newcommand{\pn}{p_{\text{north}}}
\newcommand{\ps}{p_{\text{south}}}
\newcommand{\pe}{p_{\text{east}}}
\newcommand{\pw}{p_{\text{west}}}
\title[Fourier transform on $U(N)$ as $N\to\infty$.]
{Fourier transform on high--dimensional unitary groups with applications to random
tilings}
\author{Alexey Bufetov}
\address[Alexey Bufetov]{Department of Mathematics, Massachusetts Institute of Technology, Cambridge, MA, USA. E-mail: alexey.bufetov@gmail.com}
\author{Vadim Gorin}
\address[Vadim Gorin]{Department of Mathematics, Massachusetts Institute of Technology, Cambridge, MA, USA, and
 Institute for Information Transmission Problems of Russian Academy of Sciences, Moscow, Russia. E-mail: vadicgor@gmail.com}
\begin{document}

\begin{abstract}
 A combination of direct and inverse Fourier transforms on the unitary group $U(N)$
 identifies normalized characters with probability
 measures on $N$--tuples of integers. We develop the $N\to\infty$ version of this correspondence
 by matching the asymptotics of partial derivatives at the identity of logarithm of characters
 with Law of Large Numbers and Central Limit Theorem for global behavior of corresponding
 random $N$--tuples.

 As one application we study fluctuations of the height function of random domino
 and lozenge tilings of a rich class of domains. In this direction we prove the
 Kenyon--Okounkov's conjecture (which predicts asymptotic Gaussianity and exact form of the
 covariance) for a family of non-simply
 connected polygons.

 Another application is a central limit theorem for the $U(N)$ quantum random walk with
 random initial data.
\end{abstract}

\maketitle

\section{Introduction}

\subsection{General theorem}

Due to E.~Cartan and H.~Weyl (see  \cite{Weyl-book}), the characters of irreducible representations
of the unitary group $U(N)$ are parameterized by signatures (i.e.\ highest weights), which are
$N$--tuples of integers
$$
\lambda=(\lambda_1\ge\lambda_2\ge\dots\ge\lambda_N),
$$
and the value of the character on a unitary matrix with eigenvalues $x_1,\dots,x_N$
is given by the Schur symmetric (Laurent) polynomial
$$
 s_\lambda(x_1,\dots,x_N)=\frac{\det\left[x_i^{\lambda_j+N-j}\right]_{i,j=1}^N}{\prod\limits_{1\le i<j\le N} (x_i-x_j)}.
$$
When $N=1$, $U(1)$ is the unit circle, and the characters are monomials $x^k$, $k\in\mathbb Z$. In
this case the Fourier transform identifies functions on the unit circle on one side with functions
on $\mathbb Z$ on the other side. Its probabilistic version starts from a probability measure
$\mathbb P(k)$ on $\mathbb Z$ and maps it to the generating function $\sum_{k\in\mathbb Z} \mathbb
P(k) z^k$. If we set $z=\exp(\ii t)$, then we arrive at the conventional notion of the
characteristic function. The \emph{uniqueness theorem} for characteristic functions claims that
they are in one-to-one correspondence with probability measures, i.e.\ if characteristic functions
for two measures coincide, then so do the measures themselves.

\smallskip

For general $N$ we start from a probability distribution $\mathbb P$ on
$\lambda=(\lambda_1\ge\dots\lambda_N)$ and identify it with a generating function
\begin{equation}
\label{eq_SGF_intro}
 S_{\mathbb P}(x_1,\dots,x_N) =\sum_{\lambda} \mathbb P(\lambda)
 \frac{s_\lambda(x_1,\dots,x_N)}{s_\lambda(1^N)}, \qquad
 1^N=(\underbrace{1,\dots,1}_N).
\end{equation}
which was called \emph{Schur generating function} (SGF) of $\mathbb P$ in \cite{BG,BG_CLT}. It is
again straightforward to show that the correspondence is bijective: different measures correspond
to different functions. This can be viewed as a version of the Fourier transform on $U(N)$.

\medskip

The main topic of our article is to understand whether this correspondence between measures on
signatures and (central, positive--definite) functions on $U(N)$ survives in $N\to\infty$ limit. In
other words, we would like to develop the \emph{$N=\infty$ Fourier analysis on $U(N)$}.

At this point we need to choose topologies by specifying what we mean by $N\to\infty$ version of
signatures and by $N\to\infty$ version of the functions on $U(N)$. The answer inevitably depends on
this choice.

We are driven by applications in $2d$ statistical mechanics and asymptotic
representation theory, and for that we concentrate on the \emph{empirical
distribution} of $\{\lambda_i\}_{i=1}^N$. In more detail, we deal with pairings of
the empirical distribution with polynomial test functions or, equivalently, with
\emph{moments} of random signatures, i.e.\ random variables
\begin{equation}
\label{eq_intro_moments}
p_k^N= \sum_{i=1}^N \left(\frac{\lambda_i+N-i}{N}\right)^k, \quad k=1,2,\dots.
\end{equation}
We would like to know the Law of Large Numbers and Central Limit Theorem for the moments: the
deterministic limit $\lim_{N\to\infty} \frac{1}{N} p_k^N$, and asymptotic Gaussianity and
covariance for the centered versions $\lim_{N\to\infty} (p_k^N -\E p_k^N)$.

On the side of functions on $U(N)$, we are interested in the \emph{germ} of a
function near the identity element of the group. From the technical point of view,
it turns out to be convenient to pass to the logarithm of SGF and to deal with
partial derivatives at $x_1=x_2=\dots=1$ of
\begin{equation}
\ln( S_{\mathbb P} (x_1,\dots,x_N) ) ,\quad N\to\infty.
\end{equation}
One interpretation of our previous work \cite{BG}, \cite{BG_CLT} is that germ \emph{contains
complete information} about the LLN and CLT. In other words, we constructed a map from germs to LLN
and CLT for the moments \eqref{eq_intro_moments}.

%The results found their applications in the asymptotic studies for the tensor products of
%irreducible representations of classical Lie Groups, and in statistical mechanics problems dealing
%with non-intersecting paths, random domino and lozenge tilings.

The key new results of this article \emph{invert} the theorems of \cite{BG},
\cite{BG_CLT}: we show that if the Law of Large Numbers and Central Limit Theorem
for monomials \eqref{eq_intro_moments} hold, then partial derivatives of
$\ln(\mathcal S_\mathbb P)$ at $(1^N)$ converge. The limiting germ is linked to the
deterministic limits in LLN and covariances in CLT by explicit polynomial formulas.
Theorem \ref{Theorem_main} states our main result.

Therefore, we claim that the correspondence between LLN, CLT for moments and asymptotic germ is an
$N\to\infty$ version of the Fourier transform on $U(N)$. We now have a \emph{bijective} map between
asymptotic information on probability measures on signatures and asymptotic information on
characters of $U(N)$ encoding them.
\smallskip

We also go further and provide two multidimensional generalizations, in which a single $N$--tuple
$\lambda$ is replaced by a collection of those. Conceptually the results remain the same (LLN, CLT
hold if and only if partial derivatives of the logarithm of SGF converge), and we refer to Theorems
\ref{Theorem_main_multi}, \ref{Theorem_CLT_multi_ext} for the details.

\bigskip

Let us explain the consequences of our theorems. Beforehand, in order to apply the
results of \cite{BG}, \cite{BG_CLT} to get LLN and CLT one needed precise asymptotic
information about Schur generating functions (e.g.\ in \cite{BG}, \cite{BG_CLT} we
relied on \cite{GP} for some applications). Therefore, the asymptotic theorems were
restricted to the class of systems, for which SGF was computable in somewhat
explicit form. This was a serious obstacle for the wide applicability of the
theorems, since in many examples of interest almost no information on SGF is
available.

With the new approach, we can start from a system where LLN and CLT are known by any
other method  and then perform operations on it, which act ``nicely'' on Schur
generating functions, and conclude that the the resulting system will again have LLN
and CLT. We further demonstrate how this works in two applications.

For the first one, we observe that setting a part of variables in a Schur generating
function to $1$ is closely related to combinatorics of uniformly random lozenge
tilings. Thus, when we combine our results with the CLT along a section obtained in
\cite{BGG} via dicrete loop equations, we obtain a 2d Central Limit Theorem for
random tilings leading to the Gaussian Free Field. This is explained in the next
section and detailed in Section \ref{Section_tilings}. A combination of
multiplication by simple factors and setting variables gives rise to domino tilings
and more general dimers on rail--yard graphs, cf.\ \cite{BG_CLT}, \cite{BK},
\cite{BL}.

For the second application, we note that multiplication of Schur generating
functions have a representation--theoretic interpretation as time evolution of
\emph{quantum random walk}, cf.\ \cite{Bi3,Bi4}. Thus, we can produce the LLN and
CLT for the latter with random initial conditions, as detailed in Appendix
\ref{Section_quantum_CLT}. Given our theorems, the nature of this example becomes
close to the textbook proof of the conventional CLT for sums of independent random
variables: there one also manipulates with products of characteristic functions in
order to observe the universal Gaussian limit.

\medskip

In our applications we go well beyond the cases, which were considered exactly
solvable or integrable before. For instance, for random tilings we are able to
investigate non-simply connected planar domains, which were never accessible before
due to lack of any explicit formulas. We do not find such formulas as well, instead
we show that they are not necessary  and the Central Limit Theorem for global
fluctuations can be obtained without them by operating with Schur generating
functions.

\subsection{Application to random tilings}

We now present one important application of our general approach. It deals with uniformly random
lozenge and domino tilings of planar domains. We would like to take a planar domain drawn on
triangular (or square) grid and tile it with lozenges (or dominos), which are unions of adjacent
triangles (or squares) of the grid, see Figure \ref{Figure_tilings} for a basic illustration and
Figures \ref{Figure_hex_hole}, \ref{Figure_domino_domain}, \ref{Figure_others} for more elaborate
examples.

\begin{figure}[t]
\begin{center}
 {\scalebox{0.6}{\includegraphics{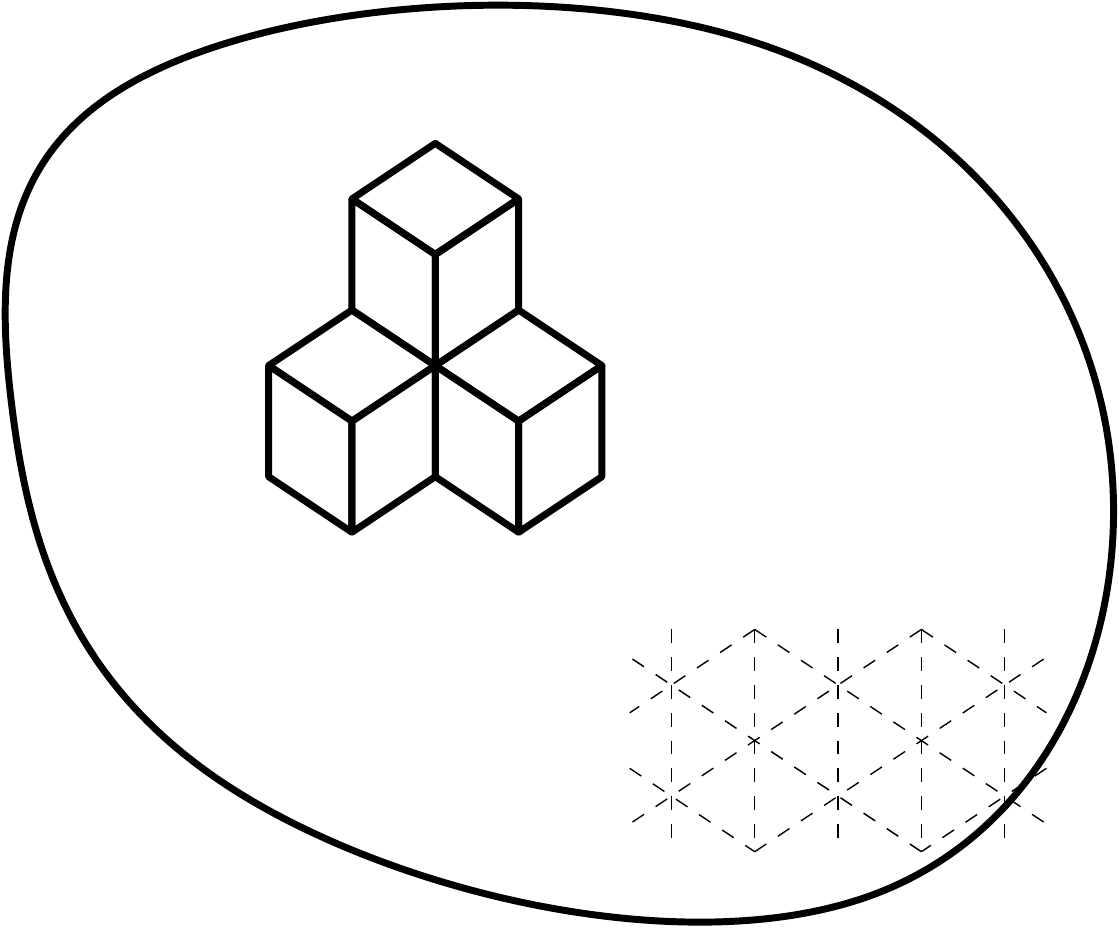}}} \qquad
 {\scalebox{0.7}{\includegraphics{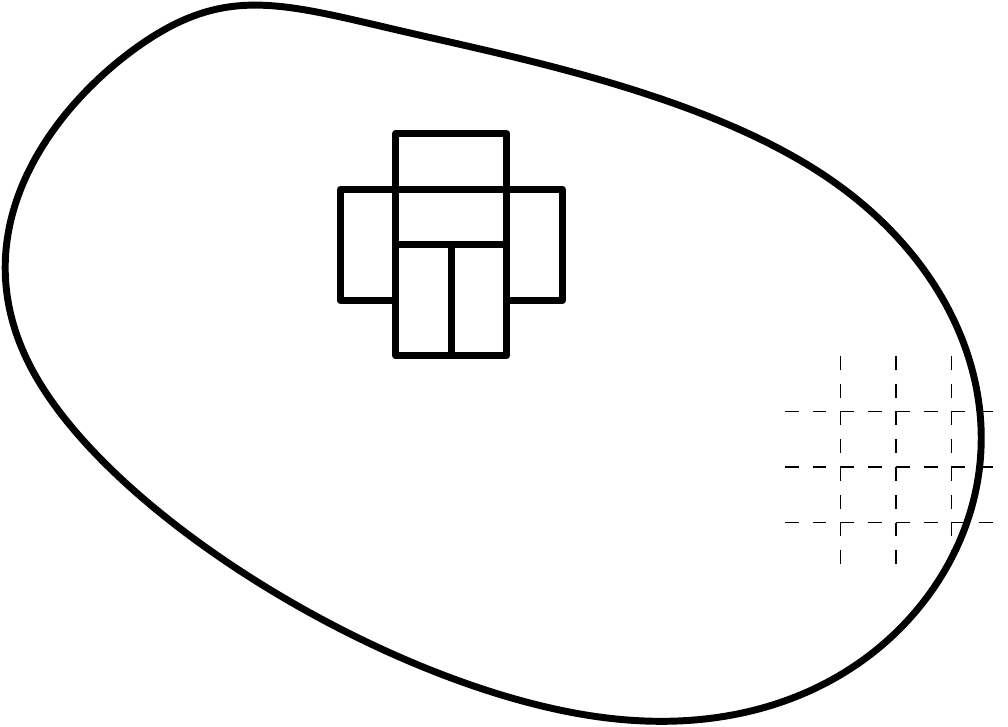}}}
 %{\scalebox{0.2}{\includegraphics{tilings_domino_abstract.pdf}}}
 \caption{Lozenge and domino tilings on triangular and square grids.
 \label{Figure_tilings}}
\end{center}
\end{figure}

Each domain has finitely many tilings, and therefore, we can choose one uniformly at
random and study its asymptotic properties as the mesh size of the grid goes to
zero. Lozenge and domino tilings are two particular cases of the \emph{dimer model}
and we refer to \cite{Kenyon_notes} for a review.

\bigskip

A form of the Law of Large Numbers for tilings of very general domains was developed
in \cite{CKP} (see also \cite{KOS}) through a variational principle. Following
\cite{Thurston}, the article identifies a tiling with its \emph{height function},
which is the most transparent for lozenge tilings, that can be viewed as a
projection in $(1,1,1)$ direction of a stepped surface, which, in turn, can be
treated as a graph of a function. For domino tilings the construction of the height
function is slightly more complicated, cf.\ \cite{Kenyon_notes}. For both types of
tilings \cite{CKP} proves\footnote{Formally, \cite{CKP} deals only with
simply--connected domains, while we study here some regions with holes. However, the
proofs of \cite{CKP} probably extend to the multiple-connected domains.} that random
height functions concentrate as the mesh size goes to $0$ near a deterministic
\emph{limit shape}, which is found as a maximizer of an integral functional of the
gradient of the height function. One intriguing feature of the limit shapes is that
they have facets, or \emph{frozen regions}, where only one type of lozenges (or
dominos) survives in the limit, see the right panel of Figure \ref{Figure_hex_hole}
for an example.

 \cite{KO_Burgers} found that if one encodes the gradient of the height function as a complex
 number (``complex slope''), then the Euler--Lagrange equation for the limit shape turns into the
 complex Burgers equation (see Section \ref{Section_tilings} for some details), and its solution is
 given in terms of complex--analytic functions. For polygonal domains, the limit shape is in
 fact given in terms of algebraic functions, which are found in an algorithmic way.

 In the same paper Kenyon and Okounkov formulated a conjecture: the fluctuations of the height function
 around the limit shape should be described by the 2d \emph{Gaussian Free Field} in the conformal
 structure given by the complex slope.

 In its largest generality this conjecture remains open at this time. However, several approaches
 arose to solve some of its particular cases. \cite{Kenyon_domino}, \cite{Kenyon_height} developed
 a method for analyzing a particular class of domains with \emph{no} frozen regions through
 determinantal structure of correlation functions and by exploiting the convergence of discrete
 harmonic functions to their continuous counterparts. New ideas were added to this approach more recently
 in \cite{Li}, in \cite{BLR1}, \cite{BLR2}, and in \cite{Russ}, yet any domains with frozen regions (in
 particular, the most aesthetically pleasant case of polygonal domains) remain out of reach.

 Another set of methods, which work for specific polygons (somewhat complementing the aforementioned
 results) was developed in the framework of Integrable Probability by finding and using explicit formulas:
 double contour integral representations of the correlation kernel \cite{BorFer},
 \cite{Petrov_GFF}, \cite{CJY}; orthogonal polynomials \cite{Duits}; differential and difference
 operators acting on explicit inhomogeneous partition functions \cite{BG_CLT}, \cite{BK},
 \cite{Ahn}.

 \bigskip

 Our general theorems provide a completely new approach to these problems. Here is the first new result
 along the lines of this approach.

 \begin{theorem} \label{Theorem_GFF_intro}
   The Kenyon--Okounkov conjecture (on the convergence to the Gaussian Free Field in the conformal structure
   of the complex slope) holds for lozenge tilings of a hexagon with a hole and for
   domino tilings of an Aztec rectangle with arbitrary many holes along a single axis.
 \end{theorem}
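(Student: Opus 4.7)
The plan is to convert both tiling problems into statements about asymptotics of Schur generating functions on multiple horizontal sections, and then invoke the multidimensional inverse theorem (Theorem \ref{Theorem_CLT_multi_ext}) to deduce joint Gaussianity. Concretely, for the hexagon with a hole, one parameterizes a lozenge tiling by the positions of the horizontal lozenges on every horizontal section; on the section passing through the top of the hole, one observes a collection of signatures whose joint distribution is known to admit a tractable Schur generating function (the hole imposes a prescribed ``frozen'' segment on that slice, and the remaining random coordinates form an ensemble to which the discrete loop equation / Nekrasov-type analysis of \cite{BGG} applies, yielding LLN and CLT on a single section). For the Aztec rectangle with holes along a single axis, the domino tiling is reinterpreted as a dimer configuration on a rail--yard graph in the sense of \cite{BK}, \cite{BL}, so that the SGF of each horizontal slice is obtained from the SGF of a distinguished slice by multiplication by explicit Koornwinder-type factors and by specialization of variables to $1$.

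Starting from a section where LLN and CLT are already available, the first step is to apply the new inverse theorem (Theorem \ref{Theorem_main}) to extract the limiting germ of $\ln S_{\mathbb P}(x_1,\dots,x_N)$ at $(1^N)$ on that reference section. Next, I would propagate this germ to every other horizontal section by exploiting the two ``nice'' operations on SGF: setting a block of variables equal to $1$ (which corresponds geometrically to projecting the tiling onto a different section) and multiplication by the simple factors arising on rail--yard graphs. Both operations act transparently on partial derivatives at the identity, so the germ on an arbitrary section is computed as an explicit polynomial in the germ on the reference section. Applying Theorem \ref{Theorem_main} (direct direction) now gives LLN and CLT on every single section.

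To upgrade single--section CLTs to the full two--dimensional CLT for the height function, the key step is to consider a finite collection of sections simultaneously and to compute the joint germ of the multivariate Schur generating function. Here the multidimensional inverse results (Theorems \ref{Theorem_main_multi}, \ref{Theorem_CLT_multi_ext}) do the work: once I verify that the joint derivatives of the multivariate log--SGF converge, I obtain a joint Gaussian limit for the moments on all sections, hence for the height function paired against an arbitrary polynomial test function. The final step is to identify the resulting covariance structure with the Gaussian Free Field in the complex--slope conformal structure. The recipe is standard given the setup of \cite{BG_CLT}: the covariance produced by the inverse theorem is expressed through the complex slope $\zeta(z,w)$ satisfying the complex Burgers equation of \cite{KO_Burgers}, and one checks that the contour--integral expression for this covariance coincides with the pullback of the logarithmic GFF kernel under the map $(z,w)\mapsto \zeta(z,w)$.

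The main obstacle will be the third step: verifying that the iterated operations on SGF (specialization and multiplication by the rail--yard factors) preserve the precise asymptotic regularity required to apply Theorems \ref{Theorem_main_multi} and \ref{Theorem_CLT_multi_ext} uniformly across all sections, and then matching the resulting covariance polynomial in the derivatives of the germ with the intrinsic GFF covariance written in the complex--slope coordinates. In particular, non--simple--connectedness of the hexagon with a hole means the complex slope lives on a non--trivial Riemann surface, so the identification of the limiting covariance with the GFF requires a careful contour deformation rather than the direct comparison available in the simply connected case of \cite{BG_CLT}.
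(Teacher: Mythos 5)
Your plan follows essentially the same route as the paper: cut along the distinguished (vertical, in the paper's coordinates) section through the holes, recognize the particles there as a discrete log--gas so that the multi-cut CLT of \cite{BGG} applies, use the inverse direction of Theorems \ref{Theorem_main}, \ref{Theorem_main_multi} to extract the limiting germ of the (two-component) Schur generating function encoding that section, propagate to all other sections via specialization of variables (and, for dominos, multiplication by the elementary factors $\prod_i\bigl(\tfrac{1+x_i}{2}\bigr)$ rather than anything Koornwinder-like) through Theorem \ref{Theorem_CLT_multi_ext}, and finally match the covariance with the GFF. Up to that last step the proposal and the paper coincide.

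The gap is in the step you yourself single out as ``the main obstacle,'' and your description of it is not merely incomplete but wrong as stated: for the holey hexagon and holey Aztec rectangle the limiting covariance is \emph{not} the pullback of the logarithmic kernel under a map built from the complex slope, because the liquid region is an annulus (resp.\ has $K$ handles' worth of cuts) and the relevant Green function involves elliptic integrals (the period-normalizing term $c(z)$ in \eqref{eq_covariance_two_cut}), i.e.\ it is genuinely non-explicit. The paper resolves this in three concrete moves that your proposal does not supply: (i) it \emph{constructs} the Green function of the slit plane $\mathbb D$ as a double primitive of the section covariance $\Cov(z,w)$ of \cite{BGG} plus the half-plane logarithmic term, and proves (Propositions \ref{Proposition_G_continuous}, \ref{Proposition_G_is_Green}) that this function is continuous and continuously differentiable across the bands, vanishes on the boundary, and hence is the Dirichlet Green function --- the cancellation in \eqref{eq_x27} between the jump of $\Cov$ and the derivative of the log term is what makes the gluing harmonic across the cuts; (ii) it builds the uniformization $\Theta$ by gluing the two trapezoid (resp.\ rectangle) liquid-region parametrizations, one onto the upper and one onto the lower half-plane, along the bands, and checks conformality across the seam; (iii) the covariance produced by Theorem \ref{Theorem_CLT_multi_ext} is matched to $\iint \mathcal G(\Theta,\Theta)$ by deforming the contours onto the $\Theta$-images of the vertical sections and integrating by parts, where for two sections on the same side of the cut the singular $1/(z-w)^2$ part must be split off by subtracting the deterministic-boundary kernel $Q^{(0)}$ of \cite{BG_CLT} before the deformation. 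Without these ingredients the final identification --- which is precisely the new content beyond \cite{BG_CLT} --- is not established by your argument.
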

 We refer to Figures \ref{Figure_hex_hole}, \ref{Figure_domino_domain} for the domains that we
 analyze and to Theorems \ref{Theorem_holey_hex}, \ref{Theorem_holey_Aztec} for detailed formulations and proofs.

 We did not want to overload this article with a complete list of examples that can be analyzed,
 rather sticking to the simplest cases in Theorem \ref{Theorem_GFF_intro}. Section
 \ref{Section_further_dimers} outlines other boundary conditions as well as other dimer models,
 which we believe to be analyzable by our approach; a detailed treatment of these cases will appear
 elsewhere.

 The domains in Theorem \ref{Theorem_GFF_intro} are \emph{not} simply connected, and they do not
 posses the exact solvability as previously analyzed polygonal domains. There are no
 known formulas for the correlation functions (neither as contour integrals, nor through orthogonal
 polynomials) and no information is available about the partition functions. Even the answer ---
 covariance of the Gaussian Free Field --- is no longer explicit: for the one hole case it involves
 elliptic integrals, and for larger number of holes one needs periods of higher genus Riemann surfaces.

 As far as we know, Theorem \ref{Theorem_GFF_intro} is the first instance of the proof of Kenyon-Okounkov's conjecture
 for multiply connected planar domains.\footnote{The
other case with non-trivial topology is dimers on the torus
 (there will be no frozen regions in this case, and the complex structure is the standard one, as the limit shape is flat) studied in \cite{Dubedat}. See also \cite{BLR3} for the new progress.}

\subsection{Methodology and organization}

Let us outline the key idea underlying the proof of the correspondence between LLN, CLT and germs
of Schur generating functions.

We start similarly to \cite{BG}, \cite{BG_CLT} (see also \cite{BorBuf}, \cite{BBO}) by applying
differential operators diagonalized by Schur functions to the Schur generating function of the
measure. This leads to polynomial formulas relating the expectations of the products of the moments
\eqref{eq_intro_moments} with partial derivatives at the unity of $\ln(S_{\mathbb P})$ for
finite values of $N$.

Our first surprising observation is that the \emph{highest order partial
derivatives} part of these relations is \emph{linear} and of \emph{square} format,
i.e.\ with the number of variables on the partial derivatives side equal to the
number of variables on the moments side. Thus, one can treat this part as a system
of linear equations on partial derivatives and hope to solve it.

Further investigation reveals that the matrix of this system splits into the sum of
two: the first one is essentially \emph{triangular}, while the second one decays as
$N\to\infty$. We do not know any a priori reasons to expect this triangularity.
Inverting the triangular part we get expressions for partial derivatives in terms of
centered moments (in fact, we rather work with cumulants), which behave in a
controlled way as $N\to\infty$.

\medskip

Section \ref{Section one_dim} provides a full proof of the correspondence by
developing this idea. Section \ref{Section_multi} makes a next step by extending the
correspondence to the multi--dimensional setting of several random signatures. In
Section \ref{Section_tilings} we first outline the strategy for applying our
approach to the study of random tilings, then give precise formulations for both
lozenge and domino tilings and provide detailed proofs. Finally, Appendix
\ref{Section_quantum_CLT} explains how our approach can be used to get the Central
Limit Theorem for quantum random walk on $U(N)$.

\subsection{Related articles} The interplay between functions (or measures) on a group and
functions (or measures) on labels of its irreducible representations in high--dimensional
\emph{asymptotic} problems of probability and representation theory was previously discussed in
several groups of articles.

The oldest example originates in the study of finite factor representations and
characters for the infinite symmetric group $S(\infty)$ and the
infinite--dimensional unitary group $U(\infty)$. Vershik and Kerov \cite{VK_S},
\cite{VK_U} introduced a way to think about these objects as limits for random Young
diagrams, with complete details and extensions first appearing in \cite{KOO},
\cite{OO}. The topology in these articles is very different from ours, rather than
looking at all coordinates of the signature simultaneously as in
\eqref{eq_intro_moments}, they deal with the first few rows and columns of the
corresponding Young diagrams. This is dictated by the infinite groups $S(\infty)$
and $U(\infty)$ themselves, see \cite[Section 1.4]{Olsh} for the Fourier point of
view in this setting. It would be interesting, if we could identify our germs of SGF
with functions on some natural infinite--dimensional groups or algebras, similarly
to harmonic analysis of \cite{KOV}, \cite{BO_U_infty}; so far we have no progress in
this direction.

\cite{Biane1}, \cite{Biane2} is closer to our studies, as in these papers Biane
explained how limit shape theorems for random Young diagrams are equivalent to the
approximate factorization property for the characters of symmetric groups. The
connections between values of characters and fluctuations were further explored in
\cite{Kerov}, \cite{IO} and in \cite{Sniady}. More recent papers \cite{DF},
\cite{DS} aimed at generalizing these results to the deformation related to Jack
polynomials. Some of the asymptotic structures related to unitary groups admit
degenerations to symmetric groups (see e.g.\ \cite{BO}), and it is to be understood,
whether our results linking LLN, CLT for the moments to germs of SGF can be
degenerated into meaningful statements for the $S(n)$ as $n\to\infty$.

Another direction is Wiengarten calculus for the integrals over unitary groups and
its application to asymptotic problems e.g.\ in \cite{Collins}, \cite{CNS},
\cite{MN}.  In particular, it was used in relation to the degeneration of the
character theory for the unitary groups (known as the semi-classical limit) to
invariant random matrix ensembles.

In a slightly different direction, the mixing time for random walks on large groups
was often studied using character theory of this group, see e.g.\ \cite{Diaconis}.
For instance, the card shuffling problems can be analyzed with the use of characters
of symmetric groups. In such work, probability measures usually live on a group,
while the corresponding functions are defined on the labels of irreducible
representations. In our approach these two roles are exchanged.

\subsection{Acknowledgements.} We would like to thank G.~Borot, R.~Kenyon, and G.~Olshanski for fruitful
discussions. V.G.~is partially supported by the NSF grants DMS-1407562, DMS-1664619,
by the Sloan Research Fellowship. Both authors were partially supported by The
Foundation Sciences Math\'{e}matiques de Paris.

\section{One--dimensional theorem}

\label{Section one_dim}

\subsection{Formulation}

Let $\GT_N$ denote the set of all $N$--tuples $\lambda_1\ge \lambda_2\ge
\dots\ge\lambda_N$ of integers. For a probability measure ${\tau}$ on $\GT_N$ let
$S_{\tau}$ denote its Schur generating function, i.e.\
\begin{equation}
\label{eq_SGF_def}
 S_{\tau}(x_1,\dots,x_N)=\sum_{\lambda\in \GT_N} {\tau}(\lambda)
 \frac{s_\lambda(x_1,\dots,x_N)}{s_\lambda(1^N)},\qquad 1^N=(\underbrace{1,\dots,1}_N),
\end{equation}
where $s_\lambda$ is Schur symmetric (Laurent) polynomial.

\begin{definition} We call a probability measure $\tau$ on $\GT_N$ \emph{smooth}, if
there exists $r>1$ such that
\begin{equation}
\label{eq_smoothness}
 \sum_{\lambda\in \GT_N} \tau(\lambda) \cdot r^{|\lambda_1|+|\lambda_2|+\dots+|\lambda_N|}
 <\infty.
\end{equation}
\end{definition}
\begin{lemma}
\label{Lemma_smooth_SGF}
 If $\tau$ is a smooth measure on $\GT_N$, then its Schur--generating function $S_\tau$ is
 uniformly convergent in a neighborhood of $1^N$.
\end{lemma}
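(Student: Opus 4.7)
The strategy is to apply the Weierstrass M-test: I need a polydisc $\K$ around $(1,\dots,1)$ and a bound of the form
$$
\left|\frac{s_\lambda(x_1,\dots,x_N)}{s_\lambda(1^N)}\right|\le R^{|\lambda_1|+\dots+|\lambda_N|},\qquad x\in\K,
$$
for some $R>1$ small enough that $R\le r$, where $r>1$ is the smoothness constant from \eqref{eq_smoothness}. Once such a bound is in hand, term-by-term absolute convergence of $S_\tau$ is immediate:
$$
\sum_{\lambda}\tau(\lambda)\left|\frac{s_\lambda(x)}{s_\lambda(1^N)}\right|\le \sum_{\lambda}\tau(\lambda)\,r^{|\lambda_1|+\dots+|\lambda_N|}<\infty,
$$
uniformly for $x\in\K$.

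To produce the bound I would first reduce to the case of partitions. For an arbitrary signature $\lambda$, set $c=\max(0,-\lambda_N)\ge 0$ so that $\mu:=\lambda+(c,\dots,c)$ has $\mu_N\ge 0$. Factoring $x_i^c$ from the $i$-th row of the Weyl determinant gives $s_\mu(x)=(x_1\cdots x_N)^c\,s_\lambda(x)$ and, in particular, $s_\lambda(1^N)=s_\mu(1^N)$. Hence
$$
\frac{s_\lambda(x)}{s_\lambda(1^N)}=(x_1\cdots x_N)^{-c}\,\frac{s_\mu(x)}{s_\mu(1^N)}.
$$
For the partition part, the combinatorial formula $s_\mu(x)=\sum_{T\in\mathrm{SSYT}(\mu)}\prod_i x_i^{m_i(T)}$ has nonnegative integer coefficients, so on $\{|x_i|\le R_1\}$ one has $|s_\mu(x)|\le s_\mu(R_1,\dots,R_1)=R_1^{|\mu|}\,s_\mu(1^N)$. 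For the shift factor, $\{|x_i|\ge R_2^{-1}\}$ gives $|(x_1\cdots x_N)^{-c}|\le R_2^{Nc}$. Using $c\le|\lambda_N|\le\sum_i|\lambda_i|$ and $|\mu|=\sum_i\lambda_i+Nc\le(N+1)\sum_i|\lambda_i|$, the two estimates combine to
$$
\left|\frac{s_\lambda(x)}{s_\lambda(1^N)}\right|\le R_1^{(N+1)\sum_i|\lambda_i|}R_2^{N\sum_i|\lambda_i|}=R^{\sum_i|\lambda_i|},\qquad R:=R_1^{N+1}R_2^{N}.
$$

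The only real subtlety is this exponential blow-up of the constant through the factor $(N+1)$ that appears when one shifts $\lambda$ into a partition. It forces the target neighborhood of $1^N$ to be taken smaller than the naive guess, but since $N$ is fixed in this one-dimensional statement, one can simply choose $R_1,R_2>1$ so close to $1$ that $R_1^{N+1}R_2^N\le r$; then $\K=\{x\in\mathbb C^N:R_2^{-1}\le|x_i|\le R_1\}$ is a closed polydisc containing a neighborhood of $(1,\dots,1)$ on which the Weierstrass M-test applies, completing the proof.
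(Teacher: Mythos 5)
Your proof is correct and takes essentially the same route as the paper: bound $\bigl|s_\lambda(x)\bigr|/s_\lambda(1^N)$ on a compact polyannulus around $1^N$ by $R^{|\lambda_1|+\dots+|\lambda_N|}$ using positivity of the monomial expansion of Schur functions, and then conclude by the Weierstrass M-test together with the smoothness condition \eqref{eq_smoothness}. The only difference is cosmetic: the paper invokes the sharp identity $\sup_{|x_1|=\dots=|x_N|=t}|s_\lambda(x)|=t^{\lambda_1+\dots+\lambda_N}s_\lambda(1^N)$ and works on the annulus $t^{-1}<|x_i|<t$ with $1<t<r$, whereas your explicit shift to a partition yields the lossier constant $R=R_1^{N+1}R_2^{N}$, which is harmless since $N$ is fixed and $R_1,R_2$ can be taken close to $1$.
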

\begin{proof}
 The combinatorial formula for Schur functions (see  e.g.\ \cite[Section 5, Chapter
 I]{Mac}) implies that for any $t>0$,
 \begin{equation}
 \label{eq_Schur_bound}
  \sup_{\begin{smallmatrix}(x_1,\dots,x_N)\in\mathbb C^N,\\ |x_1|=\dots=|x_N|=t\end{smallmatrix}}
  |s_\lambda(x_1,\dots,x_N)|=s_{\lambda}(t^N)=t^{\lambda_1+\lambda_2+\dots+\lambda_N}
  s_\lambda(1^N).
 \end{equation}
 Combining \eqref{eq_Schur_bound} with \eqref{eq_smoothness} we conclude that
 \eqref{eq_SGF_def} is uniformly convergent in the annulus $t^{-1}<|x_i|<t$,
 $i=1,\dots,N$ for any $1<t<r$.
\end{proof}

Now for $N=1,2,\dots$, let $\rho_N$ be a smooth probability measure on $\GT_N$. Let
$\lambda$ be $\rho_N$--distributed element of $\GT_N$. Define
$\{p_k^N\}_{k=1,2,\dots}$ to be a countable collection of random variables via
\begin{equation}
\label{eq_random_power_sum}
 p_k^N= \sum_{k=1}^N \left(\frac{\lambda_i+N-i}{N}\right)^k.
\end{equation}
 Let us recall the definition of a joint cumulant. For $r=1,2,\dots$, let
$\Theta_{r}$ be the collection of all unordered partitions of $\{1, \cdots, r\}$,
i.e.\
\begin{equation}
\label{eq_Set_partition} \Theta_{r} = \left\{ \left\{ U_1, \cdots, U_t \right\}
\left| t\in \mathbb{Z}_+, \bigcup_{i=1}^t U_i = \{1, \cdots, r\}, U_i \bigcap U_j =
\emptyset, U_i \neq \emptyset, \forall 1\leq i \leq j \leq t \right. \right\} .
\end{equation}
\begin{definition}
 Take a collection of random variables $(\xi_k)_{k=1,2,\dots}$. For each $r=1,2,\dots,$,
 and $r$--tuple of positive integers $(k_1,\dots, k_r)$, the $r$th order cumulant
 of $\xi_1,\xi_2,\dots$ is defined through
\begin{equation}  \label{eq_cum_def}
\kappa_{k_1,\dots,k_r}(\xi_k,\, k=1,2,\dots):=\sum_{\left\{ U_1, \cdots, U_t
\right\} \in \Theta_r} (-1)^{t-1} (t-1)! \prod_{i=1}^t \E\left[ \prod_{j \in U_i}
\xi_{k_j} \right].
\end{equation}
\end{definition}
Note that the joint cumulants uniquely define joint moments of $\xi_1,\xi_2,\dots$
and vice versa; in particular, the first order cumulant is the mean, while the
second order cumulant is the covariance. $\xi_1,\xi_2,\dots$ is a Gaussian vector if
and only if all the $r$th cumulants for each $r>2$ vanish, see e.g.\ \cite[Section
3.1, 3.2]{PT}.

\begin{definition} \label{Def_CLT}
 We say that smooth measures $\rho_N$ satisfy a CLT as $N\to\infty$, if there exists
 two countable collection of numbers $\mathfrak p(k)$, $\mathfrak {cov}(k,m)$,
 $k,m=1,2,\dots$ such that
\begin{enumerate}[label=(\Alph*)]
\item \label{ass_1} For each $k=1,2,\dots$,
$$
 \lim\limits_{N\to\infty} \frac{1}{N} \E [p_k^N] = {\mathfrak p}(k),
$$
\item \label{ass_2} For each $k,m=1,2,\dots$,
$$
 \lim\limits_{N\to\infty}\Bigl( \E [p_k^N p_m^N] -\E[p_k^N]\E [p_m^N] \Bigr)= \mathfrak {cov}(k,m),
$$
\item \label{ass_3} For each $r>2$ and any collection of positive integers
$k_1,\dots, k_r$
$$\lim_{N\to\infty} \kappa_{k_1,\dots,k_r}\bigl(  p_k^N,\, k=1,2,\dots\bigr)=0.
$$
\end{enumerate}

\end{definition}

\begin{definition}
\label{Def_CLT_appropriate}
 We say that smooth measures $\rho_N$ are CLT--appropriate as $N\to\infty$, if there
 exist two countable collections of numbers $\c_k$, $\d_{k,m}$, $k,m=1,2,\dots$,
 such that
\begin{enumerate}
\item \label{exp_1} For each $k=1,2,\dots$:
$$\lim\limits_{N\to\infty} \frac{1}{N} \left(\frac{\partial}{\partial x_i}\right)^k \ln( S_{\rho_N})
\Bigl|_{x_1=\dots=x_N=1}= \c_k,$$
\item For each $k,m=1,2,\dots$, with $k\ne m$:\label{exp_2}
$$\lim\limits_{N\to\infty} \left(\frac{\partial}{\partial x_i}\right)^k \left(\frac{\partial}{\partial x_j}\right)^m  \ln( S_{\rho_N})
\Bigl|_{x_1=\dots=x_N=1}= \d_{k,m},
$$
\item \label{exp_3} For each set of indices $\{i_1,\dots,i_s\}$ with at least
three distinct elements:
$$\lim\limits_{N\to\infty} \frac{\partial}{\partial x_{i_1}}
\dots \frac{\partial}{\partial x_{i_s}}  \ln( S_{\rho_N}) \Bigl|_{x_1=\dots=x_N=1}=
0.$$
\end{enumerate}
 \end{definition}

\begin{theorem} \label{Theorem_main}
Smooth measures $\rho_N$ satisfy CLT as $N\to\infty$ if and only if
they are CLT--appropriate.

The numbers $\p(k)$, $\cov(k,m)$, $k,m=1,\dots,$ are polynomials in $\c_k$,
$\d_{k,m}$, $k,m=1,\dots$, and can be computed through the following formulas:

\begin{equation}
\label{eq_LLN_1_formula} \mathfrak p(k) = [z^{-1}] \frac{1}{(k+1)(1+z)} \left( (1+z)
\left(\frac1z  +  \sum_{a=1}^\infty \frac{\c_a z^{a-1} }{(a-1)!}\right)
\right)^{k+1},
\end{equation}
\begin{multline}
\label{eq_CLT_1_formula} \mathfrak {cov}(k,m)= [z^{-1} w^{-1}]
 \left( (1+z)
\left(\frac1z  +  \sum_{a=1}^\infty \frac{\c_a z^{a-1} }{(a-1)!}\right)\right)^{k}
\, \left( (1+w)
\left(\frac1w  +  \sum_{a=1}^\infty \frac{\c_a w^{a-1} }{(a-1)!}\right) \right)^{m} \\
\times \left(  \left(\sum_{a=0}^{\infty} \frac{z^a}{w^{1+a}}\right)^2
+\sum_{a,b=1}^{\infty} \frac{\d_{a,b}}{(a-1)! (b-1)!} z^{a-1} w^{b-1} \right),
\end{multline}
where $[z^{-1}]$ and $[z^{-1} w^{-1}]$ stay for the coefficients of $z^{-1}$ and
$z^{-1}w^{-1}$, respectively, in the Laurent power series given afterwards.
\end{theorem}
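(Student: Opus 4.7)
The plan is to connect moments of $p_k^N$ to partial derivatives of $\ln(S_{\rho_N})$ at the identity via differential operators diagonal in the Schur basis. Concretely, there are explicit symmetric differential operators $\D_k$ (of Sekiguchi/Nazarov--Sklyanin type, built from the Cauchy kernel $\prod_{j\ne i}(x_i-x_j)^{-1}$) whose eigenvalue on $s_\la$ is $\sum_i(\la_i+N-i)^k$. Hence
\[
\frac{\D_{k_1}\cdots \D_{k_r}\, S_{\rho_N}}{S_{\rho_N}}\Bigl|_{x_1=\dots=x_N=1}= N^{k_1+\cdots+k_r}\,\E\bigl[p_{k_1}^N\cdots p_{k_r}^N\bigr].
\]
The first step is to expand the left-hand side into partial derivatives of $S_{\rho_N}$, rewrite these via the standard set-partition formula for $\partial^\alpha f/f$ in terms of partial derivatives of $\ln(S_{\rho_N})$, and then pass to cumulants on both sides. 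This yields a polynomial identity between joint cumulants of $\{p_k^N\}$ and mixed partial derivatives of $\ln(S_{\rho_N})$ at $(1^N)$, valid at every finite $N$.

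The forward implication (CLT--appropriate $\Rightarrow$ CLT) then reduces to substituting the limits from Definition~\ref{Def_CLT_appropriate} into this identity, which is the scheme of \cite{BG,BG_CLT}. The remaining work is to repackage the resulting combinatorial sums into the generating functions \eqref{eq_LLN_1_formula}, \eqref{eq_CLT_1_formula}. Expanding the Cauchy factors $\prod_{j\ne i}x_i/(x_i-x_j)$ around $x_i=1$, the leading contribution of $\D_k$ reduces to a one-variable contour integral whose integrand packages $\c_a$ into $(1+z)\bigl(z^{-1}+\sum_a \c_a z^{a-1}/(a-1)!\bigr)$ raised to power $k+1$, producing \eqref{eq_LLN_1_formula} by a single residue. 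A parallel bivariate residue computation gives \eqref{eq_CLT_1_formula}: the ``diagonal'' bracket $\bigl(\sum_{a\ge 0}z^a/w^{1+a}\bigr)^2$ originates from the Cauchy kernel when two indices coincide, while the $\d_{a,b}$ contribution arises from genuine second-order mixed derivatives.

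The reverse implication is the main new content. The crucial observation is that the finite-$N$ linear relation between partial derivatives of $\ln(S_{\rho_N})$ at $(1^N)$ and expectations of products of moments is of \emph{square} format --- at each total weight the derivative collections and the moment collections are indexed by the same set of partitions --- and that its leading-in-$N$ matrix is \emph{upper triangular} in the degree ordering, with off-triangular entries suppressed by $O(1/N)$. In particular, the top-order relation reads
\[
\frac{1}{N}\E p_k^N = \c_k + R_k(\c_1,\dots,\c_{k-1}) + O(N^{-1})
\]
for an explicit polynomial $R_k$ obtained from the expansion in the previous paragraph, determining $\c_k$ inductively from $\p(k),\p(k-1),\dots$. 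A parallel inductive argument for bivariate second cumulants recovers $\d_{k,m}$ from $\cov(k,m)$ and the previously determined $\c_j$; finally, assumption~\ref{ass_3} combined with the $r\ge 3$ cumulant identities forces the vanishing in the limit of all mixed partial derivatives involving at least three distinct indices, giving point~\ref{exp_3} of Definition~\ref{Def_CLT_appropriate}.

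The main obstacle is proving this triangularity-plus-decay structure rigorously. One must unfold the action of $\D_{k_1}\cdots\D_{k_r}$ on $S_{\rho_N}$, track how each Cauchy denominator $(x_i-x_j)^{-1}$ interacts with the antisymmetric numerator produced by the operator, and show that every contribution violating the degree-triangular form carries an extra factor of $1/N$ relative to the diagonal. This is a delicate residue and symmetrization analysis near $x_i=1$; once it is completed, invertibility of the finite-$N$ system is automatic, both directions of the theorem follow, and the explicit generating-function formulas \eqref{eq_LLN_1_formula}, \eqref{eq_CLT_1_formula} are recovered by matching coefficients on the two sides of the cumulant identity.
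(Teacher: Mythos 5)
Your proposal follows essentially the same route as the paper: the operators $\mathcal D_k$ diagonalized by Schur polynomials relate moments and cumulants of $p_k^N$ to partial derivatives of $\ln S_{\rho_N}$ at $1^N$, the forward implication together with the formulas \eqref{eq_LLN_1_formula}, \eqref{eq_CLT_1_formula} is quoted from \cite{BG}, \cite{BG_CLT}, and the reverse implication rests on exactly the square-format, dominance-triangular-plus-$O(1/N)$ structure you describe (the paper's Theorems \ref{Theorem_P} and \ref{Theorem_M}), solved inductively in the total order of the derivatives. The one notable difference in execution is that, instead of inverting the system directly, the paper subtracts from it the same system evaluated at a reference vector $\mathbf v_0$ chosen according to the limiting formulas (which satisfies the same asymptotics by the already-proven forward direction), so that the lower-order-derivative contributions cancel and the uniform invertibility of $(A+N^{-1}B)$ at once yields \eqref{exp_1}--\eqref{exp_3}, including the exact vanishing in \eqref{exp_3} that a direct inversion would still have to identify from the limiting equations.
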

\begin{remark}
 If the series in \eqref{eq_LLN_1_formula}, \eqref{eq_CLT_1_formula} are convergent
 for $z$ and $w$ in a neighborhood of $0$, then the formulas can be converted into
 contour integrals (as residues at $0$). In particular, in \eqref{eq_CLT_1_formula} we then
 need to assume $|z|<|w|$ leading to
$$
\left(\sum_{a=0}^{\infty} \frac{z^a}{w^{1+a}}\right)^2=\frac{1}{(z-w)^2}.
$$
This is the form used in \cite{BG_CLT}.
\end{remark}

\begin{lemma} \label{Lemma_as_invert}
 The formulas \eqref{eq_LLN_1_formula}, \eqref{eq_CLT_1_formula} can be inverted as
 follows:
\begin{equation}
\label{eq_LLN_inversion_formula} \sum_{a=1}^\infty \frac{\c_a z^{a-1} }{(a-1)!}=
\frac{1}{1+z}\left(\sum_{k=0}^{\infty} \frac{\mathfrak
p(k)}{u^{k+1}}\right)^{(-1)}\Biggr|_{u=\ln(1+z)} -\frac{1}{z}
\end{equation}
where $(\cdot)^{(-1)}$ is the functional inversion of the power series,
$\bigr|_{u=\ln(1+z)}$ means that we change the variables \emph{after} inversion, and
we use the convention $\mathfrak p(0)=1$.
\begin{multline}
\label{eq_CLT_inversion_formula} \sum_{a,b=1}^{\infty} \frac{\d_{a,b}}{(a-1)!(b-1)!}
z^{a-1} w^{b-1}= \frac{f'(z)}{f(z)} \frac{f'(w)}{f(w)} \\ \times \left(
\sum_{k,m=0}^{\infty} \frac{\mathfrak {cov}(k,m) }{f(z)^k f(w)^m} - [u^{-1}
v^{-1}]\left( \frac{1}{1-f(u)/f(z)} \cdot \frac{1}{1-f(v)/f(w)}
\left(\sum_{a=0}^{\infty} \frac{u^a}{v^{1+a}}\right)^2\right) \right),
\end{multline}
where
$$
 f(z)= (1+z)
\left(\frac1z  +  \sum_{a=1}^\infty \frac{\c_a z^{a-1} }{(a-1)!}\right)
$$
and $\frac{1}{1-f(u)/f(z)}$ is expanded in series as
$1+\frac{f(u)}{f(z)}+\left(\frac{f(u)}{f(z)}\right)^2+\dots$.
\end{lemma}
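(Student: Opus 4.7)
The plan is to invert both \eqref{eq_LLN_1_formula} and \eqref{eq_CLT_1_formula} via Lagrange inversion, exploiting that $f(z) := (1+z)\bigl(\tfrac{1}{z} + \sum_{a\ge 1} \c_a z^{a-1}/(a-1)!\bigr)$ has a simple pole at $z=0$. Hence $\tilde f(z) := 1/f(z)$ is a formal power series with $\tilde f(0)=0$ and $\tilde f'(0)=1$, and admits a compositional inverse $\psi(w)$ satisfying $\psi(0)=0$ and $f(\psi(w))=1/w$. I will repeatedly use the formal substitution rule $\operatorname{Res}_{z=0} G(z)\,dz = \operatorname{Res}_{w=0} G(\psi(w))\,\psi'(w)\,dw$, valid because $\psi$ is a genuine power series with $\psi'(0) \ne 0$.

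For \eqref{eq_LLN_inversion_formula}, I rewrite \eqref{eq_LLN_1_formula} as a residue and substitute $z = \psi(w)$. Since $f(\psi(w)) = 1/w$, the expression $\mathfrak p(k) = \operatorname{Res}_{z=0}\frac{f(z)^{k+1}}{(k+1)(1+z)}\,dz$ becomes $\frac{1}{k+1}[w^k]\frac{\psi'(w)}{1+\psi(w)} = \frac{1}{k+1}[w^k]\frac{d}{dw}\ln(1+\psi(w))$, identifying $\mathfrak p(k)$ with the $(k+1)$-st Taylor coefficient of $\ln(1+\psi(w))$. Thus $\sum_{k\ge 0}\mathfrak p(k)\,w^{k+1} = \ln(1+\psi(w))$; replacing $w=1/u$ yields $P(u) := \sum_{k\ge 0}\mathfrak p(k)/u^{k+1} = \ln(1+\psi(1/u))$, and solving for $u$ in terms of $v=P(u)$ gives $P^{(-1)}(v) = f(e^v-1)$. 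Setting $v = \ln(1+z)$ returns $f(z) = P^{(-1)}|_{u=\ln(1+z)}$, and dividing by $1+z$ and subtracting $1/z$ recovers \eqref{eq_LLN_inversion_formula}.

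For \eqref{eq_CLT_inversion_formula}, denote the two summands in the right-most factor of \eqref{eq_CLT_1_formula} by $S(z,w) := \bigl(\sum_{a\ge 0} z^a/w^{a+1}\bigr)^2$ and $D(z,w) := \sum_{a,b\ge 1} \tfrac{\d_{a,b}}{(a-1)!(b-1)!}\,z^{a-1}w^{b-1}$; so $\cov(k,m) = [z^{-1}w^{-1}] f(z)^k f(w)^m (S+D)(z,w)$. The same change of variables $(z,w) = (\psi(u),\psi(v))$ converts this double residue into $\cov(k,m) = [u^{k-1}v^{m-1}](S+D)(\psi(u),\psi(v))\,\psi'(u)\psi'(v)$ for $k,m\ge 1$. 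Introduce $\cov^S(k,m) := [z^{-1}w^{-1}] f(z)^k f(w)^m S(z,w)$, the part of $\cov(k,m)$ depending only on the $\c$'s through $f$; subtracting isolates $D(\psi(u),\psi(v))\,\psi'(u)\psi'(v) = \sum_{k,m\ge 1}\bigl(\cov(k,m) - \cov^S(k,m)\bigr)u^{k-1}v^{m-1}$. Setting $u = 1/f(z)$, $v = 1/f(w)$ and using $\psi'(1/f(z)) = 1/\tilde f'(z) = -f(z)^2/f'(z)$ (so the two minus signs cancel), this rearranges into
\[
 D(z,w) = \frac{f'(z)f'(w)}{f(z)f(w)} \sum_{k,m\ge 0} \frac{\cov(k,m) - \cov^S(k,m)}{f(z)^k f(w)^m},
\]
where the sum may be extended to $k,m\ge 0$ because $S,D$ are power series in $z,w$, so both $\cov$ and $\cov^S$ vanish whenever $k$ or $m$ equals zero. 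Finally, the expansion $(1-f(u)/f(z))^{-1} = \sum_{k\ge 0} f(u)^k/f(z)^k$ identifies $[u^{-1}v^{-1}]\frac{S(u,v)}{(1-f(u)/f(z))(1-f(v)/f(w))}$ term-by-term with $\sum_{k,m\ge 0} \cov^S(k,m)/(f(z)^k f(w)^m)$, completing the derivation of \eqref{eq_CLT_inversion_formula}.

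The only non-algebraic ingredient is the residue change-of-variables identity under $z=\psi(w)$; this is the standard Lagrange substitution lemma for formal Laurent series, and it applies here without issue thanks to the simple-zero structure of $\tilde f$ at the origin. After that, everything is book-keeping of formal power-series expansions, with the main thing to keep straight being which variables are being inverted and in which direction.
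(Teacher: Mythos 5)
Your argument is correct, and its computational core coincides with the paper's: the key step in both is to evaluate the $\d$--dependent part of the double residue by passing to the variable in which $f$ becomes $1/u$, i.e.\ by inverting $\tilde f=1/f$ (the paper does this in \eqref{eq_x34}--\eqref{eq_x36} by computing the contour integral as the residue at $z=f^{(-1)}(1/u)$ and then substituting $1/u=f(x)$, which is exactly your change of variables $z=\psi(u)$ followed by $u=\tilde f(z)$). The differences are in the framing. The paper first observes that \eqref{eq_LLN_1_formula}, \eqref{eq_CLT_1_formula} are triangular and polynomial, so that it suffices to verify \eqref{eq_LLN_inversion_formula}, \eqref{eq_CLT_inversion_formula} for $\c_a,\d_{a,b}$ making all series entire, and then works with genuine contour integrals; you instead stay entirely at the level of formal Laurent series and the formal residue substitution lemma, which makes that reduction unnecessary (and is legitimate here, since every coefficient extraction in your chain involves only finitely many terms --- e.g.\ $f(z)^k$ has pole order $k$, and $\sum_{k,m}\cov(k,m)\tilde f(z)^k\tilde f(w)^m$ is coefficient-wise finite because $\tilde f$ vanishes to first order). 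You also isolate the $S$--part by introducing $\cov^S(k,m)$ and subtracting, whereas the paper keeps it as the explicit $1/(z-w)^2$ double integral; these are the same term written differently. Finally, for \eqref{eq_LLN_inversion_formula} you supply the Lagrange-inversion computation explicitly, while the paper outsources it to \cite[Lemma 6.1]{BG}; your derivation $\sum_{k\ge0}\p(k)w^{k+1}=\ln(1+\psi(w))$, hence $P^{(-1)}(v)=f(e^v-1)$, is correct. One small imprecision worth fixing: the vanishing of $\cov(k,0)$ and $\cov^S(k,0)$ is not because $S$ is a power series in $w$ (it is not --- its $w$--powers are $\le -2$), but precisely because $S$ has no $w^{-1}$ term and $D$ has only non-negative $w$--powers; the conclusion you need still holds, so this does not affect the proof.
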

\begin{remark}
 The statement of the lemma is purely algebraic and does not claim anything about
 positivity. However, in our context the numbers $\cov(k,m)$ are not arbitrary, as any covariance
 matrix needs to be positive--definite. Similarly, the numbers $\p(k)$ satisfy
 constraints as moments of a positive probability measure.

 From the other side, the numbers $\c_k$, $\d_{k,m}$ are similarly not arbitrary, as
 they arise as limits of partial derivatives of symmetric functions with
 \emph{positive} decompositions into Schur polynomials (such functions can be
 identified with
 characters of the unitary group $U(N)$).

 It would be interesting to find out whether the formulas \eqref{eq_LLN_1_formula},
 \eqref{eq_CLT_1_formula}, \eqref{eq_LLN_inversion_formula}, \eqref{eq_CLT_inversion_formula} define a bijection between all possible families $\p(k)$,
 $\cov(k,m)$ (satisfying all the aforementioned inequalities) from one side and all
 possible families $\c_k$, $\d_{k,m}$ (again satisfying all the necessary
 inequalities) on the other side. We will not address this question here.
\end{remark}
\begin{proof}[Proof of Lemma \ref{Lemma_as_invert}]

The relations \eqref{eq_LLN_1_formula} and \eqref{eq_CLT_1_formula} express
$\mathfrak p(k)$ and $\mathfrak{cov}(k,m)$ as polynomials in $\c_a$, $\d_{a,b}$.
Moreover, these expressions have triangular structure. In more details, $\mathfrak
p(k)$ is expressed through $\c_a$ with $a=1,2,\dots,k$, and the dependence on $\c_k$
is linear (and non-degenerate).

Given all $\c_a$'s, $\mathfrak{cov}(k,m)$ is expressed through $\d_{a,b}$ with $a\le
k$, $b\le m$ in a linear way, and the dependence on $\d_{k,m}$ is non-degenerate.

It readily follows that the formulas\eqref{eq_LLN_1_formula} and
\eqref{eq_CLT_1_formula} can be inverted and yield a polynomial expressions for
$\c_a$, $\d_{a,b}$ through $\mathfrak p(k)$, $\mathfrak{cov}(k,m)$ with a similar
triangular structure.

Due to polynomiality, it then suffices to check \eqref{eq_LLN_inversion_formula},
\eqref{eq_CLT_inversion_formula} for such $\c_a$, $\d_{a,b}$ that the series
$\sum_{a=1}^\infty \frac{\c_a z^{a-1} }{(a-1)!}$ and $\sum_{a,b=1}^{\infty}
\frac{\d_{a,b}}{(a-1)! (b-1)!} z^{a-1} w^{b-1}$ are convergent for all
$z,w\in\mathbb C$. For \eqref{eq_LLN_inversion_formula} this check is contained in
the proof of \cite[Lemma 6.1]{BG} --- it reduces to the Lagrange inversion formula.

For \eqref{eq_CLT_inversion_formula} we  rewrite \eqref{eq_CLT_1_formula} as a
contour integral
\begin{multline}
\label{eq_x34} \frac{1}{(2\pi\ii)^2} \oint\oint_{\begin{smallmatrix} |z|=\eps\\
|w|=2\eps\end{smallmatrix}}
 f(z)^{k}
\, \ f(w)^{m}\left(  \left(\sum_{a=0}^{\infty} \frac{z^a}{w^{1+a}}\right)^2
+\sum_{a,b=1}^{\infty} \frac{\d_{a,b}}{(a-1)! (b-1)!} z^{a-1} w^{b-1} \right) dz dw,
\end{multline}
with integration around $0$ in both $z$ and $w$. We introduce two small complex
variables $u$ and $v$, multiply \eqref{eq_x34} by $u^k$, $v^m$ and sum over all
$k,m$ to get (we agree that $\mathfrak{cov}(0,0)=0$)
\begin{multline}
\label{eq_x35} \sum_{k,m=0}^\infty \mathfrak{cov}(k,m) u^k v^m=
 \frac{1}{(2\pi\ii)^2} \oint\oint_{\begin{smallmatrix} |z|=\eps\\
|w|=2\eps\end{smallmatrix}}
 \frac{1}{1-u f(z)} \cdot \frac{1}{1-v f(w)}  \frac{dzdw}{\left(z-w\right)^2} \\ +
  \frac{1}{(2\pi\ii)^2} \oint\oint_{\begin{smallmatrix} |z|=\eps\\
|w|=2\eps\end{smallmatrix}}
 \frac{1}{1-u f(z)} \cdot \frac{1}{1-v f(w)}
  \left(\sum_{a,b=1}^{\infty}
\frac{\d_{a,b}}{(a-1)! (b-1)!} z^{a-1} w^{b-1} \right) dz dw,
\end{multline}
The first term in the right--hand side does not depend on $\d_{a,b}$ and we treat it
as being known. For the second one note that when $\eps$ is small, and $u$ is even
smaller, the equation $f(z)=\frac{1}{u}$ has a unique solution $f^{(-1)}(1/u)$
inside the contour $|z|=\eps$. We can therefore compute the integral as a residue,
transforming \eqref{eq_x35} into
\begin{multline}
\label{eq_x36}
 \frac{1}{(2\pi\ii)^2} \oint\oint_{\begin{smallmatrix} |z|=\eps\\
|w|=2\eps\end{smallmatrix}}
 \frac{1}{1-u f(z)} \cdot \frac{1}{1-v f(w)}  \frac{dzdw}{\left(z-w\right)^2} \\ +
 \frac{1}{-u f'(f^{(-1)}(1/u))} \cdot \frac{1}{-u f'(f^{(-1)}(1/v))}
\cdot \left(  \sum_{a,b=1}^{\infty} \frac{\d_{a,b}}{(a-1)! (b-1)!}
(f^{(-1)}(1/u))^{a-1} (f^{(-1)}(1/v))^{b-1} \right) ,
\end{multline}
Therefore, setting $1/u=f(x)$, $1/v=f(y)$, and denoting
$$
 G(u,v)= \sum_{k,m=0}^\infty \mathfrak{cov}(k,m) u^k v^m-
 \frac{1}{(2\pi\ii)^2} \oint\oint_{\begin{smallmatrix} |z|=\eps\\
|w|=2\eps\end{smallmatrix}}
 \frac{1}{1-u f(z)} \cdot \frac{1}{1-v f(w)}  \frac{dzdw}{\left(z-w\right)^2},
$$
 we get
$$
 G\left(\frac{1}{f(x)},\frac{1}{f(y)}\right)=
  \frac{f(x)}{f'(x)} \cdot \frac{f(y)}{f'(y)}
\cdot \left(  \sum_{a,b=1}^{\infty} \frac{\d_{a,b}}{(a-1)! (b-1)!} x^{a-1} y^{b-1}
\right),
$$
which coincides with \eqref{eq_CLT_inversion_formula}.
\end{proof}

In \cite{BG}, \cite[Theorem 2.4, Theorem 2.8]{BG_CLT} we proved that if the measures
$\rho_N$ are CLT appropriate, then they satisfy CLT and the formulas
\eqref{eq_LLN_1_formula}, \eqref{eq_CLT_1_formula} hold. Here we prove the opposite
direction of Theorem \ref{Theorem_main}, i.e.\ that if measures satisfy CLT, then they are CLT--appropriate. The proof is split into several steps and culminates in Section \ref{Section_proof_of_inversion}.

\subsection{Expansion of differential operators}
\label{Section_diff_ops_degrees} Throughout this and the next section we explain how
the random power sums $p_k^N$ of \eqref{eq_random_power_sum} can be analyzed through
differential operators. Below we always silently assume that $\rho_N$ are smooth
measures.

\smallskip

We let $\x$ be the set of variables $(x_1,\dots,x_N)$ and define
$$
 V(\x)=\prod_{1\le i<j \le N} (x_i-x_j).
$$
Introduce the following differential expression in $N$ variables:
\begin{equation}
\label{eq_D_k_def}
 \mathcal D_k = V(\x)^{-1} \left(\sum_{i=1}^N \left({x_i} \frac{\partial}{\partial
 x_i} \right)^k \right) V(\x).
\end{equation}
We will generally apply this operator to $S_{\rho_N}$ to compute the moments of
$\rho_N$. We use the notation
\begin{equation}
\label{eq_uncentered_moments}
 \mathcal P_k:=\mathcal D_{k}[S_{\rho_N}] \bigr|_{x_1=\dots=x_N=1}
\end{equation}
 We also need a version for cumulants. For an integral vector
$k_1,\dots, k_r \ge 1$ let us define
\begin{equation}
\label{eq_centered_moments}
 \K_{k_1,\dots,k_r}:= \sum_{\left\{ U_1,
\cdots, U_t \right\} \in \Theta_r} (-1)^{t-1} (t-1)! \prod_{i=1}^t \left(
\left[\prod_{j \in U_i} \mathcal D_{k_j}\right][S_{\rho_N}]
\right)\Biggr|_{x_1=\dots=x_N=1}.
\end{equation}
where $[S_{\rho_N}]$ means an application of a differential operator to the function
$S_{\rho_N}$. In principle, all operators in \eqref{eq_centered_moments} commute and
therefore the order in which we apply them is irrelevant. However, for the sake of
being precise we specify this order by declaring that we first apply the factor
corresponding to the largest $j$, then the factor with second to largest $j$ acts on
the result, etc.

The following lemma is a starting point for all our proofs.
\begin{lemma} We have \label{Lemma_eigenrelation}
$$
\mathcal P_k= \E \bigl[N^k  p^N_k\bigr],\quad \quad \K_{k_1,\dots,k_r}=
\kappa_{k_1,\dots,k_r}\bigl( N^k p^N_k, \, k=1,2,\dots\bigr).
$$
\end{lemma}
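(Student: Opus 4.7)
The plan rests on the classical fact that the $\mathcal D_k$ are diagonalized by Schur functions. Concretely, I would first show that $\mathcal D_k \, s_\lambda = \bigl(\sum_{j=1}^N (\lambda_j+N-j)^k\bigr) s_\lambda$. To see this, write $V(\x)\, s_\lambda(\x)=\det[x_i^{\lambda_j+N-j}]_{i,j=1}^N$ and expand the determinant as $\sum_\sigma \mathrm{sgn}(\sigma)\prod_i x_i^{\lambda_{\sigma(i)}+N-\sigma(i)}$. The operator $x_i\partial_{x_i}$ is diagonal on monomials in $x_i$, so $(x_i\partial_{x_i})^k$ multiplies the $\sigma$-term by $(\lambda_{\sigma(i)}+N-\sigma(i))^k$. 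Summing over $i$ and reindexing by $j=\sigma(i)$ gives the $\sigma$-independent factor $\sum_{j=1}^N(\lambda_j+N-j)^k$, which equals $N^k p_k^N(\lambda)$ by the definition \eqref{eq_random_power_sum}. Dividing by $V(\x)$ restores $s_\lambda$, proving the eigenrelation. The same eigenvalue then applies to the normalized Schur function $s_\lambda/s_\lambda(1^N)$.

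Second, I would apply this term-by-term to $S_{\rho_N}(\x)=\sum_\lambda \rho_N(\lambda)\, s_\lambda(\x)/s_\lambda(1^N)$, where smoothness of $\rho_N$ plus Lemma~\ref{Lemma_smooth_SGF} justifies interchanging the differentiation with the infinite sum on a neighborhood of $1^N$. Evaluating at $x_1=\dots=x_N=1$, the normalized Schur function equals $1$, and we obtain
\[
\mathcal P_k \;=\; \mathcal D_k[S_{\rho_N}]\bigr|_{x_1=\dots=x_N=1} \;=\; \sum_{\lambda}\rho_N(\lambda)\, N^k p_k^N(\lambda) \;=\; \E[N^k p_k^N],
\]
which is the first identity.

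For the cumulant identity, the same eigenrelation used iteratively shows that the composition $\prod_{j\in U_i}\mathcal D_{k_j}$ acts on $s_\lambda/s_\lambda(1^N)$ as multiplication by $\prod_{j\in U_i} N^{k_j} p_{k_j}^N(\lambda)$, regardless of the order of application (this also makes the commutativity assertion preceding the lemma transparent). Evaluating at $1^N$ and summing against $\rho_N$ yields
\[
\Bigl[\prod_{j\in U_i}\mathcal D_{k_j}\Bigr][S_{\rho_N}]\Bigr|_{x_1=\dots=x_N=1} \;=\; \E\!\left[\prod_{j\in U_i} N^{k_j} p_{k_j}^N\right].
\]
Plugging this into \eqref{eq_centered_moments} reproduces verbatim the definition \eqref{eq_cum_def} of $\kappa_{k_1,\dots,k_r}\bigl(N^k p_k^N,\,k=1,2,\dots\bigr)$, completing the proof.

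There is no real obstacle here: the only technical point is the exchange of differentiation and summation over $\lambda$, which is immediate from the uniform convergence of $S_{\rho_N}$ on a polyannular neighborhood of $1^N$ guaranteed by smoothness. The content of the lemma is essentially the statement that Schur polynomials are eigenfunctions of the Laplace–Beltrami–type operators $\mathcal D_k$ with eigenvalues given precisely by the shifted power sums of $\lambda$, together with the algebraic identity between the combinatorial cumulant expansion and the definition of $\K_{k_1,\dots,k_r}$.
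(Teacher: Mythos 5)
Your proposal is correct and follows essentially the same route as the paper, which simply cites the eigenrelation $\mathcal D_k\, s_\mu = \bigl(\sum_{i=1}^N(\mu_i+N-i)^k\bigr)s_\mu$ as the source of both identities. You merely spell out the standard details (the determinant computation behind the eigenrelation, the interchange of $\mathcal D_k$ with the sum over $\lambda$ justified by smoothness, and the matching of \eqref{eq_centered_moments} with \eqref{eq_cum_def}), all of which the paper leaves implicit.
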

\begin{proof}
 This is a corollary of the eigenrelarion $\mathcal D_k (s_\mu(\x))=\sum_{i=1}^N
 (\mu_i+N-i)^k s_\mu(\x)$.
\end{proof}

In what follows, we analyze the combinatorics of $\mathcal P_k$ and
$\K_{k_1,\dots,k_r}$. The main difficulty lies in the fact that we need to plug in
$x_1=\dots=x_N=1$ into $V(\x)$ and $V(\x)^{-1}$ in \eqref{eq_D_k_def} and therefore
to resolve the arising singularity.

Nevertheless, we explain below that $\mathcal P_k$ and $\K_{k_1,\dots,k_r}$ can be
written as a linear combination of partial derivatives of $\ln(S_{\rho_N})$ at
$x_1=\dots=x_N=1$. Since $S_{\rho_N}$ is symmetric, there is no need to consider all
possible partial derivatives. In more details, recall that a partition (or Young
diagram) $\lambda$ is a sequence of non-negative integers
$\lambda_1\ge\lambda_2\ge\dots\ge 0$, such that $|\lambda|:=\sum_{i=1}^{\infty}
\lambda_i<\infty$. The number of non-zero parts in $\lambda$ is denoted
$\ell(\lambda)$. Introduce the notation
$$
 \partial_{\lambda}[\ln S_{\rho_N}]=\left(\prod_{i=1}^{\ell(\lambda)}
 (\partial_i)^{\lambda_i}\right)[\ln S_{\rho_N}].
$$
$|\lambda|$ is the \emph{order} of the derivative. We will also need to compare the
derivatives of the same order. For that we use the \emph{dominance order} on
partitions: for $\lambda$, $\mu$ such that $|\lambda|=|\mu|$ we write
$\lambda\succeq \mu$ if
$$
 \lambda_1+\lambda_2+\dots+\lambda_k \ge \mu_1+\dots+\mu_k, \quad k=1,2,\dots.
$$
Note that the dominance order is only a partial order, as many partitions are
incomparable. One way to extend it to a complete (i.e.\ linear) order is through the
lexicographic ordering\footnote{Recall that a vector $(u_1,u_2,\dots)$ is larger
 than $(v_1,v_2,\dots)$ in the lexicographic order, if for some $k>0$, $u_1=v_1$, $u_2=v_2$,\dots $u_{k-1}=v_{k-1}$, and $u_k>v_k$.}

The expansions of $\mathcal P_k$ and $\K_{k_1,\dots,k_r}$ in partial derivatives of
$\ln(S_{\rho_N})$ are summarized in the following two theorems.

\begin{theorem} \label{Theorem_P}
 $\mathcal P_k$ can be written as a finite (\emph{independent} of $N$) linear combination of
products of partial derivatives of $\ln(S_{\rho_N})$ at $x_1=\dots=x_N=1$, whose
coefficients are polynomials in
 $N$. The highest order of partial derivatives in this expansion is $k$, and the
 total
 contribution of the derivatives of order $k$ can be written as
 \begin{equation}
 \label{eq_P_equation}
N^{k} \sum_{\lambda: |\lambda|=k} \alpha_k(\lambda) \cdot \partial_{\lambda} [\ln
(S_{\rho_N})]_{x_1=\dots=x_N=1} + O(N^{k-1}),
 \end{equation}
 where the summation goes over all Young diagrams $\lambda$ with $|\lambda|=k$,
  $\alpha_k(\lambda)$ are some coefficients (independent of $N$) and
 $O(N^{k-1})$ is a linear combination of partial derivatives of $\ln(S_{\rho_N})$ of
 order $k$ and with all coefficients being smaller in absolute value than
 $const \cdot N^{k-1}$. Finally, $\alpha_k(k,0,\dots)\ne 0$.
\end{theorem}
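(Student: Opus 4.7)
My plan is to expand $\mathcal P_k = \mathcal D_k[S_{\rho_N}]|_{\x=1^N}$ via the Leibniz rule and then extract its structure by resolving the Vandermonde singularities at the diagonal. Since $x_i\partial_{x_i}$ is a derivation,
$$(x_i\partial_i)^k(V\,S_{\rho_N}) = \sum_{j=0}^k\binom{k}{j}\bigl((x_i\partial_i)^jV\bigr)\bigl((x_i\partial_i)^{k-j}S_{\rho_N}\bigr),$$
and dividing by $V$ yields
$$\mathcal D_kS_{\rho_N} = S_{\rho_N}\cdot\sum_{i=1}^N\sum_{j=0}^k\binom{k}{j}\,\frac{(x_i\partial_i)^jV}{V}\cdot\frac{(x_i\partial_i)^{k-j}S_{\rho_N}}{S_{\rho_N}}.$$
By the exponential formula, $(x_i\partial_i)^{k-j}S_{\rho_N}/S_{\rho_N}$ equals the Bell polynomial $B_{k-j}$ in the iterated logarithmic derivatives $\{(x_i\partial_i)^\ell\ln S_{\rho_N}\}_{\ell\le k-j}$. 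Since $S_{\rho_N}(1^N)=1$, this exhibits $\mathcal P_k$ as the $\x\to 1^N$ limit of a finite sum of products of partial derivatives of $\ln S_{\rho_N}$ weighted by the (individually singular) rational Vandermonde factors $(x_i\partial_i)^jV/V$.

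To pass to the limit, I would switch to additive coordinates $\mathbf{y}=(y_1,\dots,y_N)$ via $x_i=e^{y_i}$, so that $x_i\partial_i=\partial_{y_i}$. The standard conjugation identity gives
$$V^{-1}\partial_{y_i}^jV = \bigl(\partial_{y_i}+\partial_{y_i}\ln V\bigr)^j, \qquad \partial_{y_i}\ln V(\x(\mathbf{y})) = \sum_{q\ne i}\frac{1}{y_i-y_q} + (\text{analytic at } \mathbf{y}=0).$$
After normal-ordering, the expansion has poles of total order $\le j$ along the diagonals $\{y_p=y_q\}$; the outer symmetric sum $\sum_i$ combined with the Taylor expansion of the Bell-polynomial factors around $\mathbf{y}=0$ cancels all such poles, and each cancellation of a single simple pole $(y_i-y_q)^{-1}$ converts, in the limit, into one extra $\partial_{y_q}$-derivative acting on the smooth factor. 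Thus each absorbed pole raises by one the derivative order of $\ln S_{\rho_N}$ in the surviving term and produces one index sum of size $N-1$.

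This residue bookkeeping proves the two assertions of \eqref{eq_P_equation}. Derivative order: in the $j$-slice one obtains at most $k-j$ derivatives from the Bell polynomial plus at most $j$ additional derivatives from pole cancellations, so any single factor $\partial_\mu\ln S_{\rho_N}$ satisfies $|\mu|\le k$. Scaling in $N$: each absorbed pole contributes one factor of $N$ from its index summation, and together with the outer $\sum_{i=1}^N$ this gives $N^{j+1}$, maximized at $j=k-1$ by routing all $k-1$ cancellations into a single Bell-monomial factor. A product of factors $\partial_{\mu^{(1)}}\ln S_{\rho_N}\cdot\partial_{\mu^{(2)}}\ln S_{\rho_N}\cdots$ requires splitting the $\partial$-derivatives across several monomials, forcing at least one of the corresponding index summations to collapse (all $\partial_{y_i}$'s inside one Bell monomial share the index $i$), which costs one power of $N$ and places the contribution into the $O(N^{k-1})$ remainder.

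Finally, for $\alpha_k(k,0,\ldots)\ne 0$: the $N^k$-leading contribution of $\partial_1^k\ln S_{\rho_N}$ arises uniquely from the $j=k-1$ slice applied to the leading Bell-monomial $B_1=\partial_{y_i}\ln\widetilde S_{\rho_N}$. The $k-1$ pole absorptions collectively promote $\partial_{y_i}$ into $\partial_{y_i}^k$, whose top-order component in $x$-variables at $\mathbf{y}=0$ is $\partial_{x_i}^k$ with coefficient $1$ (Stirling number $S(k,k)=1$); tracking the combinatorial factors through the residue sum and applying symmetry at $i=1$ yields $\alpha_k(k,0,\ldots)=1$. The main technical obstacle is precisely this residue accounting; a useful sanity check is to recover the known low-$k$ answers $\mathcal P_1 = \binom{N}{2}+N\,\partial_1\ln S_{\rho_N}$ and the leading contribution $N^2\bigl(\partial_1^2\ln S_{\rho_N}-\partial_1\partial_2\ln S_{\rho_N}\bigr)$ to $\mathcal P_2$, matching the formulas of \cite{BG,BG_CLT}.
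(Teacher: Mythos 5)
Your overall strategy is the same one the paper uses: expand by the Leibniz rule, rewrite derivatives of $S_{\rho_N}$ through derivatives of $\ln S_{\rho_N}$, and observe that the leading power of $N$ comes from the slice in which $\ln S_{\rho_N}$ is differentiated once directly while the remaining $k-1$ derivatives are generated by the Vandermonde factors (this is the paper's term with $m=k-1$, cf.\ \eqref{eq_x13}); your sanity checks for $k=1,2$ are correct. However, two steps are genuinely missing. First, the heart of the statement is what you call the ``residue accounting'': that after symmetrization the singular Vandermonde factors admit a finite limit at $1^N$, that this limit is again a combination of partial derivatives of $\ln S_{\rho_N}$ of order at most $k$, and that the coefficients are polynomials in $N$ with the stated powers. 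You assert the mechanism (``each absorbed pole gives one extra derivative and one index sum'') and explicitly defer it, but individual terms diverge, higher-order and coinciding poles occur in $V^{-1}(x_i\partial_i)^jV$, and the counting cannot be done termwise: already for $k=2$ the products $(\partial_1\ln S_{\rho_N})^2$ and $(\partial_1\ln S_{\rho_N})(\partial_2\ln S_{\rho_N})$ enter with coefficients $+N(N-1)$ and $-N(N-1)$ and only reduce to order $N$ after grouping, which is a cancellation, not the ``index-sum collapse'' you invoke. The paper supplies exactly the missing lemma via Claims 1 and 2 in the proof of Proposition \ref{Proposition_P_as_sum} (divisibility by the Vandermonde after symmetrization, plus the $\eps$-scaling lower bound on the vanishing degree), combined with Lemma \ref{Lemma_highest_terms}.

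Second, your argument for $\alpha_k(k,0,\dots)\ne 0$ is not correct as stated. The $k-1$ absorbed poles do not ``promote $\partial_{y_i}$ into $\partial_{y_i}^k$'': in the confluent limit the extra derivatives get distributed over the partner variables, producing all partitions $\lambda$ with $|\lambda|=k$, and the coefficient of the pure $(k,0,\dots)$ term is a confluent divided-difference evaluation, not a Stirling-number identity. Carrying it out, the relevant term is $k\binom{N}{k}$ times $\sum_i f(x_i)/\prod_{j\ne i}(x_i-x_j)\to f^{(k-1)}(1)/(k-1)!$ with $f(x)=k\,x^k(x-1)^{k-1}$, giving $\alpha_k(k,0,\dots)=1/(k-1)!$; this matches the coefficient $1/(k-1)!$ of $\c_k$ in \eqref{eq_LLN_1_formula}, whereas your claimed value $1$ agrees only for $k=1,2$, precisely the cases you tested. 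So the non-vanishing is true, but it requires this explicit evaluation --- which is why the paper invokes \cite[Lemma 5.5]{BG} at that point --- and your proposal does not actually establish it.
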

\begin{theorem} \label{Theorem_M}
 Take a partition $\mathbf k=(k_1\ge k_2 \ge\dots k_r)$ with $\ell(\mathbf k)=r$.
 $\K_{k_1,\dots,k_r}$ can be written as a finite (\emph{independent} of $N$) linear combination of
products of partial derivatives of $\ln(S_{\rho_N})$ at $x_1=\dots=x_N=1$, whose
coefficients are polynomials in
 $N$. The highest order of partial derivatives in this expansion is $|\mathbf k|$, and the
 contribution of the derivatives of order $\mathbf k|$ can be written as
 \begin{equation}
 \label{eq_M_equation}
  N^{k_1+\dots+k_r} \sum_{\lambda \preceq \mathbf k} \alpha_{k_1,\dots,k_r}(\lambda)
  \cdot \partial_\lambda [\ln (S_{\rho_N})]_{x_1=\dots=x_N=1} + O(N^{k_1+\dots+k_r-1}),
 \end{equation}
 where the summation goes over all Young diagrams $\lambda$ with $|\lambda|=|\mathbf k|$, which are smaller in
 the dominance order than $\mathbf k$, $\alpha_{k_1,\dots,k_r}(\lambda)$ are some coefficients (independent of $N$) and
 $O(N^{k_1+\dots+k_r-1})$ is a linear combination of
  partial derivatives of $\ln(S_{\rho_N})$ of
 order $k_1+\dots+k_r$, and with all coefficients being smaller in absolute value than
 $const \cdot N^{k_1+\dots+k_r-1}$. Finally, $\alpha_{k_1,\dots,k_r}(k_1,\dots,k_r)\ne 0$.
\end{theorem}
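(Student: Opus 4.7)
The plan is to reduce the claim to a product-version of Theorem \ref{Theorem_P} combined with the classical moment--cumulant cancellation. Starting from the telescoping
$$
\prod_{j\in U}\D_{k_j}=V(\x)^{-1}\prod_{j\in U}D_{k_j}\,V(\x),\qquad D_k=\sum_{i=1}^N(x_i\pa_i)^k,
$$
and applying the right-hand side to $V\cdot e^F$ with $F=\ln S_{\rho_N}$, I would expand $\prod_j D_{k_j}$ via Leibniz for the Euler operators $x_i\pa_i$ and rewrite the $e^F$ factor by Fa\`a di Bruno as $e^F$ times a polynomial in $\{\pa_\mu F\}$. The same $V^{-1}$ antisymmetrization used in the proof of Theorem \ref{Theorem_P} to resolve the singularity of $V$ at $\x=1^N$ then yields a finite expansion
$$
Q_U:=\Bigl[\prod_{j\in U}\D_{k_j}\Bigr][S_{\rho_N}]\Bigr|_{\x=1^N}=\sum_{\text{schemes}}c_{\text{scheme}}(N)\prod_a\pa_{\mu^{(a)}}F\bigr|_{\x=1^N},
$$
in which every coefficient $c_{\text{scheme}}(N)$ is polynomial in $N$, and in which the top-order single-derivative pieces arise only when each $\D_{k_j}$ deposits all of its $k_j$ derivatives onto a single coordinate~$i_j$.

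Next I would substitute this expansion into
$$
\K_{k_1,\dots,k_r}=\sum_{\pi\in\Theta_r}(-1)^{|\pi|-1}(|\pi|-1)!\prod_{V\in\pi}Q_V
$$
and organize the resulting sum by the labeled combinatorial diagram recording, for each surviving factor $\pa_{\mu^{(a)}}F$, which of $\D_{k_1},\dots,\D_{k_r}$ contributed derivatives to it. The induced partition $\tau\in\Theta_r$ groups indices that share a factor, and the Möbius identity $\sum_{\pi\ge\tau}(-1)^{|\pi|-1}(|\pi|-1)!=0$ for $\tau\ne\{\{1,\dots,r\}\}$ cancels every \emph{disconnected} contribution and lowers the $N$-scaling of each partially disconnected contribution by a factor of $N$. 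A single top-order derivative $\pa_\lambda F$ of order $|\mathbf k|$ with the full $N^{|\mathbf k|}$ prefactor can therefore arise only from the ``maximally connected'' configuration in which each $\D_{k_j}$ chooses an a priori distinct coordinate and every later operator $\D_{k_\ell}$ nests its derivatives into the already-existing single-derivative factor: any coincidence $i_{j_1}=i_{j_2}$ merges parts of $\mathbf k$, produces $\lambda\succ\mathbf k$ in dominance, and simultaneously costs a factor of $N$ in multiplicity, pushing such $\lambda$'s into the $O(N^{|\mathbf k|-1})$ remainder. The unique no-coincidence scheme yields $\lambda=\mathbf k$ with a nonzero combinatorial coefficient $\alpha_{k_1,\dots,k_r}(k_1,\dots,k_r)$; all other surviving single-derivative patterns are refinements arising from subleading Bell-polynomial pieces of Fa\`a di Bruno and automatically satisfy $\lambda\preceq\mathbf k$.

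The main obstacle I anticipate is the scaling analysis for the $V^{-1}$ antisymmetrization in the multi-operator setting: while Theorem \ref{Theorem_P} provides a template for one operator, the many Leibniz branches produced by $r$ differential operators acting in sequence must be handled simultaneously, and one has to track precisely which combinations of ``derivatives on $V$'' and coordinate coincidences contribute to the leading $N^{|\mathbf k|}$-coefficient of each $\pa_\lambda F$. A secondary difficulty is to show that the products of derivatives $\prod_a\pa_{\mu^{(a)}}F$ also cancel cleanly across the cumulant sum at their own top orders --- this is a standard but combinatorially involved bookkeeping exercise that is needed to guarantee that every contribution not listed explicitly in the statement really does fit into the claimed $O(N^{|\mathbf k|-1})$ remainder.
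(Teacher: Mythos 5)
There is a genuine gap, and it sits at the heart of your argument: the mechanism you propose for the leading terms is not the one that produces them. You claim that the single-factor derivatives $\pa_\lambda \ln S_{\rho_N}$ of order $|\mathbf k|$ carrying the full $N^{|\mathbf k|}$ prefactor arise only when each $\D_{k_j}$ deposits all of its $k_j$ derivatives on one coordinate of $e^F$. But a term in which the $r$ operators touch only $r$ coordinates of $F$ has just $r$ free summation indices and hence a combinatorial weight of order at most $N^{r}$, which is strictly below $N^{|\mathbf k|}$ as soon as some $k_j\ge 2$; already for $r=1$, $k=2$ the piece $\sum_i (x_i\pa_i)^2 e^F$ contributes only $O(N)\cdot \pa_1^2\ln S_{\rho_N}$, whereas Theorem \ref{Theorem_P} gives an $N^2$ leading term. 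The actual leading contributions come from the opposite regime: each $\D_{k_j}$ differentiates $\ln S_{\rho_N}$ exactly once, the remaining $k_j-1$ units of order are spent on the Vandermonde, and after the symmetrization that resolves $V^{-1}$ at $1^N$ the resulting $(x_i-x_j)^{-1}$ factors extract \emph{all} mixed Taylor coefficients of $\ln S_{\rho_N}$ of order $k_j$ within a block of $k_j$ distinct variables; choosing the $k_1+\dots+k_r$ variables is what yields the $\binom{N}{k_1+\dots+k_r}\sim N^{|\mathbf k|}$ prefactor. This is also the true origin of the terms with $\lambda\prec\mathbf k$ at full order $N^{|\mathbf k|}$: they are not ``subleading Bell-polynomial pieces of Fa\`a di Bruno'' (those give products of several derivative factors, each of order strictly less than $|\mathbf k|$, so they are not order-$|\mathbf k|$ derivatives at all), but unions over the $r$ disjoint blocks of arbitrary partitions of $k_1,\dots,k_r$ --- which is exactly the set $\{\lambda\preceq\mathbf k\}$. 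Consequently your dominance argument (coincidences $i_{j_1}=i_{j_2}$ give $\lambda\succ\mathbf k$ at lower order, and a unique no-coincidence scheme gives $\lambda=\mathbf k$) does not establish \eqref{eq_M_equation}; read literally it would even assert that only $\lambda=\mathbf k$ survives at order $N^{|\mathbf k|}$, which is false.

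What is actually needed is the bookkeeping the paper does through the variable sets $\mathcal M_a$ and their combinatorial type: the $N$-dependence is isolated in the binomial prefactors, Lemma \ref{Lemma_highest_terms} shows that for order-$|\mathbf k|$ terms each operator must differentiate $\ln S_{\rho_N}$ exactly once, so $|\mathcal M_a|\le k_a$ and the maximal power of $N$ forces the blocks to be disjoint of sizes $k_1,\dots,k_r$ --- this is the correct quantitative version of your ``no-coincidence'' heuristic, applied to blocks of Vandermonde variables rather than to coordinates of $F$, and it is what produces \eqref{eq_x14} and the bound $\lambda\preceq\mathbf k$. Two further points: the nonvanishing $\alpha_{\mathbf k}(\mathbf k)\neq 0$ is not automatic from uniqueness of a scheme; it requires evaluating the symmetrized rational expression explicitly (the paper invokes \cite[Lemma 5.5]{BG}), which your proposal does not supply. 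And the moment--cumulant M\"obius cancellation you lean on, while correct, is not needed at top order: any set partition with two or more blocks produces products in which every derivative factor has order strictly less than $|\mathbf k|$, so the order-$|\mathbf k|$ terms come solely from $\prod_a\D_{k_a}[S_{\rho_N}]$ (Corollary \ref{Corollary_M_as_sum}). Finally, the two obstacles you flag at the end --- the multi-operator scaling of the $V^{-1}$ resolution and the cancellation bookkeeping --- are precisely where the substance of the proof lies (Propositions \ref{Proposition_P_as_sum}, \ref{Proposition_Product_as_sum} and Lemma \ref{Lemma_highest_terms}), so leaving them unresolved leaves the statement unproved.
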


The proof of Theorems \ref{Theorem_P}, \ref{Theorem_M} is given in the next Section
\ref{Section_Highest_order}.

\subsection{Highest order components of differential operators}

\label{Section_Highest_order}

We start discussing the structure of \eqref{eq_uncentered_moments},
\eqref{eq_centered_moments}. Expanding by the Leibnitz rule, the application of
$\mathcal D_k$ to a function $f(\x)$ can be written as a sum of partial derivatives
of $f$ divided by linear factors $(x_i-x_j)$, and multiplied by momonial factors
$x_i^m$. In more details, we get a sum of the terms of the form
\begin{equation}
\label{eq_D_k_expansion}
  \frac{(x_i)^q (\partial_i)^n f(\x)}{\prod_{j\in A}(x_i-x_j)},
\end{equation}
indexed by $i$, $A\subset\{1,\dots,N\}\setminus \{i\}$, $q \ge 0$, $n\ge 0$.

When differentiating $S_{\rho_N}$, we write $S_{\rho_N}=\exp(\ln(S_{\rho_N}))$ and
therefore
$$
 \frac{\partial}{\partial x_i} S_{\rho_N} = S_{\rho_N} \cdot \frac{\partial}{\partial x_i}
 (\ln (S_{\rho_N})).
$$
Hence, both $\mathcal P_k$ and $\K_{k_1,\dots,k_r}$ can be written as sums of terms,
each of which is $S_{\rho_N}$ raised to some power, multiplied by partial
derivatives of $(\ln (S_{\rho_N}))$, multiplied by some collection of variables
$x_i$ and divided by some linear factors $1/(x_i-x_j)$. In the end, we need to plug
in $x_1=\dots=x_N=1$ into the differential operators giving $\mathcal P_k$ and
$\K_{k_1,\dots,k_r}$. Since $S_{\rho_N}=1$ upon such substitution, we can ignore its
powers in the end. The conclusion from this discussion is that the result, i.e.\
$\mathcal P_k$ and $\K_{k_1,\dots,k_r}$ can be written as an expression involving
only various partial derivatives of $\ln( S_{\rho_N}(\x))$ and nothing else. We
introduce the notations for the latter derivatives by writing,
\begin{equation}
\label{eq_S_expansion}
 \ln(S_{\rho_N}(x_1,\dots,x_N))=\sum_{k_1,k_2,\dots,k_N=0}^\infty {\mathfrak
 s}^{k_1,\dots,k_N} \prod_{i=1}^N (x_i-1)^{k_i},
\end{equation}
$$
{\mathfrak
 s}^{k_1,\dots,k_N}= \left[ \prod_{i=1}^N \frac{(\partial_i)^{k_i}}{k_i!} \right]
 \ln(S_{\rho_N}(x_1,\dots,x_N)) \bigl|_{x_1=\dots=x_N=1}.
$$
The number $k_1+\dots+k_N$ is the order of the partial derivative corresponding to
${\mathfrak
 s}^{k_1,\dots,k_N}$.

Since the measure ${\rho_N}$ is smooth, the series \eqref{eq_S_expansion} is
uniformly converging to an analytic function in a neighborhood of $1^N$, and there
exists $R>0$ such that a uniform bound holds:
$$
|{\mathfrak
 s}^{k_1,\dots,k_N}|\le R^{k_1+\dots+k_N},
$$
Further, since $S_{\rho_N}$ is symmetric, so are the coefficients $\mathfrak
s^{k_1,\dots,k_N}$. Finally, $S_{\rho_N}(1,\dots,1)=1$ implies ${\mathfrak
 s}^{0,\dots,0}=0$.

Propositions \ref{Proposition_P_as_sum}, \ref{Proposition_Product_as_sum} and
Corollary \ref{Corollary_M_as_sum} summarize our basic point of view on $\mathcal
P_k$ and $\K_{k_1,\dots,k_r}$.
\begin{proposition} \label{Proposition_P_as_sum}
$\mathcal P_k$ can be written as a finite (\emph{independent} of $N$) sum of
products of
 $\mathfrak s^{m_1,\dots,m_h,0,\dots,0}$, whose coefficients are polynomials in
 $N$. The largest order, i.e.\ the value of $m_1+\dots+m_h$, appearing in this sum is $k$.
\end{proposition}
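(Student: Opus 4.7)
The plan is to compute $\mathcal P_k = V(\x)^{-1}\bigl(\sum_{i=1}^N (x_i\partial_i)^k\bigr)[V(\x)\, S_{\rho_N}]\bigr|_{\x=1^N}$ by a direct term-by-term expansion, and then verify that the apparent $(x_i-x_j)^{-1}$ singularities cancel so as to leave the claimed finite sum of products of $\mathfrak s$'s. The structural input I would invoke at the outset is that $V(\x)\, S_{\rho_N}(\x)$ is antisymmetric in $\x$ while the operator $\sum_i (x_i\partial_i)^k$ is $S_N$-invariant; hence $\bigl(\sum_i (x_i\partial_i)^k\bigr)[V S_{\rho_N}]$ is antisymmetric and therefore divisible by $V$ as an analytic function in a neighborhood of $1^N$. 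This guarantees that the individual singular terms arising below must combine into a symmetric analytic function, so that evaluation at $1^N$ is legitimate.

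For the expansion itself I would use $(x_i\partial_i)^k = \sum_{m=1}^k S(k,m)\, x_i^m \partial_i^m$ with Stirling numbers $S(k,m)$, then the Leibniz rule
\begin{equation*}
  \partial_i^m \bigl(V S_{\rho_N}\bigr) = \sum_{a+b=m}\binom{m}{a}(\partial_i^a V)\,(\partial_i^b S_{\rho_N}),
\end{equation*}
followed by the Fa\`a di Bruno identity $\partial_i^b S_{\rho_N} = S_{\rho_N}\cdot B_b\bigl(\partial_i \ln S_{\rho_N},\dots,\partial_i^b \ln S_{\rho_N}\bigr)$, where $B_b$ is the complete Bell polynomial. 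Factoring $V = \pm\prod_{j\ne i}(x_i-x_j)\cdot V_i$ with $V_i$ independent of $x_i$ shows that $(\partial_i^a V)/V$ is a rational function in $\x$ whose only poles are of the form $(x_i-x_j)^{-1}$. Combining these ingredients, and before any evaluation, $\mathcal P_k$ becomes a finite sum of rational-coefficient monomials in partial derivatives of $\ln S_{\rho_N}$.

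The final step is to set $\x = 1^N$. The factor $S_{\rho_N}(1^N)=1$ disappears, all singular terms cancel by the antisymmetry argument above, and the symmetry of $S_{\rho_N}$ allows me to replace every $\mathfrak s^{k_1,\dots,k_N}$ by the canonical representative $\mathfrak s^{m_1,\dots,m_h,0,\dots,0}$, where $(m_1,\dots,m_h)$ is the multiset of its nonzero exponents. Polynomial-in-$N$ coefficients then arise from counting the number of ways to distribute the $h\le k$ active coordinates among the $N$ variables. Since each $(x_i\partial_i)^k$ contributes at most $k$ derivatives, and neither the Leibniz splitting nor the Bell polynomial expansion raises the total order, the largest value of $m_1+\dots+m_h$ that can appear is $k$; the number of partitions $\mu$ with $|\mu|\le k$, and of monomials in the corresponding $\mathfrak s^{(\mu)}$'s, is finite and independent of $N$, which is precisely what the proposition claims.

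The main obstacle I anticipate is making the cancellation of the $(x_i-x_j)^{-1}$ factors quantitative enough to read off the polynomial-in-$N$ coefficient of each monomial. The antisymmetry argument guarantees that the poles are spurious, but extracting explicit polynomial dependence on $N$ requires a controlled Laurent expansion of all terms around $\x = 1^N$, in which the sum over $i$, the residues coming from $V^{-1}\partial_i^a V$, and the combinatorial relabellings forced by the symmetry of $S_{\rho_N}$ must be tracked simultaneously. Once this bookkeeping is in place, the order bound, the finiteness of the set of monomials, and the polynomiality (rather than mere rationality) of the coefficients in $N$ all follow from the same expansion.
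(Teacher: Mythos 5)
Your overall strategy is the same as the paper's (expand the operator, Leibniz, Fa\`a di Bruno to pass to $\ln S_{\rho_N}$, use symmetry plus Vandermonde divisibility to tame the $(x_i-x_j)^{-1}$ factors, then count to get polynomial coefficients in $N$), and your global antisymmetry observation is a legitimate replacement for the paper's local divisibility argument as far as \emph{well-definedness} of the evaluation at $1^N$ is concerned. But there is a genuine gap at exactly the point you flag as ``bookkeeping'': the order bound $m_1+\dots+m_h\le k$ does \emph{not} follow from the remark that ``neither the Leibniz splitting nor the Bell polynomial expansion raises the total order.'' That remark controls the order of the derivatives of $\ln S_{\rho_N}$ appearing \emph{before} evaluation; however, to evaluate a term $x_i^{q}\,\bigl(\partial_i^{a}V/V\bigr)\cdot B_b(\partial_i\ln S_{\rho_N},\dots)$ at $1^N$ you must Taylor-expand the numerator around $1^N$ and let those higher-degree Taylor coefficients (i.e.\ higher-order $\mathfrak s^{\ell_1,\dots,\ell_h}$'s) cancel against the $a$ pole factors. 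So higher-order coefficients of $\ln S_{\rho_N}$ genuinely enter, and the claim that the total order stays $\le k$ needs a quantitative two-sided estimate, which is the heart of the paper's proof: the symmetrization of $x_1^q\prod_i(x_i-1)^{\mu_i}/\prod_{j\ge2}(x_1-x_j)$ is a polynomial in the $(x_i-1)$'s (the paper's Claim 1), and every monomial of it has degree at least $\sum_i\mu_i-m$ where $m$ is the number of pole factors (Claim 2), so only Taylor data with $\sum_i\mu_i\le m$ survives at $1^N$, giving total order at most $m+\sum_i\nu_i\le k$ because $m$ pole factors cost $m$ units of the available budget $q-n\le k-n$. Without this balance between the number of poles and the admissible Taylor degree, the bound $k$ is unproved, and in fact your heuristic, taken literally, would also fail to explain why the surviving coefficients have total order that can reach but not exceed $k$.

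The same missing lemma is what delivers the polynomiality in $N$: once each term is rewritten as a symmetrized expression in only $m+1$ of the variables (with the remaining variables set to $1$ \emph{before} differentiating, which needs a short justification), the entire $N$-dependence is isolated in the combinatorial prefactor $\binom{N}{m+1}$, whereas in your formulation the $N$-dependence is spread over the sum over $i$, the sum over pole subsets inside $\partial_i^aV/V$, and the Taylor resolution, and you have not shown these recombine into a polynomial in $N$ times an $N$-independent polynomial in the $\mathfrak s$'s. So the proposal is a correct outline of the right route, but the decisive step (the analogue of Claims 1--2 and the resulting inequality $\ell_1+\dots+\ell_{m+1}\le m+\sum_i\nu_i\le k$) is absent rather than merely tedious, and the one-line justification offered in its place is not valid as stated.
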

\begin{proof}
 We write $\mathcal D_k$ as a sum of the terms of the form \eqref{eq_D_k_expansion}.
 Note that in this formula $0\le n+|A|+(k-q) \le k$. Further, note that $\mathcal
 D_k$ is a symmetric expression in $x_1,\dots,x_N$, therefore, together with each
 term of the form \eqref{eq_D_k_expansion} all possible its permutations also arise.
 Thus, $\mathcal D_k$ can be written as a linear combination (with coefficients depending only on $k$,
 but not on $N$ or anything else) of the terms of the form:
\begin{equation}
\label{eq_operator_D_k_sym}
 \sum_{ \{i, j_1,\dots,j_m\}\subset \{1,\dots,N\}} \Sym_{i,j_1,\dots,j_m} \frac{
 (x_i)^q (\partial_i)^{n}}{(x_i-x_{j_1})(x_i-x_{j_2})\cdots (x_i-x_{j_m})},
\end{equation}
with $0\le q,n,m\le k$ and $n+m+(k-q) \le k$, and where for an expression
$\Psi(x_{i_1},\dots,x_{i_p})$, we use a notation
$$
 \Sym_{i_1,\dots,i_p} \Psi=\sum_{\sigma\in {\mathfrak S}(p)} \Psi \bigl(x_{i_{\sigma(1)}},\dots,
 x_{i_{\sigma(p)}} \bigr),
$$
and $\mathfrak S (p)$ is the group of all permutations of $\{1,\dots,p\}$.

When we apply \eqref{eq_operator_D_k_sym} to $S_{\rho_N}(\x)$, we write
\begin{multline}
\label{eq_x8}
 (\partial_i)^n S_{\rho_N}= (\partial_i)^n \exp( \ln (S_{\rho_N}))=
 (\partial_i)^{n-1} \left[ \exp(\ln(S_{\rho_N})) \cdot \partial_i (\ln(S_{\rho_N})) \right]
 =\dots\\ = S_{\rho_N} \sum_{\lambda \vdash n}c_\nu \prod_{i=1}^{\infty}
 \Bigl[(\partial_i)^{\nu_i}
 \ln(S_{\rho_N}) \Bigr],
\end{multline}
where the sum goes over all partitions $\nu=(\nu_1\ge \nu_2\ge\dots)$ of $n$, and
$c_\nu$ is a combinatorial coefficient.

Therefore, $\mathcal D_k [S_{\rho_N}(\x)]$ is a finite linear combination of the
terms
\begin{equation}
\label{eq_x1}
 S_{\rho_N}(\x)
  \sum_{ \{i, j_1,\dots,j_m\}\subset \{1,\dots,N\}} \Sym_{i,j_1,\dots,j_m} \frac{
 (x_i)^q \prod_{i=1}^{\infty}
 \bigl[(\partial_i)^{\nu_i} \ln(S_{\rho_N}) \bigr] }{(x_i-x_{j_1})(x_i-x_{j_2})\cdots (x_i-x_{j_m})},
\end{equation}
with $m+(k-q)+\sum_i \nu_i \le k$. Due to symmetry of $\ln(S_{\rho_N})$, all terms
in the sum of \eqref{eq_x1} are essentially the same. Let us analyze the one
corresponding to $i=1$, $\{j_1,\dots,j_m\}=\{2,\dots,m+1\}$. We plug in the
expansion \eqref{eq_S_expansion}, differentiate and multiply to get a power series
expansion for $\prod_{i=1}^{\infty}
 \bigl[(\partial_i)^{\nu_i} \ln(S_{\rho_N}) \bigr]$. Using the symmetry of $\ln(S_{\rho_N})$, the computation reduces to
 studying
\begin{equation}
\label{eq_x2}
 \Sym_{1,\dots,m+1} \frac{
 (x_1)^q \prod\limits_{i=1}^{m+1} (x_i-1)^{\mu_i}
   }{(x_1-x_{2})(x_1-x_{3})\cdots (x_1-x_{m+1})},
\end{equation}
for any integers  $\mu_1,\dots,\mu_{m+1}\ge 0$.

{\bf Claim 1}: \eqref{eq_x2} is a symmetric \emph{polynomial} in
$(x_1-1),\dots,(x_{m+1}-1)$ of degree at most $q-m+\sum_i \mu_i$. Indeed, if we
multiply \eqref{eq_x2} by $V(x_1,\dots,x_{m+1})$ inside the symmetrization, then the
result is skew--symmetric on one hand, and a polynomial on the other hand. Thus, it
should be divisible (as a polynomial) by $V(x_1,\dots,x_{m+1})$, and we obtain the
desired statement.

\smallskip

{\bf Claim 2}: If we expand \eqref{eq_x2} into monomials in $(x_1-1)$, \dots,
$(x_{m+1}-1)$, then each monomial has degree at least $\mu_1+\dots+\mu_{m+1}-m$. (In
particular, if $\mu_1+\dots+\mu_{m+1}>m$, then the polynomial vanishes at the point
$x_1=\dots=x_{m+1}=1$.) Indeed, choose $m+1$ distinct real numbers
$\alpha_1,\dots,\alpha_{m+1}$ and set $x_i=1+ \eps \cdot \alpha_i$, $i=1,\dots,m+1$,
where $\eps$ is a small number. Then each term in \eqref{eq_x2} becomes
$O(\eps^{\sum_i \mu_i- m})$, and therefore the sum is also $O(\eps^{\sum_i \mu_i-
m})$, which implies the claim.

\smallskip

We now plug in $x_1=\dots=x_N=1$ into \eqref{eq_x1}. Since $S_{\rho_N}(\x)$ is
symmetric and $S_{\rho_N}(1,\dots,1)=1$, the result is
\begin{equation}
\label{eq_x3}
 {N \choose {m+1}} \cdot \Sym_{1,\dots,m+1} \frac{
 (x_1)^q \prod_{i=1}^{\infty}
 \bigl[(\partial_1)^{\nu_i} \ln(S_{\rho_N}) \bigr] }{(x_1-x_{2})(x_1-x_{3})\cdots (x_1-x_{m+1})} \Biggr|_{x_1=\dots=x_N=1},
\end{equation}
Note that we can set $x_{m+2}=\dots=x_{N}=1$ in \eqref{eq_x3} even before computing
derivatives and symmetrizing and then use
$$
 \ln(S_{\rho_N}(x_1,\dots,x_{m+1},1,\dots,1)=\sum_{\ell_1,\dots,\ell_{m+1}=0}^{\infty}
 {\mathfrak s}^{\ell_1,\dots,\ell_{m+1},0,\dots,0} \prod_{i=1}^{m+1} (x_i-1)^{\ell_i},
$$
which leads to
\begin{equation}
\label{eq_x4} \prod_{i=1}^{\infty}
 \bigl[(\partial_1)^{\nu_i} \ln(S_{\rho_N}(x_1,\dots,x_{m+1},1,\dots,1))
 \bigr]=\sum_{\mu_1,\dots,\mu_{m+1}=0}^{\infty} c(\mu_1,\dots,\mu_{m+1})
 \prod_{i=1}^{m+1} (x_i-1)^{\mu_i},
\end{equation}
where each $c(\mu_1,\dots,\mu_{m+1})$ is a polynomial in coefficients ${\mathfrak
s}^{\ell_1,\dots,\ell_{m+1}}$, such that if $\mathfrak s^{\ell(1)} \mathfrak s^{\ell(2)}\cdots
\mathfrak s^{\ell(t)}$ is a monomial entering this polynomial, then
$$
 \sum_{j=1}^t \sum_{i=1}^{m+1} \ell(j)_i= \sum_{i=1}^{m+1} \mu_i + \sum_{j=1}^{\infty} \nu_i
$$
 We plug \eqref{eq_x4} into
\eqref{eq_x3}. Claim 2 implies that the result (and therefore, $\mathcal P_k$) does
not depend on terms in \eqref{eq_x4} with $\sum_i \mu_i>m$. Thus, $\mathcal P_k$
depends only on ${\mathfrak s}^{\ell_1,\dots,\ell_{m+1}}$ with
$\ell_1+\dots+\ell_{m+1} \le m +\sum_i \nu_i \le k$.
\end{proof}

\begin{proposition} \label{Proposition_Product_as_sum} For any integers $k_1,\dots,k_r\ge 0$ with
$\sum_{a=1}^r k_a=K$, the expression
$$
  \left(\prod\limits_{a=1}^r \mathcal D_{k_a}\right)
 [S_{\rho_N}(\x)]\Bigr|_{x_1=\dots=x_N=1}
$$
 can be written
 as a finite (\emph{independent} of $N$) sum of products of
 $\mathfrak s^{m_1,\dots,m_h,0,\dots,0}$, whose coefficients are polynomials in
 $N$. The largest order of $m_1+\dots+m_h$ appearing in this sum is
 $K$.
\end{proposition}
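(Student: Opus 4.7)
The plan is to iterate the argument of Proposition \ref{Proposition_P_as_sum} across $r$ operators, tracking the order of partial derivatives of $\ln S_{\rho_N}$ after each application. First I would expand each $\mathcal D_{k_a}$ as a finite linear combination (with coefficients independent of $N$) of symmetrized operators of the form \eqref{eq_operator_D_k_sym}, so that $\prod_{a=1}^r \mathcal D_{k_a}$ becomes a finite sum of products of such symmetrized pieces; it suffices to bound the contribution of one such product.

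Second, I would apply the resulting differential operator to $S_{\rho_N}(\x)$ in the order $a=r,r-1,\dots,1$ from right to left, maintaining by induction the invariant that, at stage $a$, the expression $\prod_{b=a}^r \mathcal D_{k_b}[S_{\rho_N}]$ is a finite sum of terms of the form
\begin{equation*}
 S_{\rho_N}(\x)\cdot \frac{P(\x)}{Q(\x)} \cdot \prod_j (\partial_{i_j})^{\nu_j}[\ln S_{\rho_N}](\x),
\end{equation*}
where $P$ is a polynomial in $\x$, $Q$ is a product of factors $(x_i-x_j)$, the indices $i_j$ range over a set of size bounded only in terms of $k_a,\dots,k_r$, and $\sum_j \nu_j \le \sum_{b=a}^r k_b$. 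The inductive step uses that applying $\mathcal D_{k_{a-1}}$ to such a term, via Leibniz, gives a sum of terms in which each elementary derivative $(\partial_i)^{n_{a-1}}$ (with $n_{a-1}\le k_{a-1}$) either hits $S_{\rho_N}$ (producing a fresh factor of $\partial_i[\ln S_{\rho_N}]$ via \eqref{eq_x8}), hits an existing $(\partial_{i_j})^{\nu_j}[\ln S_{\rho_N}]$ (raising its order by one), or hits the rational factor $P/Q$. In every case the total order $\sum_j \nu_j$ grows by at most $n_{a-1}\le k_{a-1}$, preserving the invariant at stage $a-1$.

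Third, after all $r$ operators have been applied, we obtain a finite sum of such terms whose total order of derivatives of $\ln S_{\rho_N}$ is bounded by $K=\sum_{a=1}^r k_a$. To evaluate at $x_1=\dots=x_N=1$ I would reuse the strategy from the end of the proof of Proposition \ref{Proposition_P_as_sum}: by symmetry all but a bounded number $M$ (depending only on $K$) of the variables can be set to $1$ before evaluation, so only $x_1,\dots,x_M$ participate nontrivially. Plugging in the expansion \eqref{eq_S_expansion} restricted to these variables and invoking the analogues of Claims 1--2 from that proof shows that the poles from $Q$ are cancelled by the numerator, leaving a finite sum of products of $\mathfrak s^{m_1,\dots,m_h,0,\dots,0}$ of total order at most $K$; the polynomial-in-$N$ coefficients come from $\binom{N}{M}$-type counts produced by the symmetrizations over subsets of $\{1,\dots,N\}$.

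The main obstacle will be the bookkeeping in the inductive step: the derivatives from $\mathcal D_{k_{a-1}}$ can act on the denominator and polynomial factors produced by earlier operators, generating new, higher-order singular rational expressions along diagonals $x_i=x_j$. One has to verify that the resulting singular structure still fits the symmetrization and vanishing pattern used for \eqref{eq_x2}---in particular, that the Claim 2 argument forcing numerator vanishing of the right order at the diagonal $\x=1^M$ still applies---so that the final evaluation is well defined and respects the order bound $K$. Modulo this combinatorial bookkeeping, the statement is a direct iteration of Proposition \ref{Proposition_P_as_sum}.
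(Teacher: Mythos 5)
Your outline follows the same route as the paper's proof (expand each $\mathcal D_{k_a}$ into symmetrized pieces of the form \eqref{eq_operator_D_k_sym}, apply them one at a time via Leibniz, isolate the $N$--dependence in binomial-type counts over subsets of variables, and finish with Claims 1--2 of Proposition \ref{Proposition_P_as_sum}). However, the step you defer as ``combinatorial bookkeeping'' is precisely the mathematical content of the proposition, and the invariant you propose to carry through the induction is not strong enough to close it. You only track the total order $\sum_j \nu_j$ of the explicit factors $(\partial_{i_j})^{\nu_j}[\ln S_{\rho_N}]$, but the order of the $\mathfrak s$--coefficients that survive the evaluation at $1^N$ is \emph{not} bounded by that quantity alone: resolving each singular factor $(x_i-x_j)^{-1}$ via the symmetrization (Claim 2 applied to \eqref{eq_x2}) trades one unit of vanishing of the Taylor expansion for one extra unit of $\mathfrak s$--order. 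So the relevant budget for each operator is $n_a + (|\mathcal M_a|-1) \le k_a$, i.e.\ derivatives \emph{plus} denominator factors, and this joint budget has to be propagated through all $r$ applications, including the cases where derivatives of a later operator hit the rational factors and the Taylor coefficients produced by earlier ones.

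The paper handles this by an induction on $r$ whose hypothesis is not a bound on derivative orders of a rational expression, but a statement about the power-series expansion of \eqref{eq_x9} around $1^M$: the term is $S_{\rho_N}$ times a series $\sum_{m_1,\dots,m_M} c(m_1,\dots,m_M)\prod_i(x_i-1)^{m_i}$, where each $c(m_1,\dots,m_M)$ is a polynomial in the $\mathfrak s$'s whose monomials satisfy the degree inequality \eqref{eq_degrees_inequality} (total $\mathfrak s$--order at most $(m_1+\dots+m_M)+(k_1+\dots+k_r)$), together with symmetry of $c$ in the relevant block of variables, which is needed to even write the next-step terms in the form \eqref{eq_x10}. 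It is this coupling between the degree of the surviving monomial and the admissible $\mathfrak s$--order, combined with Claim 2 (only monomials of degree at most the number of denominator factors contribute at $x=1^M$), that yields the bound $K$; your invariant, as stated, would not rule out higher-order $\mathfrak s$'s entering through the interaction of new derivatives with previously created singular factors. So the proposal is a correct identification of the strategy, but the key lemma-level idea --- formulating and verifying an inductive statement of the type \eqref{eq_degrees_inequality} --- is missing rather than merely routine.
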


\begin{proof}[Proof of Proposition \ref{Proposition_Product_as_sum}]
 We start by writing each $D_{k_a}$ in $ \left(\prod\limits_{a=1}^r \mathcal D_{k_a}\right)
 [S_{\rho_N}(\x)]$ as a sum of the form \eqref{eq_operator_D_k_sym}. Each term thus comes with a subset
 $\mathcal M_a\subset \{1,\dots,N\}$ (that was $\{i,j_1,\dots,j_m\}$ in \eqref{eq_operator_D_k_sym}
 ) of at most $k_a$ elements, after we multiply the expressions we get

 \begin{equation}
 \label{eq_x11}
 \sum_{\mathcal M_1,\dots, \mathcal M_r \subset\{1,\dots,N\}} \text{``expression in variables from } \mathcal M_1\cup \mathcal
 M_2 \cup \dots \cup \mathcal M_r\text{''}.
\end{equation}
It is convenient to define the combinatorial type $\mathfrak{Co}(\mathcal
M_1,\dots,\mathcal M_r)$ as the set of numbers $\bigl(|\bigcap_{i\in \mathcal R}
\mathcal M_i|\bigr)_{\mathcal R\subset \{1,\dots,r\}}$. The combinatorial type
$\mathfrak c$ defines through the inclusion--exclusion principle the number
$|\bigcup_{a=1}^r \mathcal M_r|$ which we denote $M(\mathfrak c)$.

 We choose the sets $\mathcal M_1,\dots,\mathcal M_r$ in \eqref{eq_x11} by a three--step procedure: we
first choose their combinatorial type $\mathfrak c$, then a $M(\mathfrak
c)$--element subset $\overline{\mathcal M}\subset \{1,\dots,N\}$, and then $\mathcal
M_1,\dots,\mathcal M_r \subset \overline{\mathcal M}$. Note $M(\mathfrak c)\le K$.
We will further apply $\eqref{eq_x11}$ to $S_{\rho_N}(\x)$, and then set all
variables equal to $1$. Observe that, due to symmetry in $x_i$, the result is
independent of the choice of $\overline{\mathcal M}$, and therefore we can write it
as a result of setting $x_1=\dots=x_{M(\mathfrak c)}=1$ in
 \begin{multline}
\label{eq_x7}
 \sum_{\mathfrak{c}} { N \choose M(\mathfrak c) } \sum_{\begin{smallmatrix} \mathcal M_1\bigcup \dots \bigcup \mathcal M_r =\{1,\dots,M(\mathfrak c)\} \\ \mathfrak{Co}(\mathcal M_1,\dots,\mathcal M_r)=\mathfrak c\end{smallmatrix}}
\\  \left( \prod_{a=1}^{r} \Sym_{\mathcal M_a=(i,j_1,\dots,j_m)} \frac{
 (x_i)^{q_a} (\partial_i)^{n_{a}}}{(x_i-x_{j_1})(x_i-x_{j_2})\cdots (x_i-x_{j_m})}  \right) \Bigr[
S_{{\rho_N}}(x_1,\dots,x_{M(\mathfrak c)},1,\dots,1)\Bigl],
\end{multline}
where $n_a+|\mathcal M_a|-1\le k_a$.
 The two important features of \eqref{eq_x7} is that the dependence on $N$ is isolated in ${N
\choose M(\mathfrak c)}$ prefactors (and therefore, this dependence is polynomial,
as desired), and that all but $M(\mathfrak c)$ variables in $S_{\rho_N}(\x)$ are set
to $1$ before taking any derivatives.

\smallskip
Fix a number $M$ and a combinatorial type $\mathfrak c$ such that $M(\mathfrak c)\le M$. Consider
the expression
 \begin{multline}  \sum_{\begin{smallmatrix} \mathcal M_1, \dots, \mathcal M_r \subset \{1,\dots,M(\mathfrak c)\} \\ \mathfrak{Co}(\mathcal M_1,\dots,\mathcal M_r)=\mathfrak c\end{smallmatrix}}
\label{eq_x9}   \left( \prod_{a=1}^{r} \Sym_{\mathcal M_a=(i,j_1,\dots,j_m)} \frac{
 (x_i)^{q_a} (\partial_i)^{n_{a}}}{(x_i-x_{j_1})(x_i-x_{j_2})\cdots (x_i-x_{j_m})}  \right) \\ \Bigr[
S_{{\rho_N}}(x_1,\dots,x_M,1,\dots,1)\Bigl],
\end{multline}
where $n_a+|\mathcal M_a|-1\le k_a$. We prove by induction in $r$ the following properties:
 \begin{enumerate}[label=(\Alph*)]
  \item \label{en_x1} \eqref{eq_x9} can be decomposed into a series, which is uniformly and absolutely
  convergent in a neighborhood of
  $(1,\dots,1)$, of the form
  \begin{equation}
  \label{eq_x5}
  S_{\rho_N}(x_1,\dots,x_M,1,\dots,1) \sum_{m_1,\dots,m_M\ge 0} c(m_1,\dots,m_M) \prod_{i=1}^M (x_i-1)^{m_i}
  \end{equation}
 \item \label{en_x2} $c(m_1,\dots,m_M)$ is symmetric in the first $M(\mathfrak c)$ indices $m_1,\dots,m_{M(\mathfrak c)}$.
 \item \label{en_x3} $c(m_1,\dots,m_M)$ is polynomial in
 coefficients $\mathfrak s^{\ell_1,\dots,\ell_M}$ arising
 in the decomposition
$$
 \ln(S_{\rho_N}(x_1,\dots,x_{M},1,\dots,1)=\sum_{\ell_1,\dots,\ell_{M}=0}^{\infty}
 {\mathfrak s}^{\ell_1,\dots,\ell_{M},0,\dots,0} \prod_{i=1}^{M} (x_i-1)^{\ell_i},
$$
and such that if $\mathfrak s^{\ell(1)} \mathfrak s^{\ell(2)}\cdots \mathfrak
s^{\ell(t)}$ is a monomial entering this polynomial (where $\ell(j)$ is an
$M$--tuple $\ell(j)_1$, \dots, $\ell(j)_M$), then
\begin{equation}
\label{eq_degrees_inequality}
 \sum_{j=1}^t \sum_{i=1}^{M} \ell(j)_i\le (m_1+\dots+m_M) + (k_1+\dots+k_r).
\end{equation}
 \end{enumerate}

 In the base case $r=1$ the properties \ref{en_x1}, \ref{en_x2}, \ref{en_x3} are checked in Proposition
 \ref{Proposition_P_as_sum}. For general $r$, first, note that the symmetry of $c(m_1,\dots,m_M)$ follows from the symmetry of the expression \eqref{eq_x9}.
 Further, for the induction step, we \emph{fix} a set $\mathcal M_{r+1}$, which we assume without
 loss of generality to have $w$ elements $\{M(\mathfrak c)-w+1, M(\mathfrak c)-w+2,\dots, M(\mathfrak c)
\}$. We will apply the symmetrization and derivatives in this set \emph{after} doing
so for $\mathcal M_1,\dots,\mathcal M_r$. Suppose
 that $\bigcup_{a=1}^r \mathcal M_r= M'$; note that $M(\mathfrak c)-M'\le w$.
 We first use the induction assumption for the sets $\mathcal M_1,\dots,\mathcal M_r$ (the
 parameter $M$ is unchanged), take the resulting expression of \eqref{eq_x5}, then symmetrize
 it over the \emph{last}
 $w$ variables $\{M(\mathfrak c)-w+1, M(\mathfrak c)-w+2,\dots, M(\mathfrak c)
 \}$ --- this symmetrization is implicit in \eqref{eq_x9}, since it involves the sum over all
 possible choices of $\mathcal M_1,\dots,\mathcal M_r$. We arrive at an expression
   \begin{equation}
  \label{eq_x12}
  S_{\rho_N}(x_1,\dots,x_M,1,\dots,1) \sum_{m_1,\dots,m_M\ge 0} c'(m_1,\dots,m_M) \prod_{i=1}^M
  (x_i-1)^{m_i},
  \end{equation}
  with coefficients $c'(m_1,\dots,m_M)$ being symmetric in the indices $\{M(\mathfrak c)-w+1, M(\mathfrak c)-w+2,\dots, M(\mathfrak
  c)$ (it is also symmetric in the first $M(\mathfrak c)-w$ indices, but not in all the first
  $M(\mathfrak c)$ indices, because we fixed $\mathcal M_{r+1}$). We apply
\begin{multline*}
\Sym_{\mathcal M_{r+1}=(M(\mathfrak c), M(\mathfrak c)-1,\dots, M(\mathfrak c)-w+1)} \\ \frac{
 (x_{M(\mathfrak c)})^q (\partial_{M(\mathfrak c)})^{n}}{(x_{M(\mathfrak c)}-x_{M(\mathfrak c)-1})(x_{M(\mathfrak c)}-x_{M(\mathfrak c)-2})\cdots (x_{M(\mathfrak c)}-
 x_{M(\mathfrak c)-w+1})},
\end{multline*}
to \eqref{eq_x12}, where as before, $m+n\le k_{r+1}$. When we act by
$(\partial_{M(\mathfrak c)})^{n}$, each derivative can either act on $S_{\rho_N}$,
or on the sum in \eqref{eq_x12}. Using \eqref{eq_x8} and symmetry of
$c(m_1,\dots,m_M)$, we arrive at a sum of the terms of the form
\begin{multline} \label{eq_x10}
 S_{\rho_N}(x_1,\dots,x_M,1,\dots,1) \cdot \Sym_{\mathcal M_{r+1}} \Biggl[ \frac{(x_{M(\mathfrak c)})^q \cdot
 \prod_{i=1}^\infty [(\partial_{M(\mathfrak c)})^{\nu_i} \ln S_{\rho_N}(x_1,\dots,x_M,1,\dots,1)] } {
 (x_{M(\mathfrak c)}-x_{M(\mathfrak c)-1})\cdots(x_{M(\mathfrak c)}-x_{M(\mathfrak c)-m+1})}
 \\ \times (\partial_{M(\mathfrak c)})^{\nu_0} \sum_{m_1,\dots,m_M\ge 0} c'(m_1,\dots,m_M) \prod_{i=1}^M
 (x_i-1)^{m_i} \Biggr],
\end{multline}
were $\nu_0+\nu_1+\nu_2+\dots=n$. It remains to use Claim 1 and Claim 2  in the proof of
Proposition \ref{Proposition_P_as_sum} (in variables $\{M(\mathfrak c)-w+1,\dots,M(\mathfrak
c)\}$), and then sum over all other possible choices of the set $\mathcal M_{r+1}$. This finishes
the proof of properties \ref{en_x1}, \ref{en_x2}, \ref{en_x3}. We now use these  properties  (for
the case $M=M(\mathfrak c)$) when plugging $x_1=\dots=x_{M(\mathfrak c)}=1$ in \eqref{eq_x7} to
finish the proof of the proposition.
\end{proof}

\begin{corollary} \label{Corollary_M_as_sum}
 For any integers $k_1,\dots,k_r\ge 0$ with
$\sum_{a=1}^r k_a=K$, $\K_{k_1,\dots,k_r}$
 can be written
 as a finite (\emph{independent} of $N$) sum of products of
 $\mathfrak s^{m_1,\dots,m_K,0,\dots,0}$, whose coefficients are polynomials in
 $N$. The highest order of $m_1+\dots+m_K$ appearing in this sum is
 $K$. All the highest order terms arise from $  \left(\prod_{a=1}^r  \mathcal D_{k_a} \right)
 \, [S_{\rho_N}]\bigr|_{x_1=\dots=x_N=1}.$
\end{corollary}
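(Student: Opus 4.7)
The plan is to deduce the corollary directly from Proposition \ref{Proposition_Product_as_sum}. For a subset $U\subseteq\{1,\dots,r\}$ write
$$
 A_U := \left[\prod_{j\in U}\mathcal D_{k_j}\right][S_{\rho_N}]\bigr|_{x_1=\dots=x_N=1},
$$
so that the definition \eqref{eq_centered_moments} becomes
$$
 \K_{k_1,\dots,k_r}=\sum_{\{U_1,\dots,U_t\}\in\Theta_r}(-1)^{t-1}(t-1)!\prod_{i=1}^t A_{U_i}.
$$
By Proposition \ref{Proposition_Product_as_sum}, each factor $A_{U_i}$ is a finite sum of products of $\mathfrak s^{m_1,\dots,m_h,0,\dots,0}$ with coefficients that are polynomials in $N$, and the largest value of $m_1+\dots+m_h$ among the individual $\mathfrak s$-factors in $A_{U_i}$ is at most $\sum_{j\in U_i}k_j$.

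Multiplying such expressions and taking the signed sum preserves the structural form: $\K_{k_1,\dots,k_r}$ is a finite polynomial-in-$N$-coefficient combination of products of $\mathfrak s$'s. The elementary observation that drives everything is that taking products of polynomials in the $\mathfrak s$'s never raises the order of any individual $\mathfrak s$-factor beyond the maxima already present in the multiplicands. Hence no individual $\mathfrak s^{m_1,\dots,m_h,0,\dots,0}$ appearing in $\K_{k_1,\dots,k_r}$ has $m_1+\dots+m_h$ exceeding $\max_{U\subseteq\{1,\dots,r\}}\sum_{j\in U}k_j=K$.

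For the final clause, I may assume $k_j\ge 1$ for every $j$ without loss of generality, since $\mathcal D_0=N\cdot\mathrm{id}$ as an operator and trivial factors can be absorbed into the polynomial-in-$N$ coefficients. Then for any partition $\{U_1,\dots,U_t\}\in\Theta_r$ with $t\ge 2$, each block $U_i$ is a proper subset of $\{1,\dots,r\}$, so
$$
 \sum_{j\in U_i}k_j\,=\,K-\sum_{j\notin U_i}k_j\,<\,K.
$$
By the previous observation every individual $\mathfrak s$-factor occurring anywhere in $\prod_i A_{U_i}$ has order strictly less than $K$. Consequently, any monomial of $\K_{k_1,\dots,k_r}$ that contains some $\mathfrak s^{m_1,\dots,m_h,0,\dots,0}$ with $m_1+\dots+m_h=K$ can only originate from the single-block partition $\{\{1,\dots,r\}\}$, whose corresponding factor is precisely $\left(\prod_{a=1}^r\mathcal D_{k_a}\right)[S_{\rho_N}]\bigr|_{x_1=\dots=x_N=1}$, as claimed.

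The proof is a direct bookkeeping consequence of Proposition \ref{Proposition_Product_as_sum}, so I do not foresee any substantive obstacle. The only point that requires attention is the strict inequality $\sum_{j\in U_i}k_j<K$ for proper blocks $U_i\subsetneq\{1,\dots,r\}$; this is exactly what isolates the single-block partition as the sole source of $\mathfrak s$-factors of maximal order.
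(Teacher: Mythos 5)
Your argument is correct and is essentially the paper's intended route: the corollary is an immediate consequence of Proposition \ref{Proposition_Product_as_sum}, applied blockwise to the set-partition expansion \eqref{eq_centered_moments}, with the observation that every proper block carries total degree strictly less than $K$, so order-$K$ factors $\mathfrak s^{m_1,\dots,m_K,0,\dots,0}$ can only come from the one-block term $\left(\prod_{a=1}^r\mathcal D_{k_a}\right)[S_{\rho_N}]\bigr|_{x_1=\dots=x_N=1}$. (Only the handling of $k_j=0$ is slightly loose: absorbing $\mathcal D_0=N\cdot\mathrm{id}$ does not restore the strict inequality for blocks whose complement consists of zero indices; it is cleaner to note that then one argument of the joint cumulant is deterministic, so $\K_{k_1,\dots,k_r}$ vanishes identically for $r\ge 2$ and the claim is trivial.)
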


We now would like to analyze the highest order terms in the expansions of Corollary
\ref{Corollary_M_as_sum} and Proposition \ref{Proposition_P_as_sum}. Due to symmetry, these terms
are enumerated by partitions of $K=k_1+\dots+k_r$. For convenience, we also reorder $\mathbf
k=(k_1\ge k_2\ge \dots\ge k_r)$.

By Corollary \ref{Corollary_M_as_sum}, the highest order terms arise from $  \left(\prod_{a=1}^r
\mathcal D_{k_a} \right)
 \, [S_{\rho_N}]\bigr|_{x_1=\dots=x_N=1}$, and therefore are given as the results of setting
 $x_1=\dots=x_M=1$ in \eqref{eq_x7}. The expression \eqref{eq_x7} is further analyzed inductively
 in $r$ by using the formula \eqref{eq_x10}.

\begin{lemma} \label{Lemma_highest_terms}
 When we compute the highest order terms in $\K_{k_1,\dots,k_r}$ (or in $\mathcal P_k$) inductively as in Proposition \ref{Proposition_Product_as_sum},  then at the
 base step $r=1$, the partition $\nu$ in \eqref{eq_x1} is such that $\nu_2=\nu_3=\dots=0$, $\nu_1>0$, and in
 the induction step in \eqref{eq_x10}, we should have $\nu_1=\nu_2=\dots=0$, $\nu_0>0$.
\end{lemma}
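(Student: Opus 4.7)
The strategy is to induct on $r$, tracking both the total $\mathfrak{s}$-order of each monomial (the sum of $\sum_i \ell(j)_i$ over all of its $\mathfrak{s}$-factors) and the number of $\mathfrak{s}$-factors it contains. Each time the Fa\`a di Bruno formula \eqref{eq_x8} is invoked on $\exp(\ln S_{\rho_N})$ with a partition $\nu\vdash n$, precisely $\ell(\nu)$ new $\mathfrak{s}$-factors are produced, and the $j$-th such factor has individual order at least $\nu_j$ because in the expansion of $(\partial_i)^{\nu_j}\ln S_{\rho_N}$ only coefficients $\mathfrak{s}^{\ell_1,\dots,\ell_M}$ with $\ell_i\ge\nu_j$ in the differentiated variable can contribute. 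The Claim~1 / Claim~2 analysis from the proof of Proposition \ref{Proposition_P_as_sum} caps the total $\mathfrak{s}$-order gained at each of the $r$ elementary blocks by the corresponding $k_a$, so saturating the single-factor maximum at $K=k_1+\dots+k_r$ will force every such block to concentrate all of its budget into one accumulated $\mathfrak{s}$-factor.

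For the base step $r=1$, the parameters $q,n,m$ in \eqref{eq_operator_D_k_sym} satisfy $n+m\le q\le k$, and Claim~2 forces $\sum_i \mu_i\le m$ in \eqref{eq_x4}. Hence the total $\mathfrak{s}$-order of any monomial appearing in \eqref{eq_x1} is $n+\sum_i\mu_i\le n+m\le k$, spread over $\ell(\nu)$ factors whose individual orders are bounded below by $\nu_1,\nu_2,\dots$ respectively. The largest individual order attainable is therefore $\le (n+m)-\sum_{j\ge 2}\nu_j=\nu_1+m\le\nu_1+(k-n)$, which equals $k$ only if $\nu_1=n$, i.e.\ $\ell(\nu)=1$. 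This yields the base-case claim $\nu_1>0$, $\nu_2=\nu_3=\cdots=0$.

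For the induction step \eqref{eq_x10}, the split $n=\nu_0+\nu_1+\nu_2+\cdots$ has $\nu_0$ derivatives acting on the accumulated polynomial $\sum c'(m)\prod_i(x_i-1)^{m_i}$ (which only lowers the $m_i$ exponents and leaves every inherited $\mathfrak{s}$-factor intact), while each $\nu_j$ with $j\ge 1$ hits $\ln S_{\rho_N}$ and manufactures a fresh $\mathfrak{s}$-factor of individual order $\ge \nu_j$. The analogue of the Claim~1 / Claim~2 analysis applied to the new symmetrization set $\mathcal{M}_{r+1}$ bounds the $\mathfrak{s}$-order gained at this step by $n+|\mathcal{M}_{r+1}|-1\le k_{r+1}$, yielding a cumulative total of at most $k_1+\cdots+k_{r+1}$. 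Since every new factor with $\nu_j>0$, $j\ge 1$, drains at least $\nu_j$ units from this budget, saturating the single accumulated $\mathfrak{s}$-factor at the full order $k_1+\cdots+k_{r+1}$ forces $\sum_{j\ge 1}\nu_j=0$, equivalently $\nu_0=n$ and $\nu_1=\nu_2=\cdots=0$.

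The principal obstacle is setting up the Claim~1 / Claim~2 bookkeeping at the inductive step: one must check that the saturation condition $\sum_i \mu_i=|\mathcal{M}_{r+1}|-1$ required for non-vanishing upon setting $x_i=1$ can be fed either from the inherited polynomial tails $(x_i-1)^{m_i}$ or from freshly produced $\mathfrak{s}$-factors, in such a way that the two sources combine additively in the $\mathfrak{s}$-order count. Once this refinement of Proposition \ref{Proposition_P_as_sum} is in place, the budget argument above runs as stated, and the side condition $\nu_0>0$ is trivial since the case $n=0$ involves no derivative to split.
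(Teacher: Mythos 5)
Your overall strategy is the same as the paper's: the "budget" you track is exactly the inequality \eqref{eq_degrees_inequality} of Proposition \ref{Proposition_Product_as_sum}, and the paper's proof of the lemma is precisely the observation that a highest-order term forces \eqref{eq_degrees_inequality} to be an equality at every inductive step together with $t=1$ (a single $\mathfrak s$--factor), which rules out any multiplication of power series and hence any $\nu_j>0$ beyond the one allowed slot. In particular, the bookkeeping you flag as the ``principal obstacle'' does not require a new refinement of Proposition \ref{Proposition_P_as_sum}: it is already contained in property \ref{en_x3} of Proposition \ref{Proposition_Product_as_sum}. Note also that your intermediate claim that the $\mathfrak s$--order gained at a step is at most $n+|\mathcal M_{r+1}|-1\le k_{r+1}$ is only true \emph{net} of the residual $(x_i-1)$--degree: a single derivative of $\ln S_{\rho_N}$ produces $\mathfrak s$--factors of arbitrarily high order accompanied by high powers of $(x_i-1)$, and it is exactly \eqref{eq_degrees_inequality} that encodes the correct net statement. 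Your base case and your exclusion of $\nu_1,\nu_2,\dots$ in the induction step are, modulo this rephrasing, the paper's argument.

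The genuine gap is the assertion $\nu_0>0$ at the induction step, which you dismiss as trivial (``the case $n=0$ involves no derivative to split''). Terms with $n=0$ do occur in the expansion \eqref{eq_operator_D_k_sym} (all derivatives in $(x_i\partial_i)^{k}$ may fall on the Vandermonde factors), and the content of $\nu_0>0$ is precisely that such blocks never enter a highest-order genealogy. Your per-monomial budget count cannot exclude them: by Claim 2, symmetrizing the rational prefactor against an \emph{individual} monomial $\prod_i(x_i-1)^{m_i}$ may lower the residual degree by as much as $|\mathcal M_{r+1}|-1$, which for $n=0$ can be as large as $k_{r+1}$ (for instance $\Sym_{1,2}\,\frac{x_1-1}{x_1-x_2}=1$ exhibits a genuine degree drop), so on the level of monomial counting an $n=0$ block could still saturate \eqref{eq_degrees_inequality}. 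The paper closes this case by using the symmetry of $c'(m_1,\dots,m_M)$ in the variables of $\mathcal M_{r+1}$: when $\nu_0=0$ the symmetrized rational prefactor factors out of \eqref{eq_x10}, is a polynomial in the $(x_i-1)$ with non-negative degrees and independent of $\ln S_{\rho_N}$, so the step gains nothing while the budget grows and \eqref{eq_degrees_inequality} becomes strict. Without this (or an equivalent) argument the conclusion $n_a\ge 1$, and with it the bound $|\mathcal M_a|\le k_a$ on which the leading power of $N$ in the proof of Theorem \ref{Theorem_M} rests, is not established.
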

\begin{proof}
 We obtain the highest order term if and only if on each step of the inductive procedure of Proposition
 \ref{Proposition_Product_as_sum} the inequality \eqref{eq_degrees_inequality} turns into an
 equality and $t=1$ (note $\mathfrak s^{0,\dots,0}=0$). Therefore, there should
 be no multiplications of power series in \eqref{eq_x1}, \eqref{eq_x10}, which implies that
 $\nu_2=\nu_3=\dots=0$ in \eqref{eq_x1} and \eqref{eq_x10}. If in \eqref{eq_x1} also $\nu_1=0$,
 then the result does not depend on $\ln(S_{\rho_N})$ at all, and therefore
 \eqref{eq_degrees_inequality} is strict. Thus, we should have $\nu_1>0$ in \eqref{eq_x1}. For
 \eqref{eq_x10}, the absence of multiplications implies that also $\nu_1=0$. Finally, if $\nu_0$
 also vanishes, then due to symmetry of $c'(m_1,\dots, m_M)$, we can factor \eqref{eq_x10} as
 \begin{multline}
 \Sym_{\mathcal M_{r+1}} \Biggl[ \frac{(x_{M(\mathfrak c)})^q} {
 (x_{M(\mathfrak c)}-x_{M(\mathfrak c)-1})\cdots(x_{M(\mathfrak c)}-x_{M(\mathfrak c)-m+1})}
 \Biggr]
 \\ \times  \sum_{m_1,\dots,m_M\ge 0} c'(m_1,\dots,m_M) \prod_{i=1}^M
 (x_i-1)^{m_i},
\end{multline}
and observe that the first factor does not depend on $\ln(S_{\rho_N})$, and
\eqref{eq_degrees_inequality} is again strict.
\end{proof}

\begin{proof}[Proof of Theorem \ref{Theorem_P}]
 Lemma \ref{Lemma_highest_terms} implies that the highest order terms of $ \mathcal
 D_k[S_{\rho_N}]$ are the same as those for $\mathcal
 D_k[\ln(S_{\rho_N})]$, and therefore it can be a written as a linear combination of partial derivatives parameterized
 by Young diagrams, as in \eqref{eq_P_equation}. The $N$--dependence of the
 coefficients in this linear combination comes from the binomial coefficients in
 \eqref{eq_x3}, and therefore, the leading (in $N$) contribution comes from the
 maximal value of $m$. Recall that by definitions, $m\le k$. If $m=k$, then all
 $\nu_i$ in \eqref{eq_x3} are equal to $0$ --- this does not contribute to the highest order
 by Lemma \ref{Lemma_highest_terms}, hence, it is not what we want. We conclude that for
 the leading (in $N$) contribution we should have $m=k-1$, and the contributing term
 can be then written as
\begin{equation} \label{eq_x13}
 \frac{N^k}{k!} \cdot \Sym_{1,\dots,k} \frac{
 (x_1)^k  \partial_1[\ln(S_{\rho_N})]  }{(x_1-x_{2})(x_1-x_{3})\cdots (x_1-x_k)} \Biggr|_{x_1=\dots=x_N=1},
\end{equation}
We already know (cf.\ Claim 1 and Claim 2 in the proof of Proposition
\ref{Proposition_P_as_sum}) that \eqref{eq_x13} is a linear combination of partial
derivatives of $\ln(S_{\rho_N})$ at $1^N$ of order at most $k$. Therefore, it only
remains to show that the derivative $(\partial_1)^k \ln(S_{\rho_N})$ enters with a
non-zero coefficient. Expanding $\ln(S_{\rho_N})$ into a power series in $(x_i-1)$,
this is the same as checking the non-vanishing of
\begin{equation} %\label{eq_x13}
 \frac{1}{k!} \cdot \Sym_{1,\dots,k} \frac{
 (x_1)^k  \partial_1[ (x_1-1)^k]  }{(x_1-x_{2})(x_1-x_{3})\cdots (x_1-x_k)} \Biggr|_{x_1=\dots=x_N=1},
\end{equation}
which follows from the explicit formula for this expression of \cite[Lemma 5.5]{BG}.
\end{proof}

\begin{proof}[Proof of Theorem \ref{Theorem_M}]
  Lemma \ref{Lemma_highest_terms} implies that the highest order terms of $ \K_{k_1,\dots,k_r}$ are the same as those for $\left(\prod_{a=1}^r\mathcal
 D_{k_a}\right)[\ln(S_{\rho_N})]$,
 and therefore it can be a written as a linear combination of partial derivatives parameterized
 by Young diagrams, as in \eqref{eq_M_equation}. The $N$--dependence of the
 coefficients in this linear combination comes from the binomial coefficients in
 \eqref{eq_x7}, and therefore, the leading (in $N$) contribution comes from the
 maximal value of $M(\mathfrak c)$. Lemma \ref{Lemma_highest_terms} implies that
 $n_a\ge 1$ %\textcolor{green}{[REMARK: if we can prove that the lower degree terms with $n_a=0$ also do not contribute,
%  then this gives an alternative proof of CLT in direct directions]}
 in each factor of \eqref{eq_x7}, therefore, $|\mathcal M_a|\le k_a$.
 $M(\mathfrak c)$ is then maximized when all $\mathcal M_a$ are disjoint, and its
 maximal value is $k_1+\dots+k_r$. Then the leading (in $N$) contribution to the
 highest order terms becomes
 \begin{multline}
\label{eq_x14} \frac{N^{k_1+\dots+k_r}}{(k_1+\dots+k_r)!}
 \sum_{\begin{smallmatrix} \mathcal M_1\bigcup \dots \bigcup \mathcal M_r =\{1,\dots,k_1+\dots+k_r\} \\ \mathcal M_i \cap \mathcal M_j=\emptyset\text{ for all } i\ne j\end{smallmatrix}}
\\  \left( \prod_{a=1}^{r} \Sym_{\mathcal M_a=(i,j_1,\dots,j_{k_a-1})} \frac{
 (x_i)^{k_a} (\partial_i)}{(x_i-x_{j_1})(x_i-x_{j_2})\cdots (x_i-x_{j_{k_a-1}})}  \right)\\
  \Bigr[\ln(S_{{\rho_N}}(x_1,\dots,x_{k_1+\dots+k_r},1,\dots,1))\Bigl]_{x_1=\dots=x_{k_1+\dots+k_r}=1},
\end{multline}
Each of $r$ differential factors in \eqref{eq_x14} acts on its own group of
variables in $S_{{\rho_N}}(x_1,\dots,x_{k_1+\dots+k_r},1,\dots,1)$, and produces a
combination of partial derivatives in this group of variables. Therefore, together
they produce a partial derivative of $\ln(S_{\rho_N})$ parameterized by a partition
$\lambda$, whose parts are union of parts of $r$ partitions: of size $k_1$, of size
$k_2$, \dots, of size $k_r$. Therefore, $\lambda\le \mathbf k$ in the dominance
order as desired, and we arrive at a sum of the form \eqref{eq_M_equation}. The fact
that $\alpha_{\mathbf k} (\mathbf k)\ne 0$ again follows from its explicit
computation by \cite[Lemma 5.5]{BG}.
\end{proof}

\subsection{Proof of Theorem \ref{Theorem_main}}
\label{Section_proof_of_inversion}

We argue inductively, proving the statements of \eqref{exp_1}, \eqref{exp_2},
\eqref{exp_3} of Definition \ref{Def_CLT_appropriate} for all partial derivatives of
order $R$, given that for all orders up to $R-1$ it was already proven.

We know by Lemma \ref{Lemma_eigenrelation} and Definition \ref{Def_CLT} that
\begin{equation}
\label{eq_LLN}
 \mathcal P_{k}= N^{k+1} \mathfrak p (k) + o(N^{k+1}),
\end{equation}
\begin{equation}
\label{eq_CLT_cov}
 \K_{k_1,k_2}=\cov(k_1,k_2) N^{k_1+k_2} +
 o(N^{k_1+k_2}),
\end{equation}
\begin{equation}
\label{eq_CLT_cum}
 \K_{k_1,\dots,k_r}= o(N^{k_1+\dots+k_r}), \quad r>2
\end{equation}
as $N\to\infty$.

\bigskip

Note that for a given degree $R$, we have 1 equation \eqref{eq_LLN} with $k=R$ and
equations \eqref{eq_CLT_cov}, \eqref{eq_CLT_cum} enumerated by all partitions of $R$
with at least two rows. So altogether we have as many equations as there are
partitions of $R$. On the other hand, the number of $R$--degree derivatives of $\ln
S_{\rho_N}$ is also the same. Our idea is to show that this system of equations has
a unique solution as $N\to\infty$, which would imply \eqref{exp_1}, \eqref{exp_2},
\eqref{exp_3}. As we will see, the equations are linear, and have a non-degenerate
matrix of coefficients, which would imply the uniqueness of the solution.

We proceed to the detailed proof.

\bigskip

Assume that the formulas \eqref{exp_1}, \eqref{exp_2}, \eqref{exp_3} are true for
all
 partial derivatives up to order $R-1$ and we aim to prove them for partial derivatives of order
 $R$. Let $p(R)$ be the total number of Young diagrams with $R$ boxes. Divide the expression
 \eqref{eq_P_equation} for $k=R$ by $N^{R+1}$. We will view it as a linear expression in $p(R)$ variables
 \begin{equation}
 \label{eq_variables}
  \frac{(\partial_1)^R}{N}[\ln(S_{\rho_N}(\x))]_{x_1=\dots=x_N=1},
   \quad \partial_{\lambda}[\ln(S_{\rho_N}(\x))]_{x_1=\dots=x_N=1}, \quad
  \lambda:\, |\lambda|=R,\, \lambda\ne (R,0,\dots).
 \end{equation}
 What we know from Theorem \ref{Theorem_P} is that the coefficient of $\frac{(\partial_1)^R}{N}$ is bounded away from $0$ and
 from $\infty$, while all other coefficients are $O(N^{-1})$.

 Further, we take $p(R)-1$ expressions \eqref{eq_M_equation}, divide them by $N^{R}$ and again view as
 a linear expression in variables \eqref{eq_variables}. Then we know from Theorem \ref{Theorem_M} the following:
 \begin{itemize}
 \item The coefficient of $\lambda=\mathbf k$ is bounded away from $0$ and from $\infty$.
 \item The coefficients of $\lambda \prec \mathbf k$ are bounded from above
 \item The coefficient of $\frac{(\partial_1)^R}{N}$ is bounded from above
 \item All other coefficients are $O(N^{-1})$.
\end{itemize}
Altogether, if we denote the vector \eqref{eq_variables} through $\mathbf v$, then the linear
expressions can be brought into a form of a single $p(R)\times p(R)$ matrix $(A+N^{-1} B)$:
$$
 (A+N^{-1} B) \mathbf v,
$$
where $B$ might depend on $N$ (yet, remains uniformly bounded), while $A$ is independent of $N$ and
has the following structure: $A_{11}$ is non-zero, all other elements in the first row $A_{1i}$ are
zero, the elements of the first column can be arbitrary. The principle submatrix $A_{ij}$, $i,j>1$
is upper-triangular, if we order $\lambda$'s by lexicographic order so that the second row/column
corresponds to the smallest Young diagram $1^{R}$ and the last row/column corresponds to the
largest one, $(R-1,1,0,\dots)$.

It is immediate to see that the determinant of matrix $A$ is a product of its diagonal elements and
therefore is non-zero. We conclude that $A$ is invertible. Hence for large enough $N$ also
$(A+N^{-1} B)$ is invertible and all the matrix elements of $(A+N^{-1} B)^{-1}$ are bounded
(uniformly over $N>N_0$).

In these notations, the equations \eqref{eq_LLN}, \eqref{eq_CLT_cov},
\eqref{eq_CLT_cum} can be rewritten in the form
\begin{equation}
\label{eq_new_equation}
 (A+N^{-1} B) \mathbf v = \mathbf o^{R-1}+ \mathbf z + \eps,
\end{equation}
where $\mathbf z$ represents the limits of renormalized right--hand sides of
\eqref{eq_LLN}, \eqref{eq_CLT_cov}, \eqref{eq_CLT_cum}, $\eps$ is the remainder
$o(1)$, $N\to\infty$, in these right--hand sides and $o^{R-1}$ is the contribution
of partial derivatives of $\ln(S_{\rho_N})$ of orders up to $R-1$ in the expansion
of $\mathcal P_R$, $\K_{k_1,\dots,k_r}$ of Proposition \ref{Proposition_P_as_sum},
Corollary \ref{Corollary_M_as_sum}.

\bigskip

We next use the main technical result of \cite{BG_CLT}. It says that if we choose
all the partial derivatives of $\ln S_{\rho_N}$ up to order $R$ satisfying
\eqref{exp_1}, \eqref{exp_2}, \eqref{exp_3} and according to the formulas
\eqref{eq_LLN_1_formula}, \eqref{eq_CLT_1_formula} (assuming all $\p(k)$,
$\cov(k,m)$ to be known; such a choice exists by Lemma \ref{Lemma_as_invert}) , then
for such choice of $\mathbf v=\mathbf v_0$ we would have
\begin{equation}
\label{eq_old_equation}
 (A+N^{-1} B) \mathbf v_0 = \mathbf o^{R-1}+ \mathbf z + \eps',
\end{equation}
where $\eps'$ is another remainder which tends to $0$ as $N\to\infty$; in other
words, the asymptotic expansions \eqref{eq_LLN}, \eqref{eq_CLT_cov},
\eqref{eq_CLT_cum}  will be satisfied. Now we subtract \eqref{eq_new_equation} and
\eqref{eq_old_equation} to get
$$
 (A+N^{-1} B) (\mathbf v- \mathbf v_0)=\eps-\eps'.
$$
It remains to use the fact that $(A+N^{-1} B)^{-1}$ is uniformly bounded for $N>N_0$
to conclude that $\mathbf v - \mathbf v_0$ tends to $0$ as $N\to\infty$. Therefore,
$\mathbf v$ satisfies \eqref{exp_1}, \eqref{exp_2}, \eqref{exp_3}, as desired.

\section{Multi-dimensional CLT}

\label{Section_multi}

In this section we extend Theorem \ref{Theorem_main} to a setting of several random
signatures.

\subsection{Formulation}

Fix $H\ge 1$ and $H$--tuple of positive integers $N_1,\dots,N_H$.

For a probability measure ${\tau}$ on $\prod_{h=1}^H \GT_{N_h}$ we define its
$H$--dimensional Schur generating function, through
\begin{multline}
 S_{\tau}(x_{1}^1,\dots,x_{N_1}^1;\, \dots;\, x_{1}^H,\dots,x_{N_H}^H)\\=
 \sum_{\lambda^{1}\in \GT_{N_1},\dots,\lambda^{H}\in \GT_{N_H}} {\tau}\bigl(\lambda^{1},\dots,\lambda^{H}\bigr)
 \prod_{h=1}^H \frac{s_{\lambda^{h}}(x_{1}^h,\dots,x_{N_h}^h)}{s_{\lambda^{h}}(1^{N_h})}.
\end{multline}

\begin{definition} We call a probability measure $\tau$ on $\prod_{h=1}^H \GT_{N_h}$ \emph{smooth}, if
there exists $r>1$ such that
$$
 \sum_{\lambda^{1}\in\GT_{N_1},\dots,\lambda^{H}\in \GT_{N_H}} \tau(\lambda^{1},\dots,\lambda^{H}) \cdot r^{\sum_{h=1}^H \sum_{i=1}^{N_h} |\lambda_i^h|}
 <\infty.
$$
\end{definition}

An immediate analogue of Lemma \ref{Lemma_smooth_SGF} holds and has the same proof.
\begin{lemma}
 If $\tau$ is a smooth measure on $\prod_{h=1}^H \GT_{N_h}$, then its $H$--dimensional Schur--generating function $S_{\rho_N}$ is
 uniformly convergent in a neighborhood of $1$.
\end{lemma}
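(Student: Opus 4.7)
The plan is to mimic the proof of Lemma \ref{Lemma_smooth_SGF} factor by factor, since the $H$--dimensional Schur generating function splits into a product over $h=1,\dots,H$ of the normalized Schur polynomials in the variables $(x^h_1,\dots,x^h_{N_h})$. The key analytic input is still the combinatorial formula for Schur functions from \cite[Section 5, Chapter I]{Mac}, which yields the coordinatewise bound
\begin{equation*}
\sup_{|x^h_1|=\dots=|x^h_{N_h}|=t} \bigl|s_{\lambda^h}(x^h_1,\dots,x^h_{N_h})\bigr| \;=\; s_{\lambda^h}(t^{N_h}) \;=\; t^{\lambda^h_1+\dots+\lambda^h_{N_h}}\, s_{\lambda^h}(1^{N_h}),
\end{equation*}
valid for each $t>0$ and each signature $\lambda^h\in\GT_{N_h}$.

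The first step is to fix $1<t<r$ (where $r$ is the constant from the smoothness condition) and restrict each variable to the annulus $t^{-1}<|x^h_i|<t$. Using the bound above on each factor, the absolute value of the general term of the series defining $S_\tau$ is dominated by
\begin{equation*}
\tau(\lambda^1,\dots,\lambda^H)\prod_{h=1}^{H} t^{\,\lambda^h_1+\dots+\lambda^h_{N_h}} \;\le\; \tau(\lambda^1,\dots,\lambda^H)\, t^{\sum_{h=1}^H\sum_{i=1}^{N_h}|\lambda^h_i|}.
\end{equation*}
Since $t<r$, the right hand side is bounded by $\tau(\lambda^1,\dots,\lambda^H)\,r^{\sum_{h,i}|\lambda^h_i|}$, which is summable by the definition of smoothness. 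The Weierstrass $M$--test then yields uniform absolute convergence of $S_\tau$ on the polyannulus $\{t^{-1}<|x^h_i|<t\}$, and in particular in a neighborhood of the identity point $(1,\dots,1)$.

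There is essentially no obstacle here: the only substantive observation beyond the one--variable case is that the product structure of $S_\tau$ in the $H$ blocks of variables lets us apply the single--block bound independently to each group, and that the exponents aggregate into the single smoothness sum $\sum_{h=1}^H\sum_{i=1}^{N_h}|\lambda^h_i|$ that appears in the definition of a smooth measure. This is why the authors remark that the proof is identical to that of Lemma \ref{Lemma_smooth_SGF}; accordingly, I would simply write the three lines above and be done.
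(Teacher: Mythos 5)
Your proposal is correct and is exactly the route the paper takes: the paper simply states that the multi--dimensional lemma ``has the same proof'' as Lemma \ref{Lemma_smooth_SGF}, i.e.\ apply the bound \eqref{eq_Schur_bound} to each of the $H$ blocks of variables, dominate the general term by $\tau(\lambda^1,\dots,\lambda^H)\, t^{\sum_{h,i}|\lambda^h_i|}$ with $1<t<r$, and conclude uniform convergence on the polyannulus from the smoothness condition.
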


Let $L$ be a large parameter and suppose that $N_h=N_h(L)$ are such that
$$
 \lim_{L\to\infty} \frac{N_h}{L}=\mathcal N_h>0,\quad h=1,\dots,H.
$$
For each $L=1,2,\dots$, let $\rho_L$ be a smooth probability measure on
$\prod_{h=1}^H \GT_{N_h}$, and let $\lambda^{1},\dots,\lambda^{H}$ be the
corresponding random element of $\prod_{h=1}^H \GT_{N_h}$. Define
$\{p_{k;h}^N\}_{k=1,2,\dots;\, h=1,\dots,H}$ to be a countable collection of random
variables via
$$
 p_{k,h}^L= \sum_{k=1}^{N_h} \left(\frac{\lambda_i^{h}+N_h-i}{N_h}\right)^k.
$$

\begin{definition} \label{Definition_CLT_multi}
 We say that smooth measures $\rho_L$ satisfy a CLT as $L\to\infty$, if there exist
 two countable collection of numbers $\mathfrak p(k,h)$, $\mathfrak {cov}(k,h;m,f)$,
 $k,m=1,2,\dots;$ $h,f=1,\dots,H$, such that
\begin{enumerate}[label=(\Alph*)]
\item \label{ass_multi_1} For each $k=1,2,\dots$, $h=1,\dots,H$
$$
 \lim\limits_{L\to\infty} \frac{1}{N_h} \E [p_{k,h}^L] = {\mathfrak p}(k,h),
$$
\item \label{ass_multi_2} For each $k,m=1,2,\dots$; $h,f=1,\dots,H$,
$$
 \lim\limits_{L\to\infty}\Bigl( \E [p_{k,h}^L\, p_{m,f}^L] -\E[p_{k,h}^L]\E [p_{m,f}^L] \Bigr)= \mathfrak {cov}(k,h;m,f),
$$
\item \label{ass_multi_3}For each $r>2$ and any collection of pairs
$(k_1,h_1)$, \dots, $(k_r,h_r)$,
$$\lim_{L\to\infty} \kappa_{ (k_1,h_1),\dots,(k_r,h_r)}\bigl( p^L_{k,h}, h=1,\dots,H,\, k=1,2,\dots\bigr)=0.
$$
\end{enumerate}

\end{definition}

\begin{definition}
\label{Def_appropriate_multi}
 We say that smooth measures $\rho_L$ are CLT--appropriate as $L\to\infty$, if there
 exist two countable collections of numbers $\c_{k,h}$, $\d_{k,h;m,f}$,
 $k,m=1,2,\dots$; $h,f=1,\dots,H$,
 such that
\begin{enumerate}
\item \label{exp_multi_1} For each $k=1,2,\dots$:
$$\lim\limits_{L\to\infty} \frac{1}{N_h} \left(\frac{\partial}{\partial x_{i}^h}\right)^k \ln( S_{\rho_L})
\Bigl|_{x_{1}^1=\dots=x_{N_H}^H=1}= \c_{k,h},$$
\item For each two pairs $(k,h)\ne(m,f)$:\label{exp_multi_2}
$$\lim\limits_{L\to\infty}
\left(\frac{\partial}{\partial x_{i}^h}\right)^k \left(\frac{\partial}{\partial
x_{j}^f}\right)^m  \ln( S_{\rho_L}) \Bigl|_{x_{1}^1=\dots=x_{N_H}^H=1}=
\d_{k,h;m,f},
$$
\item \label{exp_multi_3} For each set of pairs $\{(i_1,h_1),\dots,(i_s,h_s)\}$ with at least
three distinct elements:
$$\lim\limits_{L\to\infty} \frac{\partial}{\partial x_{i_1}^{h_1}}
\dots \frac{\partial}{\partial x_{i_s}^{h_s}}  \ln( S_{\rho_L})
\Bigl|_{x_{1}^1=\dots=x_{N_H}^H=1}= 0.$$
\end{enumerate}
 \end{definition}

\begin{theorem} \label{Theorem_main_multi}
Smooth measures $\rho_L$ satisfy CLT as $L\to\infty$ if and only if they are
CLT--appropriate. The numbers $\p(k,h)$, $\cov(k,h;m,f)$, $k,m=1,\dots;$
$h,f=1,\dots,H$ are polynomials in $\c_{k,h}$, $\d_{k,h;m,f}$, $k,m=1,\dots$,
$h,f=1,\dots,H$ and can be computed through the following formulas:

\begin{equation}
\label{eq_LLN_multi_formula} \mathfrak p(k,h) = [z^{-1}] \frac{1}{(k+1)(z+1)} \left(
(1+z) \left(\frac1z  +  \sum_{a=1}^\infty \frac{\c_{a,h} z^{a-1} }{(a-1)!}\right)
\right)^{k+1},
\end{equation}
\begin{multline}
\label{eq_CLT_multi_formula} \mathfrak {cov}(k_1,h_1;k_2,h_2)\\ =
  [z^{-1} w^{-1}]
 \left( (1+z)
\left(\frac1z  +  \sum_{a=1}^\infty \frac{\c_{a,h_1} z^{a-1}
}{(a-1)!}\right)\right)^{k_1} \, \left( (1+w)
\left(\frac1w  +  \sum_{a=1}^\infty \frac{\c_{a,h_2} w^{a-1} }{(a-1)!}\right) \right)^{k_2} \\
\times \left(  \delta_{h_1=h_2}\left(\sum_{a=0}^{\infty}
\frac{z^a}{w^{1+a}}\right)^2 +\sum_{a,b=1}^{\infty} \frac{\d_{a,h_1;b,h_2}}{(a-1)!
(b-1)!} z^{a-1} w^{b-1} \right),
\end{multline}
where $[z^{-1}]$ and $[z^{-1} w^{-1}]$ stay for the coefficients of $z^{-1}$ and
$z^{-1}w^{-1}$, respectively, in the Laurent power series given afterwards.
\end{theorem}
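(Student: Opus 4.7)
The plan is to run the same scheme as for Theorem \ref{Theorem_main}, but with differential operators and partial derivatives indexed by a color $h\in\{1,\dots,H\}$ specifying which block of variables they act on. For each $h$ and each $k\ge 1$, define
\begin{equation*}
\mathcal D_{k,h} = V(\x^h)^{-1}\left(\sum_{i=1}^{N_h}\left(x_i^h\frac{\partial}{\partial x_i^h}\right)^k\right)V(\x^h),\qquad V(\x^h)=\prod_{1\le i<j\le N_h}(x_i^h-x_j^h).
\end{equation*}
Since the $H$--dimensional SGF factors as $S_{\rho_L}=\E\prod_h \frac{s_{\lambda^h}(\x^h)}{s_{\lambda^h}(1^{N_h})}$, and $\mathcal D_{k,h}$ is diagonalized by $s_{\lambda^h}(\x^h)$ with eigenvalue $\sum_i (\lambda^h_i+N_h-i)^k$, the analogue of Lemma \ref{Lemma_eigenrelation} holds: applying an arbitrary product $\prod_a \mathcal D_{k_a,h_a}$ to $S_{\rho_L}$ and setting all $x_i^h=1$ recovers $\E\bigl[\prod_a N_{h_a}^{k_a} p_{k_a,h_a}^L\bigr]$, and the analogous cumulant combination $\K_{(k_1,h_1),\dots,(k_r,h_r)}$ recovers the joint cumulant of $\{N_h^k p_{k,h}^L\}$.

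Next I would extend Proposition \ref{Proposition_P_as_sum}, Proposition \ref{Proposition_Product_as_sum}, and Theorems \ref{Theorem_P}, \ref{Theorem_M} to the colored setting. The key structural point is that operators acting on different color blocks commute and have no shared singularities, so Claim 1 and Claim 2 in the proof of Proposition \ref{Proposition_P_as_sum} apply \emph{block--by--block}: each factor $\mathcal D_{k_a,h_a}$ produces a symmetrized expression only in the variables $\x^{h_a}$, and the Leibnitz expansion shows that $\left(\prod_a \mathcal D_{k_a,h_a}\right)[S_{\rho_L}]\bigl|_1$ is a polynomial (in $N_1,\dots,N_H$) combination of products of partial derivatives $\partial_{\lambda}[\ln S_{\rho_L}]$ where each $\lambda$ is now a colored partition $(\lambda,\mathrm{col})$ recording, for every part, which color block the differentiated variable belongs to. As in Lemma \ref{Lemma_highest_terms}, the top--order terms come from disjoint supports $\mathcal M_a$, so each factor $\mathcal D_{k_a,h_a}$ contributes a group of $k_a$ variables of color $h_a$, and the resulting colored partition $\lambda$ is bounded in the natural ``colored dominance'' order by $\mathbf k=((k_1,h_1),\dots,(k_r,h_r))$. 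The diagonal coefficient $\alpha_{\mathbf k}(\mathbf k)$ is again nonzero by \cite[Lemma 5.5]{BG} applied within each color.

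With these structural theorems in place, the inversion argument of Section \ref{Section_proof_of_inversion} goes through verbatim. Assuming inductively that items \eqref{exp_multi_1}, \eqref{exp_multi_2}, \eqref{exp_multi_3} of Definition \ref{Def_appropriate_multi} have been established up to order $R-1$, one views the order $R$ analogues of \eqref{eq_LLN}, \eqref{eq_CLT_cov}, \eqref{eq_CLT_cum} (one equation per colored partition of $R$) as a linear system $(A+L^{-1}B)\mathbf v = \mathbf o^{R-1}+\mathbf z+\varepsilon$ for the vector $\mathbf v$ of colored partial derivatives of $\ln S_{\rho_L}$ of order $R$. The matrix $A$ is, by the colored versions of Theorems \ref{Theorem_P}, \ref{Theorem_M}, block--triangular in lexicographic colored order with nonzero diagonal; hence invertible uniformly in $L$. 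Combining with the forward direction of \cite[Theorem 2.8]{BG_CLT} (which already takes a CLT--appropriate family and produces a CLT satisfying \eqref{eq_LLN_multi_formula}, \eqref{eq_CLT_multi_formula}), one subtracts as in \eqref{eq_new_equation}--\eqref{eq_old_equation} and concludes convergence of $\mathbf v$ to the value prescribed by those formulas.

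The main obstacle will be the colored bookkeeping: verifying that the top--order analysis of Lemma \ref{Lemma_highest_terms} really ``decouples'' when operators of different colors are multiplied, and in particular that the kinematic kernel $(\sum_a z^a/w^{1+a})^2$ from the resolution of the $(x_i-x_j)^{-1}$ singularity only appears when $h_1=h_2$ (producing the $\delta_{h_1=h_2}$ in \eqref{eq_CLT_multi_formula}), while a cross--color pair $h_1\neq h_2$ has no such singularity and contributes only through the $\d_{a,h_1;b,h_2}$ term. Once this factorization is verified, the formulas \eqref{eq_LLN_multi_formula}--\eqref{eq_CLT_multi_formula} are obtained from their one--dimensional counterparts \eqref{eq_LLN_1_formula}--\eqref{eq_CLT_1_formula} applied in each color, and their inversion is the obvious block analogue of Lemma \ref{Lemma_as_invert}.
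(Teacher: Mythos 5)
Your treatment of the direction ``CLT implies CLT--appropriate'' matches the paper's: colored operators $\mathcal D_{k,h}$, the eigenrelation, the expansion of cumulants into colored partial derivatives of $\ln S_{\rho_L}$ with a triangular top--order structure (the paper's Theorem \ref{Theorem_multi_expansion}), and the $(A+N^{-1}B)$ linear--system inversion with a comparison vector $\mathbf v_0$, exactly as in Section \ref{Section_Th_Multi_2}. That part of your plan is sound.

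The gap is in the other direction, which your inversion argument needs as an input (it supplies both $\mathbf v_0$ and the formulas \eqref{eq_LLN_multi_formula}, \eqref{eq_CLT_multi_formula}). You cite \cite[Theorem 2.8]{BG_CLT} as already giving ``CLT--appropriate implies CLT'' in the multi--color setting, but the paper states explicitly that the results of \cite{BG_CLT} do not cover the generality needed here, and it devotes Section \ref{Section_Th_Multi_1} to proving this implication: the coding--graph bookkeeping, Lemma \ref{Lemma_only_connected} (only connected graphs survive in a cumulant), the variable--counting versus powers of $N$ that kills all cumulants of order $r>2$ (Proposition \ref{Proposition_multi_Gauss_direct}), and the cross--color covariance computation (Proposition \ref{Proposition_multi_covariance_direct}). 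Moreover, the point you flag as ``the main obstacle'' --- that the kernel $\bigl(\sum_a z^a/w^{1+a}\bigr)^2$ appears only when $h_1=h_2$, i.e.\ the $\delta_{h_1=h_2}$ in \eqref{eq_CLT_multi_formula} --- is precisely the crux of that argument, and you leave it unverified; in the paper it is resolved by noting that for $h_1\ne h_2$ the edge joining the two vertices of the coding graph cannot be bold, since the operators differentiate disjoint sets of variables, so no $1/(z-w)^2$--type singularity can arise, while the remaining single--edge contribution produces only the $\d_{a,h_1;b,h_2}$ term. Relatedly, your closing claim that the multi--dimensional formulas follow from the one--dimensional ones ``applied in each color'' cannot be right as stated: the cross--color covariance is genuinely a joint statement about two blocks and is not a marginal (single--color) quantity, so it requires this two--color analysis rather than two applications of Theorem \ref{Theorem_main}. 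Until the appropriate--implies--CLT direction is actually established in this colored generality, both the ``only if'' half of the equivalence and the reference point $\mathbf v_0$ in your inversion step are unsupported.
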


The proof of Theorem \ref{Theorem_main_multi} is similar to Theorem
\ref{Theorem_main} and we present it in the next sections. One complication is that
the results of \cite{BG_CLT} do not cover the generality we need here, and therefore
we have to present their extension.

\subsection{Theorem \ref{Theorem_main_multi}, part 1: Being appropriate implies CLT}
\label{Section_Th_Multi_1} In this section we show one direction of Theorem
\ref{Theorem_main_multi} by proving that CLT--appropriate measures satisfy CLT.
Although this is an extension of \cite[Theorem 2.8]{BG_CLT}, yet  we present a
mildly different proof.

We will shorten the notations by assuming throughout the proof that
$L=N_1=N_2=\dots=N_H=N$. In the general case the argument is the same.

\begin{proposition} Suppose that smooth measures $\rho_N$ on $\GT_N^H$ are
CLT--appropriate, then they satisfy condition \ref{ass_multi_1} of Definition
\ref{Definition_CLT_multi}.
\end{proposition}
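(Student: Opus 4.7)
My plan is to reduce the multi-dimensional LLN to the single-group statement already handled in Section \ref{Section one_dim}, one index $h$ at a time. Define the group-$h$ Schur eigenoperator
$$
\mathcal D_k^h := V(\x^h)^{-1}\Bigl(\sum_{i=1}^{N_h}\bigl(x_i^h\tfrac{\partial}{\partial x_i^h}\bigr)^k\Bigr)V(\x^h),\qquad \x^h:=(x_1^h,\dots,x_{N_h}^h).
$$
Since $S_{\rho_N}$ factorizes into Schur functions across groups, the one-group eigenrelation still holds and, exactly as in Lemma \ref{Lemma_eigenrelation}, gives $\mathcal D_k^h[S_{\rho_N}]|_{x=1} = \E[N_h^k\, p_{k,h}^L]$.

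Because $\mathcal D_k^h$ does not touch variables outside group $h$, it commutes with the substitution $x_j^f=1$ for $f\ne h$. Setting
$$
\tilde S(\x^h) := S_{\rho_N}\bigr|_{x_j^f=1,\,f\ne h} = \E_{\rho_N}\!\left[\frac{s_{\lambda^h}(\x^h)}{s_{\lambda^h}(1^{N_h})}\right],
$$
which is the SGF of the marginal distribution of $\lambda^h$ on $\GT_{N_h}$ (automatically smooth, being a marginal of a smooth measure), we obtain $\E[N_h^k\, p_{k,h}^L] = \mathcal D_k^h[\tilde S]|_{\x^h=1^{N_h}}$. Partial derivatives of $\ln\tilde S$ at $1^{N_h}$ coincide with partial derivatives of $\ln S_{\rho_N}$ at $x=1$ involving only group-$h$ variables; therefore items \ref{exp_multi_1}--\ref{exp_multi_3} of Definition \ref{Def_appropriate_multi}, restricted to $h_1=\cdots=h_s=h$, are exactly the single-group appropriateness hypothesis of Definition \ref{Def_CLT_appropriate} for $\tilde S$, with constants $\c_a=\c_{a,h}$ and $\d_{a,b}=\d_{a,h;b,h}$.

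It then remains to invoke the one-dimensional LLN: Proposition \ref{Proposition_P_as_sum} and Theorem \ref{Theorem_P} expand $\mathcal D_k^h[\tilde S]|_{\x^h=1^{N_h}}$ as a polynomial-in-$N_h$ combination of products of $\partial_\mu \ln\tilde S|_{\x^h=1^{N_h}}$, and the leading $N_h^{k+1}$ coefficient matches the one-dimensional formula \eqref{eq_LLN_1_formula} evaluated at $\c_a=\c_{a,h}$, which is precisely \eqref{eq_LLN_multi_formula}; the lower-order contributions are $o(N_h^{k+1})$ by the same degree-counting as in Section \ref{Section one_dim} (cf.\ \cite[Theorem 2.4]{BG_CLT}). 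There is no genuinely new obstacle: the multi-group structure decouples because both $\mathcal D_k^h$ and the hypothesis underlying the LLN see only one group at a time.
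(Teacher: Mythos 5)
Your proposal is correct and follows essentially the same route as the paper: the paper's proof likewise observes that setting all variables with $j\ne h$ equal to $1$ in the $H$--dimensional Schur generating function yields the one--dimensional SGF of the marginal $\lambda^h$, whose partial derivatives at $1$ are exactly the group-$h$ derivatives of $\ln S_{\rho_N}$, so the one--dimensional theorem (Theorem \ref{Theorem_main}) applies. Your write-up merely spells out the operator-level details that the paper leaves implicit.
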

\begin{proof}
 After we set in the $H$--dimensional Schur generating function of $\rho_N$ all $x^j_i$ with $j\ne h$ equal to $1$,
 then we get the (one--dimensional) Schur generating function of the marginal
 $\lambda^h$. Therefore, we can apply Theorem \ref{Theorem_main}.
\end{proof}

\begin{proposition} \label{Proposition_multi_Gauss_direct} Suppose that smooth measures $\rho_N$ on $\GT_N^H$ are
CLT--appropriate, then they satisfy condition \ref{ass_multi_3} of Definition
\ref{Definition_CLT_multi}.
\end{proposition}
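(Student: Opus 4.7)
The plan is to extend the differential operator machinery of Section \ref{Section_Highest_order} to the $H$-dimensional setting. For each $h \in \{1,\ldots,H\}$ and $k \geq 1$, define $\mathcal{D}_{k,h}$ by applying formula \eqref{eq_D_k_def} to the variables $(x^h_1,\ldots,x^h_{N_h})$. Operators in different groups act on disjoint variable sets and therefore commute. Since each Schur factor in the $H$-dimensional SGF is an eigenfunction of $\mathcal{D}_{k,h}$ with eigenvalue $\sum_{i=1}^{N_h}(\lambda^h_i + N_h - i)^k$, the multi-dimensional analogue of Lemma \ref{Lemma_eigenrelation} holds: setting
\[
\K_{(k_1,h_1),\ldots,(k_r,h_r)} := \sum_{\{U_1,\ldots,U_t\} \in \Theta_r} (-1)^{t-1}(t-1)! \prod_{i=1}^t \left(\Bigl[\prod_{j \in U_i} \mathcal{D}_{k_j,h_j}\Bigr] S_{\rho_L}\right)\bigg|_{x^h_i = 1},
\]
one has $\K_{(k_1,h_1),\ldots,(k_r,h_r)} = \kappa_{(k_1,h_1),\ldots,(k_r,h_r)}(N_{h}^{k} p^L_{k,h})$.

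Next I would extend Propositions \ref{Proposition_P_as_sum}, \ref{Proposition_Product_as_sum}, Corollary \ref{Corollary_M_as_sum}, Lemma \ref{Lemma_highest_terms} and Theorem \ref{Theorem_M} to the present setting. The induction of Proposition \ref{Proposition_Product_as_sum} proceeds group by group: the symmetrization identities underlying Claims 1 and 2 take place \emph{within} each group of variables, and operators from different groups only interact multiplicatively. Consequently, $\K_{(k_1,h_1),\ldots,(k_r,h_r)}$ is a finite linear combination, with polynomial-in-$L$ coefficients, of products of partial derivatives of $\ln S_{\rho_L}$ at $1$; its highest-order contribution, of size $L^{k_1+\cdots+k_r}$, is a sum of \emph{single} (not product) partial derivatives $\partial_\lambda \ln S_{\rho_L}|_1$ arising from the connected configurations of Lemma \ref{Lemma_highest_terms} in which the $r$ operator supports $\mathcal{M}_a$ are pairwise disjoint. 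In each such configuration, every $\mathcal{D}_{k_a,h_a}$ contributes at least one derivative in a variable of group $h_a$, so $\partial_\lambda$ involves at least $r$ distinct variable indices $(i,h)$.

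Therefore, when $r > 2$, each term in the leading-order sum is a partial derivative in at least three distinct pairs $(i,h)$, and condition \ref{exp_multi_3} of Definition \ref{Def_appropriate_multi} forces each such term to $0$ as $L \to \infty$. Since the sum is finite with $L$-independent coefficients, the leading contribution to $\K_{(k_1,h_1),\ldots,(k_r,h_r)}$ is $o(L^{k_1+\cdots+k_r})$. Subleading terms, which are products of lower-order partial derivatives, are bounded using \ref{exp_multi_1}, \ref{exp_multi_2}, \ref{exp_multi_3} together with smoothness, yielding $O(L^{k_1+\cdots+k_r-1})$; dividing through by $\prod_a N_{h_a}^{k_a} = \Theta(L^{k_1+\cdots+k_r})$ gives $\kappa_{(k_1,h_1),\ldots,(k_r,h_r)}(p^L_{k,h}) \to 0$, as required. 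The main obstacle I anticipate is the careful bookkeeping in the multi-dimensional Lemma \ref{Lemma_highest_terms}: one must verify that no cross-group Leibniz contribution produces a product of low-order $\mathfrak{s}$-coefficients that nevertheless scales as $L^{k_1+\cdots+k_r}$ (which would evade the direct applicability of \ref{exp_multi_3}). Because operators with different $h$ act on disjoint variable families, the Leibniz expansions analogous to \eqref{eq_x1} and \eqref{eq_x10} split cleanly per group, and the maximal-$L$-scaling indeed forces exactly the connected, single-derivative configuration.
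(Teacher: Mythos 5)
Your treatment of the top derivative-order part is fine: extending Theorems \ref{Theorem_P}, \ref{Theorem_M} to several groups of variables does show that the order-$(k_1+\dots+k_r)$ derivatives enter linearly, with coefficient $O(L^{k_1+\dots+k_r})$, and that each such derivative involves at least $r\ge 3$ distinct variables, so condition (3) of Definition \ref{Def_appropriate_multi} makes this part $o(L^{k_1+\dots+k_r})$. The gap is the sentence asserting that the remaining terms are $O(L^{k_1+\dots+k_r-1})$. Theorem \ref{Theorem_M} and its multi-dimensional analogue control \emph{only} the coefficients of the terms containing derivatives of maximal order; for products of lower-order derivatives they merely say the coefficients are polynomials in $N$, with no degree bound. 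Those products are exactly the dangerous ones: each single-variable derivative $(\partial^h_i)^a\ln S_{\rho_L}$ is of size $O(N)$ by condition (1), so any term in which $S_{\rho_L}$ itself gets differentiated several times carries several $O(N)$ factors, and before the cumulant cancellation such products (e.g.\ $\prod_a \sum_i \partial_i \ln S_{\rho_L}$) are of order $N^{2r}\gg N^{\sum_a k_a}$. To conclude $o(N^{\sum_a k_a})$ one needs two facts you do not establish: (i) the cancellation, inside the cumulant combination \eqref{eq_multi_cumulants}, of all contributions whose dependency structure is disconnected (Lemma \ref{Lemma_only_connected} in the paper); and (ii) a quantitative trade-off for the connected ones, showing that every extra differentiation of $S_{\rho_L}$ (which creates an extra $O(N)$ factor) and every derivative falling on a variable already present (which can upgrade a type-(3) or type-(2) derivative to type-(2) or type-(1), gaining a factor $N$) simultaneously removes one free variable choice and hence divides the combinatorial prefactor by $N$.

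This balance argument is the actual content of the paper's proof of Proposition \ref{Proposition_multi_Gauss_direct}: the Leibniz expansion is organized by coding graphs with single and bold edges and the counter $g$ of differentiations of $S_{\rho_L}$, the estimate is verified for the ``basic'' connected graphs, and then it is checked that the two operations generating all other terms (turning a single edge bold, increasing some $g_i$) preserve the bound because each gain of a factor $N$ in the derivative costs a factor $N$ in the variable count. Note also that the obstacle you flag at the end is misdiagnosed: the problematic products are not cross-group Leibniz artifacts, and the per-group splitting does not dispose of them — they already occur within a single group of variables whenever $S_{\rho_L}$ is hit more than once. Finally, you cannot sidestep the issue by the device used in the converse direction (Sections \ref{Section_proof_of_inversion}, \ref{Section_Th_Multi_2}), where lower-order contributions are absorbed into $\mathbf o^{R-1}$ and eliminated by comparison with a reference solution: that comparison relies on the direct implication already being known, so invoking it here would be circular.
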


The proof of Proposition \ref{Proposition_multi_Gauss_direct} is split into several
lemmas.

For $h=1,\dots,H$, we let $\x^h$ be the set of variables $(x_1^h,\dots,x_N^h)$ and
define
$$
 V(\x^h)=\prod_{1\le i<j \le N} (x_i^h-x_j^h).
$$
Introduce the following differential expression in $N$ variables:
$$
 \D_{k,h} = V(\x^h)^{-1} \left(\sum_{i=1}^N \left({x_i^h} \frac{\partial}{\partial
 x_i^h} \right)^k \right) V(\x^h).
$$
We will generally apply this operator to $S_{\rho_N}$ to compute the moments of
${\rho_N}$. We use the notation
\begin{equation}
\label{eq_multi_uncentered_moments}
 \mathcal P_{k,h}:=\D_{k,h}[S_{\rho_N}] \bigr|_{x_1=\dots=x_N=1}
\end{equation}
% We also need a centered version. For a collection of pairs
%$(k_1,h_1)\dots, (k_r,h_r)$ let us define
%\begin{equation}
%\label{eq_multi_centered_moments}
% \mathcal M_{k_1,h_1;\dots;k_r,h_r}:= \prod_{a=1}^r \left( \mathcal D_{k_a,h_a} - \mathcal D_{k_a,h_a}[S_{\rho_N}] \right)
% \, [S_{\rho_N}] \Bigr|_{x_1^1=\dots=x_N^H=1},
%\end{equation}
%where $[\cdot]$ means an application of the operator to the function. In principle,
%all operators in \eqref{eq_multi_centered_moments} commute and therefore the order
%in which we apply them is irrelevant. However, for the sake of being precise we
%specify this order by declaring that we first apply the factor corresponding to
%$a=r$, then $a=r-1$ factor acts on the result, etc.

We also need a version for the cumulants. Using the notation
\eqref{eq_Set_partition}, for a collection of pairs $(k_1,h_1),\dots,(k_r,h_r)$ we
set
\begin{equation}
\label{eq_multi_cumulants}
 \K_{k_1,h_1;\dots;k_r,h_r}:=
 \sum_{\left\{ U_1, \cdots, U_t \right\} \in \Theta_r} (-1)^{t-1}
(t-1)! \prod_{i=1}^t \left[ \left( \prod_{j \in U_i} \mathcal D_{k_j,h_j} \right)
[S_{\rho_N}] \Bigr|_{x_1^1=\dots=x_N^H=1} \right].
\end{equation}

\begin{lemma} We have
$$
\mathcal P_{k,h}= \E \left[ N^k p^N_{k,h} \right], \quad
\K_{k_1,h_1;\dots;k_r,h_r}=\kappa_{(k_1,h_1),\dots(k_r,h_r)} \bigl( N^k p^N_{k,h},\,
h=1,\dots,H,\, k=1,2,\dots\bigr)
$$
\end{lemma}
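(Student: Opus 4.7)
The plan is to mimic the proof of Lemma \ref{Lemma_eigenrelation} in the multi-dimensional setting, based on the fact that each operator $\mathcal D_{k,h}$ is diagonal in the Schur basis of the $h$-th group of variables and that the $H$-dimensional Schur generating function is a linear combination of products in which each factor involves only one group of variables.

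First I would establish the key eigenrelation: for any signature $\mu^h \in \GT_{N_h}$,
\begin{equation*}
\mathcal D_{k,h}\bigl(s_{\mu^h}(\x^h)\bigr) = \left(\sum_{i=1}^{N_h}(\mu^h_i + N_h - i)^k\right) s_{\mu^h}(\x^h).
\end{equation*}
This is the standard Laplace--Beltrami-style eigenrelation on Schur functions (as quoted in the one-dimensional case) applied to the single group of variables $\x^h$, noting that $\mathcal D_{k,h}$ does not touch the other variables $\x^f$ for $f\ne h$. Consequently, applying $\mathcal D_{k,h}$ to
\begin{equation*}
S_{\rho_N} = \sum_{\lambda^1,\dots,\lambda^H} \rho_N(\lambda^1,\dots,\lambda^H) \prod_{f=1}^H \frac{s_{\lambda^f}(\x^f)}{s_{\lambda^f}(1^{N_f})}
\end{equation*}
only affects the $h$-th factor and multiplies the corresponding term by $\sum_i(\lambda_i^h+N_h-i)^k$. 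Setting all variables to $1$ reduces every factor $s_{\lambda^f}(\x^f)/s_{\lambda^f}(1^{N_f})$ to $1$, so $\mathcal P_{k,h}=\E[N^k p^N_{k,h}]$, which proves the first identity.

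For the cumulant identity, I would iterate this observation. Since $\mathcal D_{k_1,h_1},\dots,\mathcal D_{k_r,h_r}$ act on independent (or possibly overlapping, but compatible) groups of variables, they all commute, and their product acts on each term of $S_{\rho_N}$ by multiplying it by $\prod_{j=1}^r \sum_i(\lambda_i^{h_j}+N_{h_j}-i)^{k_j}$. Hence for any subset $U\subseteq\{1,\dots,r\}$,
\begin{equation*}
\Bigl(\prod_{j\in U}\mathcal D_{k_j,h_j}\Bigr)[S_{\rho_N}]\Bigr|_{x=1} = \E\Bigl[\prod_{j\in U} N^{k_j} p^N_{k_j,h_j}\Bigr].
\end{equation*}
Plugging this into the definition \eqref{eq_multi_cumulants} of $\K_{k_1,h_1;\dots;k_r,h_r}$ yields exactly the combinatorial expression \eqref{eq_cum_def} for the joint cumulant $\kappa_{(k_1,h_1),\dots,(k_r,h_r)}(N^k p^N_{k,h})$, by matching partition-sums term by term.

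There is no real obstacle here: the only subtlety is the innocuous observation that operators $\mathcal D_{k_j,h_j}$ with the same index $h_j$ still commute (they are all polynomials in the same commuting family of $\binom{N_{h}}{}$-shifted power sums), so the order of application in \eqref{eq_multi_cumulants} is irrelevant, and the eigenvalue factors multiply cleanly. Once this is noted, the proof reduces to a one-line application of the eigenrelation and the definition of joint cumulants.
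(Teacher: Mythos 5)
Your proposal is correct and follows exactly the paper's route: the paper proves this lemma in one line as a corollary of the eigenrelation $\mathcal D_{k,h}(s_\mu(\x^h))=\sum_{i=1}^{N}(\mu_i+N-i)^k\, s_\mu(\x^h)$, which is precisely the observation you spell out before applying the product of operators to $S_{\rho_N}$, setting all variables to $1$, and matching the partition sum with the definition of the joint cumulant.
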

\begin{proof}
 This is a corollary of the eigenrelarion $\D_{k,h} (s_\mu(\x^h))=\sum_{i=1}^N
 (\mu_i+N-i)^k s_\mu(\x^h)$.
\end{proof}

We start analyzing $\K_{k_1,h_1;\dots;k_r,h_r}$. By its very definition, the
expression is a large sum, where each term is obtained by several applications of
the operators $\D_{k,h}$ to $S_{\rho_N}$.

 As in the previous sections, when we need to differentiate $S_{\rho_N}$, we
use the formula
\begin{equation}
\label{eq_S_diff}
 \partial_i [S_{\rho_N}] = S_{\rho_N} \partial_i [\ln S_{\rho_N}],
\end{equation}
Therefore, $S_{\rho_N}$ is factored away in each step, and disappears when we plug
in all $x_i^h$ equal to $1$ in the end.

In this way, if we act with $\D_{k,h}$ only once, then we can write
\begin{equation}
\label{eq_act_once}
 \D_{k,h} S_{\rho_N} = A_{k,h} \cdot S_{\rho_N},
\end{equation}
where $A_{k,h}$ is a (symmetric in each $N$ variables $\x^h$, $h=1,\dots,H$) sum
involving $\ln S_{\rho_N}$, factors of the form $\frac{x^h_i}{x^h_i-x^h_j}$, and
various partial derivatives with respect to variables $x_i^h$, $i=1,\dots,N$.

Further, if we need to act with two operators $\D_{k_1,h_1} \cdot \D_{k_2,h_2}$,
then we first act with $\D_{k_2,h_2}$ as in \eqref{eq_act_once}, and then act with
$D_{k_1,h_1}$ on the result and expand into an even larger sum by using the Leibnitz
rule. There are two cases here: either some of the derivations from $D_{k_1,h_1}$
are applied to $A_{k_2,h_2}$, or all of them are applied to $S_{\rho_N}$, and then
the factor $A_{k_1,h_1}$ appears. It will be important for us to distinguish these
two cases, and we now develop a book-keeping for them.

In general situation, we apply sequentially $r$  operators $\D_{k_i,h_i}$ (starting
from $i=r$ and ending with $i=1$), use Leibnitz rule and expand the result into a
huge sum. Let us explain, what we mean by a \emph{term} in such sum.

As in Section \ref{Section_Highest_order}, we can write the operator $\D_{k,h}$ as a
sum of the differential operators of the form
\begin{equation}
\label{eq_D_k_expansion_multi}
  \frac{(x_i^h)^q  }{\prod_{j\in A}(x_i^h-x_j^h)} \left(\frac{\partial}{\partial_i^h}\right)^n,
\end{equation}
where $A$ is a finite set of indices. Expression \eqref{eq_D_k_expansion_multi}
involves $|A|+1$ different variables; what is important is that this number does not
grow with $N$. Since each $\D_{k,h}$ is actually symmetric in variables $\x^h$, the
expression \eqref{eq_D_k_expansion_multi} shows up together with all possible
permutations of the involved variables. Put it otherwise, $\D_{k,h}$ is a sum of
differential operators of the form
\begin{equation}
\label{eq_D_k_expansion_multi_sym}
  \Sym_{\{i\}\bigcup A} \frac{(x_i^h)^q  }{\prod_{j\in A}(x_i^h-x_j^h)}
  \left(\frac{\partial}{\partial_i^h}\right)^n,
\end{equation}
where $\Sym$ is the symmetrization operation, as in Section
\ref{Section_Highest_order}. Now take two functions $f$ and $g$, which are symmetric
in variables $\{x_j^h\}, j\in \{i\}\bigcup A$. The Leibnitz rule for
\eqref{eq_D_k_expansion_multi_sym} takes the form:
\begin{multline}
\label{eq_Symmetric_Leibnitz}
  \left(\Sym_{\{i\}\bigcup A} \frac{(x_i^h)^q  }{\prod_{j\in A}(x_i^h-x_j^h)}
  \left(\frac{\partial}{\partial_i^h}\right)^n\right) [fg]\\=\sum_{m=0}^n \Sym_{\{i\}\bigcup A}\left(
\frac{(x_i^h)^q  }{\prod_{j\in A}(x_i^h-x_j^h)} \cdot {n \choose m} \cdot
   \left( \frac{\partial}{\partial_i^h}\right)^m f  \cdot  \left(\frac{\partial}{\partial_i^h} \right)^{n-m} g \right),
\end{multline}
Note that we could also symmetrize over variables in an arbitrary larger set
$B\supset\{i\}\bigcup A$ instead of $\{i\}\bigcup A$ in
\eqref{eq_D_k_expansion_multi_sym}, \eqref{eq_Symmetric_Leibnitz}. Therefore, using
\eqref{eq_S_diff} for the derivations, the application of  $D_{k,h}  D_{k',h}$  to
$S_{\rho_N}$ reduces to a sum of the expressions of the form
\begin{multline}
\label{eq_Leibnitz_computations}
 \left(\Sym_{B} \frac{(x_i^h)^q  }{\prod_{j\in A}(x_i^h-x_j^h)}
  \left(\frac{\partial}{\partial_i^h}\right)^n\right)  \left(\Sym_{B} \frac{(x_{i'}^h)^{q'}  }{\prod_{j'\in A'}(x_{i'}^h-x_{j'}^h)}
  \left(\frac{\partial}{\partial_{i'}^h}\right)^{n'}\right) [S_{\rho_N}]
  \\ =  \left(\Sym_{B} \frac{(x_i^h)^q  }{\prod_{j\in A}(x_i^h-x_j^h)}
  \left(\frac{\partial}{\partial_i^h}\right)^n\right) \left[ \left(\Sym_{B} \frac{(x_{i'}^h)^{q'}  }{\prod_{j'\in A'}(x_{i'}^h-x_{j'}^h)}
   \frac{\left(\frac{\partial}{\partial_{i'}^h}\right)^{n'} S_{\rho_N}}{S_{\rho_N}} \right) \cdot S_{\rho_N} \right]
   \\ =\sum_{m=0}^n {n \choose m} \Sym_{B}\Biggl( \frac{(x_i^h)^q  }{\prod_{j\in A}(x_i^h-x_j^h)} \cdot  \left(\frac{\partial}{\partial_i^h}\right)^m \left(\Sym_{B} \frac{(x_{i'}^h)^{q'}  }{\prod_{j'\in A'}(x_{i'}^h-x_{j'}^h)}
   \frac{\left(\frac{\partial}{\partial_{i'}^h}\right)^n S_{\rho_N}}{S_{\rho_N}} \right)
   \\\cdot \left[\frac{\left(\frac{\partial}{\partial_i^h}\right)^{n-m} S_{\rho_N}}{S_{\rho_N}}\right] \cdot S_{\rho_N}
   \Biggr)
\end{multline}
where $B=\{i\}\bigcup A\bigcup \{i'\}\bigcup A'$. We observe that $D_{k,h} D_{k',h}
[S_{\rho_N}]$ therefore expanded into a large sum over the possible choices of $q$,
$q'$, $A$, $A'$, $n$, $n'$, and $m$. Further, if we study $D_{k,h}
D_{k',h'}[S_{\rho_N}]$ with $h\ne h'$, then the computation is very similar, only we
do not need to introduce additional symmetrization over $B$ --- it is enough to
symmetrize over sets $\{i\}\bigcup A$ and $\{i'\}\bigcup A'$, as the operators act
in disjoint variables.

Summing up, all we did was the application of the Leibnitz rule, with a technical
detail that we must symmetrize at every step --- as we saw in Section
\ref{Section_Highest_order}, the symmetrization is important to ensure the
well--defined values when we plug in all the variables equal to $1$.

In general, when we apply more than two operators $D_{k,h}$ to $S_{\rho_N}$, we do
the same: Leibnitz rule expansion with additional symmetrization on each step.
That's how we arrive at terms of the expansion. Our next aim is to assign to each
such term a certain graph.

\begin{definition}
 A \emph{coding graph} is a graph with $r$ vertices $V_1$, \dots, $V_r$.
 The vertex $V_i$ corresponds to the pair $(k_i,h_i)$ of
 $\K_{k_1,h_1;\dots;k_r,h_r}$. There are two types of edges: single and bold. The
 edges encode the derivations in a term of $\K_{k_1,h_1;\dots;k_r,h_r}$. We say that
 each edge is oriented from smaller index to larger index.
\end{definition}

Let us describe in details how a coding graph is constructed from a term. The
vertices are prefixed, while the edges show the dependencies in derivations: they
are constructed by an inductive in $r$ procedure. When $r=1$, then we have a
one-vertex graph without any edges.

There is one important property, which is preserved in the steps of the
construction. Suppose that a term of the form $T\cdot S_{\rho_N}$ corresponds to a
graph $G$. If $G$ can be split into two disconnected parts $G=G_1\bigsqcup G_2$,
with $G_1$ corresponding to vertices $\{(k_i,h_i)\}_{i\in I_1}$ and $G_2$
corresponding to vertices $\{(k_i,h_i)\}_{i\in I_2}$, then $T$ is factorized,
$T=T_1\cdot T_2$. Moreover, $T_1 S_{\rho_N}$ is one of the terms in the expansion of
$$
 \left[\prod_{i\in I_1} \D_{k_i,h_i}\right]\left[ S_{\rho_N}\right],
$$
and $T_2 S_{\rho_N}$ is one of the terms in the expansion of
$$
 \left[\prod_{i\in I_2} \D_{k_i,h_i}\right]\left[ S_{\rho_N}\right].
$$
Similarly, if $G$ is split into $q$ disconnected parts, then $T$ should be split
into $q$ factors.

Clearly, this property tautologically holds when $r=1$.

We proceed to the inductive description of the construction of the graph. After
$\D_{k_{i+1},h_{i+1}},\dots,\D_{k_r,h_r}$ were applied, we have an expression of the
form
$$
 F(\x^1,\dots,\x^H) S_{\rho_N},
$$
where $F$ is a (symmetric in each $N$ variables $\x^h$, $h=1,\dots,H$) expression,
which is a sum involving $\ln S_{\rho_N}$, factors of the form
$\frac{1}{x^h_i-x^h_j}$, and various partial derivatives. If we argue inductively,
then each term in $F$ already has a graph on vertices $i+1,\dots,r$ assigned to it.
So take one term, $T$, corresponding to a graph $G$ and apply $\D_{k_i,h_i}$ to $T
\cdot S_{\rho_N}$. We assume by induction hypothesis that $T$ has a factorized form,
with each term corresponding to a connected component of $G$.

The application of $\D_{k_i,h_i}$ in the form of the sum of terms
\eqref{eq_D_k_expansion_multi_sym} involves differentiations in variables from the
set $\x^{h_i}$. By the Leibnitz rule, as in the example of
\eqref{eq_Leibnitz_computations}, each derivative is applied to either of:
\begin{enumerate}
\item $S_{\rho_N}$
\item One of the factors of $T$ corresponding to a connected  component $\mathcal C$ of $G$.
\end{enumerate}
The first case does not lead to appearance of new edges in the constructed graph. On
the other hand, in the second case, we take $G$ and add to it edges joining the
$i$th vertex $(k_i,h_i)$ with each vertex of the connected component $\mathcal C$.
It is immediate to see that after this is done, the factorized form with respect to
a new graph on vertices $\{i,i+1,\dots,r\}$ still holds.

It remains to specify whether new edges are single or bold. For each connected
component $\mathcal C$, the edges (if any) joining the $i$th vertex $(k_i,h_i)$ with
this connected component are of the same type. The single edges arise when we
differentiate an expression in a set of variables with respect to a new variable.
For instance, if we have $\partial_1 \ln S_{\rho_N}$ and then apply $\partial_2$ to
it. The bold edges arise when we instead differentiate an expression in a set of
variables with respect to a variable already present in it. For instance, if we
apply $\partial_1$ to $\partial_1 \ln S_{\rho_N}$ or to $\frac{\partial_2\ln
S_{\rho_N}}{x_1-x_2}$.

This finishes the description of the inductive procedure. Let us remark, that
although a graph describes some features of the corresponding term, but this
description is not complete, i.e.\ there might be lots of different terms
corresponding to the same graphs. In particular, sometimes these terms cancel out.

\begin{lemma} \label{Lemma_only_connected}
 After we expand $\K_{k_1,h_1;\dots;k_r,h_r}$ into a sum using
 the above procedure and collect the terms, only those corresponding to connected
 graphs give non-zero contribution.
\end{lemma}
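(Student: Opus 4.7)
The proof is an instance of the standard ``cumulants kill disconnected contributions'' argument, adapted to the graph-coding bookkeeping introduced above. The plan is to rewrite $\K_{k_1,h_1;\dots;k_r,h_r}$ as a single sum over coding graphs on $\{1,\dots,r\}$, and then invoke a classical Möbius-type identity on the partition lattice to show that every disconnected graph is killed.

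First I would verify the factorization property that is already implicit in the inductive construction of the coding graph. Namely, for any subset $U\subseteq\{1,\dots,r\}$, expanding $\bigl(\prod_{j\in U}\D_{k_j,h_j}\bigr)[S_{\rho_N}]$ by the Leibnitz-plus-symmetrization procedure yields a sum $\sum_{G} T_G\cdot S_{\rho_N}$ over coding graphs $G$ on vertex set $U$. If $G$ splits as a disjoint union $G=G_1\sqcup\cdots\sqcup G_s$ of connected components supported on blocks $C_1,\dots,C_s$ of $U$, then by construction $T_G=\prod_{\alpha=1}^s T_{G_\alpha}$, and each $T_{G_\alpha}$ is exactly one of the terms that arises when the same coding procedure is applied to $\bigl(\prod_{j\in C_\alpha}\D_{k_j,h_j}\bigr)[S_{\rho_N}]/S_{\rho_N}$. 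This gives the key identity, valid for every partition $\pi$ of $\{1,\dots,r\}$:
\begin{equation}
\label{eq_plan_factor}
\prod_{U\in\pi}\left(\prod_{j\in U}\D_{k_j,h_j}\right)[S_{\rho_N}]\Biggr|_{x_1^1=\dots=x_N^H=1}
=\sum_{G:\,\pi_G\leq\pi} T_G\bigr|_{x_1^1=\dots=x_N^H=1},
\end{equation}
where $\pi_G$ denotes the partition of $\{1,\dots,r\}$ into connected components of $G$ and $\leq$ is refinement.

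Substituting \eqref{eq_plan_factor} into \eqref{eq_multi_cumulants} and exchanging the two summations yields
\[
\K_{k_1,h_1;\dots;k_r,h_r}=\sum_G T_G\bigr|_{x_1^1=\dots=x_N^H=1}\,\cdot\,\sum_{\pi\in\Theta_r,\ \pi\geq\pi_G}(-1)^{|\pi|-1}(|\pi|-1)!,
\]
where the outer sum is over all coding graphs $G$ on $\{1,\dots,r\}$. The classical identity
\[
\sum_{\pi\in\Theta_r,\ \pi\geq\pi_0}(-1)^{|\pi|-1}(|\pi|-1)!=\begin{cases}1,& \pi_0=\{\{1,\dots,r\}\},\\ 0,& \text{otherwise},\end{cases}
\]
(proved by a short Möbius inversion on the partition lattice, or by direct induction on the number of blocks of $\pi_0$) immediately implies that only coding graphs $G$ with $\pi_G$ equal to the one-block partition — i.e., connected graphs — contribute, which is the desired conclusion.

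The only genuinely non-routine step is justifying \eqref{eq_plan_factor}, since the coding procedure involves symmetrization sets $B$ that in principle can span different connected components. The point I would emphasize is that the edges in the coding graph are added only between the newly applied vertex $(k_i,h_i)$ and those components of the already-built graph whose expression is actually being differentiated; in particular, no edge is ever drawn between two pre-existing components, so a disconnected graph $G$ on $U$ is produced only from terms in which $\D_{k_j,h_j}$'s assigned to different components of $G$ act in ``independent'' variable groups, and the resulting $T_G$ factorizes accordingly. Once this is spelled out carefully, the bijection between global coding terms with $\pi_G\leq\pi$ and tuples of block-restricted coding terms is immediate, and the lemma follows.
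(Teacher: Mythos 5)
Your proposal is correct and follows essentially the same route as the paper: the factorization of a term over the connected components of its coding graph (which the paper builds into the inductive graph construction and you re-verify as identity \eqref{eq_plan_factor}) combined with the vanishing of $\sum_{\pi\ge\pi_G}(-1)^{|\pi|-1}(|\pi|-1)!$ when $\pi_G$ has more than one block, which the paper phrases as the higher cumulant of the constant random variable $1$ and you prove by M\"obius inversion --- the same identity.
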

\begin{proof}
 Indeed, take any term corresponding to a graph with connected components
 $V_1,\dots,V_q$. Then in the sum \eqref{eq_multi_cumulants}, we can get such a
 graph from any set--partition $U_1,\dots,U_t$, whose sets are unions of the sets
 $V_1,\dots,V_q$. Therefore, the total coefficient of the terms corresponding to
 this graph is
 $$
  \sum_{\begin{smallmatrix} \text{set partitions }S_1,\dots,S_m \\ \text{of
  }\{1,\dots,q\}\end{smallmatrix}} (-1)^{m-1} (m-1)!.
 $$
 If $q>1$, then this sum is zero, as this is an expression for the $q$--th cumulant
 of the random variable identically equal to $1$.
\end{proof}

\begin{proof}[Proof of Proposition \ref{Proposition_multi_Gauss_direct}]
We fix $r>2$ and prove that $\K_{k_1,h_1;\dots;k_r,h_r}=o(N^{\sum_{i=1}^r k_r})$ as
$N\to\infty$. We proceed by taking a term corresponding to a connected graph $G$ and
analyze its contribution. The estimate $o(N^{\sum_{i=1}^r k_r})$ is obtained by
combining a combinatorial enumeration of the number of terms corresponding to $G$
with bounds on partial derivatives in Definition \ref{Def_appropriate_multi}.
 We remark that due to the arguments of Section \ref{Section_Highest_order},
 we already know that $\K_{k_1,h_1;\dots;k_r,h_r}$ is a combination of products of
 partial derivatives of $\ln(S_{\rho_N})$.

Note that $G$ has at least $r-1$ edges and at least one leaf (as an oriented graph,
i.e.\ leaf is a vertex with no outgoing edges).

Also, let us assign to a term a new number $g$, which is the number of times that we
differentiated $S_{\rho_N}$ (i.e.\ we had a term of the form $T\cdot S_{\rho_N}$ and
in the next derivation we differentiated $S_{\rho_N}$ rather than $T$). This number
is split between the vertices, $g=\sum_{i=1}^r g_i$, where $g_i$ means the number of
times that we differentiated $S_{\rho_N}$ when applying $D_{k_i,h_i}$.

Note that when applying $\D_{k_r,h_r}$, we need to differentiate $S_{\rho_N}$ at
least once. Otherwise, after symmetrization we get ${\rm const} \cdot S_{\rho_N}$
and on the next differentiation, this constant would lead to vanishing contribution.
Therefore, $g_r\ge 1$. The same argument shows that if $(k_m,h_m)$ is a leaf of $G$,
then $g_m\ge 1$.

Let us now consider the most \emph{basic} case of a graph $G$ with all single edges
and $g=1$. Then the single derivation of $S_{\rho_N}$ needs then to happen at a
single leaf of this graph, which is the last vertex, $(k_r,h_r)$. Since there are no
other leaves (and due to the way the graph is being constructed), $G$ is then the
complete graph on $r$ vertices. In this case, the term corresponding to this graph
involves at most $\sum_{i=1}^r k_i$ different variables $x^h_i$. Indeed, the leaf
gives at most $k_r$ different variables, as we need to differentiate $\ln
S_{\rho_N}$ once, and remaining differentiations can be applied to $\prod_{a<b}
(x^{h_r}_a-x^{h_r}_b)$ leading to a new variable each. Each other vertex
$(k_i,h_i)$, $i<r$ leads to at most $k_i$ new variables appearing, we need to
differentiate something other that $\prod_{a<b}(x_a^{h_i}-x_b^{h_i})$ at least once
(as there should be an outgoing edge from this vertex). Choosing these $\sum_{i=1}^r
k_i$ variables in all possible ways leads to a combinatorial factor
$O(N^{{\sum_{i=1}^r k_i}})$. Further, as all edges are single, and there are at
least $2$ of them (since $r>2$), we will differentiate in at least three distinct
variables. The conclusion is that we have $O(N^{{\sum_{i=1}^r k_i}})$ terms, each of
which is a combination of partial derivatives of case \ref{ass_multi_3} in
Definition \ref{Def_appropriate_multi}. Therefore, the total contribution of such
graph is $o(N^{\sum_{i=1}^r k_i})$, as desired.

\begin{figure}[h]
\begin{center}
 {\scalebox{1.2}{\includegraphics{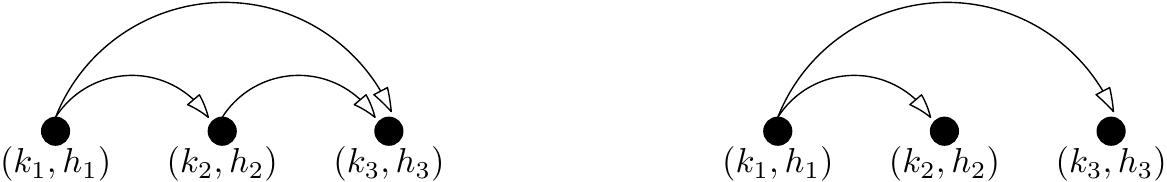}}}
 \caption{When $r=3$, two different connected graphs with all single edges are
 possible in our construction: with one leaf $(k_3,h_3)$, as in the left panel, and with two leaves
 $(k_2,h_2)$ and $(k_3,h_3)$, as in the right panel.
 \label{Figure_graphs}}
\end{center}
\end{figure}

A bit more general \emph{basic} graph has $m$ leaves and $g=m$, with each leaf
corresponding to $g_i=1$, i.e.\ single derivation of $S_{\rho_N}$, cf.\ Figure
\ref{Figure_graphs}. We still assume that all edges are single. Suppose that $m>1$,
as $m=1$ has been already discussed above. The combinatorial coefficient in this
case is $O(N^{\sum_{i=1}^r k_i}-1)$. Indeed, we again count the different variables
as in $g=1$ case. The new feature is that we need at some point (in the inductive
construction of the graph $G$) to joint a connected component of the vertex
$(k_r,h_r)$ with connected component of each other leaf. When we connect two
components, we differentiate the factors coming from them with respect to the same
variable, and therefore, the total number of variables decreases (at least) by $1$.

On the other hand, each term is a product of partial derivatives of cases
\ref{ass_multi_2} and \ref{ass_multi_3} in Definition \ref{Def_appropriate_multi},
with a partial derivative of case \ref{ass_multi_3}. Therefore, the total
contribution is again $o(N^{\sum_{i=1}^r k_i})$.

More general graphs are obtained from the basic ones by two operations: turning a
single edge into a bold edge, or increasing one of $g_i$ by $1$. Let us explain that
each operation does not change the conclusion of total $o(N^{\sum_{i=1}^r k_i})$
contribution. If we increase $g_i$, then a new partial derivative of $\ln
S_{\rho_N}$ appears; a priori it might be of order $N$, if we fit into the case
\ref{ass_multi_1} of Definition \ref{Definition_CLT_multi}. However, increasing
$g_i$ also means that the total number of variables in our term is decreased by $1$.
Therefore, the combinatorial factor is divided by $N$, compensating the partial
derivative.

Similarly, if we turn a single edge into a bold edge, then we can jump from case
\ref{ass_multi_3} to case \ref{ass_multi_2}, or from case \ref{ass_multi_2} to case
\ref{ass_multi_1} of Definition \ref{Definition_CLT_multi}, thus increasing the
order by $N$. Simultaneously, the total number of variables decreases by $1$, and
therefore, the combinatorial factor is divided by $N$. We conclude that the
contribution is still $o(N^{\sum_{i=1}^r k_i})$.
\end{proof}

\begin{proposition} \label{Proposition_multi_covariance_direct} Suppose that smooth measures $\rho_N$ on $\GT_N^H$ are
CLT--appropriate, then they satisfy condition \ref{ass_multi_2} of Definition
\ref{Definition_CLT_multi}.
\end{proposition}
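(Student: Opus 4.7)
The plan is to adapt the coding-graph machinery developed in the proof of Proposition \ref{Proposition_multi_Gauss_direct} to the case $r=2$, and then match the resulting leading term against the explicit formula \eqref{eq_CLT_multi_formula}. By Lemma \ref{Lemma_only_connected}, only connected coding graphs contribute to $\K_{k_1,h_1;k_2,h_2}$. With only two vertices, every such graph consists of a single edge, which may be single or bold, together with a choice of non-negative integers $g_1,g_2$ with $g_2\ge 1$ recording how often $S_{\rho_N}$ is differentiated at each vertex.

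The \emph{basic} case is a single edge with $g_1=0$, $g_2=1$: the outer operator $\mathcal D_{k_1,h_1}$ acts on the rational expression produced by $\mathcal D_{k_2,h_2}$ rather than on $S_{\rho_N}$ itself. A direct variable-count identical to the one in Proposition \ref{Proposition_multi_Gauss_direct} shows that this basic contribution has combinatorial prefactor of order $N^{k_1+k_2}$, whereas every modification of the graph --- turning the single edge into a bold edge, or incrementing $g_1$ or $g_2$ --- costs an extra factor of $N$ that is not compensated by the corresponding upgrade from case \ref{exp_multi_3} to case \ref{exp_multi_2}, or from case \ref{exp_multi_2} to case \ref{exp_multi_1}, of Definition \ref{Def_appropriate_multi}. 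Consequently all non-basic terms contribute $o(N^{k_1+k_2})$.

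For the basic contribution itself two subcases arise. When $h_1=h_2$, setting $x_i^f=1$ for $f\ne h_1$ before applying any derivatives turns $S_{\rho_L}$ into the one-dimensional Schur generating function of the marginal $\lambda^{h_1}$, so the leading term is delivered by the one-dimensional Theorem \ref{Theorem_main} (equivalently by \cite[Theorem 2.8]{BG_CLT}), yielding exactly \eqref{eq_CLT_multi_formula} including the $(z-w)^{-2}$ piece. When $h_1\ne h_2$ the two operators act on disjoint blocks of variables, so the symmetrization of Section \ref{Section_Highest_order} never produces any $1/(x_i^{h_1}-x_j^{h_2})$ singularity, the $(z-w)^{-2}$ piece is absent (matching $\delta_{h_1=h_2}$), and only the mixed partial derivatives $\d_{a,h_1;b,h_2}$ survive at leading order. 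A Lagrange-inversion computation identical in form to the one used for \eqref{eq_CLT_1_formula} then identifies the leading coefficient with the $\d_{a,h_1;b,h_2}$ term of \eqref{eq_CLT_multi_formula}.

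The main obstacle, as in the Gaussianity proposition, is bookkeeping: one must carry out the Leibnitz expansion together with the symmetrizations of Section \ref{Section_Highest_order} carefully enough to tame the singular factors $1/(x_i^h-x_j^h)$ before specializing to $1^N$, and to track precisely which derivatives of $\ln S_{\rho_N}$ can deliver the maximal $O(N)$ contribution from case \ref{exp_multi_1} versus the $O(1)$ contributions from cases \ref{exp_multi_2} and \ref{exp_multi_3}. Once this accounting is in place, the extraction of the explicit generating-function identity \eqref{eq_CLT_multi_formula} is essentially a repackaging of the one-dimensional computation of \cite{BG_CLT}.
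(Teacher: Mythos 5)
Your reduction of the case $h_1=h_2$ to the marginal measure and Theorem \ref{Theorem_main} is what the paper does, and your observation that for $h_1\ne h_2$ a bold edge is impossible because the two operators act on disjoint sets of variables is also the paper's key point. The genuine problem is your negligibility claim: it is false that turning the single edge into a bold edge or incrementing $g_1,g_2$ ``costs an extra factor of $N$ that is not compensated,'' so that all non-basic terms are $o(N^{k_1+k_2})$. The compensation mechanism in the proof of Proposition \ref{Proposition_multi_Gauss_direct} works the other way around: increasing $g_i$ divides the combinatorial factor by $N$ but creates a new derivative of $\ln S_{\rho_N}$ which may be of order $N$ by case (1) of Definition \ref{Def_appropriate_multi}, and a bold edge likewise trades one factor of $N$ in the variable count for an upgrade of the derivative by one order of $N$; both modifications change a term by an $O(1)$ multiplicative factor, not by $o(1)$. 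For $r>2$ this is harmless because the basic term is already $o(N^{\sum_i k_i})$, but for $r=2$ the basic term has exact order $N^{k_1+k_2}$, and so do the modified ones. The target formula itself shows this: the prefactors $\bigl((1+z)(\tfrac1z+\sum_a \c_{a,h_1}z^{a-1}/(a-1)!)\bigr)^{k_1}$ in \eqref{eq_CLT_multi_formula} come precisely from terms with larger $g_i$ (extra differentiations of $S_{\rho_N}$ producing case-(1) derivatives), and for $h_1=h_2$ the $\delta_{h_1=h_2}\bigl(\sum_a z^a/w^{1+a}\bigr)^2$ term comes precisely from the bold edge --- the very terms you discard. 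If only your basic term survived, the limit would have no $\c$-dependence at all, contradicting \eqref{eq_CLT_multi_formula}; indeed your own remark that the $h_1=h_2$ case produces the $(z-w)^{-2}$ piece is inconsistent with declaring bold-edge terms negligible.

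Consequently, for $h_1\ne h_2$ you cannot finish by a Lagrange-inversion manipulation of the single basic term: one must carry out the full leading-order computation over all single-edge terms with arbitrary $g_1,g_2$ and arbitrary derivative orders at each vertex. This is exactly the computation of \cite[Sections 6.1, 6.2]{BG_CLT} that the paper invokes at this point; the only simplification available here is that the bold edge, which produced the $1/(z-w)^2$ term there, cannot occur when $h_1\ne h_2$, which is why that term is absent from the covariance. With the erroneous negligibility claim replaced by this computation (or by the citation), the rest of your architecture --- Lemma \ref{Lemma_only_connected}, the two-vertex graph, and the dichotomy $h_1=h_2$ versus $h_1\ne h_2$ --- coincides with the paper's proof.
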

\begin{proof}
 The case $h_1=h_2$ is a particular case of Theorem \ref{Theorem_main}, and
 therefore we do not deal with it and assume $h_1\ne h_2$. Lemma
 \ref{Lemma_only_connected} implies that we need to deal with terms corresponding to
 the connected graph on two vertices. Note that since $h_1\ne h_2$, the edge joining
 these vertices can not be bold (as the operators act in distinct sets of variables).
 For the graph with single edge, the computation still needs efforts, but it is the same
  as in \cite[Sections 6.1, 6.2]{BG_CLT}.
 (the bold edge produced the term involving $1/(z-w)^2$ there, which no longer appears in our last case).
\end{proof}

\subsection{Theorem \ref{Theorem_main_multi}, Part 2: CLT implies being appropriate}
\label{Section_Th_Multi_2} In this section we prove the second implication of
Theorem \ref{Theorem_main_multi} by showing that CLT implies being CLT--appropriate.
The proof shares all key ideas with that of Theorem \ref{Theorem_main}, so we omit
some details. As in Section \ref{Section_Th_Multi_1} we restrict ourselves to the
case $L=N_1=\dots=N_H=N$ in order to shorten the notations.

We argue inductively, proving the statements of \eqref{exp_multi_1},
\eqref{exp_multi_2}, \eqref{exp_multi_3} of Definition \ref{Def_appropriate_multi}
for all partial derivatives of order $R$, given that for all orders up to $R-1$ it
was already proven.

What we know is that
\begin{equation}
\label{eq_LLN_multi}
 \mathcal P_{k,h}= N^{k+1} \mathfrak p (k,h) + o(N^{k+1}),
\end{equation}
\begin{equation}
\label{eq_CLT_cov_multi}
 \K_{(k_1,h_1),(k_2,h_2)}=\cov(k_1,h_1;\, k_2,h_2) N^{k_1+k_2} +
 o(N^{k_1+k_2}),
\end{equation}
\begin{equation}
\label{eq_CLT_cum_multi}
 \K_{(k_1,h_1),\dots,(k_r,h_r)}= o(N^{k_1+\dots+k_r}), \quad r>2
\end{equation}
as $N\to\infty$. As in the proof of Theorem \ref{Theorem_main}, the idea of what
follows is to treat \eqref{eq_LLN_multi}, \eqref{eq_CLT_cov_multi},
\eqref{eq_CLT_cum_multi} as a system of linear equations on partial derivatives.

\bigskip

Let us take an $r$--tuple $(k_1,h_1),\dots,(k_r,h_r)$ and assign to it $H$
partitions $\mu^{(1)},\dots,\mu^{(H)}$. The parts of $\mu^{(h)}$ are all $k_i$ such
that $h_i=h$. Note that $r$--tuples corresponding to the same partitions
$\mu^{(1)},\dots,\mu^{(H)}$ give essentially the same
$\K_{(k_1,h_1),\dots,(k_r,h_r)}$ -- the only freedom is the order of applying the
operators $D_{h,r}$, which is irrelevant, as the operators commute; we can agree on
some order, e.g.\ by imposing the condition that $k_i$ and $h_i$ weakly grow with
$i$. Set $R_h=|\mu^{(h)}|=\sum_{i=1}^{\infty} \mu^{(h)}_i$, $R=\sum_{h=1}^H R_h$ is
the \emph{order} of $\mu$ or of $(k_1,h_1),\dots,(k_r,h_r)$.

We can similarly encode various partial derivatives of $\ln(S_{\rho_N})$ by
$H$--tuples of partitions. Given such $H$--tuple
$\lambda=(\lambda^{(1)},\lambda^{(2)},\dots,\lambda^{(H)})$, we consider the partial
derivative
$$
 \partial_{\lambda}[\ln S_{\rho_N}]:=\left(\prod_{h=1}^{H} \prod_{i=1}^{\infty} \left(\frac{\partial}{\partial
 x^h_i}\right)^{\lambda^{(h)}_i}\right) [\ln S_{\rho_N}]\Bigr|_{x_1^1=\dots=x^H_N=1}.
$$
Due to symmetry of $\ln(S_{\rho_N})$, all possible partial derivatives are encoded
in this way. $\sum_{h=1}^H \sum_{i=1}^\infty \lambda^{(H)}_i$ is the order of the
partial derivative.

\begin{theorem}
\label{Theorem_multi_expansion}
 Take an $r$--tuple $(k_1,h_1),\dots,(k_r,h_r)$ corresponding to $H$ partitions
 $\mu=(\mu^{(1)},\dots,\mu^{(H)})$ of order $R$. Then
 $\K_{(k_1,h_1),\dots,(k_r,h_r)}$ or $\mathcal P_{k_1,h_1}$ (if $r=1$) is a sum of partial
 derivatives of $\ln S_{\rho_N}$ of order up to $R$. The highest (i.e.\ $R$th) order terms
 are given by the following expression
\begin{equation}
\label{eq_multi_expansion}
 N^{R} \sum_{\lambda \le \mu} \alpha_{\mu}(\lambda) \partial_{\lambda} [\ln S_{\rho_N}]
 + O(N^{R-1}),
\end{equation}
where the summation goes over all $H$--tuples of partitions
$\lambda=(\lambda^{(1)},\dots,\lambda^{(H)})$ such that
$|\lambda^{(h)}|=|\mu^{(h)}|$ and $\lambda^{(h)}\preceq \mu^{(h)}$ in the dominance
order for each $h=1,\dots,H$. $\alpha_{\mu}(\lambda)$ are some coefficients
(independent of $N$) and $O(N^{R-1})$ is a finite linear combination of other
partial derivatives of $\ln S_{\rho_N}$ of orders up to $R$ and with all
coefficients being smaller in the absolute value than ${\rm const} \cdot N^{R-1}$.
Finally, $\alpha_{\mu}(\mu)\ne 0$.
\end{theorem}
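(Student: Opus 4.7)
The plan is to extend the proofs of Theorems \ref{Theorem_P} and \ref{Theorem_M} to the multi-dimensional setting, leveraging the fact that the operators $\mathcal{D}_{k_i, h_i}$ with distinct $h$-indices act on disjoint variable sets $\x^{h_i}$, while operators sharing an $h$-index interact exactly as in the one-dimensional case.

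First I would establish the multi-dimensional analogue of Proposition \ref{Proposition_Product_as_sum}: that $\left(\prod_{a=1}^r \mathcal{D}_{k_a, h_a}\right)[S_{\rho_N}]\big|_{x_1^1 = \cdots = x_N^H = 1}$ is a finite, $N$-independent linear combination of products of power series coefficients of $\ln S_{\rho_N}$ at the all-ones point, with coefficients polynomial in $N$, and with maximum total derivative order $R$. The proof repeats the one-dimensional argument: expand each $\mathcal{D}_{k_a, h_a}$ as a sum of symmetrized terms of the form \eqref{eq_D_k_expansion_multi_sym}, then apply the Leibnitz rule \eqref{eq_Symmetric_Leibnitz} iteratively. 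Claims 1 and 2 from the proof of Proposition \ref{Proposition_P_as_sum} apply within each variable set $\x^h$ separately, giving the required polynomiality and order bound; the case $r=1$ for $\mathcal P_{k_1,h_1}$ is a direct application to the restriction of $S_{\rho_N}$ to the block $\x^{h_1}$.

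Next I would extract the highest-order component. The multi-dimensional analogue of Lemma \ref{Lemma_highest_terms} forces that, in order to reach order $R$, one performs exactly one derivative of $S_{\rho_N}$ at each leaf of the coding graph, never creates a multiplication of power series in the expansion \eqref{eq_x8}, and keeps $\nu_0 > 0$ at non-leaf steps. The leading $N$-power arises from maximizing the number of distinct variables involved. Because operators with different $h$-indices act on disjoint variable sets, this maximum factors over $h$: within each $h$, the sets $\mathcal M_a$ for $a$ with $h_a = h$ must be disjoint subsets of $\{1, \ldots, N\}$ with $|\mathcal M_a| = k_a$, contributing $N^{|\mu^{(h)}|}$ and, together, the overall $N^R$. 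The leading term then takes the form of \eqref{eq_x14} carried out separately within each block $\x^h$, yielding a partial derivative of $\ln S_{\rho_N}$ indexed by an $H$-tuple $\lambda$ with $|\lambda^{(h)}| = |\mu^{(h)}|$. Within each block the argument of Theorem \ref{Theorem_M} applies verbatim: the $r_h := \#\{a : h_a = h\}$ differential factors acting in $\x^h$ produce a partition $\lambda^{(h)}$ built by coalescing the parts $\{k_a : h_a = h\}$, which assemble to $\mu^{(h)}$, whence $\lambda^{(h)} \preceq \mu^{(h)}$. Non-vanishing $\alpha_{\mu}(\mu) \neq 0$ follows block-wise from \cite[Lemma 5.5]{BG}.

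The main obstacle will be verifying cleanly that cross-$h$ interactions do not spoil the block-wise dominance. Such interactions can only occur through the single function $\ln S_{\rho_N}$, since the Vandermonde factors $V(\x^h)$ live in separate variable blocks; but they necessarily produce multiplications of derivative factors in \eqref{eq_x8}, which by the multi-dimensional analogue of Lemma \ref{Lemma_highest_terms} push the term into strictly lower $N$-order. Formalizing this \emph{no cross-talk at leading order} claim, rather than merely invoking it, is where most of the real work sits; once it is in place, the dominance ordering $\lambda^{(h)} \preceq \mu^{(h)}$ and the non-vanishing of $\alpha_\mu(\mu)$ follow block-by-block from the one-dimensional case already handled in Theorem \ref{Theorem_M}.
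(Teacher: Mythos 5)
Your proposal is correct and follows essentially the same route as the paper, whose proof of Theorem \ref{Theorem_multi_expansion} simply runs the arguments of Theorems \ref{Theorem_P} and \ref{Theorem_M} blockwise, using that operators with distinct $h$ act on disjoint variable sets. One caution on wording: your ``no cross-talk at leading order'' claim should be understood as ``no products of several derivative factors of $\ln S_{\rho_N}$'' (the analogue of Lemma \ref{Lemma_highest_terms}), since single \emph{mixed} derivatives across blocks, e.g.\ $\partial_{x^{h_1}_1}\partial_{x^{h_2}_1}\ln S_{\rho_N}$, do survive at order $N^R$ and in fact carry the nonvanishing coefficient $\alpha_{\mu}(\mu)$ whenever $\mu$ has nonempty components in more than one block.
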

\begin{proof}
 The proof is the same as that of Theorems \ref{Theorem_P}, \ref{Theorem_M}.
\end{proof}

We now finish the proof of Theorem \ref{Theorem_main_multi} by showing that if
$\rho_N$ satisfy CLT as $N\to\infty$ (recall that we assumed $N=L$), then the
conditions \eqref{exp_multi_1}, \eqref{exp_multi_2}, \eqref{exp_multi_3} of
Definition \ref{Def_appropriate_multi} hold.

First, note that each of the $H$ components of $\rho_N$--random element of $\GT_N^H$
itself satisfies a CLT in the sense of Definition \ref{Def_CLT}. Therefore, we can
invoke Theorem \ref{Theorem_main}. This implies that condition \eqref{exp_multi_1}
of Definition \ref{Def_appropriate_multi} holds, and moreover, the conditions
\eqref{exp_multi_2}, \eqref{exp_multi_3} hold for partial derivatives parameterized
by $H$--tuples of partitions $\lambda=(\lambda^{(1)},\dots,\lambda^{(H)})$ such that
all but one partition are empty.

Consider all possible $H$--tuples of partitions
$\lambda=(\lambda^{(1)},\dots,\lambda^{(H)})$ of order $R$ except for those tuples,
which have $\lambda^{(h)}=(R,0,\dots)$ for some $h=1,\dots,H$, and all other
partitions empty. Let us denote the set of all such $H$--tuples through $\Omega_R$.
Consider $|\Omega_R|$--dimensional vector $\mathbf v$, whose coordinates should be
though of as partial derivatives $\partial_\lambda \ln S_{\rho_N}$, $\lambda\in
\Omega_R$.

Further, for each $\mu\in \Omega_R$ consider an affine-linear expression in $\mathbf
v$, given by the expansion of $N^{-R}\K_{(k_1,h_1),\dots,(k_r,h_r)}$, corresponding
to $\mu$, as in \eqref{eq_multi_expansion}. By \eqref{eq_CLT_cov_multi},
\eqref{eq_CLT_cum_multi}, we know an asymptotic behavior of
$N^{-R}\K_{(k_1,h_1),\dots,(k_r,h_r)}$. Therefore, we get $|\Omega_R|$ linear
equations on the vector $\mathbf v$. According to Theorem
\ref{Theorem_multi_expansion} the equations can be compactly written as
\begin{equation}
\label{eq_multi_defining_equation} (A+N^{-1} B) \mathbf v = \mathbf z + \eps +
\mathbf o^{R-1}+ \mathbf o',
\end{equation}
where $\mathbf z$ represents the limits of renormalized right--hand sides of
 \eqref{eq_CLT_cov_multi}, \eqref{eq_CLT_cum_multi}, $\eps$ is the remainder
$o(1)$, $N\to\infty$, in these right--hand sides, $o^{R-1}$ is the contribution of
partial derivatives of $\ln(S_{\rho_N})$ of orders up to $R-1$ in the expansion
\eqref{eq_multi_expansion}, and $\mathbf o'$ is the contribution of partial
derivatives $(\partial/\partial x_1^h)^R\ln S_{\rho_N}$, $h=1,\dots,H$. $A$ is an
independent of $N$ triangular $|\Omega_R|\times |\Omega_R|$ matrix with respect to
the component-wise dominance order, as in Theorem \ref{Theorem_multi_expansion},
with non-zero diagonal elements. $B$ might depend on $N$, but its matrix elements
remain uniformly bounded as $N\to\infty$. It follows that $(A+N^{-1}B)$ is
invertible for large $N$ and its inverse is uniformly bounded.

At this point we follow the argument in Section \ref{Section_proof_of_inversion}.
The part of Theorem \ref{Theorem_main_multi} proven in Section
\ref{Section_Th_Multi_1} implies the existence of a vector $\mathbf v_0$, satisfying
the asymptotics of \eqref{exp_multi_2}, \eqref{exp_multi_3} and such that
\begin{equation}
\label{eq_multi_defining_equation_2} (A+N^{-1} B) \mathbf v_0 = \mathbf z + \eps' +
\mathbf o^{R-1}+ \mathbf o',
\end{equation}
for another $\eps'$ tending to $0$ as $N\to\infty$. Subtracting
\eqref{eq_multi_defining_equation} from \eqref{eq_multi_defining_equation_2} and
using the invertibility of $(A+N^{-1}B)$ we conclude that the difference $\mathbf
v-\mathbf v_0$ tends to $0$ as $N\to\infty$, which finishes the proof.

\subsection{Extension to multilevel setting.}

\label{Section_CLT_extension}

We give here a slight extension of the first implication of Theorem
\ref{Theorem_main_multi}.

Let $0<a_1 \le a_2 \le \dots \le a_M=1$ be fixed positive reals, and let $\rho_{N}$
be a probability measure on $\GT_{N}$. For each $m=1,\dots,M-1$ take a symmetric
function $g_m(x_1,\dots,x_{\lfloor a_m N \rfloor})$ in $\lfloor a_m N \rfloor$
variables. We will further assume that as $N\to\infty$, $g_m$ are CLT--appropriate
Schur generating functions in the sense of Definition \ref{Def_CLT_appropriate} of
some smooth probability measures.

For $\la \in \GT_{\lfloor a_m N\rfloor }, \mu \in \GT_{\lfloor a_{m-1} N\rfloor}$,
with $k_1 \ge k_2$, let us introduce the coefficients ${\mathfrak p}_{m \to m-1}
(\la \to \mu)$ via
\begin{multline}
\label{eq:Schur-branching-pr-coef} \frac{s_{\lambda} (x_1, \dots, x_{\lfloor
a_{m-1}N\rfloor}, 1^{\lfloor a_{m}N\rfloor-\lfloor a_{m-1}N\rfloor})}{s_{\la}
(1^{\lfloor a_{m}N\rfloor})} \cdot g_{m-1}(x_1,\dots,x_{\lfloor a_{m-1}N\rfloor})
\\ = \sum_{\mu \in \GT_{\lfloor a_{m-1} N\rfloor}} \mathfrak p_{m \to m-1} (\la \to \mu) \frac{s_{\mu}
(x_1, \dots, x_{\lfloor a_{m-1} N\rfloor})}{s_{\mu} (1^{\lfloor a_{m-1} N\rfloor})}.
\end{multline}
The branching rule for Schur functions and the combinatorial formulas for the
Littlewood--Richardson coefficients assert that the coefficients $\mathfrak p_{m \to
m-1} (\la \to \mu)$ are non-negative for all $\la$, $\mu$ (see \cite[Chapter
I]{Mac}). Plugging in $x_1=\dots=x_{\lfloor a_{m-1} N\rfloor}=1$, we see that
$\sum_{\mu \in \GT_{\lfloor a_{m-1}N\rfloor}} \mathfrak{p}_{m \to m-1} (\la \to
\mu)=1$.

Let us introduce the probability measure on the set $\GT_{\lfloor a_1 N\rfloor}
\times \GT_{\lfloor a_2 N\rfloor} \times \dots \times \GT_{\lfloor a_M N\rfloor}$
via
\begin{equation}
\label{eq:def-projections-general} \mathrm{Prob} ( \la{[1]}, \la{[2]}, \dots,
\la{[M]} ) := \rho ( \la{[M]} ) \prod_{m=1}^{M-1} \mathfrak p_{m+1  \to m } (
\la[m+1] \to \la[m]).
\end{equation}
(the fact that all these weights are summed up to $1$ can be straightforwardly
checked by induction over $M$.) In words, $\lambda[m]$, $m=M,M-1,\dots,1$ is a
Markov chain with initial distribution $\rho$ and transition probabilities
$\mathfrak p_{m+1  \to m }$.

More generally, take a smooth measure ${\tau}$ on $\prod_{h=1}^H \GT_{N_h}$, and for
each $h=1,\dots,H$, fix $M$ reals  $0<a_{h,1} \le a_{h,2} \le \dots \le a_{h,M}=1$
and $M-1$ symmetric functions $g_{h,m}(x_1,\dots,x_{\lfloor a_{m;h} N_h \rfloor})$,
$m=1,\dots,M-1$ in $\lfloor a_{h,m} N_h \rfloor$ variables each. We will further
assume that all these functions are CLT--appropriate. We further define the
coefficients ${\mathfrak p}_{m \to m-1;h} (\la \to \mu)$ as in
\eqref{eq:Schur-branching-pr-coef} but using $g_{h,m}$ instead of $g_m$.

Let us introduce the probability measure on the set $\prod_{h=1}^H \prod_{m=1}^M
\GT_{\lfloor a_{h,m} N_h\rfloor}$ via
\begin{multline}
\label{eq:def-projections-general_multi} \mathrm{Prob} ( \lambda^h[m],\,
h=1,\dots,H,\, m=1,\dots,M) := \tau ( \la^1{[M]},\dots,\la^H[M] ) \\
\times \prod_{h=1}^H \prod_{m=1}^{M-1} \mathfrak p_{m+1 \to m;h } ( \la^h[m+1] \to
\la^h[m]).
\end{multline}
In words, we use the procedure of \eqref{eq:def-projections-general} separately and
independently for each coordinate of $\tau$--distributed $(\lambda^1,\dots,
\lambda^H)$.

Let $L$ be a large parameter and suppose that $N_h=N_h(L)$ are such that
$$
 \lim_{L\to\infty} \frac{N_h}{L}=\mathcal N_h>0,\quad h=1,\dots,H.
$$
For each $L=1,2,\dots$, let $\rho_L$, be a smooth probability measure on
$\prod_{h=1}^H \GT_{N_h}$, and let $(\lambda^{h}[m])$, $h=1,\dots,H$, $m=1,\dots,M$
be the corresponding \eqref{eq:def-projections-general_multi}--random element of
$\prod_{h=1}^H \prod_{m=1}^M \GT_{\lfloor a_{h,m} N_h\rfloor}$.

Define $\{p_{k;h,m}^L\}_{k=1,2,\dots;\, h=1,\dots,H, m=1,\dots,M}$ to be a countable
collection of random variables via
$$
 p_{k;h,m}^L= \sum_{i=1}^{\lfloor a_{h,m} N_h\rfloor} \left(\frac{\lambda_i^{h}[m]+\lfloor a_{h,m} N_h\rfloor-i}{\lfloor a_{h,m} N_h \rfloor }\right)^k.
$$

Let us extend the definition of CLT--appropriate measures to the present setting. We
require $\rho_L$ to be CLT--appropriate in the sense of Definition
\ref{Def_appropriate_multi} as $L\to\infty$. In addition, each function $g_{h,m}$
should be CLT--appropriate in the sense of Definition \ref{Def_CLT_appropriate}.

Let us introduce an encoding for the various limits of partial derivatives of the
Schur generating functions. Recall that
$$
S_{\rho_L}(x_{1}^1,\dots, x_{N_1}^1;\, x_1^1,\dots,x_{N_2}^2;\,
x_{1}^{H},\dots,x_{N_H}^H)
$$
is the Schur--generating function of $\rho_L$. Set
$$
 c_{k;h,m}:=\lim_{L\to\infty} \frac{1}{a_{h,m} N_h} \left( \frac{\partial}{\partial
 x_{i;h}}\right)^k \left[\ln S_{\rho_L} + \sum_{q=m}^{M-1} \ln\bigl( g_{h,q}(x_1^{h},x_2^h\dots)\bigr)
 \right]_{x_{1}^1=\dots=x_{N_H}^H=1}
$$
\begin{multline*}
 d_{k,h,m; k',h',m'}:= \lim_{L\to\infty} \\ \begin{cases}   \left( \frac{\partial}{\partial
 x_{i;h}}\right)^k \left( \frac{\partial}{\partial
 x_{i';h'}}\right)^{k'} \left[\ln S_{\rho_L}
 \right]_{x_{1}^1=\dots=x_{N_H}^H=1},& h\ne h',\\
  \left( \frac{\partial}{\partial
 x_{i;h}}\right)^k \left( \frac{\partial}{\partial
 x_{i';h'}}\right)^{k'} \left[\ln S_{\rho_L} + \sum_{q=\max(m,m')}^{M-1} \ln\bigl( g_{h,q}(x_1^{h},x_2^h\dots)\bigr)
 \right]_{x_{1}^1=\dots=x_{N_H}^H=1}, & h=h'.
 \end{cases}
\end{multline*}

\begin{theorem}
\label{Theorem_CLT_multi_ext} Suppose that the measures $\rho_L$ are
CLT--appropriate and the functions $g_{h,m}$ are Schur generating functions of
smooth CLT--appropriate measures. Then the following CLT for $(p_{k;h,m}^L)$ holds:
\begin{enumerate}[label=(\Alph*)]
\item \label{ass_multi_ext_1} For each $k=1,2,\dots$, $h=1,\dots,H$, $m=1,\dots,M$
$$
 \lim\limits_{L\to\infty} \frac{1}{a_{h,m} N_h} \E [p_{k;h,m}^L] = {\mathfrak p}(k;h,m),
$$
\item \label{ass_multi_ext_2} For each $(k,h,m)$, $(k',h',m')$
$$
 \lim\limits_{L\to\infty}\Bigl( \E [p_{k;h,m}^L\, p_{k';h',m'}^L] -\E[p_{k;h,m}^L]\E [p_{k';h',m'}^L] \Bigr)= \mathfrak {cov}(k,h,m;k',h',m'),
$$
\item \label{ass_multi_ext_3}For each $r>2$ and any collection of pairs
$(k_1,h_1,m_1)$, \dots, $(k_r,h_r,m_r)$,
$$\lim_{L\to\infty} \kappa_{ (k_1,h_1,m_1),\dots,(k_r,h_r,m_r)}\bigl( p^L_{k;h,m},\,  k=1,2,\dots,\, h=1,\dots,H,\, m=1,\dots,M\bigr)=0.
$$
\end{enumerate}
Where
\begin{equation}
\label{eq_LLN_multi_formula_ext} \mathfrak p(k;h,m) = [z^{-1}] \frac{1}{(k+1)(z+1)}
\left( (1+z) \left(\frac1z  +  \sum_{a=1}^\infty \frac{\c_{a;h,m} z^{a-1}
}{(a-1)!}\right) \right)^{k+1},
\end{equation}
\begin{multline}
\label{eq_CLT_multi_formula_ext} \mathfrak {cov}(k_1,h_1,m_1;k_2,h_2,m_2)\\ =
  [z^{-1} w^{-1}]
 \left( (1+z)
\left(\frac1z  +  \sum_{a=1}^\infty \frac{\c_{a;h_1,m_1} z^{a-1}
}{(a-1)!}\right)\right)^{k_1} \, \left( (1+w)
\left(\frac1w  +  \sum_{a=1}^\infty \frac{\c_{a;h_2,m_2} w^{a-1} }{(a-1)!}\right) \right)^{k_2} \\
\times \left(  \delta_{h_1=h_2}\left(\sum_{a=0}^{\infty}
\frac{z^a}{w^{1+a}}\right)^2 +\sum_{a,b=1}^{\infty}
\frac{\d_{a,h_1,m_1;b,h_2,m_2}}{(a-1)! (b-1)!} z^{a-1} w^{b-1} \right),
\end{multline}
where $[z^{-1}]$ and $[z^{-1} w^{-1}]$ stay for the coefficients of $z^{-1}$ and
$z^{-1}w^{-1}$, respectively, in the Laurent power series given afterwards.
\end{theorem}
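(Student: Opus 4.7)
The plan is to interpret \eqref{eq:def-projections-general_multi} as a probability measure on $\prod_{h,m}\GT_{\lfloor a_{h,m}N_h\rfloor}$ and apply Theorem \ref{Theorem_main_multi} to its $HM$--dimensional Schur generating function
\begin{equation*}
S_{\mathrm{joint}}(\mathbf{y}) := \sum_{\vec\lambda}\mathrm{Prob}(\vec\lambda)\prod_{h,m}\frac{s_{\lambda^h[m]}(y^{h,m})}{s_{\lambda^h[m]}(1^{\lfloor a_{h,m}N_h\rfloor})},
\end{equation*}
treating indices $(h,m)$ as the new ``$h$'' in Definition \ref{Def_appropriate_multi}. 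The formulas \eqref{eq_LLN_multi_formula_ext}--\eqref{eq_CLT_multi_formula_ext} then fall out as direct specializations of \eqref{eq_LLN_multi_formula}--\eqref{eq_CLT_multi_formula}, so the task reduces to verifying that $S_{\mathrm{joint}}$ is CLT--appropriate with the stated coefficients $\c_{k;h,m}$ and $\d_{k,h,m;k',h',m'}$.

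Condition \eqref{exp_multi_1} follows by iterating the branching identity \eqref{eq:Schur-branching-pr-coef} from the lowest level upward: setting $y^{h',m'}=1$ for every $(h',m')\neq(h,m)$ collapses $S_{\mathrm{joint}}$ to the marginal $\tilde S^{[h,m]}(y)=S_{\rho_L}(y,1^{\mathrm{rest}})\cdot\prod_{q=m}^{M-1} g_{h,q}(y,1^{\mathrm{rest}})$, whose log is CLT--appropriate by assumption. For mixed derivatives at pairs with distinct $h$'s, the independence of branchings across $h$ in \eqref{eq:def-projections-general_multi} factors the two--variable marginal as $S_{\rho_L}^{[h,h']}(y^{h,m},y^{h',m'},1^{\mathrm{rest}})$ times single--variable $g$--factors, so mixed derivatives reduce to those of $\ln S_{\rho_L}$, giving $\d_{k,h,m;k',h',m'}=\d_{k,h;k',h'}$.

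The crux is the same--$h$ case $m<m'$: iterated branching up to level $m'$ writes the relevant two--variable marginal as $\prod_{q=m}^{m'-1} g_{h,q}(y^{h,m},1^{\mathrm{rest}})\cdot T(y^{h,m},y^{h,m'})$ with
\begin{equation*}
T(y,y') = \sum_\lambda P^{[h,m']}(\lambda)\,\frac{s_\lambda(y,1^{\mathrm{rest}})\,s_\lambda(y')}{s_\lambda(1)^2},
\end{equation*}
which is precisely the $2$--dimensional Schur generating function of the degenerate diagonal measure $\tilde P(\mu,\mu'):=P^{[h,m']}(\mu)\,\delta_{\mu=\mu'}$ on $\GT^2_{\lfloor a_{h,m'}N_h\rfloor}$. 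The marginal $P^{[h,m']}$ satisfies a CLT by Theorem \ref{Theorem_main} applied to its CLT--appropriate SGF, and consequently so does $\tilde P$, with every joint cumulant of its two components equal to the corresponding cumulant of $P^{[h,m']}$. The converse direction of Theorem \ref{Theorem_main_multi} (Section \ref{Section_Th_Multi_2}) now yields CLT--appropriateness of $T$, which furnishes the required asymptotics of mixed derivatives of $\ln S_{\mathrm{joint}}$ and reproduces the $\delta_{h_1=h_2}(\sum_{a\ge 0} z^a/w^{1+a})^2$ contribution in \eqref{eq_CLT_multi_formula_ext} via \eqref{eq_CLT_multi_formula}. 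Condition \eqref{exp_multi_3} for $r\ge 3$ distinct variables is handled analogously: mixed--$h$ subcases reduce to factorization, while the fully same--$h$ subcase is treated via $r$--fold diagonal measures on $\GT^r$ and the $r$--dimensional analog of the argument just described. With $S_{\mathrm{joint}}$ verified as CLT--appropriate, Theorem \ref{Theorem_main_multi} yields the joint CLT. The principal obstacle is the same--$h$ case: Markov correlation inside a single chain precludes direct factorization of the joint SGF and forces essential use of the converse (CLT implies CLT--appropriate) direction of Theorem \ref{Theorem_main_multi} applied to the diagonal measure $\tilde P$.
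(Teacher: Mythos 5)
Your overall route --- viewing the full multilevel array as a single measure on $\prod_{h,m}\GT_{\lfloor a_{h,m}N_h\rfloor}$, verifying CLT--appropriateness of its joint Schur generating function, and then quoting the direct part of Theorem \ref{Theorem_main_multi} --- is genuinely different from the paper's, which simply re-runs the direct graph/cumulant expansion of Section \ref{Section_Th_Multi_1} on the product of $S_{\rho_L}$ with the functions $g_{h,q}$ (following the analogous statements in \cite{BG_CLT}). Your two-level reduction is correct and is the attractive part of the proposal: the $(h,m)$--$(h,m')$ restriction is indeed a product of single-variable $g$-factors with the two-dimensional SGF of the diagonal measure built from $P^{[h,m']}$, whose CLT is immediate, so the converse direction (CLT implies appropriate) of Theorem \ref{Theorem_main_multi} gives convergence of the mixed derivatives; and the resulting covariance agrees with \eqref{eq_CLT_multi_formula_ext} at the level of the extracted coefficients (not as an identity of formal series, since the diagonal measure's mixed-derivative series has no negative powers of $w$, but that is all you need).

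The gap is your treatment of configurations involving three or more distinct levels of the same chain $h$ (and, more generally, any configuration with at least two intermediate levels of one chain), which is exactly what condition (3) of Definition \ref{Def_appropriate_multi} for the joint SGF requires. There the restriction is \emph{not} the SGF of an $r$-fold diagonal of a single marginal $P^{[h,m_r]}$: integrating from the top down, you are left with conditional expectations of \emph{products} such as $\E\bigl[\tfrac{s_{\lambda[m_2]}(y_1,1,\dots)\,s_{\lambda[m_2]}(y_2,1,\dots)}{s_{\lambda[m_2]}(1,\dots)^2}\,\big|\,\lambda[m_3]\bigr]$, and the branching identity \eqref{eq:Schur-branching-pr-coef} only collapses a single normalized Schur function, not a product. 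What you actually obtain is the SGF of the Markov chain itself with some components doubled, and the joint CLT of that chain is an instance of the very theorem being proved --- so as written this step is circular. It can be repaired by an induction on the number of levels: peel off the bottom level via \eqref{eq:Schur-branching-pr-coef}, which replaces it by a doubled copy of the next level times a $g$-factor depending only on the bottom variables; the doubled shorter chain satisfies a CLT by the induction hypothesis together with the direct part of Theorem \ref{Theorem_main_multi} (doubling a component does not change joint cumulants), the converse part then yields its CLT--appropriateness, and the $g$-factor only shifts single-variable derivative limits. With this induction spelled out your argument closes; without it, condition (3) is not established and the final application of Theorem \ref{Theorem_main_multi} is not justified.
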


The proof of Theorem \ref{Theorem_CLT_multi_ext} is the same, as the one presented
in Section \ref{Section_Th_Multi_1}, and therefore we omit it. See also
\cite[Theorems 2.9, 2.10, 2.11]{BG_CLT} for similar extensions.

\section{Application to random tilings}

\label{Section_tilings}

In this section we apply Theorems \ref{Theorem_main}, \ref{Theorem_main_multi},
\ref{Theorem_CLT_multi_ext} to the study of global macroscopic fluctuations of
uniformly random lozenge and domino tilings of polygonal domains. A general
conjecture of Kenyon and Okounkov \cite{KO_Burgers} predicts that fluctuations
should be universally governed by the Gaussian Free Field. We prove this conjecture
for several classes of domains, leading to GFF in \emph{multiply--connected}
regions. We will not aim at the most general accessible situations, instead
concentrating on two basic cases, illustrating the basic principles: these are
lozenge tilings of hexagons with a single hole and domino tilings of Aztec
rectangles with several holes along a
single axis. %A much more general class of domains will be discussed in \cite{???}.

Let us outline a general strategy. Each domain that we study will have a specific
singled out vertical section, along which the distribution of the particles can be
computed and identified with a discrete log--gas. The results of \cite{BGG} then
imply the Central Limit Theorem along the section. After we cut the domain along
this vertical line, it splits into several trapezoids. Using the only if part of
Theorems \ref{Theorem_main}, \ref{Theorem_main_multi} we then obtain the asymptotic
for the partial derivatives of the Schur generating functions, and Theorem
\ref{Theorem_CLT_multi_ext} leads to the extension of CLT to the entire domain.
Theorem \ref{Theorem_CLT_multi_ext} outputs the covariance in a contour integral
form, and efforts are then required to identify it with the Gaussian Free Field.

\subsection{Lozenge tilings of holey hexagons: formulation}
\label{Section_GFF_hex_formulation}

Consider an $A\times B\times C$ hexagon with $D\times D$ rhombic hole drawn on a triangular grid,
as shown in Figure \ref{Figure_hex_hole}. The bottom point of the hole is distance $t$ from the
left border of the hexagon, and is at distance $E$ in vertical direction from the bottom border of
the hexagon.

\begin{figure}[h]
\begin{center}
 {\scalebox{0.65}{\includegraphics{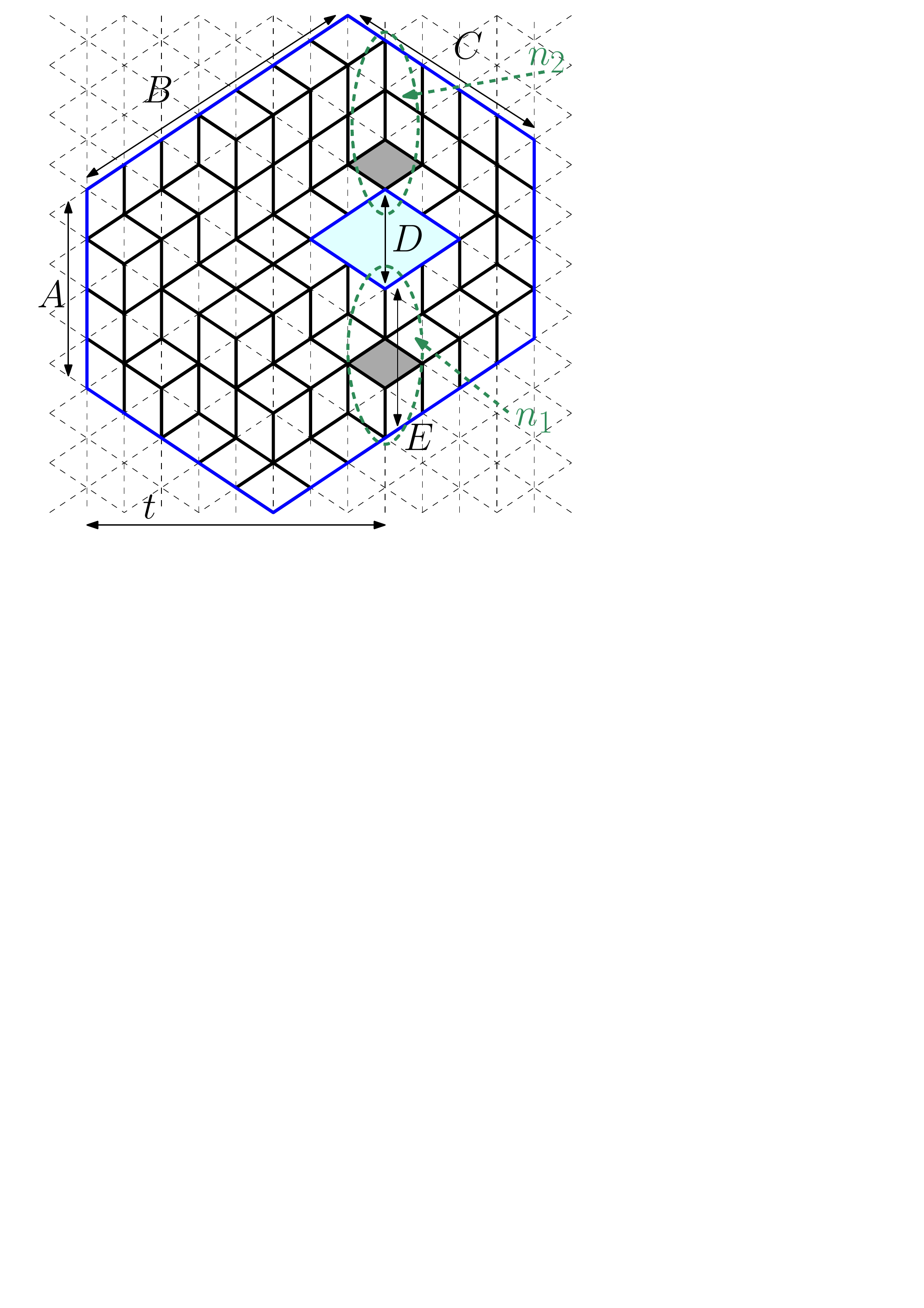}}} \quad
  {\scalebox{0.34}{\includegraphics{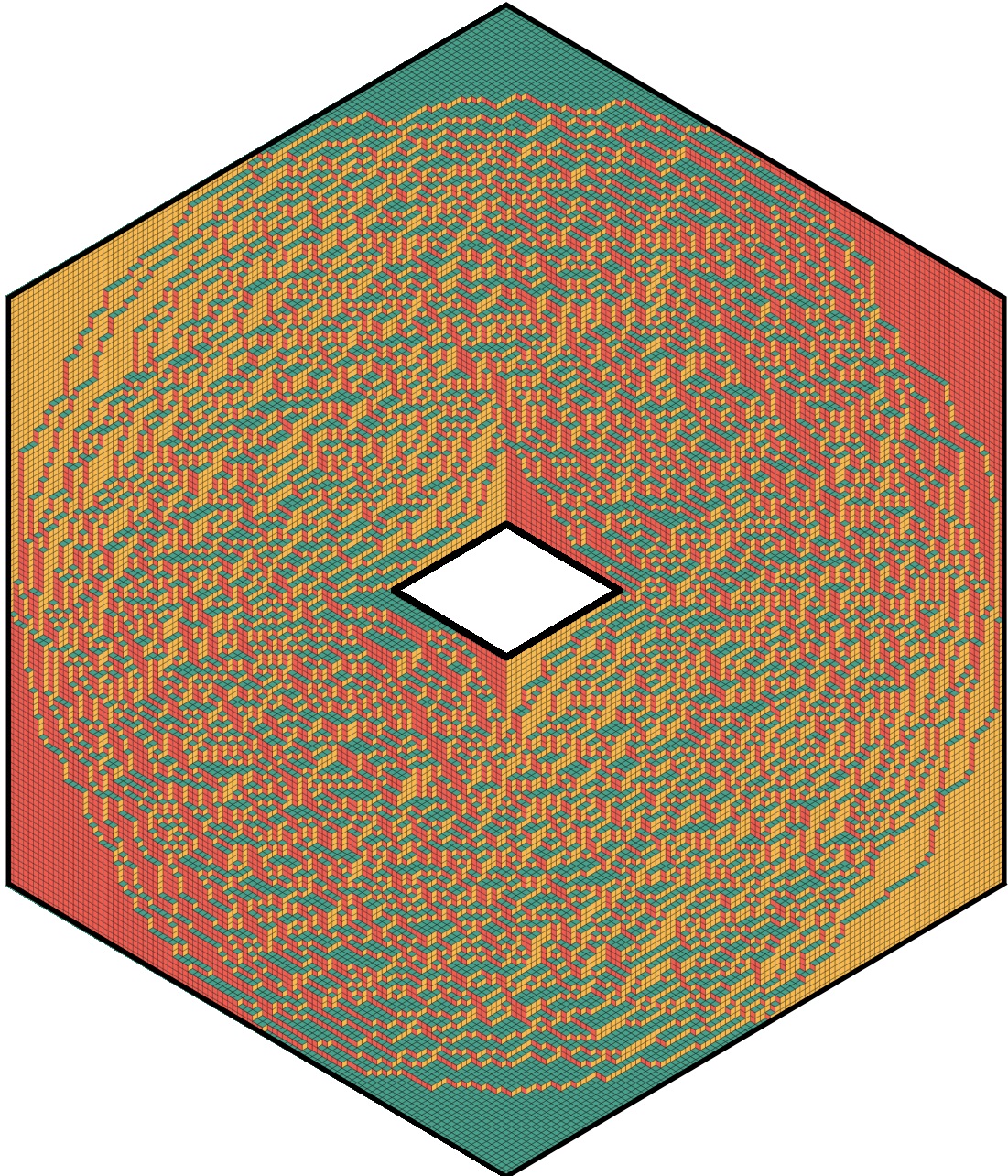}}}
 \caption{Uniformly random lozenge tilings of hexagons with holes. We are grateful
 to Leonid Petrov for the simulations}
 \label{Figure_hex_hole}
\end{center}
\end{figure}

We study \emph{lozenge tilings} of a holey hexagon. Such tilings can be
alternatively viewed as projections in $(1,1,1)$ direction of stepped surfaces, or
piles of cubes.

For a given tiling, consider its section by the vertical line $x=t$ passing through
the center of the hole. The number of \emph{horizontal lozenges}
{\scalebox{0.16}{\includegraphics{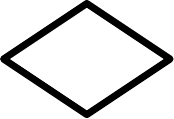}}} on this section is fixed, i.e.\
does not depend on the choice of tilings. For instance, when $C<B<t$, as in the left
panel of Figure \ref{Figure_hex_hole}, then it is $B+C-t-D$. However, the number of
horizontal lozenges {\scalebox{0.16}{\includegraphics{lozenge_hor.pdf}}} below and
above the hole might vary from tiling to tiling. These numbers are called
\emph{filling fractions}. Let us denote them $n_1$ and $n_2$, respectively (so that
$n_1+n_2=B+C-t-D$ in our example).

Fix $A$, $B$, $C$, $D$, $t$, $E$, and $n_1$, $n_2$, and let $\Omega$ be
\emph{uniformly random} lozenge tiling with such parameters. We further identify
$\Omega$ with its \emph{height function}, $H(x,y;\Omega)$.

\begin{definition}
Fix a \emph{scale parameter} $L$. For a given $(x,y)$, the height function
$H_L(x,y)$ counts the number of horizontal lozenges
{\scalebox{0.16}{\includegraphics{lozenge_hor.pdf}}} directly \emph{above}
$(L^{-1}\lfloor L x\rfloor ,L^{-1}\lfloor L y\rfloor )$, cf.\ Figures
\ref{Figure_triangle}, \ref{Figure_hex_hole_coord} for our choice of the $(x,y)$
coordinate system.\footnote{Many articles use another definition of the height
function, counting the number of lozenges of types
{\scalebox{0.16}{\includegraphics{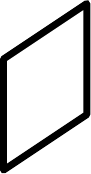}}},
{\scalebox{0.16}{\includegraphics{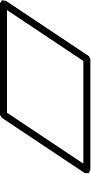}}} below the point $(Nx,Ny)$.
The third definition treats a tiling a projection of a stepped surface in direction
onto the plane $x+y+z=0$ along the direction $(1,1,1)$ --- the stepped surface
itself is then the height function. All definitions  are related to each other by
affine transformations of coordinate systems, and so the asymptotic theorems for
them are equivalent.}
\end{definition}

Note that in terms of the height function, fixing $n_1$, $n_2$ is equivalent to
fixing the heights inside the removed size $D$ rhombus, so this is a natural
conditioning on the deterministic boundary values, from the point of view of stepped
surfaces.

We are interested in the following limit regime:
\begin{align}
\label{eq_hex_limit_regime}
A=L \cdot \hat A, && B= L\cdot \hat B, && C= L\cdot \hat C, && D= L\cdot \hat D,\\
\notag t=L \cdot \tau, && E= L\cdot \hat E, && n_1= L\cdot \hat n_1, && n_2= L\cdot
\hat n_2,&& L\to\infty.
\end{align}
Our aim is to study the asymptotic of the random height function $H_L(x,y)$ in the
regime \eqref{eq_hex_limit_regime}.

Note that all the proportions of the hexagon need to be integers. If all the numbers
in \eqref{eq_hex_limit_regime} are integers themselves, then this is automatic. A
more general parameters (even irrational) are also possible --- then we need to add
integer parts to \eqref{eq_hex_limit_regime}, e.g.\ $A=\lfloor L \cdot \hat
A\rfloor$. For the notational simplicity, we will silently assume that all the
numbers are integers, but the proofs go through for general values of the parameters
without any changes.

\begin{proposition}
\label{Proposition_lozenge_LLN}
 In the limit regime \eqref{eq_hex_limit_regime}, as $L\to\infty$ the random normalized height
 functions $\frac{1}{L} H_L(x,y)$ converge to a non-random limit shape $\mathfrak
 h(x,y)$.
\end{proposition}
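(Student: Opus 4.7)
The plan is to follow the general strategy outlined at the start of Section \ref{Section_tilings}: reduce the two-dimensional LLN to a one-dimensional LLN along the vertical section through the hole, and then propagate it via the Schur generating function machinery of the earlier sections.

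First, I would cut the hexagon along the vertical line $x = \tau$ through the center of the hole. This splits the tiling into two trapezoidal (simply connected) subdomains, and the horizontal lozenges on the cut form a particle configuration with $n_1$ particles in the interval below the hole and $n_2$ in the interval above, in positions encoded by a pair of signatures $(\lambda^1,\lambda^2)$. Conditional on this pair, the tilings in the two trapezoids are independent uniformly random tilings with fixed boundary. Counting such tilings via the Weyl dimension formula (the number of tilings of a trapezoid is a product of values $s_\lambda(1^N)$) shows that the marginal distribution of $(\lambda^1,\lambda^2)$ is a discrete log-gas with two frozen cardinalities, proportional to $\prod_{i<j}(x_i - x_j)^2 \prod_i w_L(x_i)$ for an explicit product weight $w_L$ built from the hexagon parameters.

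Second, I would invoke the LLN for discrete two-cut log-gases with prescribed filling fractions (as in \cite{BGG} or by a standard variational-principle argument) to conclude that the empirical measure of the particles on the cut converges to a deterministic equilibrium measure $\mu^{\tau}$ supported on the two intervals. This is the one-dimensional input. Applying the "only if" direction of Theorem \ref{Theorem_main_multi} with $H=2$ then yields the asymptotic germ $\c_{k,h}$ of the two-dimensional Schur generating function of $(\lambda^1,\lambda^2)$, i.e.\ the first-order partial derivatives of $\ln S$ at the identity.

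Third, for each trapezoid the Schur generating function of the horizontal lozenges read on an arbitrary interior vertical line $x = x_0$ is obtained from the SGF of the cut by multiplication by an explicit symmetric factor that specializes extra variables to $1$; its logarithmic derivatives at the identity are elementary power sums. Feeding the $\c_{k,h}$ into Theorem \ref{Theorem_CLT_multi_ext} produces the LLN \ref{ass_multi_ext_1} for the normalized power sums $p_{k;h,m}^L$ of the particle positions on every vertical section in each trapezoid. Since these power sums determine the corresponding empirical measures, one obtains a deterministic limiting density of horizontal lozenges on every vertical line, and integrating in $x$ yields the sought pointwise limit $\mathfrak{h}(x,y)$ for $L^{-1} H_L(x,y)$. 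The main obstacle will be step two: identifying the induced measure on the cut as a log-gas to which the results of \cite{BGG} literally apply, and ensuring that the two-cut version with fixed filling fractions $n_1,n_2$ is covered; once that is settled, everything else is bookkeeping with the machinery developed in Sections \ref{Section one_dim}–\ref{Section_CLT_extension}.
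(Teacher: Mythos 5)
Your overall route is the same as the paper's: cut the hexagon along $x=\tau$, identify the law of the horizontal lozenges on the cut as a two-cut discrete log--gas with fixed filling fractions and import its asymptotics from \cite{BGG}, then use the inverse direction of Theorem \ref{Theorem_main_multi} to extract the germ of the two-dimensional Schur generating function of the pair of signatures encoding the cut, and finally propagate to all vertical sections via the branching rule \eqref{eq_lozenge_through_Schur} and Theorem \ref{Theorem_CLT_multi_ext} (with $H=2$, $g_{h,m}=1$, $N_1=t$, $N_2=B+C-t$); this is exactly how the paper obtains the limit shape, as Corollary \ref{Corollary_hex_LLN_CLT}.

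There is, however, one real gap in your step two as written. You propose to feed only the \emph{LLN} for the log--gas on the cut (``as in \cite{BGG} or by a standard variational-principle argument'') into the machinery. But the only-if direction of Theorem \ref{Theorem_main_multi} takes as hypothesis the full CLT of Definition \ref{Definition_CLT_multi} --- convergence of means, of covariances, \emph{and} vanishing of all higher cumulants --- and its proof inverts a linear system that couples the single-variable derivatives $\c_{k,h}$ with the mixed derivatives at every order; the LLN alone does not determine even the $\c_{k,h}$ for $k\ge 2$ within that argument. Likewise, the hypothesis of Theorem \ref{Theorem_CLT_multi_ext} is CLT-appropriateness in full (including the $\d$'s and the vanishing of derivatives in three or more distinct variables), even though the LLN output \eqref{eq_LLN_multi_formula_ext} only involves the $\c$'s. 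So a variational-principle LLN is not a sufficient input; you must use the full CLT for the two-cut log--gas with fixed filling fractions, which is precisely what the paper takes from \cite[Theorem 7.1 and Section 9.2]{BGG} (Theorem \ref{Theorem_hex_cut}), converted into moment/cumulant form in Proposition \ref{Prop_covariance_section_identification} before invoking Theorem \ref{Theorem_main_multi}. Two smaller points: for lozenges no multiplicative factor appears when passing to interior sections (the functions $g_{h,m}$ are identically $1$; the factors $\prod_i\frac{1+x_i}{2}$ only enter in the domino case), and at the last step you do not integrate in $x$ --- for each fixed $x$ the height function is recovered from the particle configuration on that same vertical line, as in the integration by parts \eqref{eq_integrate_by_parts}, so moment convergence of the empirical measures on each section already gives $\frac1L H_L(x,y)\to\mathfrak h(x,y)$ pointwise.
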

The article \cite{CKP} contains a very general variational principle, which
guarantees the existence of the limit shapes for lozenge tilings of almost arbitrary
domains. Note that \cite{CKP} assumes the domains to be \emph{simply connected}, and
therefore, the setting of Proposition \ref{Proposition_lozenge_LLN} is formally out
of the scope of that article; however, the proofs probably go through with very
minor modifications. Nevertheless, we give our own proof of Proposition
\ref{Proposition_lozenge_LLN} in Corollary \ref{Corollary_hex_LLN_CLT} below.

The limit shape $\mathfrak h(x,y)$ has two types of local behavior, as can be seen
in the right panel of Figure \ref{Figure_hex_hole}. Note that the definitions imply
$-1 \le \frac{\partial}{\partial x} \mathfrak h(x,y) \le 0$; the points where the
extreme values are achieved, i.e.\ $\frac{\partial}{\partial x} \mathfrak h(x,y)\in
\{0,-1\}$ form a \emph{frozen region}, other points where $-1
<\frac{\partial}{\partial x} \mathfrak h(x,y)  < 0$ give rise to the \emph{liquid
region} $\mathcal L$. In frozen regions as $L\to\infty$ we see only one type of
lozenges, and therefore, there are no fluctuations. On the other hand, we will show
that in the liquid region $\mathcal L$ the recentered height function $H_L(x,y)-\E
H_L(x,y)$ converges to the \emph{Gaussian Free Field}. This is a particular case of
a general conjecture of Kenyon and Okounkov \cite{KO_Burgers} which we now state.

Near any point $(x,y)$ the partial derivatives of the limit shape $\mathfrak h(x,y)$
uniquely define local proportions of three types of lozenges
$p^{{\scalebox{0.16}{\includegraphics{lozenge_hor.pdf}}}}(x,y)$,
$p^{{\scalebox{0.16}{\includegraphics{lozenge_v_up.pdf}}}}(x,y)$,
$p^{{\scalebox{0.16}{\includegraphics{lozenge_v_down.pdf}}}}(x,y)$. These are
non-negative numbers, which sum up to $1$. In frozen regions one of them equals $1$
and others vanish, while in the liquid region all three are between $0$ and $1$. We
remark that the probabilistic interpretation of the local derivatives as
probabilities of seeing lozenges was rigorously proven for our class of domains only
recently in \cite{G_Bulk}. For the most general domains it remains conjectural.

Consider a triangle on the complex plane with angles proportions $\pi \cdot
p^{{\scalebox{0.16}{\includegraphics{lozenge_hor.pdf}}}}(x,y)$, $\pi \cdot
p^{{\scalebox{0.16}{\includegraphics{lozenge_v_up.pdf}}}}(x,y)$, $\pi \cdot
p^{{\scalebox{0.16}{\includegraphics{lozenge_v_down.pdf}}}}(x,y)$, and vertices $1$, $\xi$, $0$,
respectively, such that $\xi$ lies in the upper halfplane, cf.\ Figure \ref{Figure_triangle}. $\xi$
is called the \emph{complex slope} of the limit shape $\mathfrak h(x,y)$.

\begin{figure}[h]
\begin{center}
 {\scalebox{1.0}{\includegraphics{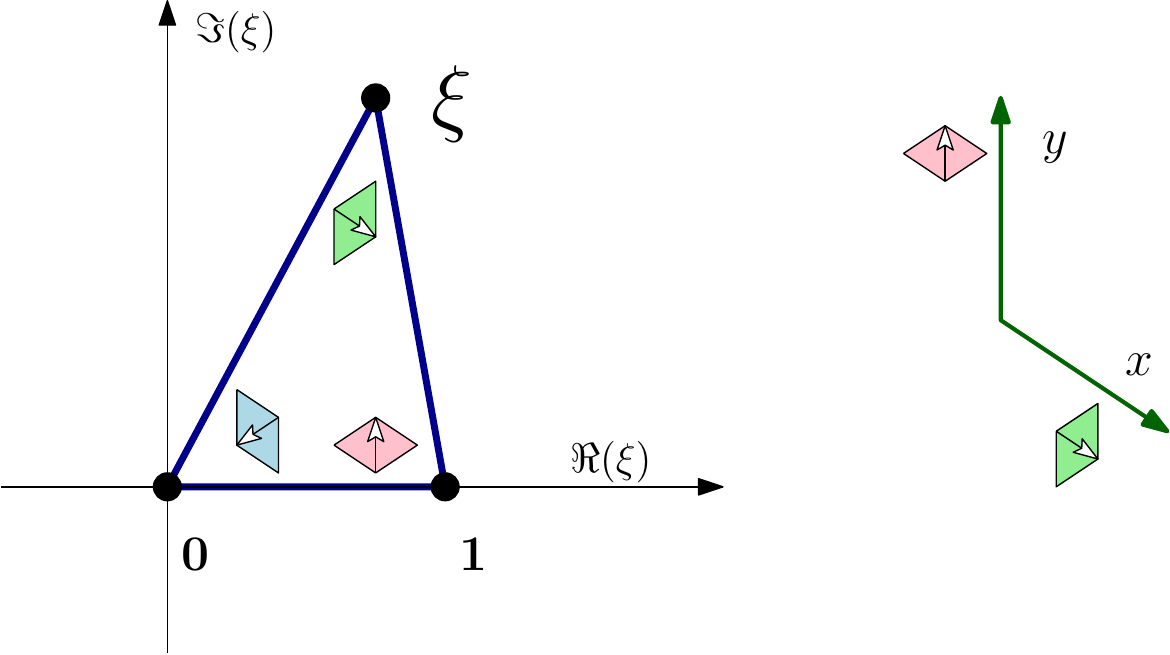}}}
 \caption{Complex slope $\xi$ corresponds to three local proportions of lozenges through a geometric construction. The coordinate
 directions for the complex Burgers equation can be also encoded by lozenges.}
 \label{Figure_triangle}
\end{center}
\end{figure}

The three angles of the triangle are $\arg(\xi)$, $\arg\left(\frac{\xi-1}{\xi}\right)$, and
$\arg\left(\frac{1}{1-\xi}\right)$, corresponding to three types of lozenges.
 Note that there is no canonical order on types of lozenges, and therefore, the angles
of the triangle of Figure \ref{Figure_triangle} can be reshuffled to get five alternative
definitions of the complex slope:
\begin{equation}
\label{eq_alternative_form}
 \xi'=\frac{\xi-1}{\xi},\quad \xi'=\frac{1}{1-\xi}, \quad \xi'=\frac{1}{\bar \xi}, \quad \xi'=1-\bar \xi,\quad \xi'=\frac{\bar \xi}{\bar \xi-1}.
\end{equation}
Kenyon and Okounkov \cite{KO_Burgers} showed that the complex slope satisfies the complex Burgers
equation everywhere inside the liquid region, which in our notations reads:
\begin{equation}
\label{eq_Burgers}
 \xi_y= \xi \cdot \xi_x, \quad (x,y)\in\mathcal L.
\end{equation}
We again note that there is no canonical way to specify the coordinate system on the
triangular lattice: we need to somehow choose two coordinate directions out of the
six lattice directions, and different choices lead to slightly different equations
\eqref{eq_Burgers}. Only if we make this choice in a way agreeing with our choice of
the complex slope, we get precisely \eqref{eq_Burgers}.\footnote{For instance, in
the articles \cite{Petrov-curves, Petrov_GFF}, the
choice of the complex slope is different, which leads to the equation
$\Omega_\chi=\frac{\Omega-1}{\Omega} \Omega_\eta$ on the slope.}

We use $\xi$ to define a \emph{complex structure} on the liquid region $\mathcal L$ as follows:
\begin{definition}
\label{Definition_complex_structure_general}
 A function $f:\mathcal L\mapsto \mathbb C$ is \emph{complex--analytic} (holomorphic), if
 \begin{equation}
 \label{eq_analytic_function}
  f_y = \xi \cdot f_x,\quad (x,y)\in\mathcal L.
 \end{equation}
\end{definition}
If $\xi$ is non-degenerate near a given point $(x,y)$, i.e.\ its Jacobian does not vanish (in our
examples $\xi$ is usually non-degenerate), then we can replace \eqref{eq_analytic_function} by the
condition that locally $f$ is a (conventional) holomorphic function of $\xi$.

The complex structure turns the liquid region into a compact ($1d$) complex manifold
with a boundary. Let $G(x,y)$ denote the Green function of the Laplace operator in
the liquid region (with respect to this complex structure) with Dirichlet boundary
conditions.\footnote{In other words, $G(x,y)$ is the integral kernel for the inverse
operator with Dirichlet boundary conditions to the operator
$\Delta=\frac{\partial}{\partial z} \frac{\partial}{\partial \bar z}$ acting on real
functions, where the complex derivatives (as well as the area form for the
integration) are computed in accord with the complex structure.}

Note that the five other choices of the complex slope in \eqref{eq_alternative_form}
lead to the same complex structure up to complex conjugation, and therefore, to the
same Green function.

\begin{definition}
 The Gaussian Free Field (GFF) in $\mathcal L$ (with respect to complex slope $\xi$ and with Dirichlet boundary conditions)
 is a generalized centered Gaussian field,
 whose covariance is $G(x,y)$.
\end{definition}
Since the Green function has a logarithmic singularity on the diagonal, the values
of GFF are not defined, but its integrals along the curves as well as pairings with
(smooth enough) test functions are bona-fide Gaussian random variables.

Another way to think about the GFF on $\mathcal L$ is through the uniformization. The Koebe
general uniformization theorem (see e.g.\ \cite[Chapter III, Section 4]{AS}) implies that there exists a bijective
conformal (in the sense of Definition \ref{Definition_complex_structure_general})  map $\Theta$
from the liquid region $\mathcal L$ to a domain $\mathbb D\subset \mathbb C$ of the same topology.
$\mathbb D$ is equipped with the conventional complex structure, and we can consider the GFF on
$\mathbb D$ --- generalized centered Gaussian field whose covariance is the Green function for the
Laplace operator with Dirichlet boundary conditions. Then the GFF on $\mathcal L$ is simply the
$\Theta$--pullback of the GFF on $\mathbb D$.

\begin{conjecture}[{\cite[Section 2.3]{KO_Burgers}}]
\label{Conjecture_GFF}
 Fix an arbitrary (tilable) polygonal domain $\Upsilon$ on the triangular grid and let
 $H_L(x,y)$ be the random height function of a uniformly random lozenge tiling\footnote{If $\Upsilon$ is not simply connected, then we additionally deterministically fix heights of each hole, as we did for the holey hexagon}
 of its dilated version $L\cdot \Upsilon$. Then $\sqrt{\pi}\left[H_L(x,y)-\E H_L(x,y)\right]$ converges as $L\to\infty$ in
 the liquid region $\mathcal L$ to the Gaussian Free Field with respect to the complex slope
 $\xi$ and with Dirichlet boundary
 conditions.
\end{conjecture}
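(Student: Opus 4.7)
The plan is to execute the strategy sketched just before Section \ref{Section_GFF_hex_formulation}, applied to the hexagon with one rhombic hole. First I would single out the vertical line $x=t$ passing through the center of the hole. The horizontal lozenges along this section are forced outside the hole, and the free degrees of freedom are the positions of $n_1$ horizontal lozenges below the hole and $n_2$ above it. Using the Lindstr\"om--Gessel--Viennot / Schur function interpretation of non-intersecting paths, the joint distribution of these two particle systems can be written in explicit product form as a discrete log-gas with Hahn-type weights (two species, conditioned on prescribed filling fractions $n_1$, $n_2$). By the discrete loop equation machinery of \cite{BGG}, this one-dimensional slice satisfies a Central Limit Theorem: the centered linear statistics of the section converge to a Gaussian field whose mean and covariance are given by explicit contour integrals.

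Next I would cut the holey hexagon along $x=t$ into two (ordinary, simply-connected) trapezoids. Each trapezoid, with its left boundary carrying the random signature coming from the section, is a uniformly random tiling conditioned on a boundary signature; its Schur generating function is an explicit ratio of Schur polynomials (specialization of part of the variables to $1$), and in fact just multiplies the section's Schur generating function by a simple prefactor. Applying the \emph{only if} direction of Theorem \ref{Theorem_main} to the section yields convergence of the partial derivatives of $\ln S$ of the section at $1^N$, hence (via the explicit prefactors) of the Schur generating function of each trapezoid. Theorem \ref{Theorem_CLT_multi_ext} then upgrades the one-dimensional CLT on the section to a multi-level CLT for horizontal-lozenge statistics along every vertical section in both trapezoids simultaneously, with the covariances produced in the double-contour-integral form of \eqref{eq_CLT_multi_formula_ext}. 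Integrating these statistics against appropriate test functions produces the convergence of $\sqrt{\pi}(H_L-\E H_L)$ paired with arbitrary smooth test functions to a centered Gaussian field; joint Gaussianity across the two trapezoids follows from the joint CLT given by Theorem \ref{Theorem_CLT_multi_ext}.

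The main obstacle, and the step I expect to require the most work, is the identification of the resulting contour-integral covariance with the Green function of the Laplace operator on the liquid region $\mathcal L$ in the complex structure induced by the complex slope $\xi$. Concretely, I would parametrize the double contour integrals via the change of variables $z\mapsto \xi(x,y)$, with $\xi$ solving the complex Burgers equation \eqref{eq_Burgers} with boundary data dictated by the hexagon, and show that the integrand becomes the universal kernel $\frac{1}{(\xi(x_1,y_1)-\xi(x_2,y_2))^2}$ plus a correction term that encodes the boundary and hole topology. One then needs to check two things: (i) that the diagonal of this kernel is $\partial_{z}\partial_{\bar z}$ applied to $-\log|\Theta(\cdot)-\Theta(\cdot)|$ for a uniformizing map $\Theta:\mathcal L\to\mathbb D\subset\mathbb C$; and (ii) that the correction matches the image-charge term forced by Dirichlet boundary conditions on $\partial\mathcal L$ and by the nontrivial topology of $\mathcal L$ (which, for the holey hexagon, is an annulus and hence requires an elliptic, rather than rational, Green function). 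The analogous matching in the simply-connected case was done in \cite{BG_CLT, Petrov_GFF}; here I would reduce to that analysis on each trapezoid separately and then glue, using that the section statistics already carry the GFF-compatible covariance along the cut via the log-gas CLT. The filling-fraction conditioning (fixing $n_1,n_2$) corresponds precisely to the Dirichlet condition around the hole, giving the elliptic Green function.

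Once the covariance has been identified on a dense class of observables (pairings with polynomial or piecewise-constant test functions coming from vertical slices), standard Gaussian extension arguments promote the convergence to convergence in the sense of generalized functions on the liquid region, yielding the stated convergence to the GFF. The domino case for Aztec rectangles with collinear holes is analogous, using the multi-level version (Theorem \ref{Theorem_main_multi}) and the dimer-on-rail-yard-graph formalism of \cite{BG_CLT, BK, BL} to realize the cut-and-paste step.
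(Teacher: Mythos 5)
Your proposal follows essentially the same route as the paper: the discrete log--gas CLT along the $x=t$ section via \cite{BGG}, inversion through the ``only if'' direction of Theorems \ref{Theorem_main}/\ref{Theorem_main_multi}, extension to all vertical sections of the two trapezoids via Theorem \ref{Theorem_CLT_multi_ext}, and identification of the resulting covariance with the GFF of the annular liquid region by gluing the two trapezoid uniformizations --- exactly the five-step strategy the paper executes in Section \ref{Section_tilings}. Just note that the step you defer as ``the most work'' is indeed where the paper spends most of its effort, and it proceeds slightly differently from your image-charge matching: the candidate Green function of the two-cut domain is built directly as double antiderivatives of the \cite{BGG} covariance (formulas \eqref{eq_GFF_cov_1}--\eqref{eq_GFF_cov_3}, Propositions \ref{Proposition_G_continuous}, \ref{Proposition_G_is_Green}), the maps $z(x,y)$ and $\tilde z(x,y)$ are glued across the cut into $\Theta$, and the covariance from Theorem \ref{Theorem_CLT_multi_ext} is then identified with the $\Theta$--pullback by a change of variables, contour deformation and integration by parts, including the extra splitting $Q^{<\tau}=Q^{(0)}+\Delta Q$ needed when both sections lie in the same trapezoid.
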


There is a class of domains with non-trivial limit shapes for which the validity of
Conjecture \ref{Conjecture_GFF} have been established, see \cite{Kenyon_height,
BorFer, Petrov_GFF, Duits,BG_CLT,Ahn}; however all these domains are
simply--connected. In particular, in these articles the liquid regions are
conformally equivalent to the upper half--plane $\mathbb U$ with the standard
complex structure. Therefore, the limiting GFF is identified with a pullback of the
standard GFF in $\mathbb U$ (whose covariance is explicit: $\mathrm
{Cov}(z,w)=-\frac{1}{2\pi}\ln|\frac{z-w}{z-\bar w}|$) with respect to the
uniformization map identifying the liquid region with $\mathbb U$. This map turned
out to be explicit in \cite{BorFer, Petrov_GFF, Duits,BG_CLT,Ahn}.

The main result of this section leads to the first rigorous appearance of the Gaussian Free Field
in tilings of multiply-connected planar domains.

\begin{theorem} \label{Theorem_holey_hex} Conjecture \ref{Conjecture_GFF} is true for the lozenge tilings $\Omega$
of the holey hexagons in the limit regime \eqref{eq_hex_limit_regime}.
\end{theorem}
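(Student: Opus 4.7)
The plan is to implement the strategy sketched in the introduction: reduce the problem on the holey hexagon to a one-dimensional CLT on a single vertical section, transport this information via the Fourier-type inversion of Theorem \ref{Theorem_main} into germs of Schur generating functions, spread it over all horizontal sections via Theorem \ref{Theorem_CLT_multi_ext}, and finally match the resulting covariance with the Green function for the complex slope.

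First, I slice the holey hexagon by the vertical line $x=t$ through the midpoint of the hole. The configuration of horizontal lozenges on this line splits into two independent segments, below and above the hole, of sizes controlled by the filling fractions $n_1,n_2$. Conditionally on $n_1,n_2$ the law of the full tiling is uniform on the union of four trapezoids (two left, two right of the cut) sharing deterministic boundaries with the removed rhombus; the marginal on the cut is a discrete log-gas with Hahn-type weights. The CLT for such log-gases from \cite{BGG}, combined with the (easy) CLT for $(n_1,n_2)$ obtained by integrating out, supplies assumptions \ref{ass_1}--\ref{ass_3} of Definition \ref{Def_CLT} for the signature read off the cut, jointly across the two sub-signatures (below and above the hole).

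Second, I invoke the new direction of Theorem \ref{Theorem_main_multi}: the above CLT on the cut implies the existence and explicit form of the germ constants $\c_{a,h}$, $\d_{a,h;b,h'}$ of the two-level Schur generating function of the cut data. Now the tiling of each trapezoid to the left/right of the cut is encoded by a uniform measure whose Schur generating function is obtained from that of the cut data by setting some variables equal to $1$ (this performs the projection onto an interior horizontal section) and by multiplying by explicit Vandermonde-type factors that account for the frozen trapezoidal strips; both operations act transparently on the germ. Applying Theorem \ref{Theorem_CLT_multi_ext} with these modified germs promotes the CLT on the cut to a joint CLT on all horizontal sections throughout the holey hexagon, and simultaneously yields the LLN of Proposition \ref{Proposition_lozenge_LLN} by \eqref{eq_LLN_multi_formula_ext} and \eqref{eq_LLN_inversion_formula}. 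The output is an explicit double contour integral for the limiting covariance of heights on any two horizontal sections, in terms of the functional inverses $f_{h,m}^{(-1)}$ built from the $\c$'s.

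Third, and this is where the main work lies, I must identify this contour-integral covariance with $\frac{1}{\pi}G_{\mathcal L}(\cdot,\cdot)$, where $G_{\mathcal L}$ is the Dirichlet Green function for the Laplacian on $\mathcal L$ in the complex slope structure. The key observation is that the critical point equation for the integrand, written in the variable $z$, is precisely the equation whose root is the complex slope $\xi(x,y)$ at the horizontal section under consideration; verifying that the resulting $\xi$ satisfies the complex Burgers equation \eqref{eq_Burgers} is a direct calculation from the formulas \eqref{eq_LLN_multi_formula_ext} and \eqref{eq_CLT_multi_formula_ext}. Once this is in hand, the change of variables $z\mapsto \xi(x,y)$, $w\mapsto \xi(x',y')$ turns the $(z-w)^{-2}$ part of \eqref{eq_CLT_multi_formula_ext} into the pullback of the standard $(\xi-\xi')^{-2}$ kernel and thus, after integration against smooth test functions, into the pullback of the standard Green function. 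The main obstacle (which is absent in all previously treated simply connected cases) is the non-trivial topology: $\mathcal L$ is an annulus, the uniformizing map is no longer rational, and the additional $\d$-dependent term in \eqref{eq_CLT_multi_formula_ext} must reproduce exactly the elliptic period correction that distinguishes the annular Green function from the planar one. I expect to pin down this period correction by exploiting the two-dimensional family of admissible filling fractions $(n_1,n_2)$: varying them shifts the complex structure on $\mathcal L$ in a controlled way, and matching the derivative of the covariance in $(n_1,n_2)$ with the derivative of the annular Green function in its modulus fixes the unknown period term uniquely.
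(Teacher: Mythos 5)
Your overall architecture matches the paper's: cut along $x=t$, get a CLT for the particles on the cut from the discrete log--gas results of \cite{BGG}, feed it into the inverse direction of Theorem \ref{Theorem_main_multi} to extract the germ of the Schur generating function, propagate to all vertical sections via Theorem \ref{Theorem_CLT_multi_ext}, and then identify the covariance with the Green function in the complex--slope structure. But two points are genuinely off. First, the filling fractions: the theorem (and the definition of $\Omega$) fixes $n_1,n_2$ deterministically, and your proposed ``easy CLT for $(n_1,n_2)$ obtained by integrating out'' is both unnecessary and wrong --- for multi-cut ensembles with free filling fractions the fluctuations of the band occupancies are not asymptotically Gaussian, which is exactly why one conditions on them (the paper says this explicitly in the Aztec case). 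Relatedly, the particles below and above the hole on the cut are not independent; they form a single two-cut log--gas, and \cite{BGG} is applied to that ensemble with fixed filling fractions. (Also, for lozenge tilings the projection to an interior section is the bare branching rule, i.e.\ all $g$-functions equal $1$; no extra ``Vandermonde-type factors'' enter --- those multiplicative factors are a feature of the domino case.)

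Second, and this is the real gap, your Step 3 --- the identification of the contour-integral covariance with the Dirichlet Green function of the annular liquid region --- is left as a plan whose key mechanism is unsupported. You propose to determine the elliptic ``period correction'' by varying $(\hat n_1,\hat n_2)$ and matching derivatives of the covariance against derivatives of the annular Green function in its modulus; but varying the filling fractions changes the limit shape, the bands, and the liquid region itself, not merely a modulus of a fixed annulus, and you give no argument that such a matching pins the covariance down uniquely. In fact no such variation is needed: the period information is already contained in the two-cut covariance $\Cov(z,w)$ of \eqref{eq_covariance_two_cut}, where the function $c(z)$ is uniquely fixed by the requirement that the $w$-integrals around both bands vanish. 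The paper exploits exactly this: double-integrating $\Cov$ produces $\mathcal G(z,w)$ of \eqref{eq_GFF_cov_1}--\eqref{eq_GFF_cov_3}, which is shown (Propositions \ref{Proposition_G_continuous}, \ref{Proposition_G_is_Green}) to vanish on the boundary of the two-cut domain $\mathbb D$ and hence to be its Dirichlet Green function; the uniformization $\Theta$ is then built by gluing the explicit trapezoid maps \eqref{eq_critical_equation_rescaled}, \eqref{eq_critical_equation_rescaled_right} along the cut, and the covariance produced by Theorem \ref{Theorem_CLT_multi_ext} is matched to $\frac{1}{\pi}\mathcal G(\Theta,\Theta)$ by a change of variables, contour deformation onto the images of the vertical sections, and integration by parts (with the $1/(z-w)^2$ term accounting for the in-halfplane logarithmic part, as in \cite{BG_CLT}). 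Without an argument of this kind, your third step asserts the conclusion rather than proving it.
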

We prove the convergence to the Gaussian Free Field for finite-dimensional
distributions of pairings with a specific class of test--functions, which are
$\delta$--functions in $x$--direction and polynomials in $y$--directions. A more
detailed version of Theorem \ref{Theorem_holey_hex} is given below in Proposition
\ref{Proposition_holey_hex_detailed}.

\subsection{Domino tilings of holey Aztec rectangles: formulation}

The story of uniformly random domino tilings is to a large extent parallel to the lozenge one.

In this section we will first state a generalization to dominos of Kenyon--Okounkov's conjecture,
and then we show how to prove this conjecture for a class of Aztec rectangles with holes.

In the most general framework, we would like to consider uniformly random domino tilings of domains
on square grid with $2\times 1$ dominos. Let us describe one interesting class of such domains.

\begin{figure}[t]
\begin{center}
 {\scalebox{1.7}{\includegraphics{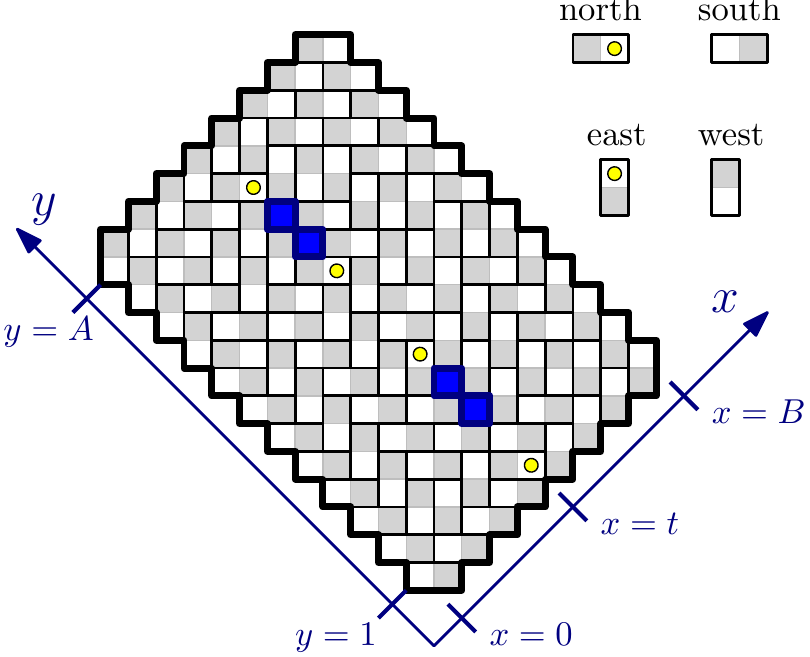}}}
 \caption{Rectangular Aztec diamond with 2 holes. Here $A=12$ and $B=8$. Also four types of dominos are shown:
 north and south in the first row, east and west in the second row. Particles are put on north and east dominos, and we show them
 along the $x=t$ section of the tiling.}
 \label{Figure_domino_domain}
\end{center}
\end{figure}

Consider $A\times B$ rectangle drawn on the square grid in the diagonal direction, i.e.\ such that
its sides are not parallel to the base grid directions, but are rather rotated by $45$ degrees, see
Figure \ref{Figure_domino_domain}. We color the grid in the checkerboard fashion, and the white
squares in the rectangle span coordinates from $y=1$ to $y=A$ and from $x=0$ to $x=B$, so that
there are $A\times (B+1)$ of them. We use the rotated $(x,y)$--coordinate system, as in Figure
\ref{Figure_domino_domain}. We assume $A\ge B$ and along the line $x=t$ we remove $(A-B)$ white
squares from the domain. We call the result \emph{a holey Aztec rectangle}. The total number of
white squares in the domain is $A \times (B+1)- (A-B)= A B +B$. The total number of black squares
is the same, and therefore, the domain is tilable with dominos, Figure \ref{Figure_domino_domain}
shows one possible tiling. Our object of interest is \emph{uniformly random domino tiling} of a
large holey Aztec rectangle.

There are several ways to define a height function of the tiling. All of them are based on
distinguishing not two, but four types of dominos, according to their checkerboard colorings:
horizontal dominos can be \emph{north} and \emph{south} --- the former can fit into the upmost
corner of the recatngle, while latter fits into the bottommost part; similarly, vertical dominos
can be \emph{east} and \emph{west}. We put particles onto white squares of north and east dominos.

\begin{definition}
\label{Def_height_domino}
 Fix a \emph{scale parameter} $L$. For a given $(x,y)$, the height function
$H_L(x,y)$ counts the number of particles (in north and east dominos) directly
\emph{above}, i.e.\ with the same $x$ coordinate and greater $y$ coordinate than
$(L^{-1}\lfloor L x\rfloor ,L^{-1}\lfloor L y\rfloor )$ in the coordinate system of
Figure \ref{Figure_domino_domain}.
\end{definition}

A more traditional definition of the height function for domino tilings (cf.\ \cite{Kenyon_notes})
proceeds as follows. We define the height function $h$ at all vertices of the square lattice, by
imposing the following two local rules:
\begin{itemize}
 \item If a lattice edge $(u,v)$ belong to a domino (i.e.\ cuts it into two squares) of the
     tiling, then $h(v)=h(u)\pm 3$, where the sign is $+$ if $(u,v)$ has a dark square on the
 left and $-$ otherwise.
 \item If a lattice edge $(u,v)$ does not belong to a domino (i.e.\ it borders one of the
     dominos), then $h(v)=h(u)\pm 1$, where the sign is $+$ is $(u,v)$ has a dark square on the
 left and $-$ otherwise.
\end{itemize}
 \cite[Lemma 3.11]{BK} explains how for
\emph{simply connected domains} Definition \ref{Def_height_domino} is matched to the above local
definition of the height function by an affine change of coordinates. Note, however, that for
multiple connected domains the local definition has an inconvenient feature: when we loop around a
hole, the height function will pick up a non-zero increment. Therefore, the height becomes a
multivalued function. This can be fixed\footnote{We would like to thank Rick Kenyon for explaining
this to us.} by noting that the increment depends on the path of integration inside the domain, but
not on the (random) domino tiling and therefore, the fluctuations of the height function (which we
are mostly interested in) are still single--valued.

Definition \ref{Def_height_domino} does not have this problem. But the tradeoff is discontinuity of
$H_L(x,y)$ on the line $x=t$: the height function makes a deterministic jump because of the white
squares cut out of the domain.

\medskip

As the size of the rectangle grows, we would like to vary the positions of the holes
in a regular way. For that we introduce an integer $K>0$, and split  the positions
$\{1,2,\dots,A\}$ into $K$ disjoint groups. The first one is at
$a_1,a_1+1,\dots,b_1$, the second one is at $a_2,a_2+1,\dots,b_2$, etc, until the
last one is at $a_K,a_K+1,\dots,b_K$. We require $a_1=1$ and $b_K=A$. We put holes
in every position \emph{outside} these groups, so that the total number of holes is:
$$
 A-B = A -\sum_{k=1}^K (b_k-a_k+1).
$$

Another parameter is the number of particles in each group. Let us call these numbers
$n_1,n_2,\dots,n_K$. They are \emph{filling fractions}. In principle, they do depend on the
particular choice of the domino tiling, however, we \emph{restrict} the class of considered tilings
by requiring that the filling fractions are deterministically fixed. (Otherwise their fluctuations
would spoil the Gaussian central limit theorem which we aim at). Combinatorics of the model implies
that
$$
 \sum_{k=1}^K n_k= t.
$$

Let $L\to\infty$ be a large parameter. We study unifomly random domino tilings in the asymptotic
regime
\begin{equation}
\label{eq_Aztec_limit_regime_1}
 \frac{A}{L}\to \hat A,\quad \frac{B}{L}\to \hat B,\quad \frac{t}{L}\to \tau,\quad \frac{a_k}{L}\to \hat a_k,\quad
 \frac{b_k}{L}\to \hat b_k,\quad \frac{n_k}{L}\to \hat n_k, \quad 1\le k\le K,
\end{equation}
\begin{equation}
\label{eq_Aztec_limit_regime_2}
 0=\hat a_1 < \hat b_1<\hat a_2<\hat b_2<\dots<\hat a_{K}<\hat b_K=\hat A,\quad \hat n_k>0, \quad 1\le k\le K,
\end{equation}
where $K$ is an arbitrary non-negative integer.

\begin{proposition}
\label{Proposition_LLN_Aztec} In the limit regime \eqref{eq_Aztec_limit_regime_1},
\eqref{eq_Aztec_limit_regime_2}, the rescaled height function $\frac{1}{L} H_L(x,y)$
of a uniformly random domino tiling converges as $L\to\infty$ to a non-random limit
shape $\mathfrak h(x,y)$.
\end{proposition}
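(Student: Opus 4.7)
The plan is to use the Schur generating function machinery of Sections~\ref{Section one_dim} and~\ref{Section_multi}, combined with the analysis of discrete log--gases from \cite{BGG}, to establish the limit shape by a bootstrapping argument along vertical sections. In fact, this proof proceeds in parallel to the argument promised for the lozenge case in Section~\ref{Section_GFF_hex_formulation}: CLT along the section implies (and in particular yields LLN for) the full $2d$ statistics via Theorem~\ref{Theorem_CLT_multi_ext}.

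First, I would set up the bijection between domino tilings of the holey Aztec rectangle and collections of signatures, following the rail--yard--graph construction of \cite{BK}, \cite{BL} (as also used in \cite{BG_CLT}). The positions of particles along each vertical line $x=s$ form an interlacing sequence of integers, and the joint distribution of these sections is governed by a Schur generating function that is the product of the SGF along the special section $x=t$ with explicit multiplicative factors $g_{h,m}$ (of the form $\prod_i (1+\alpha x_i)$, arising from the geometry of the two trapezoids to the left and right of the cut).

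Second, along the section $x=t$ the particles are deterministically constrained to lie in the disjoint intervals $[a_k,b_k]$ with prescribed filling fractions $n_k$. Conditional on the filling fractions, combinatorics of domino tilings identifies the distribution as a discrete multi--cut log--gas: its density is proportional to a Vandermonde squared times an explicit product weight coming from the Cauchy--type factorization of the Schur function on either side of $t$. Apply the discrete loop--equation results of \cite{BGG} to this log--gas to obtain a Law of Large Numbers and Central Limit Theorem for its linear statistics as $L\to\infty$. Via Theorem~\ref{Theorem_main}, this CLT is equivalent to convergence of all partial derivatives of $\ln$ of the section's SGF at $1^B$, i.e.\ the section's distribution is CLT--appropriate in the sense of Definition~\ref{Def_CLT_appropriate}.

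Third, since the trapezoid factors $g_{h,m}$ are products of linear terms, their logarithms have easily controllable derivatives at $1$, so each $g_{h,m}$ is itself CLT--appropriate. Hence the hypotheses of Theorem~\ref{Theorem_CLT_multi_ext} are met for the multilevel measure obtained by slicing the tiling at arbitrary vertical lines. The conclusion provides convergence of $\frac{1}{L}\mathbb{E}[p_{k;h,m}^L]$ together with vanishing of variances of $p_{k;h,m}^L/L$, i.e.\ LLN for the empirical measure of particles on every vertical slice. Since the height function $H_L(x,y)$ is a linear statistic of these empirical measures (it counts particles in a half--line), we get $\frac{1}{L}H_L(x,y)\to \mathfrak h(x,y)$ in probability for a deterministic $\mathfrak h$.

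The main obstacle is the precise combinatorial identification in step two: checking that, conditional on $(n_1,\dots,n_K)$, the distribution along $x=t$ is a multi--cut discrete log--gas with a weight falling into the class treated in \cite{BGG}, and verifying the technical hypotheses there (analyticity of the potential, regularity at the edges of each band). A secondary technical point is tracking the Jacobians of the bijection between dominos and signatures carefully enough that the SGF really factorizes through the section; this is where the rail--yard graph formalism of \cite{BK} does the bulk of the work.
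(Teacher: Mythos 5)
Your plan is correct and essentially the same as the paper's proof: identify the $x=t$ section as a multi--cut discrete log--gas and invoke \cite{BGG} for LLN/CLT there, convert that CLT into CLT--appropriateness of the sectional Schur generating function via Theorem \ref{Theorem_main} (resp.\ \ref{Theorem_main_multi}), extend to all vertical slices by Theorem \ref{Theorem_CLT_multi_ext} using the linear factors of \eqref{eq_domino_through_Schur} (powers of $\prod_i\frac{1+x_i}{2}$), and read off the LLN for $\frac1L H_L$ since the height function is a linear statistic of the slice particle configurations. The combinatorial ``obstacle'' you flag is handled in the paper exactly along the lines you anticipate and turns out to be short: Ciucu's enumeration (Lemma \ref{Lemma_Aztec_rectangle_Z}, cf.\ \cite{Ciucu}, \cite{BK}) gives one Vandermonde per Aztec--rectangle half, the product over the two halves rearranges algebraically into a squared Vandermonde times a one--body weight (Corollary \ref{Corollary_Aztec_law}), and the ratio \eqref{eq_weight_ration} identifies this weight with the multi--cut $\theta=1$ Krawtchouk ensemble already analyzed in \cite[Section 9.1]{BGG}, so no separate verification of the log--gas hypotheses is needed.
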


As with Proposition \ref{Proposition_lozenge_LLN}, a version of Proposition
\ref{Proposition_LLN_Aztec} for simply connected domains would follow from the variational
principle of \cite{CKP}; for our domain we provide a proof in Section \ref{Section_domino_proofs}.

The limit shape can be alternatively encoded by four local proportions of the four
types of dominos: $\pn$, $\ps$, $\pe$, $\pw$. These four quantities can be uniquely
reconstructed from the derivatives of the limit shape. In our notations they are
found by solving the equations :
\begin{align}
\label{eq_A_height_1} \pn,\ps,\pe,\pw\ge 0,\\
\label{eq_A_height_2} \pn+\ps+\pe+\pw=1,\\
 \label{eq_A_height_3}\frac{\partial h}{\partial y}=-\pn-\pe,\\
 \label{eq_A_height_4}\frac{\partial h}{\partial x}=\pe+\ps,\\
 \label{eq_A_height_5} \sin(\pi \pn) \sin(\pi \ps)=\sin(\pi \pe)\sin(\pi \pw).
\end{align}
 The conditions
\eqref{eq_A_height_1}--\eqref{eq_A_height_4} can be seen from the geometry and our
definition of the height function. The last condition \eqref{eq_A_height_5} follows
from the uniformity of the measure, as explained in \cite{CKP}.

We remark, that, in principle, \eqref{eq_A_height_1}--\eqref{eq_A_height_5} can be
taken as the definition of $\pn,\ps,\pe,\pw$ --- for the purpose of this article
such definition would be enough. In the most general setup, the interpretation of
$\pn,\ps,\pe,\pw$  as the local appearance probabilities for the dominos is
conjectural, see \cite[Section 13]{CKP} for the conjectures. For our particular
class of domains, the interpretation of $\pn+\pe$ as the local probability of the
appearance of particles was recently proven in \cite{G_Bulk}, \cite[Appendix B]{BK}.

We remark that similarly to the height function itself, the limit shape $\mathfrak
h(x,y)$ is discontinuous on the line $x=\tau$.  However, this discontinuity
disappears when we pass to from $\mathfrak h$ to its partial derivatives or,
equivalently, to proportions $\pn,\ps,\pe,\pw$.

The region where all four proportions, $\pn,\ps,\pe,\pw$, are positive is called the
\emph{liquid region}, while degenerate areas are \emph{frozen}. In the liquid region
we encode the proportions by the complex slope $\xi$ --- a complex number in the
upper half--plane. For that we consider a quadrilateral $PQRS$ on the complex plane
with vertices $0,1, 1+\xi, \frac{1+\xi}{1-\xi}$, respectively, which all lie on a
circle. We require $\measuredangle RSQ=\measuredangle RPQ=\pi \pe$, $\measuredangle
SQR=\measuredangle SPR=\pi \ps$, $\measuredangle SRP=\measuredangle SQP=\pi \pw$,
$\measuredangle PRQ=\measuredangle PSQ=\pi \pn$, see Figure \ref{Figure_quad}. The
condition \eqref{eq_A_height_5} is used to guarantee the existence of the
quadrilateral --- it says that $|PS|\cdot |QR|=|PQ|\cdot|SR|$, which must hold, as
in terms of the complex numbers encoding the sides of $PQRS$,
$\bigl|\frac{1+\xi}{1-\xi}\bigr|\cdot \bigl|\xi\bigr|=\bigl|1\bigr| \cdot \bigl|\xi
\frac{1+\xi}{1-\xi}\bigr|$.

\begin{figure}[t]
\begin{center}
 {\scalebox{1.6}{\includegraphics{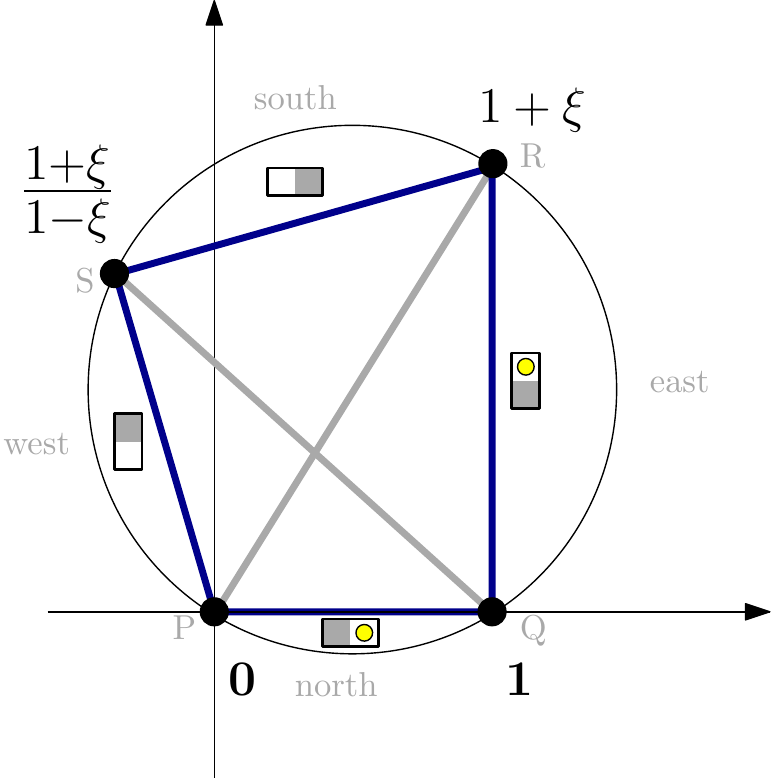}}}
 \caption{A quadrilateral encoding four proportions of dominos and the complex slope $\xi$.
 }
 \label{Figure_quad}
\end{center}
\end{figure}

Similarly to the formulas \eqref{eq_alternative_form} for the lozenge tilings, we have some freedom
in choosing $\xi$, since there is no canonical ordering on the types of dominos. Again, this will
not be important for the final covariance of the Gaussian Free Field, and therefore, we do not
detail this here.

If we denote $z=\xi$, $w=\frac{\xi+1}{\xi-1}$ (these are oriented sides $QR$ and $SP$ of $PQRS$),
then
\begin{equation}
\label{eq_Spectal_domino}
 P(z,w):=1+ z + w -zw=0. % 1 +\xi + \frac{\xi+1}{\xi-1} - \xi \frac{\xi+1}{\xi-1}=0
\end{equation}
The polynomial \eqref{eq_Spectal_domino} is the spectral curve $P(z,w)$ of
\cite{KO_Burgers} for the uniformly random domino tilings. The complex Burgers
equation of \cite{KO_Burgers} reads in this case for $(x,y)$ in the liquid region:
\begin{equation}
\label{eq_Burgers_domino_1}
 \frac{z_x}{z}+\frac{w_y}{w}=0,\quad P(z,w)=0.
\end{equation}
In terms of solely $\xi$ this can be rewritten as
\begin{equation}
\label{eq_Burgers_domino_2}
 \xi_x=\frac{2}{\xi-\xi^{-1}}\cdot \xi_y.
\end{equation}
As with lozenge tilings, $\xi$ gives rise to a complex structure on the liquid
region $\mathcal L$.

\begin{definition}
 A function $f:\mathcal L \to \mathbb C$ is complex--analytic (holomorphic) if
\begin{equation}
\label{eq_complex_structure_domino}
 f_x=\frac{2}{\xi-\xi^{-1}}\cdot f_y.
\end{equation}
\end{definition}
As before, locally near any point where $\xi$ is non-degenerate (i.e.\ its Jacobian
is non-vanishing), \eqref{eq_complex_structure_domino} is equivalent to saying that
$f$ is  a (conventional) holomorphic function of $\xi$.

\begin{conjecture}[{\cite[Section 2.3]{KO_Burgers}}]
\label{Conjecture_GFF_domino}
 Fix an arbitrary (tilable) polygonal (with sides inclined by 45 degrees to the grid directions)
 domain $\Upsilon$ on the square lattice and let
 $H_L(x,y)$ be the random height function of a uniformly random domino tiling\footnote{If $\Upsilon$ is not simply connected, then we additionally
 deterministically fix heights of each hole, as we did for the holey Aztec rectangle.}
 of its dilated version $L\cdot \Upsilon$. Then $\sqrt{\pi}\left[H_L(x,y)-\E H_L(x,y)\right]$ converges as $L\to\infty$ in
 the liquid region $\mathcal L$ to the Gaussian Free Field with respect to the complex slope
 $\xi$ and with Dirichlet boundary
 conditions.
\end{conjecture}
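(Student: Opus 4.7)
The plan is to extend the strategy that succeeds for holey hexagons and holey Aztec rectangles in Theorems \ref{Theorem_holey_hex}, \ref{Theorem_holey_Aztec} to general polygonal $\Upsilon$, using the Schur generating function machinery of Theorems \ref{Theorem_main}, \ref{Theorem_main_multi}, \ref{Theorem_CLT_multi_ext} as the engine. The overall picture is to reduce the 2d problem to a 1d problem along carefully chosen vertical sections, bootstrap from a section-CLT to a full-domain CLT using the SGF correspondence, and then translate the resulting covariance formula into a statement about the Green function of the complex structure determined by $\xi$.

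First, I would triangulate $\Upsilon$ into trapezoidal pieces by vertical cuts, placing cuts at every $x$-coordinate where the polygonal boundary changes slope (and, for the domino case, also through each hole). Each piece is then a simply connected trapezoid for which the combinatorics of dominos / lozenges fits directly into the branching framework of Section \ref{Section_CLT_extension}: conditionally on the particle configurations along the cuts, the tiling of each trapezoid corresponds to restriction / extension of a Schur polynomial via the multiplicative factors $g_{h,m}$ determined by the geometry of the trapezoid. The joint law of the particle configurations along all cuts is then encoded by a smooth measure on $\prod_h \GT_{N_h}$ and its multilevel extension, exactly as in Theorem \ref{Theorem_CLT_multi_ext}.

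The crucial input that needs to be established independently is a CLT for the particle configurations along a distinguished ``spine'' of sections --- this is the role played by the single vertical line through the hole in the holey hexagon and Aztec cases, where one appeals to \cite{BGG}. For a general polygon, I would attempt to set up a discrete loop equation / Nekrasov-type equation along each spine cross-section simultaneously, exploiting the fact that on each such section the law is an inhomogeneous discrete log-gas whose inhomogeneities are determined by the boundary of $\Upsilon$. Assuming one has such a section-CLT with an explicit limiting covariance, Theorems \ref{Theorem_main_multi} and \ref{Theorem_CLT_multi_ext}, combined with the branching relations, would propagate this CLT to all cross-sections and, by finite-dimensional pairings against $\delta$-functions in $x$ and polynomials in $y$, to the full height function. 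One then proves the identification with GFF by verifying that the limiting covariance, expressed by \eqref{eq_CLT_multi_formula_ext}, satisfies the defining properties of the Green function of the Laplacian in the complex structure \eqref{eq_complex_structure_domino} (resp.\ \eqref{eq_analytic_function}) with Dirichlet boundary conditions: symmetry, harmonicity away from the diagonal, logarithmic singularity, and vanishing on $\partial \mathcal L$.

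The main obstacle will be the section-CLT for a \emph{generic} polygon. For the two cases treated in the paper the spine carries a single log-gas in one or two intervals, and one can appeal to \cite{BGG}; for an arbitrary polygon the marginal on a spine cross-section is a more complicated object, and neither an explicit determinantal kernel nor a clean loop equation is available. A secondary but substantial difficulty is the analytic identification of the contour-integral covariance \eqref{eq_CLT_multi_formula_ext} with the Green function in the complex structure given by $\xi$: for multiply connected $\mathcal L$ the uniformization is to a higher genus Riemann surface and the Green function involves period integrals, so matching requires understanding how the critical points of the integrand in \eqref{eq_CLT_multi_formula_ext} trace out the cuts of the spectral curve \eqref{eq_Spectal_domino} (or its lozenge counterpart). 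A realistic intermediate goal is the subclass of polygonal $\Upsilon$ whose spine cross-sections still reduce, after appropriate factorization of the SGF, to finitely many independent log-gases --- for this subclass the present methodology should go through essentially as in the proof of Theorem \ref{Theorem_holey_Aztec}.
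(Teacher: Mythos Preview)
The statement you are addressing is a \emph{conjecture}, not a theorem: the paper does not prove Conjecture~\ref{Conjecture_GFF_domino} in the stated generality, and explicitly says so (``In its largest generality this conjecture remains open at this time''). What the paper proves is the special case of holey Aztec rectangles (Theorem~\ref{Theorem_holey_Aztec}). So there is no ``paper's own proof'' to compare against, and your proposal is not a proof but a research program.

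That said, your program is exactly the strategy the paper advocates and partially carries out. You correctly identify the two genuine obstructions. First, the section-CLT: for the holey hexagon and holey Aztec rectangle the marginal on the distinguished vertical line is a discrete log-gas covered by \cite{BGG}, but for a general polygon the spine marginal is not of this form, and the paper itself says in Section~\ref{Section_further_dimers} that ``the results of \cite{BGG} are no longer enough'' and points to the forthcoming \cite{BorotGG} for the needed extension. Second, the covariance identification: matching \eqref{eq_CLT_multi_formula_ext} to the Green function on a higher-genus liquid region is flagged by the paper as ``a future challenge''. Your proposal does not resolve either obstruction; it names them. The honest conclusion is that the conjecture remains open, and your write-up is a correct summary of why the present machinery falls short, not a proof.
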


Before this article, the validity of Conjecture \ref{Conjecture_GFF_domino} was established for the
tilings of the Aztec diamond in \cite{CJY}, \cite{BG_CLT} and for a class of rectangles in
\cite{BK}. Here is the first non-simply connected result.

\begin{theorem}
\label{Theorem_holey_Aztec} Conjecture \ref{Conjecture_GFF_domino} is true for domino tilings of
holey Aztec rectangles in the limit regime \eqref{eq_Aztec_limit_regime_1},
\eqref{eq_Aztec_limit_regime_2}.
\end{theorem}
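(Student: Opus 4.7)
The plan is to mimic, step for step, the strategy already used to establish Theorem \ref{Theorem_holey_hex} for holey hexagons, replacing each ingredient by its domino analogue. The distinguished vertical section is $x=t$, where the holes are located, and on this section the positions of the white squares covered by north and east dominos form $K$ consecutive groups lying in the intervals $[a_k,b_k]$ of deterministically fixed cardinalities $n_k$. The conditional law of these positions is a discrete Selberg-type ensemble (a discrete log-gas with a fixed number of particles per interval), of the form covered by the nested loop-equation analysis of \cite{BGG}. This gives, for free, the Law of Large Numbers and Central Limit Theorem for polynomial linear statistics on $x=t$, together with explicit contour-integral formulas for their mean and covariance.

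First I would cut the domain along $x=t$ into a left and a right Aztec ``trapezoid'', each of which carries $K$ signatures (one per group $[a_k,b_k]$) on its $x=t$ side and deterministic data on the opposite side. Standard representation-theoretic identities (as in \cite{BK,BL,BG_CLT}) express the partition function of a uniformly random tiling of each trapezoid, conditioned on these signatures, as a Schur polynomial evaluated at an explicit geometric specialization; consequently the marginal law of the $K$-tuple of signatures on the section has a multi-level Schur generating function that factors as the section law times explicit prefactors of the form
\[
\prod_{h=1}^{K}\prod_{i}(1+x_i^h)^{\alpha_h},
\]
with $\alpha_h$ read off from the geometry of the trapezoid. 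Feeding the section CLT of \cite{BGG} into the ``CLT implies CLT--appropriate'' direction of Theorem \ref{Theorem_main_multi} produces the asymptotics of all partial derivatives at $(1,\dots,1)$ of the logarithm of the section Schur generating function. Adding the (trivial) logarithmic derivatives of the prefactors gives the full asymptotic expansion needed to apply Theorem \ref{Theorem_CLT_multi_ext} to the two-sided multi-level Schur generating function.

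Theorem \ref{Theorem_CLT_multi_ext} then delivers joint Gaussianity for polynomial linear statistics on every vertical section $x=x_0$ of both trapezoids, together with explicit double contour-integral expressions \eqref{eq_LLN_multi_formula_ext}, \eqref{eq_CLT_multi_formula_ext} for the limiting mean and covariance. A saddle-point analysis of the LLN integral, combined with \eqref{eq_A_height_1}--\eqref{eq_A_height_5} and the bulk interpretation of $\pn+\pe$ as the local density of particles \cite{G_Bulk}, yields Proposition \ref{Proposition_LLN_Aztec} as a by-product and shows that the critical point of the integrand is precisely the complex slope $\xi(x,y)$ determined by the spectral polynomial $P(z,w)=1+z+w-zw$ of \eqref{eq_Spectal_domino}.

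The hard part will be the final, purely analytic step: identifying the resulting covariance with the Dirichlet Green function for the complex structure \eqref{eq_complex_structure_domino}. Concretely, one must check that $\xi(x,y)$ solves the complex Burgers equation \eqref{eq_Burgers_domino_2}; that in the simply-connected case $K=1$ the double integral evaluates, via residues at the saddles, to the pullback by $\xi$ of $-\tfrac{1}{2\pi}\log\!\left|\tfrac{\xi-\xi'}{\xi-\overline{\xi'}}\right|$; and that for $K\ge 2$ the integral produces precisely the Green function on the multiply-connected liquid region, whose spectral curve now has genus $K-1$ and whose Dirichlet Green function involves periods of that curve. Handling the multiply-connected topology is the key new difficulty: the periods of the Green function around each hole must match the filling fractions $\hat n_k$ dictated by the log-gas equilibrium measure of \cite{BGG}, and this matching has to be extracted from the same variational problem that determines the equilibrium measure on the section. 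Once the identification is verified on the dense family of polynomial test functions produced by Theorem \ref{Theorem_CLT_multi_ext}, convergence of $\sqrt{\pi}\bigl[H_L(x,y)-\E H_L(x,y)\bigr]$ to the GFF follows exactly as in the closing argument of Theorem \ref{Theorem_holey_hex}.
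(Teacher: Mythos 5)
You have reproduced the paper's strategy step for step: cut along $x=t$, obtain a log--gas CLT on the section from \cite{BGG}, invert it via Theorem \ref{Theorem_main_multi} to get the germ of the Schur generating function, propagate to all vertical sections via Theorem \ref{Theorem_CLT_multi_ext}, and identify the resulting covariance with the Green function of the uniformized liquid region. Three points, however, where your description diverges from what actually makes this work. First, the log--gas structure on the section is not ``for free'': it rests on the exact enumeration of domino tilings of an Aztec rectangle with prescribed boundary particles, $2^{t(t+1)/2}\prod_{i<j}(\ell_j-\ell_i)$ (Lemma \ref{Lemma_Aztec_rectangle_Z}, due to Ciucu and \cite{BK}); multiplying the counts for the two halves and converting the complementary configuration yields the squared--Vandermonde law \eqref{eq_Aztec_law}, which is then recognized as the multi-cut $\theta=1$ Krawtchouk ensemble of \cite[Section 9.1]{BGG}. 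Second, the encoding: the section is carried by a \emph{single} signature $\lambda\in\GT_t$ for the left rectangle and the complementary $\tilde\lambda\in\GT_{B-t}$ for the right one, so the multidimensional framework is used with $H=2$, exactly as for the hexagon; the $K$ groups and filling fractions enter only through the weight of the section ensemble. Your ``$K$ signatures per side'' with group-indexed prefactors $\prod_{h}\prod_i (1+x_i^h)^{\alpha_h}$ does not match the Schur-branching combinatorics: the $(1+x)$-factors actually appear as the level-transition functions $g_{h,m}$ via \eqref{eq_domino_through_Schur}, with exponents depending on the level within each rectangle, not on the group.

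Third, the ``key new difficulty'' you anticipate for $K\ge 2$ largely dissolves. The covariance $\Cov^K$ of \cite{BGG} (computed with fixed filling fractions) is normalized so that its integrals around every band vanish; this is exactly what makes the double-integral formulas \eqref{eq_GFF_cov_1_Aztec}--\eqref{eq_GFF_cov_3_Aztec} contour-independent and allows one to prove, as in Proposition \ref{Proposition_G_is_Green}, that they define the Dirichlet Green function of the $K$-slit domain $\mathbb D$. No separate matching of periods to the $\hat n_k$, and no explicit genus-$(K-1)$ period computations, are required in the identification step. What does require a genuinely new check on the domino side is the relation between the root $z(x,y)$ of \eqref{eq_Aztec_critical_equation_left} and the complex slope, namely $\xi=\exp\left(-G\left(\frac{z}{\tau}\right)\right)$ together with the Burgers equation \eqref{eq_Burgers_domino_2}; this is Lemma \ref{Lemma_Aztec_rectangle_slope}, relying on \cite[Theorem 4.3]{BK}. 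With these adjustments the final covariance identification repeats the hexagon computation verbatim, with \cite[Section 6]{BK} playing the role of \cite[Section 9.1]{BG_CLT}.
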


We prove the convergence to the Gaussian Free Field for finite-dimensional
distributions of pairings with a specific class of test--functions, which are
$\delta$--functions in $x$--direction and polynomials in $y$--directions. The
detailed version of Theorem \ref{Theorem_holey_Aztec} is given in Proposition
\ref{Proposition_holey_Aztec_detailed}.

\subsection{Other tiling models and boundary conditions}
\label{Section_further_dimers}

We believe that the general method which we use here for the asymptotic analysis of holey hexagons
and Aztec rectangle is applicable to a wider class of models.

One example is gluings of arbitrary many trapezoids (for lozenge tilings) or
rectangles (for dominos) along a single axis, cf.\ Figure \ref{Figure_others}. This
leads to polygonal domains of high complexity and arbitrary topology. There are
several kinds of the holes, which these domains might have: the main distinction is
that some of them have continuous height function, as in the hexagon example, while
for others the heights are discontinuous. The strategy for these domains would
remain the same (analysis of log-gas on the vertical section and extension to 2d
fluctuations through Schur Generating Functions), however, the ingredients need to
be redeveloped. The results of \cite{BGG} are no longer enough for the analysis of
the necessary log--gases and in \cite{BorotGG} we develop much more general
machinery based on the extensions of Nekrasov (discrete loop) equations. The
asymptotic covariance is also complicated and matching it to the Gaussian Free Field
remains a future  challenge.

\begin{figure}[t]
\begin{center}
 {\scalebox{0.7}{\includegraphics{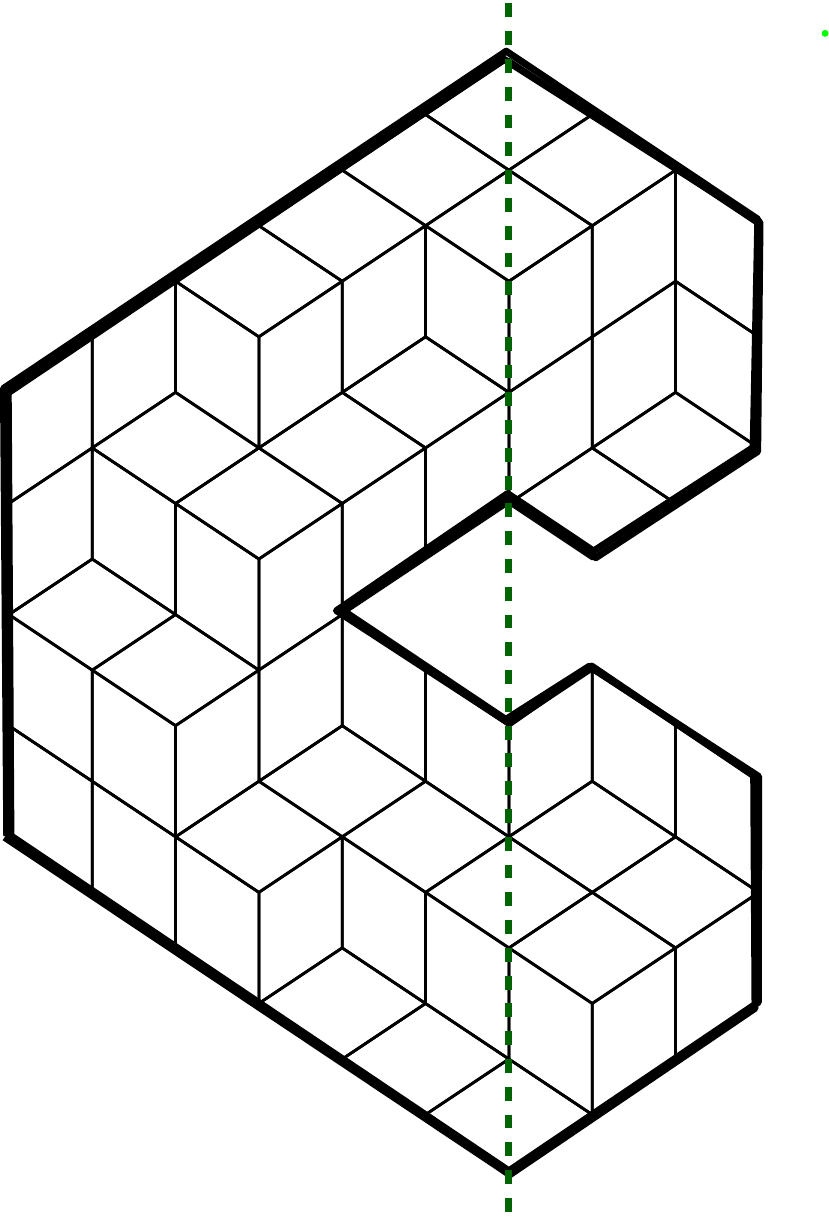}}} \quad {\scalebox{0.7}{\includegraphics{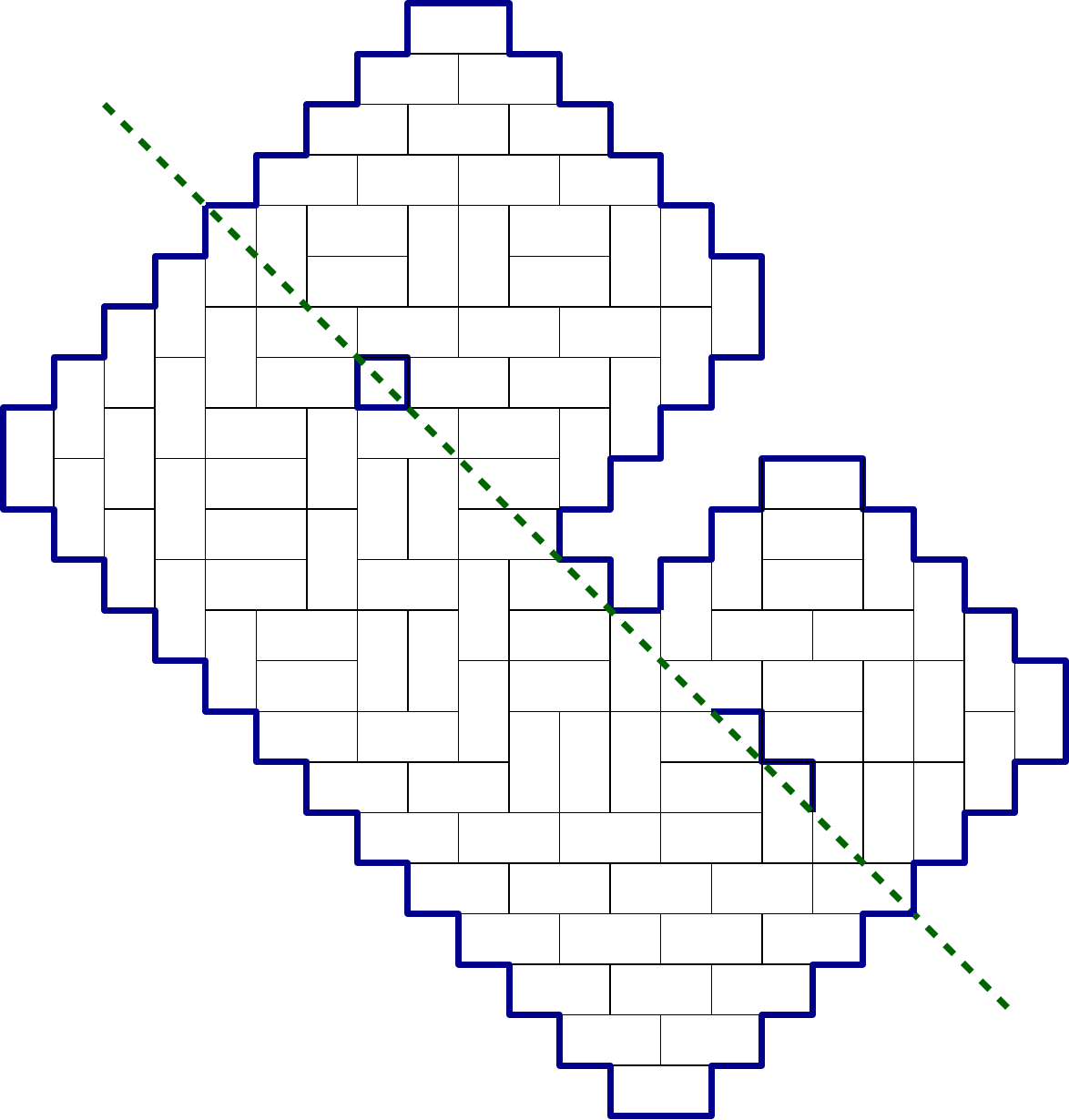}}}
 \caption{Left panel: a simply--connected domain obtained by gluing of 3 trapezoids. Right panel: a domain with two holes
 (of different types) obtained by gluing 3 rectangles}
 \label{Figure_others}
\end{center}
\end{figure}

\bigskip

In another direction, the Schur Generating Functions are applicable to the study of
other tilings and dimer models. One such application was recently demonstrated in
\cite{BL}. For other accessible models see \cite[Section 7]{BouCC}, \cite[Section
4]{BorFer_tile}, \cite{BBCCR}. We expect that the present method will find its way
into analysis of these models.

\subsection{Lozenge tilings of holey hexagons: proofs}
In this section we prove Proposition \ref{Proposition_lozenge_LLN} and Theorem
\ref{Theorem_holey_hex}. The proof proceeds in the following five steps:

\begin{enumerate}
 \item The horizontal lozenges along the section $x=t$ form a discrete log--gas (this was shown
     in \cite[Section 9.2]{BGG}), and therefore, LLN and CLT for them follows from the results of
     \cite{BGG}.
 \item We show how the covariance in CLT of \cite[Theorem 7.1]{BGG} can be used to construct the
     covariance function for the Gaussian Free Field.

 \item We construct the uniformization map, which is a conformal bijection between the liquid
     region $\mathcal L$ (with Kenyon--Okounkov's complex structure) and a Riemann sphere with
     two cuts. This is done by gluing together several applications of theorems of \cite{Petrov_GFF}, \cite{DM}, \cite{BG_CLT} on
     the parameterization of the liquid region in lozenge tilings of (simply--connected)
     trapezoids.

 \item Applying Theorem \ref{Theorem_main_multi} to the result of Step 1, we get the asymptotic
     of the partial derivatives of the logarithm of Schur generating functions, encoding the
     $x=t$ section.  Branching rules for Schur functions have the same combinatorics as lozenge
 tilings. Therefore, the combination of Theorem \ref{Theorem_CLT_multi_ext} with just obtained
 partial derivatives asymptotics leads to LLN and CLT jointly for any finite collection of
  sections $x=t_i$, $i=1,\dots,m$.
 \item We identify the covariance of Theorem \ref{Theorem_CLT_multi_ext} with that of the
     Gaussian Free Field constructed in Step 2. As an ingredient we use computations of
     \cite{BG_CLT} matching the global fluctuations for tilings of (simply-connected) trapezoids
     with the Gaussian Free Field.
\end{enumerate}

Let us start the detailed proof. We assume without loss of generality that $\hat C <  \hat B<\tau$
in \eqref{eq_hex_limit_regime}, so that the picture qualitatively looks like the left panel of
Figure \ref{Figure_hex_hole}. Other cases are studied in a similar way.

Our first task is to turn Theorem \ref{Theorem_holey_hex} into a numeric form by
presenting formulas for the liquid region $\mathcal L$ and the covariance of the
corresponding Gaussian Free Field. For that we need to fix notations and collect
previous results.

For a random tiling $\Omega$ of a hexagon with a hole, consider its section along the vertical line
$x=t$. It has $N=B+C-t-D=n_1+n_2$ horizontal lozenges, where $n_1$ is the number of lozenges below
the hole. Let us denote the positions of these lozenges through $\ell_1>\dots>\ell_N$ in the $y$
coordinate of Figures \ref{Figure_hex_split}, \ref{Figure_hex_hole_coord}. The results of
\cite[Section 9.2]{BGG} yield the Law of Large Numbers and the Central Limit Theorem for the
macroscopic fluctuations of $\ell_i$.

For the CLT we need to introduce the covariance  $\Cov(z,w)$. It depends only on two segments
$(\alpha_1,\beta_1)$, $(\alpha_2,\beta_2)$, which are intersections of the liquid region $\mathcal
L$ with the vertical line $x=\tau$:
\begin{multline}
\label{eq_covariance_two_cut} \Cov(z,w) =
 -\frac{1}{2(w-z)^2}+ \frac{c(z)}{\sqrt{\prod_{i=1}^2 (w-\alpha_i)(w-\beta_i)}}
\\+\frac{{\sqrt{\prod_{i=1}^2(z-\alpha_i)(z-\beta_i)}}}{2\sqrt{\prod_{i=1}^2(w-\alpha_i)(w-\beta_i)}}\Biggl(\frac{1}{(z-w)^2}-\frac{1}{2(z-w)}\sum_{i=1}^2
\left(\frac{1}{z-\alpha_i}+\frac{1}{z-\beta_i}\right)\Biggr),
\end{multline}
where $c(z)$ is a unique function of $z$, such that the integrals of
\eqref{eq_covariance_two_cut} in $w$ around the segments $(\alpha_1,\beta_1)$ and
$(\alpha_2,\beta_2)$ vanish. $c(z)$ can be expressed through elliptic integrals. For
the square root $\sqrt{x}$ we choose the branch which maps large positive real
numbers to large positive real numbers everywhere in \eqref{eq_covariance_two_cut}.

Another form of the same expression for $\Cov(z,w)$, which makes some of its
properties more transparent can be found e.g.\ in \cite{BorotGG}. It reads
\begin{multline}
\label{eq_covariance_two_cut_symmetric} \Cov(z,w) =
 \frac{1}{(z - w)^2}\left(- \frac{1}{2} + \frac{P_4(z,w)}{{\sqrt{\prod_{i=1}^2(z-\alpha_i)(z-\beta_i)}}{\sqrt{\prod_{i=1}^2(w-\alpha_i)(w-\beta_i)}}}\right) \\+
 \frac{P_2(z,w)}{{\sqrt{\prod_{i=1}^2(z-\alpha_i)(z-\beta_i)}}{\sqrt{\prod_{i=1}^2(w-\alpha_i)(w-\beta_i)}}},
\end{multline}
where $P_4$ and $P_2$ are symmetric polynomials in $z,w$ of degrees $4$ and $2$, respectively,
whose coefficients depend on $\alpha_1,\alpha_2,\beta_1,\beta_2$ in an explicit way. $P_4(z,w)$ is
such that
\begin{equation}
\label{eq_covariance_polynomial}
 \frac{P_4(z,w)}{{\sqrt{\prod_{i=1}^2(z-\alpha_i)(z-\beta_i)}}{\sqrt{\prod_{i=1}^2(w-\alpha_i)(w-\beta_i)}}}=\frac{1}{2}+O\bigl( (z-w)^2 \bigr),\quad (z-w)\to 0.
\end{equation}

 Although the formula
\eqref{eq_covariance_two_cut} might look complicated, but only its basic properties are important
for us:
\begin{itemize}
 \item $\Cov(z,w)$ is holomorphic in both variables outside $[\alpha_1,\beta_1]$,
     $[\alpha_2,\beta_2]$, i.e.\ there is no singularity near $z=w$.
 \item $\Cov(z,w)=\Cov(w,z)$.

 \item When $w$ crosses the real axis at point $x\in [\alpha_1,\beta_1]\cap[\alpha_2,\beta_2]$,
     $\Cov(z,w)$ makes a jump
\begin{multline}
\label{eq_covariance_two_cut_jump} \Cov(z,x+\ii 0)-\Cov(z,x-\ii 0) =
\frac{2}{(z - x)^2} \frac{P_4(z,x)}{{\sqrt{\prod_{i=1}^2(z-\alpha_i)(z-\beta_i)}}{\sqrt{\prod_{i=1}^2(x-\alpha_i)(x-\beta_i)}}} \\+
 \frac{ 2 P_2(z,x)}{{\sqrt{\prod_{i=1}^2(z-\alpha_i)(z-\beta_i)}}{\sqrt{\prod_{i=1}^2(x-\alpha_i)(x-\beta_i)}}},
\end{multline}
 \item The $w$--jump at $x\in [\alpha_1,\beta_1]\cup[\alpha_2,\beta_2]$ as a function of $z$ has
     a double pole at $z=x$ of the form $\frac{\pm 1}{(z-x)^2}$, with sign depending on whether
     $z$ approaches $x$ from the upper or lower halfplane.
\end{itemize}

\medskip

 If we lift $\Cov(z,w)$ to the Riemann surface of the equation
 $y^2=(x-\alpha_1)(x-\alpha_2)(x-\beta_1)(x-\beta_2)$, $x,y\in\mathbb C$, then it becomes closely
 related to ``fundamental bidifirential of the second kind'' (or sometimes ``Bergman kernel'') on
 this surface, see \cite{BorotGG} for more details.

\begin{theorem}[{\cite[Theorem 7.1 and Section 9.2]{BGG}}] \label{Theorem_hex_cut} Define the Cauchy--Stieltjes transform of random
particles $\ell_1>\dots>\ell_N$ introduced above, through
$$
 G_L(z)=\sum_{i=1}^N \frac{1}{z- \frac{\ell_i}{L} },
$$
then as $L\to\infty$ in the limit regime \eqref{eq_hex_limit_regime}, we have
\begin{equation}
 \lim_{L\to\infty} \frac{1}{L} G_L(z) = \int_{\mathbb R} \frac{\mu(x)}{z-x} dx, \text{ in
 probability},
\end{equation}
where $\mu(x)$ is a density of a compactly--supported measure of mass $(\hat B+\hat
C-\tau - \hat D)$, which satisfies $0\le \mu(x)\le 1$ everywhere; $0<\mu(x)<1$ only
on two intervals (``bands'') $(\alpha_1,\beta_1)$ and $(\alpha_2,\beta_2)$. Further,
$G_L(z)-\E G_L(z)$, converges as $L\to\infty$ to a Gaussian field in the sense of
moments, jointly and uniformly for finitely many $z$'s belonging to an arbitrary
compact subset of $\mathbb C\setminus \overline{ {\mathrm supp} [\mu(x)]}$. The
covariance $\lim_{L\to\infty}\left[ \E G_L(z) G_L(w) - \E G_L(z) \E G_L(w)\right]$
is given by $\Cov(z,w)$.
\end{theorem}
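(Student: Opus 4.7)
The plan is to reduce the statement to an application of the discrete loop equations machinery for log--gases developed in \cite{BGG}. First, I would compute the joint distribution of the horizontal lozenges $\ell_1>\dots>\ell_N$ on the section $x=t$ explicitly. Cutting the holey hexagon along this vertical line splits it into two trapezoidal pieces meeting at the hole; the number of lozenge tilings of each piece, given the particles on the cut, is a Schur polynomial evaluated at $1^k$ (via Lindström--Gessel--Viennot) times explicit hook factors. Using the Weyl dimension formula for $s_\lambda(1^k)$, one then finds that the distribution of $(\ell_1,\dots,\ell_N)$ is a discrete $\beta=2$ log--gas
$$
 \mathrm{Prob}(\ell_1,\dots,\ell_N) = \frac{1}{Z_L} \prod_{i<j}(\ell_i-\ell_j)^2 \prod_i w_L(\ell_i),
$$
subject to the deterministic constraint that $n_1$ particles lie below the hole and $n_2$ above, where $w_L$ is an explicit ratio of Pochhammer symbols that behaves in the standard semiclassical way as $L\to\infty$.

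Next, I would invoke the general LLN and CLT of \cite{BGG} for discrete log--gases with fixed filling fractions. The LLN produces an equilibrium measure $\mu$ minimizing the logarithmic energy functional under $0\le\mu\le 1$ and prescribed mass in each of the two allowed regions (separated by the hole); this yields a two--band density supported on $(\alpha_1,\beta_1)\cup(\alpha_2,\beta_2)$, with endpoints pinned down by the saddle point equations together with the two filling fraction constraints. The Stieltjes transform of $\mu$ satisfies a quadratic algebraic equation obtained as the $L\to\infty$ limit of the Nekrasov (discrete loop) equation of \cite{BGG}. For the CLT, I would perturb the Nekrasov equation one order below the leading one, obtaining a linear functional equation for $G_L(z)-\E G_L(z)$ whose inversion gives the announced Gaussian field. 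The leading piece of the covariance reproduces the fundamental bidifferential of the second kind on the genus--one Riemann surface $y^2=\prod_{i=1,2}(x-\alpha_i)(x-\beta_i)$, and the remaining additive term $c(z)$ is uniquely fixed by the requirement that the $w$--contour integrals around both cuts vanish, reflecting the fact that the filling fractions $n_1,n_2$ are deterministic and cannot fluctuate.

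The main obstacle is the two--cut (rather than one--cut) structure. In the one--cut regime the master operator arising from the Nekrasov equation is essentially an explicit Cauchy--type operator with a transparent inverse, and the covariance reduces to a rational expression. In the two--cut regime one must work on an elliptic Riemann surface with nontrivial periods, and the inversion of the master operator has a nontrivial kernel spanned by ``filling fraction modes''; the role of the term $c(z)$ is precisely to project onto the complement of this kernel. A secondary technical point is checking the structural hypotheses of \cite{BGG} for the specific weight $w_L$ produced by the hexagon--with--hole geometry, and verifying that the equilibrium measure genuinely separates into two bands and does not degenerate into a one--cut configuration, which follows from the assumption that the hole lies inside the liquid region together with the constraints on $n_1,n_2$.
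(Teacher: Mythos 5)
Your outline follows exactly the route the paper takes: the paper does not reprove this statement but imports it from \cite{BGG} (Theorem 7.1 and Section 9.2), where the particles on the section $x=t$ of the holey hexagon are identified, via the trapezoid/Schur--Weyl dimension count, with a two--cut discrete $\beta=2$ log--gas with deterministically fixed filling fractions, and the LLN/CLT are derived from the Nekrasov (discrete loop) equations, with the term $c(z)$ pinned down by the vanishing of the period integrals around the two bands. Your proposal is therefore correct and essentially the same argument, merely spelled out rather than cited.
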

Note that \cite{BGG} used normalization by $N$, while we normalize by $L$, hence a slight
difference in the statements.

Although, it is possible to get certain formulas for the dependence of $\mu(x)$ and
its bands $(\alpha_1,\beta_1)$, $(\alpha_2,\beta_2)$ on the parameters
\eqref{eq_hex_limit_regime}, we will not address it here; we rather use the
knowledge of $\mu(x)$ and its bands as an input for all other formulas.

\bigskip

Topologically, the liquid region $\mathcal L$ of the holey hexagon is an annulus,
cf.\ Figure \ref{Figure_hex_hole}, and therefore, there is a conformal bijection of
it with the Riemann sphere with two cuts. We fix these cuts in a specific way by
considering the domain $\mathbb D=\mathbb C\setminus \bigl[(-\infty, \alpha_1)\cup
(\beta_1,\alpha_2)\cup (\beta_2,+\infty)\bigr]$, where
$\alpha_1<\beta_1<\alpha_2<\beta_2$ are endpoints of the bands from Theorem
\ref{Theorem_hex_cut}.

Below we construct an explicit uniformization map between $\mathcal L$ and $\mathbb D$. Before
doing that, let us define the Green function of the Laplace operator in $\mathbb D$ with Dirichlet
boundary conditions.

 Fix an arbitrary point $D\in\mathbb C$ and define for $z$ and $w$ in the upper half--plane the
 function

 \begin{multline}
\label{eq_GFF_cov_1}
  \mathcal G(z,w)=\frac{1}{4\pi}\Biggl[ \int_{D}^z \int_D^w \Cov(\mathfrak z,\mathfrak w) d\mathfrak z d\mathfrak w -
 \int_{D}^{\bar z} \int_D^w \Cov(\mathfrak z,\mathfrak w) d\mathfrak z d\mathfrak w
 \\ -\int_{D}^z \int_D^{\bar w} \Cov(\mathfrak z,\mathfrak w) d\mathfrak z d\mathfrak w
 +\int_{D}^{\bar z} \int_D^{\bar w} \Cov(\mathfrak z,\mathfrak w) d\mathfrak z d\mathfrak
 w\Biggr]-\frac{1}{2\pi}\ln\left|\frac{z-w}{z-\bar w}\right|.
 \end{multline}

 Note that $\mathcal G(z,w)$ is a real function (because the formula is unchanged under
conjugations).
 The integration contour in \eqref{eq_GFF_cov_1} lies inside $\mathbb C\setminus \bigcup_{i=1}^2
 [\alpha_i,\beta_i]$ (which is different from the domain $\mathbb D$), and  $\mathcal G(z,w)$ does
 not depend on the choice of such contour, as follows from the fact that integrals of
 $\Cov(\mathfrak z,\mathfrak w)$ around the bands vanish.

 Further,\
 if $z$ is in the upper halfplane, while $w$ is in the lower halfplane, then
 $\mathcal G(z,w)$ has a similar definition:
 \begin{multline}
\label{eq_GFF_cov_2}
  \mathcal G(z,w)=\frac{1}{4\pi}\Biggl[ \int_{D}^z \int_D^{\bar w} \Cov(\mathfrak z,\mathfrak w) d\mathfrak z d\mathfrak w -
 \int_{D}^{\bar z} \int_D^{\bar w} \Cov(\mathfrak z,\mathfrak w) d\mathfrak z d\mathfrak w
 \\ -\int_{D}^z \int_D^{w} \Cov(\mathfrak z,\mathfrak w) d\mathfrak z d\mathfrak w
 +\int_{D}^{\bar z} \int_D^{w} \Cov(\mathfrak z,\mathfrak w) d\mathfrak z d\mathfrak
 w\Biggr].
 \end{multline}
The differences between \eqref{eq_GFF_cov_1} and \eqref{eq_GFF_cov_2} are the
logarithmic term and conjugation of $w$. We further extend the definition to all
other non-real $z$ and $w$ by requiring
 \begin{equation}
 \label{eq_GFF_cov_3}
  \mathcal G(z,w)=\mathcal G(\bar z,\bar
  w).
 \end{equation}

\begin{proposition} \label{Proposition_G_continuous}
 The function $\mathcal G(z,w)$ of \eqref{eq_GFF_cov_1}, \eqref{eq_GFF_cov_2}, \eqref{eq_GFF_cov_3}
 can be uniquely extended to real $z$ and $w$, so that the resulting function of $(z,w)\in\mathbb
 C^2$ is continuous outside the diagonal $z=w$ and continuously differentiable in $\mathbb D^2\setminus \{z=w\}$.
 If $w\in \bigcup_{i=1}^2 [\alpha_i,\beta_i]$, then $G(\bar z,w)= G(z,w)$.
 If $w\in \mathbb R \setminus \bigcup_{i=1}^2 [\alpha_i,\beta_i]$, then $G(z,w)=0$.
\end{proposition}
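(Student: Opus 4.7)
The plan is to analyze the behavior of $\mathcal G(z,w)$ as each argument approaches the real axis and verify that the four piecewise pieces of \eqref{eq_GFF_cov_1}--\eqref{eq_GFF_cov_3} glue together continuously with the claimed boundary values. The manifest symmetry of the construction in $(z,w)$ (inherited from $\Cov(\mathfrak z,\mathfrak w)=\Cov(\mathfrak w,\mathfrak z)$) lets me fix $z\in\mathbb C_+$ and study only the $w$ variable; the argument in $z$ is identical.

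First I would verify property 4, which also gives continuity at these boundary points. For $w\to x\in\mathbb R\setminus\bigcup_{i=1}^{2}[\alpha_i,\beta_i]$, the kernel $\Cov(\mathfrak z,\cdot)$ is holomorphic in a neighborhood of $x$, so the two contours underlying $\int_D^w$ and $\int_D^{\bar w}$ can either be deformed into one another through $x$ (when $x$ lies in the gap $(\beta_1,\alpha_2)$) or differ by a full closed loop around one of the bands (when $x\in(-\infty,\alpha_1)\cup(\beta_2,\infty)$). In the first case the bracket $\int_D^w - \int_D^{\bar w}$ tends to $0$ trivially; in the second it vanishes because the period integrals of $\Cov$ around each band are zero, which is precisely the defining condition imposed on $c(z)$. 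The log term $\log|(z-w)/(z-\bar w)|$ also tends to $0$ since $w$ becomes real in the limit. Hence the upper-half-plane formula tends to $0$, and the lower-half-plane formula, which equals minus the same integral part, tends to $0$ as well, yielding property 4 and continuity at these points.

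The core case is $w\to x\in[\alpha_i,\beta_i]$. The one-sided limits of each iterated integral exist because the square-root singularities of $\Cov$ at the band endpoints are integrable, and the log term still vanishes. The bracket $\int_D^{x+i0}-\int_D^{x-i0}$ now equals the integral of $\Cov(\mathfrak z,\cdot)$ along a contour encircling only a proper piece of the band and no longer vanishes. Two sign flips save the day: the formulas \eqref{eq_GFF_cov_1} and \eqref{eq_GFF_cov_2} differ by an overall sign in the integral part (since $w$ and $\bar w$ are swapped in the integration limits), while the bracket $\int_D^w-\int_D^{\bar w}$ itself reverses sign between the upper and lower limits because the enclosing loop is traversed in opposite orientations. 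The two sign flips cancel, so the one-sided limits of $\mathcal G(z,w)$ from $\mathbb C_+$ and $\mathbb C_-$ coincide, giving continuity of $\mathcal G(z,\cdot)$ across the bands. Property 3 is then an immediate corollary: the reality condition \eqref{eq_GFF_cov_3} gives $\mathcal G(\bar z,w)=\mathcal G(z,\bar w)$, and the just-proved continuity across the band allows replacing $\bar w$ by $w$. Continuous differentiability in $\mathbb D^2\setminus\{z=w\}$ follows by differentiating the same identities under the integral sign; the integrand is smooth enough in the relevant variables away from the diagonal, and the matching of one-sided derivatives across the bands is handled by the same sign-cancellation argument.

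The hard part is the sign-cancellation step in the band case: it requires a careful parametrization of the integration contours near the bands and uses both the vanishing of period integrals enforced by $c(z)$ and the explicit jump structure \eqref{eq_covariance_two_cut_jump}. All of these are structural properties of $\Cov$ and $\mathcal G$ already recorded in the paper, so no new analytic ingredient is needed beyond careful bookkeeping.
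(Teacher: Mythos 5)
Your treatment of the boundary values and of plain continuity follows essentially the paper's route: outside the bands both formulas tend to zero (using that the period integrals of $\Cov$ around the bands vanish), across a band the two half-plane formulas agree because the integral part of \eqref{eq_GFF_cov_1} at $(z,w)$ coincides with \eqref{eq_GFF_cov_2} at $(z,\bar w)$ while the logarithm vanishes in the limit, and $\mathcal G(\bar z,x)=\mathcal G(z,x)$ then follows from \eqref{eq_GFF_cov_3}. Up to that point the proposal is fine.

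The gap is in the continuous differentiability, which is the substantive part of the proposition (it is what later allows the harmonicity of $\mathcal G$ to extend across the cuts in Proposition \ref{Proposition_G_is_Green}). It is not true that "the matching of one-sided derivatives across the bands is handled by the same sign-cancellation argument." The $w$-derivative computed from the upper half-plane, as in \eqref{eq_x25}, contains the term $\frac{1}{4\pi}\left(\frac{1}{w-z}-\frac{1}{w-\bar z}\right)$ produced by the logarithm --- a term invisible in your continuity argument because the logarithm itself vanishes on the real axis --- whereas the derivative from below, \eqref{eq_x26}, has no logarithmic contribution and carries the opposite sign on the integral part. Consequently the difference of the two one-sided limits at $x$ in a band is
$\frac{1}{4\pi}\Bigl[\int_{D}^z \bigl(\Cov(\mathfrak z,x+\ii 0)+\Cov(\mathfrak z,x-\ii 0)\bigr)\,d\mathfrak z-\int_{D}^{\bar z}\bigl(\cdots\bigr)\,d\mathfrak z+\frac{1}{x-z}-\frac{1}{x-\bar z}\Bigr]$,
which involves the \emph{sum}, not the jump, of the boundary values of $\Cov$; no formal sign flip makes this vanish. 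One needs the structural identity that on a band $\Cov(\mathfrak z,x+\ii 0)+\Cov(\mathfrak z,x-\ii 0)=-\frac{1}{(\mathfrak z-x)^2}$ (the square-root part of \eqref{eq_covariance_two_cut_symmetric}, normalized by \eqref{eq_covariance_polynomial}, changes sign across the cut, so only the explicit double pole survives in the sum), and then an explicit integration showing that this exactly cancels $\frac{1}{x-z}-\frac{1}{x-\bar z}$, as in \eqref{eq_x27}; the paper's remark that the same mechanism fails for $x$ off the bands shows no soft argument can substitute for it. In addition, you never treat the regime where both variables approach the real axis: smoothness in $\mathbb D^2\setminus\{z=w\}$ also requires matching the $z$-derivative of the already-extended function $\mathcal G(z,x)$, $x$ in a band, as $z$ crosses the bands, and there the mechanism is again different (the double-pole part of $\Cov(z,\mathfrak w)$ integrates to zero between $x-\ii 0$ and $x+\ii 0$, while the remaining part changes sign as $z$ crosses the real axis), cf. \eqref{eq_x28}--\eqref{eq_x29}; this case is not covered by your reduction "fix $z\in\mathbb C_+$ and argue symmetrically."
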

\begin{proof}

 First, take $x\in \mathbb R \setminus \bigcup_{i=1}^2 [\alpha_i,\beta_i]$. If we send $w\to x$ in
 either upper or lower halfplane, then both formulas \eqref{eq_GFF_cov_1} and \eqref{eq_GFF_cov_2}
 turn into identical zeros. Therefore the continuity holds and $G(x,w)=0$.

 Next, take $x\in [\alpha_1,\beta_1]$ (the case $x\in [\alpha_2,\beta_2]$ is similar). Suppose that
 both $z$ and $w$ are in the upper half--plane and send $w\to x$. The last term involving logarithm
 in \eqref{eq_GFF_cov_1} vanishes. For the part involving the integrals, note that for $w$ in the
 upper half--plane, $\mathcal G(z,\bar w)$ given by \eqref{eq_GFF_cov_2} (with $w$ replaced by $\bar w$)
 coincides with the integral part of $\mathcal G(z,w)$ given in \eqref{eq_GFF_cov_1}. When $w\to
 x$, also $\bar w\to x$, and we conclude the continuity of $\mathcal G(z,w)$ at
 $w=x\in[\alpha_1,\beta_1]$. Note that we used $z\ne x$ here, as otherwise both logarithmic part of
 \eqref{eq_GFF_cov_1} and integrand would be exploding near $x$. Let us also emphasize, that
 $\mathcal G(x,w)$ does not have to vanish (as might seem from looking at \eqref{eq_GFF_cov_1}, \eqref{eq_GFF_cov_2}) --- this is because the integration contours from $D$ to
 $w$ and $\bar w$ are necessarily very different, as they are not allowed to cross
 $[\alpha_1,\beta_1]$.

 We proceed to continuity of the derivatives. Let us compute the (complex) $w$--derivative of
 \eqref{eq_GFF_cov_1}. Note that for the third and forth integrals the derivative vanishes, as they
 anti--holomorphic by definition, while for the first and second integrals the derivative equals to the integrand. For the logarithm we write
 \begin{equation}
 \label{eq_x24}
  \frac{1}{2\pi} \ln\left|\frac{z-w}{z-\bar w}\right|=\frac{1}{4\pi}\bigl( \ln (z-w)+\ln(\bar z-\bar w) - \ln(z-\bar w)-\ln(\bar z-w)\bigr),
 \end{equation}
 where the branches of the logarithms need to be chosen appropriately. The $w$--derivative of
 \eqref{eq_x24} is
 \begin{equation}
  \frac{1}{4\pi} \left( \frac{1}{w-z}-\frac{1}{w-\bar z} \right).
 \end{equation}
 Summing up, when both $z$ and $w$ are in the upper  halfplane we get
 \begin{equation}
 \label{eq_x25}
  \frac{\partial}{\partial w} \mathcal G(z,w)=\frac{1}{4\pi}\Biggl[ \int_{D}^z \Cov(\mathfrak z, w) d\mathfrak z -
 \int_{D}^{\bar z} \Cov(\mathfrak z,w) d\mathfrak z +\frac{1}{w-z}-\frac{1}{w-\bar z}\Biggr]
 \end{equation}
 Similarly, when  $w$ is in the lower halfplane, we get
\begin{equation}
\label{eq_x26}
  \frac{\partial}{\partial w} \mathcal G(z,w)=\frac{1}{4\pi}\Biggl[ -\int_{D}^z \Cov(\mathfrak z, w) d\mathfrak z +
 \int_{D}^{\bar z} \Cov(\mathfrak z,w) d\mathfrak z \Biggr]
 \end{equation}
 We need to show that as $w\to x\in[\alpha_1,\beta_1]$, the expressions \eqref{eq_x25} and
 \eqref{eq_x26} have the same limit. The difference between $w\to x$ limits of \eqref{eq_x25} and
 \eqref{eq_x25} is
\begin{multline}
 \label{eq_x27}
  \frac{\partial}{\partial w} \mathcal G(z,w)= \frac{1}{4\pi}\Biggl[ \int_{D}^z \bigl(\Cov(\mathfrak z, x+\ii 0) +\Cov(\mathfrak z, x-\ii 0)\biggr) d\mathfrak z \\-
 \int_{D}^{\bar z} \bigl(\Cov(\mathfrak z,x+\ii 0)+ \Cov(\mathfrak z,x-\ii 0)\bigr) d\mathfrak z +\frac{1}{x-z}-\frac{1}{x-\bar z}\Biggr]
\\=\frac{1}{4\pi}\Biggl[ -\int_{D}^z \frac{1}{(\mathfrak z-x)^2} +
 \int_{D}^{\bar z} \frac{1}{(\mathfrak z-x)^2}  d\mathfrak z +\frac{1}{x-z}-\frac{1}{x-\bar z}\Biggr]=0,
 \end{multline}
which proves continuity of the derivative. Note that the above argument would not
work for $x\in \mathbb R\setminus \bigcup_{i=1}^2  [\alpha_i,\beta_i]$ --- there are
no reasons for the $w$ derivative to be continuous there.

So far the variable $z$ was non-real, and it remains to study what happens when both $z$ and $w$
tend to the real axis.

First note that for $z$ in the lower half--plane we define $\mathcal G(z,w)$ as
$\mathcal G(\bar z,\bar w)$. When $w=x\in\bigcup_{i=1}^2 [\alpha_i,\beta_i]$,
$w=\bar w$ and we get $\mathcal G(z,x)=\mathcal G(\bar z,x)$. In turn, this readily
implies that the limits of $\mathcal G(z,x)$ as $z$ approaches a real point from
upper or lower halfplanes, coincide. For the complex $z$--derivative, we compute
using $w\to x$ limit of \eqref{eq_GFF_cov_1} for $z$ in the upper half--plane
\begin{equation}
\label{eq_x28}
 \frac{\partial}{\partial z} \mathcal G(z,x)=
\frac{1}{4\pi}\Biggl[ \int_D^{x+\ii 0} \Cov(z,\mathfrak w)  d\mathfrak w -\int_D^{x-\ii 0} \Cov(z,\mathfrak w)  d\mathfrak w
 \Biggr],
 \end{equation}
 and for $z$ in the lower halfplane we get
\begin{equation}
\label{eq_x29}
 \frac{\partial}{\partial z} \mathcal G(z,x)= \frac{\partial}{\partial z} \mathcal G(\bar z,x)=
 \frac{1}{4\pi}\Biggl[ -
  \int_D^{x+\ii 0} \Cov(z,\mathfrak w) d\mathfrak w
 +\int_D^{x-\ii 0} \Cov(z,\mathfrak w) d\mathfrak
 w\Biggr],
 \end{equation}
The $\frac{1}{2(z-\mathfrak w)^2}$ part of $\Cov(z,\mathfrak w)$ integrates to zero
in \eqref{eq_x28}, \eqref{eq_x29} (unless $z=x$), while the remaining part changes
its sign as $z$ crosses the real axis. We conclude that \eqref{eq_x28},
\eqref{eq_x29} have the same limits as $z$ approaches the real axis inside
$\bigcup_{i=1}^2 [\alpha_i,\beta_i]$.
\end{proof}

\begin{proposition} \label{Proposition_G_is_Green} $\mathcal G(z,w)$
(continuously extended to $z,w\in \mathbb D$, as in Proposition
\ref{Proposition_G_continuous}) is the Green function of the Laplace operator in
$\mathbb D$ with Dirichlet boundary conditions.
\end{proposition}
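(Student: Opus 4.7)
The plan is to verify the three defining properties of the Dirichlet Green function: (i) $\mathcal G(z,w)$ vanishes on the boundary $\partial\mathbb D = (-\infty,\alpha_1)\cup(\beta_1,\alpha_2)\cup(\beta_2,+\infty)$; (ii) for fixed $w\in\mathbb D$, the function $z\mapsto\mathcal G(z,w)$ is harmonic on $\mathbb D\setminus\{w\}$; and (iii) $\mathcal G(z,w)=-\frac{1}{2\pi}\ln|z-w|+O(1)$ as $z\to w$. Combined with the symmetry $\mathcal G(z,w)=\mathcal G(w,z)$, which follows immediately from $\Cov(\mathfrak z,\mathfrak w)=\Cov(\mathfrak w,\mathfrak z)$ and the symmetry of the log kernel, these properties identify $\mathcal G$ uniquely.

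Items (i) and (iii) are essentially free. Property (i) is exactly the last sentence of Proposition~\ref{Proposition_G_continuous}. For (iii), each of the four double integrals in \eqref{eq_GFF_cov_1} is continuous on the diagonal because $\Cov(\mathfrak z,\mathfrak w)$ has no singularity at $\mathfrak z=\mathfrak w$; so the whole singular contribution comes from $-\frac{1}{2\pi}\ln|(z-w)/(z-\bar w)|$, which has precisely the required logarithmic pole at $z=w$ and a bounded remainder from $\ln|z-\bar w|$.

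For harmonicity off the real axis I would introduce $F(z,w):=\int_D^z\int_D^w \Cov(\mathfrak z,\mathfrak w)\,d\mathfrak z\,d\mathfrak w$, which is holomorphic in both variables on $\mathbb C\setminus\bigcup_i[\alpha_i,\beta_i]$ thanks to the vanishing of the $w$-periods of $\Cov$ around each band (this vanishing is the very property used to single out $c(z)$ in \eqref{eq_covariance_two_cut}). Then the integral part of \eqref{eq_GFF_cov_1} equals $\frac{1}{4\pi}[F(z,w)-F(\bar z,w)-F(z,\bar w)+F(\bar z,\bar w)]$, a sum of terms each holomorphic or anti-holomorphic in $z$ (and in $w$), hence harmonic in each variable separately. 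The remaining piece is the classical upper-halfplane Green function, harmonic off the diagonal. The same argument applied to \eqref{eq_GFF_cov_2} gives harmonicity when $w$ is in the lower halfplane, and the reflection rule \eqref{eq_GFF_cov_3} transports the conclusion to $z$ in the lower halfplane.

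The expected main obstacle is harmonicity at real points of $\mathbb D$ lying in the bands $\bigcup_i[\alpha_i,\beta_i]$: these points are interior to $\mathbb D$, yet the defining formula changes character across the real axis and even switches between \eqref{eq_GFF_cov_1}, \eqref{eq_GFF_cov_2} and \eqref{eq_GFF_cov_3}. I would address this by invoking Proposition~\ref{Proposition_G_continuous}, which already establishes $C^1$ matching of $\mathcal G$ across each band; since the function is harmonic on either side, a standard removable-singularity argument (testing the distributional Laplacian against a smooth bump, or applying Morera to the harmonic conjugate) shows that $\mathcal G$ is harmonic across the band as well. This is exactly the place where the construction of $c(z)$—ensuring path-independence of $F$ on $\mathbb C\setminus\bigcup_i[\alpha_i,\beta_i]$—is used in an essential way. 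Once harmonic extension across the bands is in hand, uniqueness of the Dirichlet Green function on $\mathbb D$ identifies it with $\mathcal G$ and completes the proof.
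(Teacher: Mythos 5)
Your argument is essentially the paper's own proof: boundary vanishing is quoted from Proposition \ref{Proposition_G_continuous}, harmonicity off the real axis comes from splitting the integral part of \eqref{eq_GFF_cov_1}, \eqref{eq_GFF_cov_2} into holomorphic and anti-holomorphic pieces (with path-independence supplied by the vanishing of the periods of $\Cov$ around the bands), harmonicity across the bands $[\alpha_i,\beta_i]$ follows from the $C^1$ matching of Proposition \ref{Proposition_G_continuous} plus harmonic continuation over a cut, and the logarithmic singularity sits entirely in the $-\frac{1}{2\pi}\ln\bigl|\frac{z-w}{z-\bar w}\bigr|$ term.

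The one ingredient you omit, and which the paper checks as a separate property, is the behavior at infinity. Since $\mathbb D$ is unbounded, vanishing on the finite boundary, harmonicity, the logarithmic pole and symmetry do \emph{not} by themselves pin down the Green function: for example, $h(z)=\mathrm{Im}\sqrt{(z-\alpha_1)(z-\beta_1)(z-\alpha_2)(z-\beta_2)}$ has a single-valued branch on $\mathbb D$ (a loop in $\mathbb D$ can only encircle the segment $[\beta_1,\alpha_2]$, i.e.\ two branch points), is harmonic there, and its boundary values on $\partial\mathbb D$ vanish because the polynomial is positive on the three removed intervals; such a function could be added to any candidate without violating your list of properties. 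The paper closes this loophole by observing that $\mathcal G(z,w)\to 0$ as $w\to\infty$ (directly from the definitions \eqref{eq_GFF_cov_1}, \eqref{eq_GFF_cov_2}) and as $z\to\infty$ (using the symmetry of $\Cov$), and only then invokes uniqueness. This is a one-line fix, but your uniqueness claim as stated is incomplete without it.
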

\begin{proof}
Let us check the defining properties of the Green function. First of all, we know
from Proposition \ref{Proposition_G_is_Green} that $\mathcal G(z,w)$ vanishes when
either $z$ or $w$ belong to the boundary of $\mathbb D$. Second, as $z$ or $w$ goes
to $\infty$, $\mathcal G(z,w)$ converges to $0$: for $w\to\infty$
 this is clear from the definition of $\mathcal G(z,w)$, and for $z\to\infty$ we use
 the symmetry of the covariance. Third, for fixed $z$, $\mathcal G(z,w)$ is harmonic
 everywhere in $\mathbb D\setminus{z}$, because it is a combination of holomorphic
 and anti-holomorphic functions. Additional care is required near
 $[\alpha_1,\beta_1]$, $[\alpha_2,\beta_2]$: here we use the fact harmonic function can be
 extended (preserving harmonicity) over a cut, if it is continuously differentiable
 near this cut.

 Finally, locally near
$z$ the function $\mathcal G(z,w)$ can
 be represented as a sum of $-\frac{1}{2\pi}\ln|z-w|$ and a harmonic function.

 At this point, the Green function is uniquely determined by the above four
 properties.
\end{proof}

The next step is to construct a conformal isomorphism $\Theta$ between  the liquid
region $\mathcal L$ with the complex structure given by the complex slope $\xi$ and
$\mathbb D$ with the standard complex structure.

For that we cut the hexagon along the vertical line going though the center of the
hole and extend each of the two pieces to a trapezoid as in Figure
\ref{Figure_hex_split} (cf.\ \cite{BG}, \cite{BG_CLT} for study of the uniformly
random lozenge tilings of trapezoids). The horizontal lozenges along the cut become
a part of both trapezoids: they form the common boundary of the left trapezoid of
width $t$ and right trapezoid of width $B+C-t$. Note that given the lozenges along
the cut, the tilings of left and right trapezoids are independent and uniformly
distributed. This property would allow us to use the results about liquid regions
for random tilings of trapezoids.

We identify $\ell_1>\dots>\ell_N$ of Theorem \ref{Theorem_hex_cut} with two
different signatures. One is $\lambda$ of rank $t$, which encodes the horizontal
lozenges on the border of the left trapezoid.  Another one is $\tilde \lambda$ of
rank $\tilde t:=B+C-t$, which encodes the horizontal lozenges on the border of the
right trapezoid.  Both these signatures are obtained by supplementing
$\ell_1>\dots>\ell_N$ with additional deterministic particles. Note that for $\tilde
\lambda$ the $y$--direction in the coordinate system is flipped, as shown in Figure
\ref{Figure_hex_split}.

\begin{figure}[t]
\begin{center}
 {\scalebox{0.65}{\includegraphics{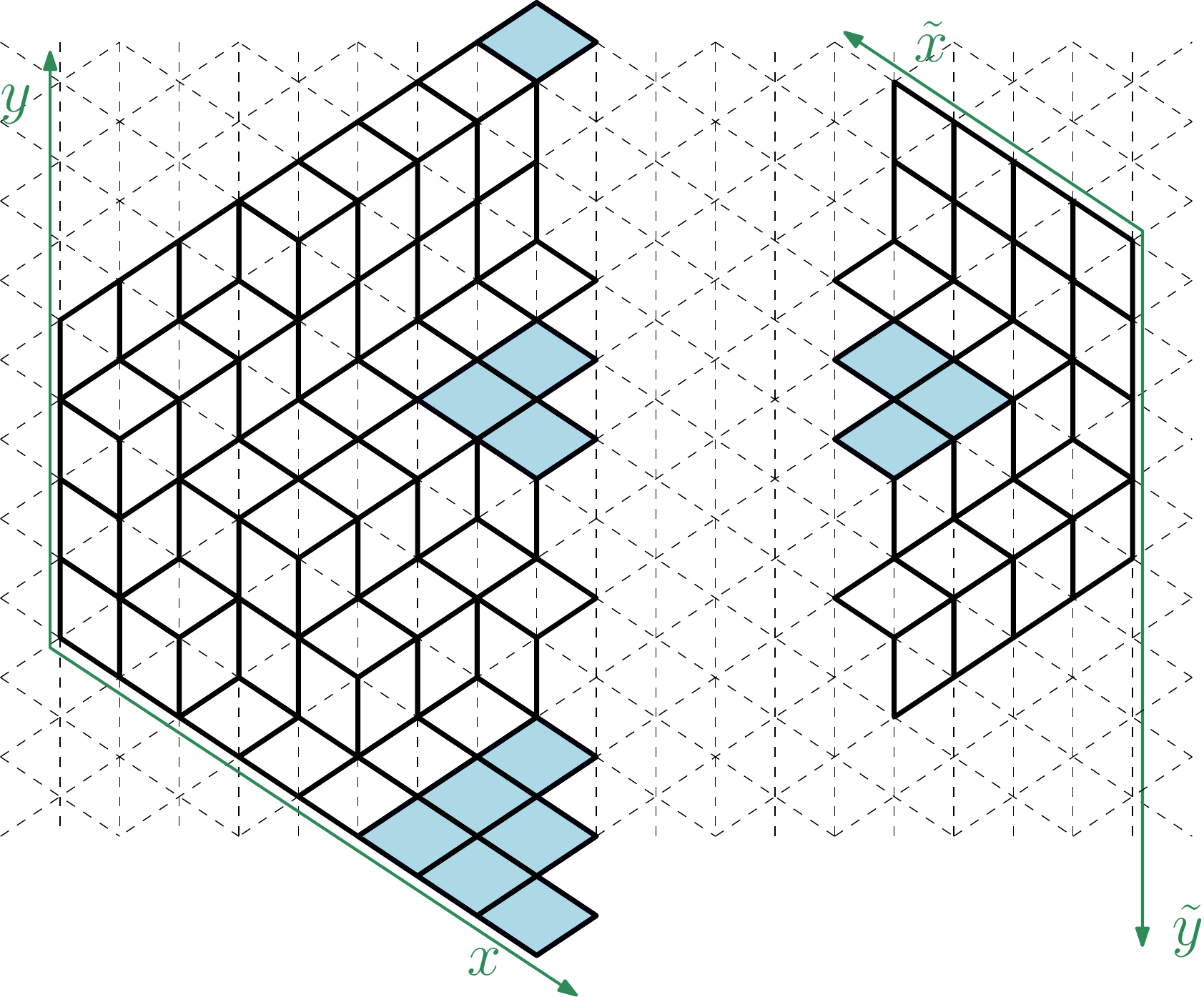}}}
 \caption{Hexagon with a hole is split into two trapezoids. The blue lozenges are deterministic and create parts of the boundary
for the hexagon and the hole. The ``native'' coordinate systems of trapezoids differ
by a central symmetry. \label{Figure_hex_split}}
\end{center}
\end{figure}

We obtain $\Theta$ as a \emph{gluing} of isomorphisms of the liquid region in the
left and right trapezoids with upper and lower half--planes, respectively. For each
trapezoid the isomorphisms are written in terms of Cauchy--Stieltjes transforms:
\begin{equation}
\label{eq_St_left_right}
 G(z)=\lim_{L\to\infty} \frac{1}{t} \sum_{i=1}^t \frac{1}{z-\lambda_i/t}, \quad
 \tilde G(z)=\lim_{L\to\infty} \frac{1}{\tilde t} \sum_{i=1}^{\tilde t} \frac{1}{z-\tilde \lambda_i/{\tilde t}},
\end{equation}
where the limits exist by Theorem \ref{Theorem_hex_cut}.

For $x\le \tau$ consider the equation
\begin{equation}
\label{eq_critical_equation_rescaled}
 y=z + \frac{\tau-x}{\exp\left(-G\left(\frac{z}{\tau}\right)\right)-1},\quad z\in \mathbb C.
\end{equation}
Let $\mathcal L^{\mathrm{left}}$ denote the set of points $(x,y)\subset
(0,\tau)\times \mathbb R$, such that the equation has a solution $z$ in the upper
half--plane $\mathbb U$. It is proven in
\cite{Petrov-curves}, \cite{DM}, \cite{BG_CLT}, \cite{G_Bulk} that $\mathcal
L^{\mathrm{left}}$ is precisely the liquid region of the left trapezoid (we rescaled
$x$, $y$, and $z$ by $\tau$, as compared to those articles). Moreover, for each
$(x,y)\in \mathcal L^{\mathrm{left}}$ \eqref{eq_critical_equation_rescaled} has a
\emph{unique} solution $z(x,y)$ in the upper-halfplane $\mathbb U$, and the map $z:
\mathcal L^{\mathrm{left}}\to \mathbb U$ is a smooth bijection, see \cite{DM}.

Differentiating \eqref{eq_critical_equation_rescaled} in $x$ and $y$, we get
\begin{equation}
\label{eq_critical_equation_rescaled_x}
 0=z_x\left(1+\frac{1}{\tau}G'\left(\frac{z}{\tau}\right)  \exp\left(-G\left(\frac{z}{\tau}\right)\right) \frac{\tau-x}{(\exp\left(-G\left(\frac{z}{\tau}\right)\right)-1)^2}\right) - \frac{1}{\exp\left(-G\left(\frac{z}{\tau}\right)\right)-1}
\end{equation}
\begin{equation}
\label{eq_critical_equation_rescaled_y}
 1=z_y\left(1 + \frac{1}{\tau}G'\left(\frac{z}{\tau}\right)  \exp\left(-G\left(\frac{z}{\tau}\right)\right) \frac{\tau-x}{(\exp\left(-G\left(\frac{z}{\tau}\right)\right)-1)^2}\right),
\end{equation}
which implies the relation
\begin{equation}
\label{eq_burgers_for_map}
 z_y=\left(\exp\left(-G\left(\frac{z}{\tau}\right)\right)-1\right) z_x
\end{equation}
In addition, $z(x,y)$ is related to the complex slope $\xi$ of Section
\ref{Section_GFF_hex_formulation} through
\begin{equation}
\label{eq_complex_structure_def}
 \xi(x,y)= \frac{\tau-x}{y-z(x,y)}=
\exp\left(-G\left(\frac{z}{\tau}\right)\right)-1 , \quad x\le \tau.
\end{equation}
 Note that the articles
\cite{Petrov-curves}, \cite{DM}, \cite{BG_CLT}, \cite{G_Bulk} where such relations
are established all use slightly different coordinate systems and the definitions of
$\xi$ (there is an ambiguity in the identification of angles of the triangle of
Figure \ref{Figure_triangle} with types of lozenges). For instance, in
\cite{Petrov-curves,Petrov_GFF}, the coordinate system $(\eta,\chi)$ is used, which
corresponds to our coordinate system by $\eta=x$, $\chi=-y$. The complex slope
$\Omega$ used there is related to $\xi$ by $\xi=\frac{\Omega-1}{\Omega}$, which is
one of the transformations mentioned in \eqref{eq_alternative_form}.

Either of the relations \eqref{eq_burgers_for_map}, \eqref{eq_complex_structure_def} imply that
$(x,y)\mapsto z(x,y)$ is indeed a conformal map in the complex structure $\xi$.

For the right trapezoid we take $x\ge\tau$ and consider the equation
\begin{equation}
\label{eq_critical_equation_rescaled_right}
 y=\tilde z + \frac{\tau-x}
{\exp\left(-\tilde G\left(\frac{\hat B +\hat A -\tilde z}{\hat B+\hat C-\hat
\tau}\right)\right)-1},\quad \tilde z\in \mathbb C,
\end{equation}
which is the same equation \eqref{eq_critical_equation_rescaled} written in terms of the
coordinates $\tilde x$, $\tilde y$, with $G$ replaced  by $\tilde G$, $\tau$ replaced by $\hat B +
\hat C -\tau$, and then reexpressed back in $x=\hat B+\hat C -\tilde x$, $y=\hat B+\hat A- \tilde
y$, $\tilde z = \hat B +\hat A - z$ (see Figure \ref{Figure_hex_split} for coordinate systems). We
let $\mathcal L^{\mathrm{right}}$ be the set of points $(x,y)\in (\tau,\hat B+\hat C)\times \mathbb
R$ such that \eqref{eq_critical_equation_rescaled_right} has a unique solution in the \emph{lower }
half-plane  $\overline{\mathbb U}$ (as before, it can not have more solutions), and denote this
solution as $\tilde z(x,y)$. Then $\tilde z: \mathcal L^{\mathrm{right}}\to \overline{\mathbb U}$
is a smooth bijection, and $\tilde z$ is related to the complex slope $\xi$ in a conformal way.

The bijectivity and relation to the complex slope is obtained by the above change of coordinates
from the results about the map $z\to z(x,y)$ of \eqref{eq_critical_equation_rescaled}.

\begin{remark}
 General results on the solution of the variational problem for the limit shape of
lozenge tilings guarantee that the complex slope $\xi(x,y)$ continuously depends on
$(x,y)$, see \cite{dSS}. In our particular case the continuity of $\xi(x,y)$ given
 is immediate outside the line $x=\tau$ (due to explicit formulas), while on the latter line one needs additional
arguments to see the continuity.
\end{remark}

Define the \emph{uniformization map} $\Theta:\mathcal L \to \mathbb D$ through
\begin{equation}
\label{eq_Def_theta} \Theta(x,y)=
\begin{cases}
 z(x,y),& x< \tau,\\
 \tilde z(x,y), & x> \tau,\\
 y ,& x=\tau,\, y \in (\alpha_1,\beta_1)\cup (\alpha_2,\beta_2),
\end{cases},
 \end{equation}
where $\alpha_1,\alpha_2,\beta_1,\beta_2$ are endpoints of the bands in Theorem
\ref{Theorem_hex_cut}.

\begin{proposition} The map $\Theta$ of \eqref{eq_Def_theta} is a bijection between
the liquid region $\mathcal L$ of the holey hexagon and $\mathbb D=\mathbb
C\setminus[(-\infty,\alpha_1)\cup (\beta_1,\alpha_2)\cup(\beta_2,+\infty)]$. Moreover, $\Theta$ is
a conformal map between the complex structure in $\mathcal L$ of the complex slope $\xi$ and the
standard complex structure in $\mathbb D$.
\end{proposition}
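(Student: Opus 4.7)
The plan is to decompose $\Theta$ according to its three pieces in \eqref{eq_Def_theta} and use the known bijection results on each of the two open trapezoids, then glue them together along the cut $x=\tau$ by verifying compatible boundary behaviour. From the cited results of \cite{Petrov-curves,DM,BG_CLT,G_Bulk}, the map $z(x,y)$ is already a smooth bijection from $\mathcal L^{\mathrm{left}}$ to the upper half-plane $\mathbb U$, and $\tilde z(x,y)$ is a smooth bijection from $\mathcal L^{\mathrm{right}}$ to the lower half-plane $\overline{\mathbb U}$; moreover each is conformal with respect to the complex slope structure on its side (via \eqref{eq_burgers_for_map}, \eqref{eq_complex_structure_def}). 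So the real content of the claim lies in (a) identifying the boundary values of $z$ and $\tilde z$ on the section $x=\tau$, and (b) showing that the gluing at $x=\tau$ inherits conformality across the line.

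For the boundary analysis, I would let $x\to\tau^-$ in \eqref{eq_critical_equation_rescaled}. If $z(x,y)$ stays away from real points where $\exp(-G(z/\tau))=1$, the equation forces $z(x,y)\to y$; thus we need $y$ to lie where the boundary value of $z$ actually \emph{reaches} the real axis rather than approaching some other limit. By Theorem~\ref{Theorem_hex_cut}, the limiting measure $\mu$ has density strictly between $0$ and $1$ exactly on $(\alpha_1,\beta_1)\cup(\alpha_2,\beta_2)$, so $G(\cdot/\tau)$ has a jump of $-2\pi\ii\mu$ across those intervals; this forces the bijective smooth boundary of $\mathcal L^{\mathrm{left}}\cap\{x=\tau\}$ to be precisely the two segments $(\alpha_1,\beta_1)\cup(\alpha_2,\beta_2)$ and on them $z(x,y)\to y$. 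The complementary real intervals $(-\infty,\alpha_1)$, $(\beta_1,\alpha_2)$, $(\beta_2,+\infty)$ correspond to frozen phases and are \emph{not} in the image. An identical analysis (via the change of variables to \eqref{eq_critical_equation_rescaled_right}) on the right trapezoid shows $\tilde z(x,y)\to y$ as $x\to\tau^+$, with the same range.

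Combining these, the image of $\Theta$ is $\mathbb U\sqcup\overline{\mathbb U}\sqcup\bigl((\alpha_1,\beta_1)\cup(\alpha_2,\beta_2)\bigr)=\mathbb D$. Injectivity on each half is by the cited trapezoid results; the boundary values match up precisely so that a point $y\in(\alpha_i,\beta_i)$ is hit only by the triple $(\tau,y)$. Conformality on the two open halves is already known from \eqref{eq_complex_structure_def}. For the conformality across the cut, I would note that the complex slope $\xi(x,y)$ is continuous across $x=\tau$ in the interior of the bands (by the cited result of \cite{dSS}; see also the Remark), and show that $\Theta$ is continuously differentiable there. Concretely, differentiating \eqref{eq_critical_equation_rescaled} with $x\to\tau$ and using the jump relation for $G$ produces finite one-sided derivatives $\Theta_x$, $\Theta_y$ that agree with those obtained from \eqref{eq_critical_equation_rescaled_right}, so $\Theta$ is $C^1$ across $\{x=\tau\}\cap\mathcal L$; together with the Cauchy–Riemann-type identity \eqref{eq_burgers_for_map} (which is equivalent to $\Theta_y=\xi\,\Theta_x$ in the sense of Definition~\ref{Definition_complex_structure_general}), this gives conformality.

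The main obstacle is the gluing step: showing that the two smooth bijections from the open trapezoids, with values approaching the real axis from opposite half-planes, knit together into a globally smooth conformal map at the interface $\{x=\tau\}\cap\mathcal L$. This requires the somewhat delicate verification that the one-sided derivatives of $z$ and $\tilde z$ agree at the bands, which ultimately hinges on the precise boundary behaviour of the Cauchy–Stieltjes transforms $G$ and $\tilde G$ on $(\alpha_i,\beta_i)$ (analogous in spirit to the cancellation in \eqref{eq_x27}), and on the continuity of $\xi$ across the cut. Once this local smoothness is secured, the conformal gluing is automatic from \eqref{eq_complex_structure_def}.
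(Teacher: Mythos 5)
Your treatment of the bijection is essentially the paper's: decompose $\mathcal L$ into $\mathcal L^{\mathrm{left}}$, $\mathcal L^{\mathrm{right}}$ and the bands on $x=\tau$, quote the trapezoid results for the two open halves, and check that the three pieces of \eqref{eq_Def_theta} map disjointly onto $\mathbb U$, $\overline{\mathbb U}$ and $(\alpha_1,\beta_1)\cup(\alpha_2,\beta_2)$. The conformality on each open half is likewise the same appeal to \eqref{eq_burgers_for_map}, \eqref{eq_complex_structure_def}. Where you diverge is the gluing across the cut, and this is where your proposal has a genuine gap.

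You propose to establish conformality at $\{x=\tau\}\cap\mathcal L$ by proving that $\Theta$ is $C^1$ there, i.e.\ by matching one-sided derivatives of $z$ and $\tilde z$ on the bands. That matching is only asserted (``differentiating \eqref{eq_critical_equation_rescaled} \dots produces finite one-sided derivatives that agree''), and you yourself flag it as the delicate main obstacle; carrying it out would require relating the boundary values of $G$ and $\tilde G$ on $(\alpha_i,\beta_i)$, which comes from the complementarity of the particle systems encoded by $\lambda$ and $\tilde\lambda$ and is nowhere verified in your argument. The missing idea is that none of this is needed: it suffices to check that $\Theta$ is merely \emph{continuous} across the cut, which is immediate because \eqref{eq_critical_equation_rescaled} and \eqref{eq_critical_equation_rescaled_right} degenerate to $y=z$ and $y=\tilde z$ as $x\to\tau$, and then to invoke the standard principle that a function holomorphic on either side of a cut and continuous across it extends holomorphically over the cut (a Morera-type removability statement). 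This is exactly how the paper concludes; the $C^1$ matching you aim for is then an automatic consequence of holomorphy rather than an input. As written, your proof of conformality across the interface is incomplete, though it becomes correct (and much shorter) once the derivative-matching plan is replaced by the continuity-plus-extension argument.
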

\begin{proof} According to the above definitions and results of \cite{DM}, \cite{BG_CLT},
\cite{G_Bulk}, inside the left trapezoid the liquid region is $\mathcal
L^{\mathrm{left}}$, inside the right trapezoid the liquid region is $\mathcal
L^{\mathrm{right}}$. Along the cut $x=\tau$, the liquid region is the union of two
bands $(\alpha_1,\beta_1)$ and $(\alpha_2,\beta_2)$. Therefore, $\mathcal L=
\mathcal L^{\mathrm{left}}\cup \mathcal L^{\mathrm{right}} \cup \{(\tau,y)\mid y\in
(\alpha_1,\beta_1)\cup (\alpha_2,\beta_2) \}$ and $\Theta$ maps it to $\mathbb D$.
Since each of the three cases in \eqref{eq_Def_theta} gives a bijection and together
they form a disjoint cover of $\mathcal L$ and $\mathbb D$, we conclude that
$\Theta$ is a bijection.

The restriction of $\Theta$ to $\mathcal L^{\mathrm{left}}$ and $\mathcal L^{\mathrm{right}}$ are
conformal in the complex slope $\xi$ due to definitions and the results of \cite{DM},
\cite{BG_CLT}, \cite{G_Bulk}. In addition, $\Theta$ is continuous near the cut $x=\tau$, since
formulas \eqref{eq_critical_equation_rescaled}, \eqref{eq_critical_equation_rescaled_right} turn
into $y=z$, $y=\tilde z$, respectively, as $x\to \tau$. Since holomorphic function can be extended
(in a holomorphic way) over a cut, where it is continuous, we conclude that $\Theta$ is conformal
in $\mathcal L$.
\end{proof}

\bigskip

With definitions of $\mathcal G$ and $\Theta$ in hand, we state a detailed version
of Theorem \ref{Theorem_holey_hex}.

\begin{proposition}
\label{Proposition_holey_hex_detailed}
 Let $\mathrm{Hex}$ be the interior of the holey hexagon in the coordinate system of
 Figure \ref{Figure_hex_hole_coord}, and let $H_L(x,y)$ be the random height function
of uniformly random lozenge tiling of $\mathrm{Hex}$ in the limit regime
\eqref{eq_hex_limit_regime}. Further, let $\mathcal L$ be the liquid region, let
$\mathbb D=\mathbb C\setminus[(-\infty,\alpha_1)\cup
(\beta_1,\alpha_2)\cup(\beta_2,+\infty)]$, let $\Theta: \mathcal L \to \mathbb D$ be
the uniformization map \eqref{eq_Def_theta}, and let $\mathcal G$ be the Green
function of $\mathbb D$, as in \eqref{eq_GFF_cov_1}, \eqref{eq_GFF_cov_2},
\eqref{eq_GFF_cov_3}.

 For any $m=1,2,\dots$ take a collection of $m$ points $0< x_1,x_2,\dots,x_m< \hat B
+\hat C$ and $m$ polynomials $f_1,\dots,f_m$. Then the random variables
\begin{equation}
\label{eq_polynomial_observables}
 \int_{y:\, (x_i,y)\in \mathrm{Hex}} f_i(y) \bigl( H_L(x_i,y)-\E H_L(x_i,y) )
dy,\quad i=1,\dots,m,
\end{equation}
converge as $L\to\infty$ (in the sense of moments) to a centered Gaussian
$m$--dimensional random vector. The asymptotic covariance of $i$th and $j$th
components is
\begin{equation}
\label{eq_target_covariance}
 \frac{1}{\pi} \int_{y_i:\, (x_i,y_i)\in \mathcal L} \int_{y_j:\, (x_j,y_j)\in \mathcal L}
f_i(y_i) f_j(y_j) \mathcal G\bigl( \Theta(x_i,y_i), \Theta(x_j,y_j) \bigr)    dy_i d
y_j, \quad 1\le i,j \le m.
\end{equation}
\end{proposition}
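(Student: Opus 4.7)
The plan follows the five-step strategy sketched at the start of Section \ref{Section_tilings}. I begin from the input provided by Theorem \ref{Theorem_hex_cut}: the horizontal lozenges along the vertical section $x=\tau$ form a discrete log--gas (as shown in \cite[Section 9.2]{BGG}), and they satisfy LLN and CLT with an explicit two--cut covariance $\Cov(z,w)$ for their Cauchy--Stieltjes transforms.

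Next I cut the hexagon along $x=\tau$, obtaining a left trapezoid of width $\tau$ and a right trapezoid of width $\hat B+\hat C-\tau$ (Figure \ref{Figure_hex_split}). Conditionally on the section particles the two trapezoid tilings are independent and uniform, and the configuration of horizontal lozenges along any other vertical section $x=t_i$ is obtained from the $x=\tau$ configuration by the Schur branching described in Section \ref{Section_CLT_extension}, i.e.\ by setting a group of Schur variables equal to $1$ (this is the standard identification of uniform lozenge tilings of trapezoids with Schur--Littlewood--Richardson branching; no extra multiplicative factor $g_{h,m}$ is needed here). Thus the joint law of any finite collection of sections fits the multilevel setting of Theorem \ref{Theorem_CLT_multi_ext}, with the two--component signature $(\lambda,\tilde\lambda)$ supported at $x=\tau$ playing the role of $\rho_L$.

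To feed Theorem \ref{Theorem_CLT_multi_ext} I first apply the newly proved inverse direction of Theorem \ref{Theorem_main_multi} to the two--component Schur generating function at the level $x=\tau$: the Gaussian behavior provided by Theorem \ref{Theorem_hex_cut} determines the numbers $\c_{k,h}$, $\d_{k,h;m,f}$ via the inversion formula of Lemma \ref{Lemma_as_invert}. Inserting these into \eqref{eq_CLT_multi_formula_ext} then yields asymptotic Gaussianity and a concrete double contour integral formula for the covariance of the polynomial moments $p^L_{k;h,m}$ jointly across all chosen sections. Since a polynomial linear statistic of the horizontal lozenges at height $x_i$ is, by summation by parts, exactly a linear combination of the observables \eqref{eq_polynomial_observables}, this establishes the limiting centered Gaussianity claimed in the proposition.

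The main remaining obstacle, and the step I expect to take real work, is identifying the resulting contour integral covariance with $\frac{1}{\pi}\mathcal G(\Theta(x_i,y_i),\Theta(x_j,y_j))$. For pairs $(x_i,x_j)$ on the same side of the cut this reduces to the computation carried out in \cite[Section 6]{BG_CLT}, where the substitution $z\mapsto\Theta(x,y)$ defined by \eqref{eq_critical_equation_rescaled} turns the double contour integral into the logarithmic GFF kernel on the upper half--plane. For pairs straddling the cut, the input $\d_{a,h_1;b,h_2}$ is governed by the two--cut covariance $\Cov(z,w)$, and the task is to verify that the double integrals of $\Cov$ appearing in \eqref{eq_CLT_multi_formula_ext}, after the change of variables through $\Theta$, reproduce the two--point function \eqref{eq_GFF_cov_1}--\eqref{eq_GFF_cov_3} on $\mathbb D$. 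Rather than match term by term, I would use Propositions \ref{Proposition_G_continuous} and \ref{Proposition_G_is_Green}: they characterize $\mathcal G$ as the \emph{unique} Dirichlet Green function on $\mathbb D$, so it suffices to check on the transported covariance the defining properties (vanishing on the boundary cuts, harmonicity in each argument, and the $-\frac{1}{2\pi}\log|z-w|$ singularity on the diagonal). Harmonicity and the logarithmic singularity follow from the residue calculus structure of the double contour integrals from Theorem \ref{Theorem_CLT_multi_ext} together with the conformality of $\Theta$; the Dirichlet boundary condition on the frozen boundary of $\mathcal L$ is built into the construction of $\Theta$ in \eqref{eq_Def_theta} and into the fact that the integrals of $\Cov$ around the bands $(\alpha_i,\beta_i)$ vanish, which ensures well--definedness of $\mathcal G$ across the cuts.
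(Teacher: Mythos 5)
Your Steps 1--4 --- the log--gas CLT along the cut from Theorem \ref{Theorem_hex_cut}, the splitting into two trapezoids, the identification of the sections with Schur branching so that Theorem \ref{Theorem_CLT_multi_ext} applies with $g_{h,m}=1$, the extraction of the data $\c_{k,h}$, $\d_{k,h;m,f}$ via the inverse direction of Theorem \ref{Theorem_main_multi}, and the summation by parts reducing \eqref{eq_polynomial_observables} to the moments $p^L(x;k)$ --- coincide with the paper's argument. The gap is in Step 5, the identification of the limiting covariance with \eqref{eq_target_covariance}. The shortcut via uniqueness of the Dirichlet Green function cannot be run as you state it: Theorem \ref{Theorem_CLT_multi_ext} delivers the covariance only as a family of numbers (double contour integrals indexed by pairs of sections and polynomial degrees), not as a bivariate kernel on $\mathcal L\times\mathcal L$. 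Before one can even speak of ``harmonicity in each argument'' or of the $-\frac{1}{2\pi}\log|z-w|$ diagonal singularity of the transported covariance, one must rewrite those contour integrals as double integrals over the two vertical sections against some kernel; that rewriting is precisely the computation the paper performs --- expressing $\d_{a,h_1;b,h_2}$ through $\Cov$ by a residue evaluation as in \eqref{eq_covariance_through_Q}, changing variables, deforming the contours onto the $\Theta$--images of the lines $x=x_1$, $x=x_2$, and integrating by parts --- and once it is done the kernel is manifestly $\mathcal G\circ(\Theta\times\Theta)$, so the uniqueness argument buys nothing, while without it the claim that harmonicity and the singularity ``follow from the residue calculus structure'' is an assertion rather than a proof.

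A second, concrete error is your treatment of the same--side case. It does not ``reduce to the computation of \cite{BG_CLT}'' with the answer being the logarithmic half--plane kernel: in the holey hexagon the particles along $x=\tau$ are random, so the same--side data $\d_{a,1;b,1}$ are themselves governed by the two--cut covariance $\Cov$, and the same--side limit covariance is the half--plane logarithmic kernel \emph{plus} the harmonic extension of the boundary fluctuations, i.e.\ the four--integral part of \eqref{eq_GFF_cov_1}. Moreover the relevant kernel contains the $1/(z-w)^2$ term, which prevents evaluating the contour integral as a single residue. The paper resolves both points by splitting $Q^{<\tau}=Q^{(0)}+\Delta Q$, where $Q^{(0)}$ is the zero--covariance (deterministic boundary) kernel whose contribution is identified with $-\frac{1}{2\pi}\ln\left|\frac{z-w}{z-\bar w}\right|$ using \cite[Section 9.1]{BG_CLT}, and $\Delta Q$ carries $\Cov$ and produces the integral part of $\mathcal G$. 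Without this decomposition your same--side covariance misses the harmonic part entirely and would not equal the Green function of the two--cut domain $\mathbb D$; so this step needs to be redone along the paper's lines (or with an equally explicit substitute).
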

Note that the polynomial observables \eqref{eq_polynomial_observables} are dense in
any reasonable set of test functions on $\mathcal L$; they uniquely define the
covariance of the limiting Gaussian Free Field.

In the rest of this section we prove Proposition
\ref{Proposition_holey_hex_detailed} by combining Theorems \ref{Theorem_main_multi},
\ref{Theorem_CLT_multi_ext} with Theorem \ref{Theorem_hex_cut}.

We start by explaining that the result of Theorem \ref{Theorem_hex_cut} agrees with
the statement of Proposition \ref{Proposition_holey_hex_detailed}.

\begin{proposition} \label{Prop_covariance_section_identification}
 In the setting of Theorem \ref{Theorem_hex_cut} we have convergence in probability
and in the sense of moments
 $$
  \frac{1}{N}\sum_{i=1}^N  \left(\frac{\ell_i}{N}\right)^k\to \int_{\mathbb R} x^k
  \mu(x) dx,\quad k=1,2,\dots.
 $$
 Moreover, the random variables
 $$
  \sum_{i=1}^N \left[  \left(\frac{\ell_i}{N}\right)^k -\E
  \left(\frac{\ell_i}{N}\right)^k \right], \quad k=1,2,\dots,
 $$
 become Gaussian as $L\to\infty$ with asymptotic covariance of $k$th and $m$th components given
 by
 \begin{equation}
 \label{eq_covariance_hex_section}
  \frac{1}{(2\pi \ii)^2} \oint\oint z^k w^m \Cov(z,w) dz dw,
 \end{equation}
 where integration goes over a large positively oriented contour, enclosing both
 $(\alpha_1,\beta_1)$ and $(\alpha_2,\beta_2)$, and $\Cov(z,w)$ was defined in \eqref{eq_covariance_two_cut}.
 An equivalent form of
\eqref{eq_covariance_hex_section} is
 \begin{equation}
\label{eq_cov_polynomial_section}
 \frac{km}{\pi} \iint_{ [(\alpha_1,\beta_1)\cup (\alpha_2,\beta_2)]^2} x^{k-1} y^{m-1} \mathcal G(x,y)
  \,dx\,
  dy.
 \end{equation}
\end{proposition}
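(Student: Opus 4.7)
My plan is to transport the statements of Theorem~\ref{Theorem_hex_cut}, which are phrased in terms of the Cauchy--Stieltjes transform $G_L(z)$, to statements about the power sums $\sum_i (\ell_i/N)^k$ by the standard residue trick. If $\gamma$ is a positively oriented contour enclosing all particles $\ell_i/L$ (and hence containing the support of $\mu$), then
\begin{equation*}
 \sum_{i=1}^N \left(\frac{\ell_i}{L}\right)^k = \frac{1}{2\pi\ii}\oint_\gamma z^k\, G_L(z)\,dz,
\end{equation*}
and the analogous formula holds after centering both sides. Theorem~\ref{Theorem_hex_cut} gives that $L^{-1}G_L(z)$ converges in probability to $\int \mu(x)/(z-x)\,dx$ uniformly on $\gamma$, hence substituting under the integral yields the first claim (after the harmless rescaling between $\ell_i/L$ and $\ell_i/N$, since $N/L\to \hat B+\hat C-\tau-\hat D$ is the total mass of $\mu$). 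Upgrading in-probability convergence to convergence in moments is a standard exercise using the fact that all joint cumulants of $G_L(z)$ on $\gamma$ converge to the Gaussian values dictated by Theorem~\ref{Theorem_hex_cut}.

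The Gaussianity of the centered power sums and formula \eqref{eq_covariance_hex_section} then follow directly: a finite-dimensional limit of Gaussian random variables (indexed by points on $\gamma$) is Gaussian, and
\begin{equation*}
 \Cov\!\left(\frac{1}{2\pi\ii}\oint_\gamma z^k(G_L(z)-\E G_L(z))dz,\;\frac{1}{2\pi\ii}\oint_\gamma w^m(G_L(w)-\E G_L(w))dw\right)
\end{equation*}
converges to $\frac{1}{(2\pi\ii)^2}\oint\oint z^k w^m \Cov(z,w)\,dz\,dw$. Uniformity of the moment convergence, needed to commute limits with the contour integral, is again part of the conclusion of Theorem~\ref{Theorem_hex_cut}.

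The main work, and the part I expect to be the real obstacle, is identifying \eqref{eq_covariance_hex_section} with the Green-function expression \eqref{eq_cov_polynomial_section}. The strategy is to deform both contours in \eqref{eq_covariance_hex_section} inward to wrap around the two bands $(\alpha_1,\beta_1)\cup(\alpha_2,\beta_2)$; this is legal because by \eqref{eq_covariance_polynomial} the integrand has no singularity on the diagonal $z=w$ and is holomorphic off the cuts. Each contour deformation converts an integral over $\gamma$ into an integral over the bands weighted by the jump of $\Cov$ across the cut, which, by the properties of $\Cov$ listed after \eqref{eq_covariance_two_cut_symmetric}, is precisely what appears after differentiating the integral representation \eqref{eq_GFF_cov_1}--\eqref{eq_GFF_cov_3} of $\mathcal{G}(x,y)$ twice. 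Concretely, I would integrate by parts once in each variable to transfer $z^k w^m$ into $k z^{k-1}$ and $m w^{m-1}$, absorbing a pair of antiderivatives into a double integral of $\Cov$ that, by the very definition of $\mathcal{G}$ on the bands (Proposition~\ref{Proposition_G_continuous}, together with the jump computation \eqref{eq_x27}), collapses to $\mathcal{G}(x,y)$. The $1/\pi$ prefactor arises from the $1/(4\pi)$ in \eqref{eq_GFF_cov_1} combined with the factor $2$ picked up when each of the two contour collapses produces a jump.

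The main obstacle is bookkeeping: tracking signs and factors of $2\pi\ii$ through the two contour collapses and two integrations by parts, and checking that the contributions from the ``holomorphic off the cuts'' part (the logarithmic term in $\mathcal{G}$ and the diagonal-subtraction $-\tfrac{1}{2(z-w)^2}$ piece of $\Cov$) cancel correctly, so that only the band integrals survive. Once the jump formulas are carefully matched, the identity follows, and the proof is complete.
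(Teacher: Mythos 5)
Your proposal follows essentially the same route as the paper: the residue identity $\sum_i(\ell_i/N)^k=\frac{1}{2\pi\ii}\oint z^k G_N(z)\,dz$ combined with Theorem \ref{Theorem_hex_cut} gives the LLN, Gaussianity and \eqref{eq_covariance_hex_section}, and the identification with \eqref{eq_cov_polynomial_section} is obtained by integrating by parts and collapsing the contours onto the real line, where the bracket of antiderivative differences is recognized as $\mathcal G$ (vanishing off the bands, with the logarithmic term of \eqref{eq_GFF_cov_1} disappearing on the real axis). The one point to be careful about is the order of operations: the jump of $\Cov(z,w)$ across a band has a double pole $\pm(z-x)^{-2}$, so writing the covariance as a double band integral of jumps \emph{before} integrating by parts produces a non-integrable diagonal singularity; one must integrate by parts first (or keep the contours at distance $\eps$ until after doing so), as in the paper, so that the $1/(z-w)^2$ piece becomes a logarithm and the limit onto the bands is legitimate.
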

\begin{proof}
 Note that
 $$
   \sum_{i=1}^N  \left(\frac{\ell_i}{N}\right)^k=\frac{1}{2\pi \ii} \oint z^k G_N(z) dz
 $$
 with integration over a large positively oriented contour, and apply Theorem
 \ref{Theorem_hex_cut}. This yields Gaussian convergence and formula
\eqref{eq_covariance_hex_section}.

It remains to show that \eqref{eq_covariance_hex_section} equals
\eqref{eq_cov_polynomial_section}. Let us integrate
\eqref{eq_covariance_hex_section} by parts in $z$ and in $w$.  We get
 \begin{equation}
 \label{eq_x15}
  \frac{km}{(2\pi \ii)^2 } \oint\oint z^{k-1} w^{m-1}
  \left[\int_{D}^z \int_D^w \Cov(\mathfrak z,\mathfrak w) d\mathfrak z d\mathfrak w  \right]dz dw,
 \end{equation}
Deform  both contours in \eqref{eq_x15} to closely follow the real line. In more
details, we choose large $J>0$ and small $\eps>0$, integrate over the segment
$[-J-\ii \eps,J-\ii \eps]$ in positive direction and then over
$[-J+\ii\eps,J+\ii\eps]$ in the negative direction. Let us now send $\eps\to 0$.
Note that the term $1/(w-z)^2$ after integration in $z$ and $w$ becomes $\ln(z-w)$.
whose singularity at $z=w$ is mild, i.e.\ integrable. Therefore, the integral does
not explode and we can send $\eps\to 0$ directly, getting
\begin{multline}
\label{eq_x17}
  \frac{km}{(2\pi \ii)^2 } \iint_{[-J,J]^2} x^{k-1} y^{m-1}
  \Biggl[\\ \int_{D}^{x+\ii 0} \int_D^{y+\ii 0} \Cov(\mathfrak z,\mathfrak w) d\mathfrak z d\mathfrak w-
        \int_{D}^{x-\ii 0} \int_D^{y+\ii 0} \Cov(\mathfrak z,\mathfrak w) d\mathfrak z d\mathfrak w \\-
        \int_{D}^{x+\ii 0} \int_D^{y-\ii 0} \Cov(\mathfrak z,\mathfrak w) d\mathfrak z d\mathfrak w+
        \int_{D}^{x-\ii 0} \int_D^{y-\ii 0} \Cov(\mathfrak z,\mathfrak w) d\mathfrak z d\mathfrak w
   \Biggr]dx dy.
\end{multline}
Outside $[\alpha_1,\beta_1]\cup [\alpha_2,\beta_2]$, $\Cov(\mathfrak z,\mathfrak w) d\mathfrak z
d\mathfrak w$ is analytic, and the values of $\int_{D}^{x\pm \ii 0} \int_D^{y\pm \ii 0}
\Cov(\mathfrak z,\mathfrak w) d\mathfrak z d\mathfrak w$ are all the same, and therefore, the
integrand vanishes. We conclude that integration domain in \eqref{eq_x17} can be restricted from
$[-J,J]^2$ to $([\alpha_1,\beta_1]\cup [\alpha_2,\beta_2])^2$. On the other hand, observing that
$\ln\left|\frac{z-w}{z-\bar w}\right|$ vanishes for real $z$ and $w$, we identify the integrand
with $\mathcal G(x,y)$. Therefore, \eqref{eq_x17} is identical to
\eqref{eq_cov_polynomial_section}, as desired.
\end{proof}

Recall that we identified $\ell_1>\dots>\ell_N$ of Theorem \ref{Theorem_hex_cut}
with two different signatures: $\lambda$ of rank $t$, and $\tilde \lambda$ of rank
$\tilde t:=B+C-t$, according to splitting into two trapezoids of Figure
\ref{Figure_hex_split}. Further, the horizontal lozenges on the vertical line at
distance $q=1,2,\dots, t$ from the left boundary of the hexagon are inside the left
trapezoid and, therefore, are identified with a signature of rank $q$. The
horizontal lozenges on the vertical line at distance $r=1,2,\dots,B+C-t$ from the
right boundary are inside the right trapezoid and, therefore, identified with a
signature of rank $r$.

Let us recall that the branching rules for Schur polynomials match the combinatorics
of the lozenge tilings of trapezoids (see e.g.\ \cite[Section 3.2]{BG},
\cite[Section 3.5]{BG_CLT}). In particular, if a signature $\mu$ encodes $q$
lozenges along the vertical line $x=q$, then the conditional law
$\mathrm{Prob}(\mu\mid \lambda)$ can be found from the decomposition
\begin{equation}
\label{eq_lozenge_through_Schur}
 \frac{s_\lambda(x_1,\dots,x_q,1^{t-q})}{s_\lambda(1^t)}=\sum_{\mu\in\GT_q}
 \mathrm{Prob}(\mu\mid \lambda) \frac{s_\mu(x_1,\dots,x_q)}{s_\mu(1^q)}.
\end{equation}
Thus, the finite--dimensional distributions of all these signatures for
$q=1,2,\dots, t$, $r=1,2,\dots,B+C-t$ fit into the framework of Section
\ref{Section_CLT_extension} with the following specifications:

\begin{equation}
\label{eq_specifications_lozenge}
 H=2, \qquad g_{h,m}=1,\qquad N_1=t,\qquad N_2=B+C-t,
\end{equation}
and therefore, Theorem \ref{Theorem_CLT_multi_ext} holds.

Our main object of interest is the  height function $H_L(x,y)$, as in Section
\ref{Section_GFF_hex_formulation}, so let us link it to the power sums of Theorem
\ref{Theorem_CLT_multi_ext}.

\begin{figure}[t]
\begin{center}
 {\scalebox{0.65}{\includegraphics{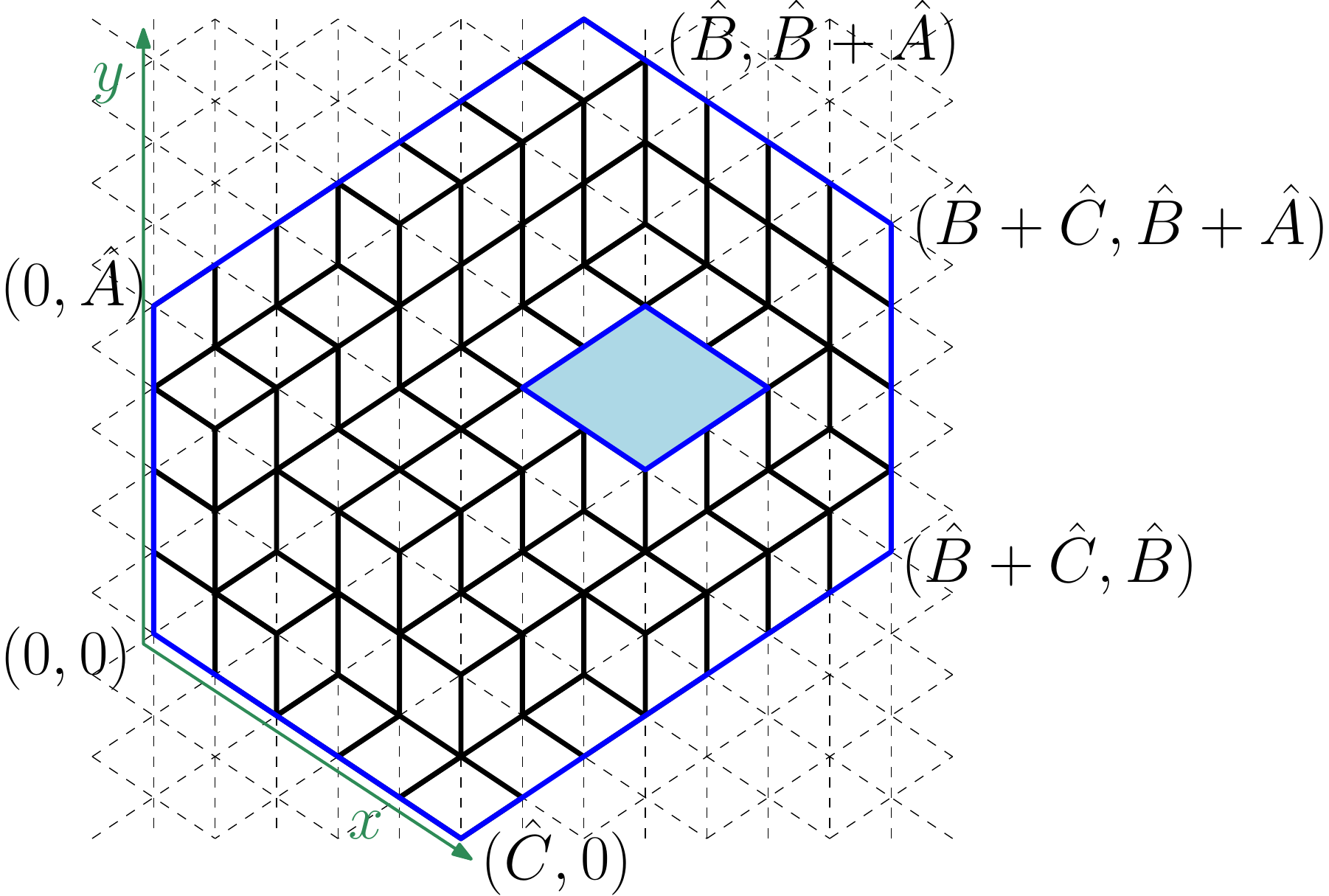}}}
 \caption{Coordinate system for the hexagon with holes. The coordinates of vertices are shown.}
 \label{Figure_hex_hole_coord}
\end{center}
\end{figure}

 We use the rescaled coordinate system, as
in Figure \ref{Figure_hex_hole_coord}. For each $x\in [0,\hat B+\hat C]$ and
$k=1,2\dots$ define
\begin{multline}
\label{eq_polynomial_observables_2}
 p^L(x;k)\\=\begin{dcases} \int\limits_0^{x+\hat A} H_L(x,y) y^k dy, & x\le \hat C,\\
                        \int\limits_{x-\hat C}^{x+\hat A} H_L(x,y) y^k dy +
\int\limits_{x-\hat C}^{x+\hat A} (x-\hat C) y^k dy +\int\limits_0^{x-\hat C} y^k dy
, & \hat C\le x< \hat B,\\
\int\limits_{x-\hat C}^{\hat A+\hat B} H_L(x,y) y^k dy +\int\limits_{x-\hat
C}^{\hat A+\hat B-x} (x-\hat C) y^k dy +\int\limits_0^{x-\hat C} y^k dy
+\int\limits_{\hat A + \hat
B-x}^{\hat A +x} y^k dy, & \hat B\le x< \tau,\\
\int\limits_{x-\hat C}^{\hat A +\hat B} H_L(x,y) \bigl(\hat A+\hat B-y \bigr)^k
dy, & x> \tau.
\end{dcases}
\end{multline}

In all the cases of the definition of $p^L(x;k)$, the random part is the first integral, which is a
pairing of the height function against a $\delta$--function in $x$--direction multiplied by a
polynomial in $y$--direction. The remaining integrals are deterministic terms, which are introduced
for the convenience of matching with Theorem \ref{Theorem_CLT_multi_ext}; they all disappear when
we pass to the centered versions $p^L(x;k)-\E p^L(x;k)$, and therefore play no role in the central
limit theorem. The change from $y$ to $\hat A+\hat B-y$ in the forth case is related to the switch
to $\tilde y$ coordinates in Figure \ref{Figure_hex_split}.

Note that our definitions change in a discontinuous way near $x=\tau$: two different
definitions are related by an affine transformation. Precisely at $x=\tau$ one can
stick to any of the definitions, let us agree to use the $x<\tau$ one for being
definite.

\begin{corollary} \label{Corollary_hex_LLN_CLT}
 For any $0< x<\hat B + \hat C$ and $k=1,2,\dots$, the random variables
$\frac{1}{L}p^L(x;k)$ converge as $L\to\infty$ to constants (in the sense of
moments). Further, the centered random variables $p^L(x;k)-\E p^L(x;k)$ are
asymptotically Gaussian (jointly in finitely many $x$'s and $k$'s).
\end{corollary}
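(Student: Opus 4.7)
The plan is to realize each $p^L(x;k)$ as, up to a deterministic shift, a power sum of the signature that encodes horizontal lozenges along the vertical section at $x$, and then to derive joint LLN and CLT for these power sums from Theorem \ref{Theorem_CLT_multi_ext}.

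First I would unpack \eqref{eq_polynomial_observables_2}. If $\lambda^{(x)}_1>\dots>\lambda^{(x)}_{N_x}$ encodes the horizontal lozenges on the vertical line at $x$, and $\ell^{(x)}_i=\lambda^{(x)}_i+N_x-i$ are their actual $y$-coordinates (in the natural coordinate system of whichever trapezoid $x$ lives in), then writing $H_L(x,y)$ as a sum of indicators $\sum_i \mathbf{1}_{y<\ell^{(x)}_i/L}$ and exchanging sum with integration gives
$$
\int H_L(x,y)\, y^k\, dy \;=\; \frac{1}{k+1}\sum_i \left(\frac{\ell^{(x)}_i}{L}\right)^{k+1} + (\text{boundary integrals}).
$$
The deterministic $y^k$ integrals in \eqref{eq_polynomial_observables_2} exactly account for the lozenges forced by the hexagon/hole boundary, so that $p^L(x;k)$ equals $\frac{L^{k+1}}{k+1}$ times a power sum $p^L_{k+1;h,m}$ of the full signature at section $x$, plus a deterministic constant. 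For $x>\tau$ the change of variable $y\mapsto\hat A+\hat B-y$ built into \eqref{eq_polynomial_observables_2} matches the coordinate flip of Figure \ref{Figure_hex_split}.

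Second, I would put the problem into the multilevel framework of Section \ref{Section_CLT_extension}. Cutting along $x=\tau$ as in Figure \ref{Figure_hex_split} gives, conditionally on the cross--section, two independent uniformly random trapezoid tilings; the cross--section together with the deterministic boundary lozenges determines signatures $\lambda\in\GT_t$ and $\tilde\lambda\in\GT_{\tilde t}$ (with $\tilde t=B+C-t$). Let $\rho_L$ be the joint law of $(\lambda,\tilde\lambda)$. The vertical sections inside each trapezoid then form a Markov chain whose transitions are exactly \eqref{eq:Schur-branching-pr-coef} coming from the Schur branching rule \eqref{eq_lozenge_through_Schur}, i.e.\ the specializations in \eqref{eq_specifications_lozenge}: $H=2$, $g_{h,m}\equiv 1$, $N_1=t$, $N_2=\tilde t$.

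Third, I would verify the hypotheses of Theorem \ref{Theorem_CLT_multi_ext}. The constants $g_{h,m}\equiv 1$ are tautologically CLT--appropriate. For $\rho_L$, Theorem \ref{Theorem_hex_cut} together with Proposition \ref{Prop_covariance_section_identification} yield LLN and joint Gaussian CLT for the power sums of $\ell_1>\dots>\ell_N$; since $\lambda$ and $\tilde\lambda$ are obtained from the same $\ell_i$'s by fixed deterministic shifts, the conditions \ref{ass_multi_1}--\ref{ass_multi_3} of Definition \ref{Definition_CLT_multi} hold for $(\lambda,\tilde\lambda)$, with covariance coming from \eqref{eq_covariance_two_cut} and all higher joint cumulants vanishing. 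Applying the ``CLT implies CLT--appropriate'' direction of Theorem \ref{Theorem_main_multi} (established in Section \ref{Section_Th_Multi_2}) converts this into the required asymptotics for the partial derivatives of $\ln S_{\rho_L}$ at the identity.

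Finally, Theorem \ref{Theorem_CLT_multi_ext} yields joint LLN and Gaussian CLT for all $p^L_{k;h,m}$ at every vertical section of both trapezoids, and the identification of paragraph one translates this into the stated convergence of $\frac{1}{L}p^L(x;k)$ to constants and asymptotic Gaussianity of $p^L(x;k)-\E p^L(x;k)$, jointly in finitely many $(x,k)$. The main obstacle I expect is paragraph three: one must carefully match the power sums of $(\lambda,\tilde\lambda)$ to those of the section particles $\ell_1>\dots>\ell_N$ (including the flipped coordinate system in the right trapezoid) and invoke the freshly-proved implication of Theorem \ref{Theorem_main_multi}; everything else amounts to bookkeeping of boundary particles and checking that integer--part discrepancies hidden in \eqref{eq_hex_limit_regime} are negligible.
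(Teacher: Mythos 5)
Your proposal is correct and follows essentially the same route as the paper: rewrite $p^L(x;k)$ via the indicator/integration-by-parts identity as a power sum of the section signature, place the model in the multilevel framework of Section \ref{Section_CLT_extension} with the specializations \eqref{eq_specifications_lozenge}, deduce CLT--appropriateness of the cross--section law from Proposition \ref{Prop_covariance_section_identification} combined with the inverse direction of Theorem \ref{Theorem_main_multi}, and conclude by Theorem \ref{Theorem_CLT_multi_ext}. The extra bookkeeping you flag (boundary particles, the coordinate flip for $x>\tau$, integer parts) is exactly the routine part the paper also treats implicitly.
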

Note that the convergence of $\frac{1}{L}p^L(x;k)$ implies Proposition
\ref{Proposition_lozenge_LLN}.
\begin{proof}[Proof of Corollary \ref{Corollary_hex_LLN_CLT}]
 Assume without loss of generality that $x<\hat C$. Integrating by parts, and using the vanishing of the integrand both at $0$ and at $x+\hat A$,
 we rewrite
\begin{multline}\label{eq_integrate_by_parts}
 p^L(x;k)=
\int\limits_0^{x+\hat A} \left(\sum_{i=1}^{\lfloor Lx \rfloor} {\mathbf
1}_{\lambda^{\lfloor Lx \rfloor}_i +\lfloor Lx \rfloor -i \ge Ly}
 \right) y^k dy
=-\frac{1}{k+1} \sum_{i=1}^{\lfloor Lx \rfloor} \left( \frac{\lambda^{\lfloor Lx \rfloor}_i +\lfloor Lx \rfloor -i}{L}\right)^{k+1}
\end{multline}
At this point, the statement becomes an application of Theorem
\ref{Theorem_CLT_multi_ext}. Indeed, we explained above that lozenge tilings are in
the framework of this theorem with specifications \eqref{eq_specifications_lozenge}.
On the other hand, the measures are CLT--appropriate by the combination of
Proposition \ref{Prop_covariance_section_identification} and Theorem
\ref{Theorem_main_multi}.
\end{proof}

Note that polynomial observables \eqref{eq_polynomial_observables}  of Proposition
\ref{Proposition_holey_hex_detailed} are linear combinations of $p^L(x;k)$.
Therefore, Corollary \ref{Corollary_hex_LLN_CLT} implies the asymptotic Gaussianity
in Proposition \ref{Proposition_holey_hex_detailed}. It now remains to identity the
asymptotic covariance of the random variables $p^L(x;k)-\E p^L(x;k)$ with that of
sections of the Gaussian Free Field, i.e.\ with formulas
\eqref{eq_target_covariance}.

The basic idea is to treat this Gaussian Free Field as a sum of three independent
parts: the GFF (with Dirichlet boundary conditions) in the liquid region of the left
trapezoid, the GFF (with Dirichlet boundary conditions) in the liquid region of the
right trapezoid, and the 1d random field along the common boundary of the trapezoid,
extended as a harmonic function to the liquid regions of the left and right
trapezoids. Such splitting is clearly visible in our formulas \eqref{eq_GFF_cov_1},
\eqref{eq_GFF_cov_2}: the $-\frac{1}{2\pi} \ln\left|\frac{z-w}{z-\bar w}\right|$
part (treated as two different functions, one for $z,w$ in the upper half--plane and
another one for $z,w$ in the lower halfplane) is the Green function of the upper (or
lower) halfplane; the sum of the four integrals is the harmonic extension from the
common boundary of the trapezoids.

This approach is a shadow of the well--known restriction property of the Gaussian Free Field (or,
equivalently, of the Green function encoding its covariance).

\bigskip

\begin{proposition} The covariance in Corollary \ref{Corollary_hex_LLN_CLT}
is $\frac{1}{\pi}$ times that of the $\Theta$--pullback of the Gaussian Free Field
with Dirichlet boundary conditions in $\mathbb D$. In more details, take any
$k_1,k_2=1,2,\dots$. If $0<x_1,x_2\le \tau$, then
\begin{multline}
\label{eq_target_left_left}
 \lim_{L\to\infty} \E p^L(x_1;k_1) p^L(x_2;k_2)-\E p^L(x_1;k_1) \E p^L(x_2;k_2)\\
= \frac{1}{\pi} \int_{y_1:\, (x_1,y_1)\in \mathcal L} \int_{y_2:\, (x_2,y_2)\in
\mathcal L}
 (y_1)^{k_1} (y_2)^{k_2} \mathcal G\bigl( \Theta(x_1,y_1), \Theta(x_2,y_2) \bigr)
dy_1 dy_2.
\end{multline}
If $0<x_1\le \tau < x_2 < \hat B+\hat C$, then
\begin{multline}
\label{eq_target_left_right}
 \lim_{L\to\infty} \E p^L(x_1;k_1) p^L(x_2;k_2)-\E p^L(x_1;k_1) \E p^L(x_2;k_2)\\
= \frac{1}{\pi} \int_{y_1:\, (x_1,y_1)\in \mathcal L} \int_{y_2:\, (x_2,y_2)\in
\mathcal L}
 (y_1)^{k_1} (\hat A+\hat B- y_2)^{k_2} \mathcal G\bigl( \Theta(x_1,y_1), \Theta(x_2,y_2) \bigr)
dy_1 dy_2.
\end{multline}
If $\tau < x_1,x_2 < \hat B+\hat C$, then
\begin{multline}
\label{eq_target_right_right}
 \lim_{L\to\infty} \E p^L(x_1;k_1) p^L(x_2;k_2)-\E p^L(x_1;k_1) \E p^L(x_2;k_2)\\
= \frac{1}{\pi} \int_{y_1:\, (x_1,y_1)\in \mathcal L} \int_{y_2:\, (x_2,y_2)\in
\mathcal L}
 (\hat A+\hat B-y_1)^{k_1} (\hat A+\hat B-y_2)^{k_2} \mathcal G\bigl( \Theta(x_1,y_1), \Theta(x_2,y_2) \bigr)
dy_1 dy_2.
\end{multline}
\end{proposition}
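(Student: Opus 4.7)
The plan is to match the two covariance formulas by decomposing each of them into a ``halfplane Green function'' part and a ``harmonic extension from the cut'' part, and identifying the pieces separately. On the tilings side, apply Theorem \ref{Theorem_CLT_multi_ext} to the multilevel measure arising from the splitting of the holey hexagon into the left and right trapezoids (so that $H=2$, with $g_{h,m}\equiv 1$, the two trapezoids corresponding to $h=1,2$, and vertical sections corresponding to levels $m$). The theorem expresses the asymptotic covariance of $p^L(x_1;k_1)$ and $p^L(x_2;k_2)$ through \eqref{eq_CLT_multi_formula_ext}, i.e.\ a double contour integral in $z,w$ whose integrand splits into (i) a ``diagonal'' summand $\delta_{h_1=h_2}\bigl(\sum_{a\ge 0} z^a/w^{1+a}\bigr)^2$ present only when both points lie in the same trapezoid, and (ii) the generating series of the coefficients $\d_{a,h_1,m_1;b,h_2,m_2}$ that measures the covariance across the cut.

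Now invoke the translation between the coefficients and the measure on the cut. Combining Proposition \ref{Prop_covariance_section_identification} with the CLT for the log-gas on the section (Theorem \ref{Theorem_hex_cut}), the asymptotic covariance of power sums of $\ell_1>\dots>\ell_N$ is given by $\Cov(z,w)$ in \eqref{eq_covariance_two_cut}. Applying Theorem \ref{Theorem_main_multi} and the inversion formula \eqref{eq_CLT_inversion_formula}, this identifies $\d_{a,h_1,M;b,h_2,M}$ (at the cut level) in terms of $\Cov$; since $g_{h,m}\equiv 1$, the same $\d$'s control all other levels up to the renormalization built into Definition \ref{Def_appropriate_multi}. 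Furthermore, the $\c_{a;h,m}$ are fixed by the LLN on the cut via \eqref{eq_LLN_multi_formula_ext}, and the resulting generating function $f_{h,m}(z)=(1+z)\bigl(\tfrac1z+\sum_a \tfrac{\c_{a;h,m} z^{a-1}}{(a-1)!}\bigr)$ agrees, after a change of variable, with $\exp(-G(z/\tau))$ for the left trapezoid and its analogue for the right trapezoid. This is precisely the data encoded in the equations \eqref{eq_critical_equation_rescaled}, \eqref{eq_critical_equation_rescaled_right} defining the uniformization map $\Theta$: the map $z\mapsto f_{h,m}(z)$ pushes the $z$-contour around the bands onto the section of the liquid region at $x$-coordinate corresponding to $m$.

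On the Green function side, the decomposition of $\mathcal{G}$ defined in \eqref{eq_GFF_cov_1}--\eqref{eq_GFF_cov_3} is explicit: the term $-\tfrac{1}{2\pi}\ln\bigl|\tfrac{z-w}{z-\bar w}\bigr|$ is the Dirichlet Green function of the upper (resp. lower) halfplane, present exactly when $z,w$ lie in the same halfplane; the combination of four double integrals of $\Cov$ is the harmonic extension, into the interior, of boundary values dictated by the fluctuations on the cut. Matching the ``diagonal'' contour integral to the halfplane Green function part is exactly the computation carried out in \cite[Section 5]{BG_CLT}: for lozenge tilings of a single trapezoid, the $\delta_{h_1=h_2}(z-w)^{-2}$ piece of \eqref{eq_CLT_multi_formula_ext} produces the pullback of the standard upper-halfplane GFF along the map $\Theta$; we apply this statement verbatim to each trapezoid. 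For the ``off-diagonal'' piece, substitute \eqref{eq_CLT_inversion_formula} for $\d$ into the contour integral, then deform the $(z,w)$ contours to hug the bands $(\alpha_i,\beta_i)$; the residues at $f_{h_j,m_j}(z_j)=1/u_j$ produce the change of variable $z_j\to y_j=\Theta^{-1}(z_j)$, and after integrating by parts in $y_j$ (as in Proposition \ref{Prop_covariance_section_identification}) the result becomes exactly the double integral of $\mathcal G(\Theta(x_1,y_1),\Theta(x_2,y_2))$ against $y^{k_j}$ (or $(\hat A+\hat B-y)^{k_j}$ on the right). The three cases of the proposition differ only in which halfplane $\Theta(x_j,y_j)$ lies in and therefore in the sign conventions of \eqref{eq_GFF_cov_1}--\eqref{eq_GFF_cov_3}; the argument is identical.

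The main technical obstacle will be the contour gymnastics in the last step, namely verifying that the pieces produced by the inversion formula \eqref{eq_CLT_inversion_formula} and by the halfplane GFF identification from \cite{BG_CLT} fit together continuously across the cut $x=\tau$ to give the single coherent function $\mathcal G$ of Proposition \ref{Proposition_G_is_Green}. This reduces to checking that the jumps of the Green function produced by the $(z-w)^{-2}$ piece across $(\alpha_1,\beta_1)\cup(\alpha_2,\beta_2)$ are cancelled by the jumps of the harmonic extension piece, so that the full $\mathcal G$ is harmonic away from the diagonal---this is precisely the content of Proposition \ref{Proposition_G_continuous} and \eqref{eq_x27}, and was arranged in the very definition of $c(z)$ in \eqref{eq_covariance_two_cut} forcing the integrals of $\Cov$ around the bands to vanish. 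Once this cancellation is verified, the three cases \eqref{eq_target_left_left}--\eqref{eq_target_right_right} follow simultaneously, completing the proof of Theorem \ref{Theorem_holey_hex}.
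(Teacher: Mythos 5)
Your overall route coincides with the paper's: cut along $x=\tau$, use the log--gas CLT of \cite{BGG} on the section, feed it through Theorem \ref{Theorem_main_multi} to extract the germ data, apply Theorem \ref{Theorem_CLT_multi_ext} with $H=2$, $g_{h,m}\equiv 1$, and finish by deforming the $(z,w)$--contours onto the $\Theta$--images of the vertical sections and integrating by parts. The cross--trapezoid case $x_1\le\tau\le x_2$ is handled correctly this way, since there the $\delta_{h_1=h_2}$ double--pole is absent and the cross coefficients $\d_{a,1;b,2}$ are directly the $\Cov$--transform, producing the four--integral part of $\mathcal G$ (no logarithmic term, consistent with the two points lying in opposite halfplanes).

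The genuine flaw is in the equal--side case ($h_1=h_2$, say both $x_i\le\tau$), in how you split the integrand of \eqref{eq_CLT_multi_formula_ext}. You match the bare double pole $\bigl(\sum_a z^a/w^{1+a}\bigr)^2$ with the halfplane (logarithmic) part of $\mathcal G$ ``verbatim'' via the single--trapezoid result of \cite{BG_CLT}, and the bare series $\sum_{a,b}\d_{a,1;b,1}z^{a-1}w^{b-1}$ with the harmonic--extension (four--integral--of--$\Cov$) part. Neither identification is correct as stated. The single--trapezoid GFF theorem of \cite{BG_CLT} concerns a \emph{deterministic} boundary, whose covariance integrand is $Q^{(0)}(z,w)=\frac{1}{(z-w)^2}+\sum_{a,b}\frac{\d^{(0)}_{a,b}}{(a-1)!(b-1)!}z^{a-1}w^{b-1}$ with nonzero reference coefficients $\d^{(0)}_{a,b}$ (those forced by vanishing covariance on the cut); it is this full $Q^{(0)}$, not the bare double pole, that produces the pullback of $-\frac{1}{2\pi}\ln\bigl|\frac{z-w}{z-\bar w}\bigr|$. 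Correspondingly, only the difference $\Delta Q=\sum_{a,b}\frac{\d_{a,1;b,1}-\d^{(0)}_{a,b}}{(a-1)!(b-1)!}z^{a-1}w^{b-1}$ equals $\tau^2\,\Cov(\tau F(z),\tau F(w))F'(z)F'(w)$ and hence yields the four--integral part of $\mathcal G$; the bare $\d$--series differs from it by the nonvanishing $\d^{(0)}$--contribution. Your own tool already signals this: the inversion formula \eqref{eq_CLT_inversion_formula} expresses $\sum\d$ as the $\Cov$--transform \emph{minus} a covariance--independent correction, and that correction term (which is exactly $-\sum\d^{(0)}$) is silently dropped in your matching. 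The fix is the paper's device: introduce the deterministic--boundary reference $Q^{(0)}$, write $Q^{<\tau}=Q^{(0)}+\Delta Q$, send $Q^{(0)}$ to the logarithmic term via \cite[Section 9.1]{BG_CLT} and $\Delta Q$ to the $\Cov$--integrals; without this regrouping the two pieces you compute do not separately equal the two pieces of $\mathcal G$, even though their (unverified) sum would.
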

\begin{proof}
 We compute the asymptotic covariance as $L\to\infty$ of $p^L(x_1;k_1)$ and
$p^L(x_2; k_2)$ of \eqref{eq_polynomial_observables_2}. The definition of the random
part of $p^L(x;k)$ depends on whether $x<\tau$ or $x>\tau$, leading to three cases
\eqref{eq_target_left_left}, \eqref{eq_target_left_right},
\eqref{eq_target_right_right}. We start from the second case $0< x_1\le \tau \le
x_2<\hat B+\hat C$.

We use the formulas of Theorem \ref{Theorem_CLT_multi_ext}. Note that the numbers $d_{a,h_1,m_1;b,
h_2,m_2}$ there do not depend on $m_1$, $m_2$ --- this is because all $g$--functions were set to
$1$. Therefore, we will denote these numbers simply $d_{a,h_1; b,h_2}$. We now ``compute'' them by
applying Theorem \ref{Theorem_CLT_multi_ext} to evaluate the asymptotic covariance of $\sum_{i=1}^t
(\frac{\lambda_i+t-i}{t})^k$ and $\sum_{i=1}^{\tilde t} (\frac{\tilde \lambda_i+\tilde t-i}{\tilde
t})^m$ (recall that $\tilde t=B+C-t$). Multiplying the result of \eqref{eq_CLT_multi_formula_ext}
by $u^{-k-1} v^{-m-1}$ with large complex numbers $u$ and $v$, and then summing over all $k$ and
$m$ and rewriting the residue computation as a contour integral we get
\begin{equation}
\label{eq_covariance_Stieltjes}
 \cov\left(\frac{1}{u-x},t; \frac{1}{v-x},\tilde t \right)= \frac{1}{(2\pi\ii)^2}
\oint\oint \frac{1}{u-F(z)} \cdot \frac{1}{v-\tilde F(w)} Q(z,w) dz dw,
\end{equation}
where $\cov\left(\frac{1}{u-x},t; \frac{1}{v-x},\tilde t \right)$ means the
asymptotic covariance of the random variables
$$
 \sum_{i=1}^t
\frac{1}{u- \frac{\lambda_i+t-i}{t}}\text{ and }\sum_{i=1}^{\tilde t}
\frac{1}{v-\frac{\tilde \lambda_i+\tilde t-i}{\tilde t}},
$$
$$
 F(z)=(z+1)\left( \frac{1}{z}+\sum_{a=1}^\infty \frac{\c_{a;1}
z^{a-1}}{(a-1)!}\right),\quad  \tilde F(w)=(w+1)\left( \frac{1}{w}+\sum_{a=1}^\infty
\frac{\c_{a;2} w^{a-1}}{(a-1)!}\right),
$$
and
$$
 Q(z,w)=\sum_{a,b=1}^{\infty} \frac{\d_{a,1;b,2}}{(a-1)!(b-1)!} z^{a-1} w^{b-1}.
$$
The integration goes over small contours around $0$, which enclose both $1/u$ and $1/v$. Note that
the non-deterministic parts of $\lambda$ and $\tilde \lambda$ are essentially the same and can be
identified with random $N$--tuple $\ell_1>\dots>\ell_N$ studied in Theorem \ref{Theorem_hex_cut}.
Therefore, the left-hand side of \eqref{eq_covariance_Stieltjes} is the function $\tau (\hat B+\hat
C-\tau)\Cov\bigl(\tau u, \hat B+\hat A-(\hat B+\hat C-\tau) v \bigr)$, where $\Cov$ is given by
\eqref{eq_covariance_two_cut}. The substraction from $\hat B+\hat A$ in the argument is the change
between $y$ and $\tilde y$ in two coordinate systems of Figure \ref{Figure_hex_hole_coord}. On the
other hand, the integrand in the right-hand side of \eqref{eq_covariance_Stieltjes} has a unique
(simple) pole inside the contours at the point $(z,w)$ satisfying $u=F(z)$, $v-\tilde F(w)$, and
therefore the integral is computed as the residue at this point. We conclude that
\begin{equation}
\label{eq_covariance_through_Q}
  Q(z,w)=\tau (\hat B+\hat
C-\tau) \, \Cov\bigl(\tau F(z), \hat B+\hat A-(\hat B+\hat C-\tau)\tilde F(w)\bigr)\, {F'(z) \tilde F'(w)}.
\end{equation}
With this identity, we can now apply Theorem \ref{Theorem_CLT_multi_ext} and
integration by parts, as in \eqref{eq_integrate_by_parts},  to compute the desired
asymptotic covariance of $p^L(x_1;k_1)$ and $p^L(x_2; k_2)$ as
\begin{multline}
\label{eq_x19}
 \tau (\hat B +\hat C - \tau) \frac{\left(x_1\right)^{k_1+1}  \left(\hat B + \hat C - x_2\right)^{k_2+1} }{(k_1+1)(k_2+1)}  \\ \times  \oint \oint (F_{x_1}(z))^{k_1+1} (\tilde
F_{x_2}(w_2))^{k_2+1}  \Cov\bigl(\tau F(z),\hat B+\hat A-(\hat B+\hat C-\tau)\tilde F(w)\bigr)\, {F'(z) \tilde F'(w)} dz dw,
\end{multline}
where the integration goes over small contours around $0$ and
$$
 F_{x_1}(z)=(z+1)\left( \frac{1}{z}+\frac{\tau }{x_1}\sum_{a=1}^\infty \frac{\c_{a;1}
z^{a-1}}{(a-1)!}\right),\quad  \tilde F_{x_2}(w)=(w+1)\left( \frac{1}{w}+\frac{\hat
B + \hat C -\tau}{\hat B + \hat C - x_2}\sum_{a=1}^\infty \frac{\c_{a;2}
w^{a-1}}{(a-1)!}\right),
$$
so that $F_{\tau}(z)=F(z)$ and $\tilde F_\tau(w)=\tilde F(w)$.

We now relate these expressions to \eqref{eq_critical_equation_rescaled},
\eqref{eq_critical_equation_rescaled_right}. Recall that $G(z)$ and $\tilde G(z)$
are defined in \eqref{eq_St_left_right}. As a corollary of the relation of
\eqref{eq_LLN_multi_formula_ext} between $F(z)$ and $G(z)$ we have (see \cite[proof
of Lemma 9.2]{BG_CLT} for more details of this computation)
$$
 F_{x_1}(z)= \frac{\tau }{x_1} \left(\ddot{z}+
\frac{1-x_1/\tau}{\exp(-G(\ddot{z}))-1} \right), \quad \ddot z=
G^{(-1)}\bigl(\log(1+z)\bigr),
$$
where $G^{(-1)}$ is the functional inverse of the function $G$. Similarly,
$$
 \tilde F_{x_2}(w)= \frac{\hat B+\hat C - \tau }{\hat B + \hat C - x_2} \left(\ddot{w}+
\frac{1-\frac{\hat B + \hat C -x_2}{\hat B + \hat C- \tau}}{\exp(-\tilde
G(\ddot{w}))-1} \right), \quad \ddot w= \tilde G^{(-1)}\bigl(\log(1+w)\bigr).
$$
Note that $G(z)$, $\tilde G(z)$ behave as $1/z$ when $z\to\infty$. Therefore, the
inverse functions map a neighborhood of $0$ to a neighborhood of $\infty$. It
follows that for small $z$ and $w$, the maps $z\mapsto \ddot z$, $w\mapsto\ddot w$
are locally bijective. Therefore, we can change the integration variables in
\eqref{eq_x19} to $\ddot{z}$, $\ddot{w}$, getting
\begin{multline}
\label{eq_x20}
 \frac{\tau (\hat B + \hat C - \tau) }{(k_1+1)(k_2+1)(2\pi\ii)^2}
\oint \oint \Cov\left(\tau \ddot{z}, \hat B+\hat A-(\hat B+\hat C-\tau)\ddot{w} \right) \\ \times \Biggl(\tau \ddot{z}+ \frac{\tau-x_1}{\exp(-G(\ddot{z}))-1} \Biggr)^{k_1+1}
\Biggl((\hat B+\hat C-\tau) \ddot{w}+ \frac{x_2-\tau}{\exp(-\tilde G(\ddot{w}))-1} \Biggr)^{k_2+1}
 d\ddot{z} d\ddot{w}.
\end{multline}
Note that $F(z)=F_\tau(z)=\ddot{z}$, $\tilde F(w)=\tilde F_\tau(w)=\ddot{w}$, and therefore,
$F'(z)$, $\tilde F'(w)$ factors in \eqref{eq_x19} got absorbed into the Jacobian of our change of
variables.

Let us do yet another change of variables, so that $\tau \ddot{z}$ and $\hat B+\hat A-\ddot{w}(\hat
B+\hat C-\tau)$ become the new variables $\ddot{z}$ and $\ddot{w}$, respectively, to get
\begin{multline}
\label{eq_x30}
 \frac{ - 1}{(k_1+1)(k_2+1)(2\pi\ii)^2}
\oint \oint \Cov\left(\ddot{z}, \ddot{w} \right) \\ \times \Biggl(\ddot{z}+ \frac{\tau-x_1}{\exp\left(-G\left(\frac{\ddot{z}}{\tau}\right)\right)-1} \Biggr)^{k_1+1}
\Biggl(\hat A+\hat B-\Biggl(\ddot{w}+ \frac{\tau-x_2}{\exp\left(-\tilde G\left(\frac{\hat B+\hat A-\ddot{w}}{\hat B+\hat C-\tau}\right)\right)-1}\Biggr) \Biggr)^{k_2+1}
 d\ddot{z} d\ddot{w}.
\end{multline}

We further deform the $\ddot{z}$ contour in \eqref{eq_x30} in such a way that its
upper half--plane part becomes the image of the vertical line (more precisely, its
part inside the liquid region) with horizontal coordinate $x_1$ under the map
$\Theta(x,y)$ of \eqref{eq_Def_theta} . The lower half--plane part is the axial
reflection of the upper half--plane part. Similarly, we deform the $\ddot{w}$
contour in such a way that its lower half--plane part is the image of the vertical
line with horizontal coordinate $x_2$ under $\Theta(x,y)$; the upper half--plane
part is the axial reflection of the lower half--plane part.

After deformation of the contours, we integrate by parts in $\ddot{z}$ and $\ddot{w}$. Taking into
account that $\Cov(\ddot{z},\ddot{w})$ is the mixed partial derivative of the first term in
$\mathcal G (z,w)$ of \eqref{eq_GFF_cov_2}, we get

\begin{multline}
\label{eq_x22} \frac{1}{\pi} \int\limits_{\ddot{z}\in\{\Theta(x_1,\cdot)\}}
\int\limits_{\ddot{w}\in\{\Theta(x_2,\cdot)\}} \Biggl(\ddot{z}+ \frac{\tau-x_1}{\exp\left(-G\left(\frac{\ddot{z}}{\tau}\right)\right)-1} \Biggr)^{k_1} \Biggl(\hat A+\hat B- \Biggl(\ddot{w}+ \frac{\tau-x_2}{\exp\left(-\tilde G\left(\frac{\hat B+\hat A-\ddot{w}}{\hat B+\hat C-\tau}\right)\right)-1}\Biggr) \Biggr)^{k_2} \\
\times \mathcal G(\ddot{z},\ddot{w}) \frac{\partial}{\partial \ddot{z}} \left[\ddot{z}+ \frac{\tau-x_1}{\exp\left(-G\left(\frac{\ddot{z}}{\tau}\right)\right)-1}  \right] \frac{\partial}{\partial
\ddot{w}}\left[\ddot{w}+ \frac{\tau-x_2}{\exp\left(-\tilde G\left(\frac{\hat B+\hat A-\ddot{w}}{\hat B+\hat C-\tau}\right)\right)-1}  \right]
 d\ddot{z} d\ddot{w}.
\end{multline}
Note that the $\ddot{z}$
contour can be parameterized by the $y$--coordinate of the vertical slice at $x=x_1$, and
$\ddot{z}+ \frac{\tau-x_1}{\exp(-G(\ddot{z}))-1}$ is equal to this coordinate. Similarly, the
$\ddot{w}$ contour can be parameterized by the $y$ coordinate of the vertical slice at $x=x_2$.
Therefore, changing the coordinates to the $y$--coordinates along these two slices, \eqref{eq_x22}
becomes \eqref{eq_target_left_right}.

\bigskip

We switch to the case $x_1\le x_2 \le \tau$. Following the same scheme as in the previous case, we
compute the numbers $d_{1,h_1; 1,h_2}$ by applying Theorem \ref{Theorem_CLT_multi_ext} to evaluate
the asymptotic covariance of $\sum_{i=1}^t (\frac{\lambda_i+t-i}{t})^k$ and $\sum_{i=1}^{t}
(\frac{\lambda_i+t-i}{ t})^m$. Multiplying the result of \eqref{eq_CLT_multi_formula_ext} by
$u^{-k-1} v^{-m-1}$ with large complex numbers $u$ and $v$, and then summing over all $k$ and $m$
and rewriting the residue computation as a contour integral  we get
\begin{equation}
\label{eq_covariance_Stieltjes_2}
 \cov\left(\frac{1}{u-x},t; \frac{1}{v-x}, t \right)= \frac{1}{(2\pi\ii)^2}
\oint\oint \frac{1}{u-F(z)} \cdot \frac{1}{v-F(w)} Q^{<\tau}(z,w) dz dw,
\end{equation}
where $\cov\left(\frac{1}{u-x},t; \frac{1}{v-x},\tilde t \right)$ means the
asymptotic covariance of the random variables
$$
 \sum_{i=1}^t
\frac{1}{u- \frac{\lambda_i+t-i}{t}}\text{ and }\sum_{i=1}^{t}
\frac{1}{v-\frac{\lambda_i+t-i}{t}},
$$
$F(z)$ is the same as in \eqref{eq_covariance_Stieltjes}, and
$$
 Q^{<\tau}(z,w)=\frac{1}{(z-w)^2}+\sum_{a,b=1}^{\infty} \frac{\d_{a,1;b,1}}{(a-1)!(b-1)!} z^{a-1} w^{b-1}.
$$
The integration goes over small contours around $0$, which enclose both $1/u$ and
$1/v$, and one of the contours is inside another one.

The non-deterministic parts of $\lambda$ coincides with random $N$--tuple $\ell_1>\dots>\ell_N$
studied in Theorem \ref{Theorem_hex_cut}. Therefore, the left-hand side of
\eqref{eq_covariance_Stieltjes} is $\tau^2 \Cov(\tau u, \tau v)$, where $\Cov$ is given by
\eqref{eq_covariance_two_cut}.

The difference with the $x_1\le \tau \le x_2$ argument is that the $1/(z-w)^2$ term
in $Q^{<\tau}$ does not allow us to compute the contour integral
\eqref{eq_covariance_Stieltjes_2} as a single residue. We adjust for that by
introducing a new function
$$
 Q^{(0)} (z,w)= \frac{1}{(z-w)^2}+\sum_{a,b=1}^{\infty} \frac{\d_{a;b}^{(0)}}{(a-1)!(b-1)!} z^{a-1}
w^{b-1},
$$
where the coefficients $d_{a;b}^{(0)}$ are chosen so that \eqref{eq_covariance_Stieltjes_2} with
$Q^{<\tau}$ replaced by $Q^{(0)}$ would give identical zero. In other words, these are the numbers
corresponding to vanishing covariance. Theorem \ref{Theorem_main} implies that there is a unique
choice for such $d_{a;b}^{(0)}$. In particular, $Q^{(0)}(z,w)$ then coincides with $Q_\rho$ of
\cite[Section 9.1]{BG_CLT}, used there in the GFF theorem for trapezoids with deterministic
boundaries. Define
$$
 \Delta Q(z,w)=Q^{<\tau}(z,w)-Q^{(0)}(z,w),
$$
and write
$$
 \Delta Q(z,w)=\sum_{a,b=1}^{\infty} \frac{\Delta \d_{a;b}^{(0)}}{(a-1)!(b-1)!}
z^{a-1} z^{a-1} w^{b-1}.
$$
The definitions imply that $\Delta Q(z,w)$ is analytic (has no singularities) in a
neighborhood of $(0,0)$. Then the identity
\begin{equation}
\label{eq_covariance_Stieltjes_3}
 \cov\left(\frac{1}{u-x},t; \frac{1}{v-x}, t \right)= \frac{1}{(2\pi\ii)^2}
\oint\oint \frac{1}{u-F(z)} \cdot \frac{1}{v-F(w)} \Delta Q(z,w) dz dw,
\end{equation}
implies an analogue of \eqref{eq_covariance_through_Q}
\begin{equation}
\label{eq_covariance_through_Q_2}
  \Delta Q(z,w)= \tau^2 \Cov(\tau F(z),\tau F(w))\, {F'(z) F'(w)}.
\end{equation}
Applying Theorem \ref{Theorem_CLT_multi_ext} and integration by parts, as in
\eqref{eq_integrate_by_parts},  we compute the asymptotic covariance of
$p^L(x_1;k_1)$ and $p^L(x_2; k_2)$ as
\begin{multline}
\label{eq_x23}
   \frac{\tau^2 \left(x_1\right)^{k_1+1}  \left(x_2\right)^{k_2+1} }{(k_1+1)(k_2+1)}   \oint \oint (F_{x_1}(z))^{k_1+1} (
F_{x_2}(w_2))^{k_2+1}\Cov(\tau F(z),\tau F(w))\, {F'(z) \tilde F'(w)} dz dw,
\\ +
\frac{\tau^2 \left(x_1\right)^{k_1+1}  \left(x_2\right)^{k_2+1} }{(k_1+1)(k_2+1)} \oint
\oint (F_{x_1}(z))^{k_1+1} ( F_{x_2}(w_2))^{k_2+1} Q^{(0)}(z,w) dz dw,
\end{multline}
We transform the first line of \eqref{eq_x23} exactly in the same way as in $x_1\le
\tau \le x_2$ case, arriving at an expression of a form similar to \eqref{eq_x22}.
Note that now both $\ddot{z}$ and $\ddot{w}$ contour are in the upper half--plane,
and therefore, there is an additional term $-\frac{1}{2\pi}
\ln\left|\frac{\ddot{z}-\ddot{w}}{\ddot{z} \bar\ddot{w}}\right|$ in the definition
of $\mathcal G (\ddot{z}, \ddot{w})$. However, we get precisely this term after
changing the variables to $\ddot{z}$, $\ddot{w}$ in the second line of
\eqref{eq_x23}, as shown in \cite[Section 9.1]{BG_CLT}. This finishes the proof for
the $x_1\le x_2\le \tau$ case. The final case $\tau \le x_1\le x_2$ is analogous.
\end{proof}

\subsection{Domino tilings of holey Aztex rectangles: proofs}

\label{Section_domino_proofs}

In this section we prove Proposition \ref{Proposition_LLN_Aztec} and Theorem
\ref{Theorem_holey_Aztec}. The proofs are parallel to the lozenge tilings case and we omit many
details. In particular, we follow the same five steps. This time we need to detail Step 1, as it
was not present in the examples of \cite[Section 9]{BGG}.

Let us \emph{fix} all particles along the $x=t$ section of the rectangle. This separates the tiling
into two --- each corresponds to its own rectangle, as in  Figure \ref{Figure_Aztec_split}. Each
domino of the original tilings thus belongs to exactly one of the rectangles, the holes do not
belong to either of them.

\begin{figure}[t]
\begin{center}
 {\scalebox{1.5}{\includegraphics{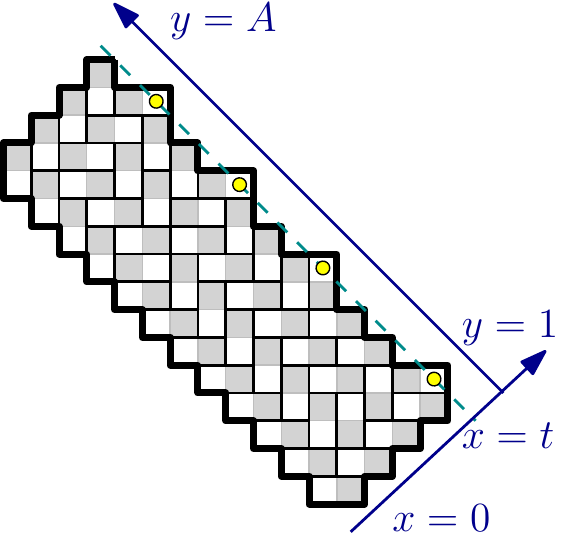}}}{\scalebox{1.5}{\includegraphics{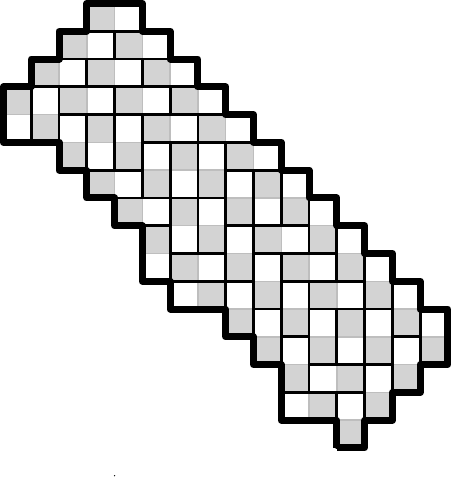}}}
 \caption{Domino tiling of Figure \ref{Figure_domino_domain} is split into two rectangles along $x=t$ line.}
 \label{Figure_Aztec_split}
\end{center}
\end{figure}

For each of the rectangles, the total number of domino tilings (given the boundary) can be
explicitly computed.

\begin{lemma} \label{Lemma_Aztec_rectangle_Z}
 Let $\ell_1<\dots<\ell_t$ denote the particle positions along the line $x=t$, encoding the left
 Aztec rectangle, as in Figure \ref{Figure_Aztec_split}. The total number of domino tilings of this
 rectangle is
 \begin{equation}
  2^{t(t+1)/2} \prod_{i<j} (\ell_j-\ell_i)
 \end{equation}
\end{lemma}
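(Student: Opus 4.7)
The plan is to prove Lemma~\ref{Lemma_Aztec_rectangle_Z} via the standard bijection between domino tilings and non-intersecting lattice paths, followed by a Lindström--Gessel--Viennot (LGV) determinant evaluation.

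First I would set up the path model (following the ``DR-path'' construction of Johansson, cf.\ also \cite{BK}): every domino tiling of the left Aztec rectangle shown in Figure~\ref{Figure_Aztec_split} is in weight-preserving bijection with a family of $t$ non-intersecting monotone lattice paths, whose sources lie at $t$ fixed positions $s_1 < \dots < s_t$ on the left boundary $x=0$ (determined by the deterministic staircase shape of that boundary) and whose sinks are the particle positions $\ell_1 < \dots < \ell_t$ on the vertical line $x=t$. In this encoding each step of a path corresponds to a local $2\times 2$ block whose completion to a domino configuration admits two compatible choices, so that each step contributes a multiplicative weight of $2$.

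Second, the LGV lemma expresses the number of tilings as $\det\bigl[W_{ij}\bigr]_{i,j=1}^{t}$, where $W_{ij}$ is the weighted number of single paths from $s_i$ to $\ell_j$. For the shape relevant here, the source positions work out to consecutive integers $s_i = i-1$, and a direct enumeration shows that $W_{ij}$ is, after extracting a common power of $2$ per row, a polynomial in $\ell_j$ of degree $i-1$ with leading coefficient $1$. Factoring these powers of $2$ out of the rows produces a cumulative factor $2^{t(t+1)/2}$ coming from the per-step weights, while the remaining determinant reduces to the Vandermonde $\det\bigl[\ell_j^{i-1}\bigr]_{i,j=1}^t = \prod_{1\le i<j\le t}(\ell_j - \ell_i)$. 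Combining these two factors yields the claimed formula.

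The main obstacle is the clean identification of $W_{ij}$ so that the LGV determinant collapses to a pure Vandermonde rather than to a binomial-coefficient determinant that would require further simplification. This amounts to careful bookkeeping of exactly which columns of the rectangle carry a weight-$2$ degree of freedom and which are forced by the boundary, and depends sensitively on the precise shape of the cut-out piece together with the convention used for placing particles on north and east dominos. A convenient way to double-check the final answer is via the Kasteleyn matrix determinant, which for rectangular Aztec-type geometries can be diagonalized by a discrete Fourier transform and produces the same product $2^{t(t+1)/2}\prod_{i<j}(\ell_j-\ell_i)$; a short induction on $t$, peeling off one column at a time, provides a further sanity check.
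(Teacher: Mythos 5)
Your LGV plan is not the paper's route at all (the paper does not reprove this count: it simply cites Ciucu's enumeration of matchings of symmetric regions, \cite{Ciucu}, and \cite[Corollary 2.14]{BK}), and as written it has a genuine gap exactly where the content of the lemma lies. The two load-bearing claims --- that the path encoding carries a uniform multiplicative weight $2$ per step, and that ``a direct enumeration shows'' each entry $W_{ij}$ becomes, after extracting a row-dependent power of $2$, a monic polynomial of degree $i-1$ in $\ell_j$ --- are asserted rather than proved, and neither is safe. In the standard Schr\"oder/DR-path encodings of domino tilings of Aztec-type regions the correspondence between tilings and non-intersecting path families is one-to-one with no per-step factor of $2$: the powers of $2$ only appear when one actually evaluates the resulting determinant of Schr\"oder/Delannoy-type entries. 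Moreover, such entries are binomial-like in $\ell_j$, and naive counts of the intermediate interlacing data produce determinants of the shape $\prod_{i<j}\frac{\ell_j-\ell_i}{j-i}$ times a power of $2$; the lemma asserts the \emph{pure} Vandermonde $\prod_{i<j}(\ell_j-\ell_i)$, which differs from that by the superfactorial $\prod_{k<t}k!$, and accounting for precisely this factor (it comes from the extra binary/half-step freedom peculiar to dominos and to this cut geometry) is the step you explicitly defer as ``the main obstacle.'' Since that bookkeeping \emph{is} the lemma, the proposal is a plan rather than a proof; your closing appeal to a Kasteleyn computation is likewise only invoked, not carried out.

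To close the gap you would need to fix the sources from the staircase boundary of the left piece in Figure \ref{Figure_Aztec_split}, compute the weighted single-path counts $W_{ij}$ explicitly (expect Delannoy-type sums, not monomials), and evaluate the determinant by row operations, verifying that the factorials and powers of $2$ recombine into $2^{t(t+1)/2}\prod_{i<j}(\ell_j-\ell_i)$. An alternative self-contained route is induction over the columns $x=q$, $q=1,\dots,t$, using the one-column transfer step underlying \eqref{eq_domino_through_Schur}, which adds a particle per column and lets you track both the Vandermonde and the factor $2^{t(t+1)/2}$ level by level. Whichever route you take, a good consistency check is that gluing your formula with its mirror image for the right rectangle must reproduce, after the complementation identity, the discrete-ensemble law of Corollary \ref{Corollary_Aztec_law} (and, in the hole-free case $A=B$, the known Krawtchouk-type section law of the Aztec diamond); a formula with the superfactorial denominators would fail this check. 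Of course, the shortest correct solution is the paper's: quote \cite[Corollary 2.14]{BK}.
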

\begin{proof}
 See \cite{Ciucu} and also
 \cite[Corollary 2.14]{BK}.
\end{proof}

\begin{corollary}\label{Corollary_Aztec_law}
 Let $X=\bigcup_{k=1}^K \{a_k,a_k+1,\dots,b_k\}\subset \{1,\dots,A\}$.
 The distribution of $t$ particles $\ell_1<\dots<\ell_t$ along the $x=t$ section of uniformly
 random domino tiling is
 \begin{equation}
 \label{eq_Aztec_law}
  \frac{1}{Z} \prod_{1\le i<j\le t} (\ell_i-\ell_j)^2 \prod_{i=1}^t \left[\frac{1}{ (\ell_i-1)! (A-\ell_i)!}
  \prod_{u\in \{1,\dots,A\}\setminus X} |\ell_i-u|\right],
 \end{equation}
 where $Z>0$ is a normalization constant.
\end{corollary}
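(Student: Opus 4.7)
The plan is to condition on the positions $\ell_1<\dots<\ell_t$ of the particles lying on the vertical slice $x=t$. These particles live on the common boundary of the left $A\times t$ Aztec rectangle and the right (holey) piece of width $B-t$, and because each white square on this line is either a hole or carries a definite particle/non-particle label, fixing $\vec\ell$ decouples the tiling into two independent sub-tilings. Writing $Z_L(\vec\ell)$ and $Z_R(\vec\ell)$ for the numbers of tilings of the two pieces, uniformity gives
$$
P(\ell_1,\dots,\ell_t)\;=\;\frac{Z_L(\vec\ell)\,Z_R(\vec\ell)}{\sum_{\vec\ell}Z_L(\vec\ell)\,Z_R(\vec\ell)}.
$$

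The left factor is immediate from Lemma~\ref{Lemma_Aztec_rectangle_Z}, since the left piece is a plain Aztec rectangle with boundary profile $\ell_1<\dots<\ell_t$:
$$
Z_L(\vec\ell)\;=\;2^{t(t+1)/2}\prod_{1\le i<j\le t}(\ell_j-\ell_i).
$$
This produces one of the two Vandermonde factors in the claimed formula.

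The main step is to evaluate $Z_R(\vec\ell)$ for the right piece, which is an Aztec rectangle of width $B-t$ with missing white squares along its left boundary at heights $u\in\{1,\dots,A\}\setminus X$. I would compute this by applying the same type of enumerative identity that underlies Lemma~\ref{Lemma_Aztec_rectangle_Z} (Lindström--Gessel--Viennot applied to the non-intersecting path model of dominoes, followed by a Vandermonde/Cauchy--Binet evaluation of the resulting determinant), but now for a rectangle with prescribed removals; the hole at height $u$ simply freezes that row and, upon determinant expansion, contributes a factor $|\ell_i-u|$ for each particle. This should give
$$
Z_R(\vec\ell)\;=\;C(A,B,t,X)\,\prod_{1\le i<j\le t}(\ell_j-\ell_i)\,\prod_{i=1}^{t}\frac{\prod_{u\in\{1,\dots,A\}\setminus X}|\ell_i-u|}{(\ell_i-1)!\,(A-\ell_i)!},
$$
with $C(A,B,t,X)$ a positive constant independent of $\vec\ell$. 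The binomial-type denominators are the standard Hahn-ensemble weights produced by the Lindström--Gessel--Viennot determinant on the $A$-row grid; the numerator encodes the "repulsion" from the missing white squares. Alternatively, one can appeal directly to the enumeration result for Aztec rectangles with removed squares from \cite{Ciucu} or \cite[Section~2]{BK}, which is exactly of this form.

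Multiplying $Z_L\cdot Z_R$, the constants (including $2^{t(t+1)/2}$ and $C(A,B,t,X)$) are independent of $\vec\ell$ and can be absorbed into the normalization, while the $\vec\ell$-dependent part gives $\prod_{i<j}(\ell_j-\ell_i)^2\prod_i\frac{\prod_{u\notin X}|\ell_i-u|}{(\ell_i-1)!(A-\ell_i)!}$, yielding \eqref{eq_Aztec_law}. The main obstacle is step (c): carrying out the determinantal evaluation for the holey right region, which is where the $\prod_{u\notin X}|\ell_i-u|$ factor must be produced in a clean way.
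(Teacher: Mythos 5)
Your setup is the same as the paper's: condition on the particles along $x=t$, note that the two halves become independent, and evaluate the left factor by Lemma \ref{Lemma_Aztec_rectangle_Z}. Your guessed formula for $Z_R(\vec\ell)$ is also correct. But the proposal has a genuine gap exactly at the step you yourself flag as the main obstacle: you never actually establish that formula. The heuristic that a missing white square ``freezes that row and, upon determinant expansion, contributes a factor $|\ell_i-u|$ for each particle'' is an assertion, not an argument --- a Lindstr\"om--Gessel--Viennot determinant for a rectangle with boundary removals does not visibly expand into the product $\prod_i\prod_{u\notin X}|\ell_i-u|$ times Hahn-type weights, and neither \cite{Ciucu} nor \cite[Section 2]{BK} states an enumeration in that form; what they give is precisely Lemma \ref{Lemma_Aztec_rectangle_Z}, a single Vandermonde in the boundary particle positions. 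So as written, the crucial enumeration for the ``holey'' right piece is left unproved.

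The missing idea, which makes the whole computation elementary, is that the right piece is not a new kind of holey region at all: once the section is fixed, its boundary along $x=t$ is encoded by the \emph{complementary} configuration $\tilde\ell_1<\dots<\tilde\ell_{\tilde t}$, namely $X\setminus\{\ell_1,\dots,\ell_t\}$ with $\tilde t=B-t$ (the removed squares belong to neither piece). Hence Lemma \ref{Lemma_Aztec_rectangle_Z} applies verbatim and gives $Z_R\propto\prod_{i<j}(\tilde\ell_j-\tilde\ell_i)$, so $\mathrm{Prob}(\vec\ell)\propto\prod_{i<j}(\ell_j-\ell_i)\prod_{i<j}(\tilde\ell_j-\tilde\ell_i)$. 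One then converts this to the one-variable-weight form by the identity, valid for any partition $X=\mathcal A\sqcup\mathcal B$,
$$
\prod_{\substack{a,a'\in\mathcal A\\ a<a'}}(a'-a)\cdot\prod_{\substack{x,x'\in X\\ x<x'}}(x'-x)
=\prod_{\substack{b,b'\in\mathcal B\\ b<b'}}(b'-b)\cdot\prod_{a\in\mathcal A}\prod_{\substack{x\in X\\ x\ne a}}|x-a|,
$$
combined with $\prod_{u\in\{1,\dots,A\},\,u\ne\ell_i}|u-\ell_i|=(\ell_i-1)!\,(A-\ell_i)!$; the Vandermonde of the full set $X$ is a constant and is absorbed into $Z$. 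This reproduces \eqref{eq_Aztec_law} (and, incidentally, shows that your posited $Z_R$ is equivalent, up to an $\vec\ell$-independent constant, to the complementary Vandermonde). If you want to keep your route, you must either supply the determinant evaluation for the rectangle with removed boundary squares or, more simply, replace that step by the complementation argument above.
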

\begin{remark}
 It is not important for the computation, whether we fix the filling fractions $n_1,\dots,n_K$ or
 not; only the partition function and the set of admissible configurations $\ell_1,\dots,\ell_t$
 changes.
\end{remark}
\begin{proof}[Proof of Corollary \ref{Corollary_Aztec_law}]
 Let $\tilde \ell_1<\tilde \ell_2<\dots<\tilde \ell_{\tilde
 t}$ denote the particle configuration $X\setminus\{\ell_1,\dots,\ell_t\}$. The combinatorics of
 the model implies $\tilde t=B-t$

  Then combining Lemma \ref{Lemma_Aztec_rectangle_Z}
 with the same statement for the right rectangle, we get the formula for the distribution of
 $\ell_1,\dots,\ell_t$ of the form
 \begin{equation}
 \label{eq_x31}
  \mathrm {Prob}(\ell_1,\dots,\ell_t)=\frac{1}{Z} \prod_{1\le i<j \le t} (\ell_j-\ell_i)
  \prod_{1\le i<j \le \tilde t} (\tilde \ell_j-\tilde \ell_i).
 \end{equation}
 We next use the formula, which is valid for any two disjoint sets $\mathcal A$, $\mathcal B$ with
 $\mathcal A\cup \mathcal B=X$:
 $$
  \prod_{\begin{smallmatrix}a,a'\in \mathcal A,\\ a<a'\end{smallmatrix}}  (a'-a) \cdot \prod_{\begin{smallmatrix}x,x'\in X, \\
  x<x'\end{smallmatrix}} (x'-x)
  =  \prod_{\begin{smallmatrix}b,b'\in \mathcal B,\\ b<b'\end{smallmatrix}}(b'-b)
  \cdot \prod_{a\in \mathcal A} \prod_{\begin{smallmatrix}x\in X,\\ x\ne a\end{smallmatrix}} |x-a|.
 $$
 Applying this formula with $\mathcal A=\{\tilde \ell_1,\dots,\tilde \ell_{\tilde t}\}$, we
 transform \eqref{eq_x31} into
 \begin{equation}
 \label{eq_x31}
  \mathrm {Prob}(\ell_1,\dots,\ell_t)=\frac{1}{Z'} \prod_{1\le i<j \le t} (\ell_j-\ell_i)^2
  \prod_{i=1}^t \prod_{\begin{smallmatrix}x\in X,\\ x\ne \ell_i\end{smallmatrix}} |x-\ell_i|,
 \end{equation}
 which is another form of \eqref{eq_Aztec_law}.
\end{proof}

Let us rewrite \eqref{eq_Aztec_law} in the form
$$
  \frac{1}{Z}\prod_{1\le i<j \le t} (\ell_j-\ell_i)^2 \prod_{i=1}^t w(\ell_i).
$$
Then the weight $w(\cdot)$ satisfies
\begin{equation}
 \label{eq_weight_ration}
 \frac{w(x)}{w(x-1)}=-\prod_{k=1}^K \frac{b_k+1-x}{a_k-x},
\end{equation}
and therefore we can identify it with the weight of multi--cut general $\theta$ Krawtchouk ensemble
of \cite[Section 9.1]{BGG} at $\theta=1$. Therefore, we can apply the results of this article to
get the LLN and CLT for $\ell_1,\dots,\ell_t$.

The covariance in the CLT depends on $K$ segments $(\alpha_k,\beta_k)\subset [\hat
a_k, \hat b_k]$, $k=1,\dots,K$, which are intersections of the liquid region
$\mathcal L$ with the vertical line $x=\tau$:
\begin{multline}
\label{eq_covariance_K_cut} \Cov^K(z,w) =
 -\frac{1}{2(w-z)^2}+ \frac{\sum_{d=0}^{K-2}c_d(z)w^d}{\sqrt{\prod_{i=1}^K (w-\alpha_i)(w-\beta_i)}}
\\+\frac{{\sqrt{\prod_{i=1}^K(z-\alpha_i)(z-\beta_i)}}}{2\sqrt{\prod_{i=1}^K(w-\alpha_i)(w-\beta_i)}}\Biggl(\frac{1}{(z-w)^2}-\frac{1}{2(z-w)}\sum_{i=1}^K
\left(\frac{1}{z-\alpha_i}+\frac{1}{z-\beta_i}\right)\Biggr),
\end{multline}
where $\sum_{d=0}^{K-2}c_d(z)w^d$, is a unique polynomial of degree at most $K-2$ and with
coefficients depending on $z$, such that the integrals of \eqref{eq_covariance_K_cut} in $w$ around
the segments $(\alpha_k,\beta_k)$, $k=1,\dots,K$ vanish. For the square root $\sqrt{x}$ we choose
the branch which maps large positive real numbers to large positive real numbers everywhere in
\eqref{eq_covariance_two_cut}.

\begin{theorem}[{\cite[Theorem 7.1 and Section 9.1]{BGG}}] \label{Theorem_Aztec_cut} Define the Cauchy--Stieltjes transform of random
particles $\ell_1>\dots>\ell_N$ introduced above, through
$$
 G_L(z)=\sum_{i=1}^N \frac{1}{z- \frac{\ell_i}{L} },
$$
then as $L\to\infty$ in the limit regime \eqref{eq_hex_limit_regime}, we have
\begin{equation}
 \lim_{L\to\infty} \frac{1}{L} G_L(z) = \int_{\mathbb R} \frac{\mu(x)}{z-x} dx, \text{ in
 probability},
\end{equation}
where $\mu(x)$ is a density of a compactly--supported measure of mass $\tau$, which
satisfies $0\le \mu(x)\le 1$ everywhere; $0<\mu(x)<1$ only on K intervals
(``bands'') $(\alpha_k,\beta_k)$, $k=1,\dots,K$. Further, $G_L(z)-\E G_L(z)$,
converges as $L\to\infty$ to a Gaussian field in the sense of moments, jointly and
in uniformly for finitely many $z$'s belonging to an arbitrary compact subset of
$\mathbb C\setminus \overline{ {\mathrm supp} [\mu(x)]}$. The covariance
$\lim_{L\to\infty}\left[ \E G_L(z) G_L(w) - \E G_L(z) \E G_L(w)\right]$ is given by
$\Cov^K(z,w)$.
\end{theorem}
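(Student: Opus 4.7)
The plan is to reduce the statement to a direct application of the Central Limit Theorem for multi-cut discrete log-gases established in \cite[Theorem 7.1]{BGG}. The reduction has already been set up: Corollary \ref{Corollary_Aztec_law} exhibits the joint law of $\ell_1<\dots<\ell_t$ on the set $X=\bigcup_{k=1}^K\{a_k,a_k+1,\dots,b_k\}$ as a discrete log-gas with squared Vandermonde interaction and weight $w(\cdot)$, and the ratio $w(x)/w(x-1)$ from \eqref{eq_weight_ration} is a ratio of two products of linear factors in $x$. This is precisely the input data for the ``general $\theta$ Krawtchouk ensemble at $\theta=1$'' treated in \cite[Section 9.1]{BGG}, so the first step is simply to match notations.

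Next I would check that the limit regime \eqref{eq_Aztec_limit_regime_1}--\eqref{eq_Aztec_limit_regime_2} fits the hypotheses of the cited CLT. All parameters $a_k,b_k,n_k,t,A,B$ scale linearly in $L$, so after dividing by $L$ the effective potential from $w$ converges to an analytic function off the support; the strict interlacing $\hat a_1<\hat b_1<\hat a_2<\dots<\hat b_K$ together with $\hat n_k>0$ and $\sum_k \hat n_k=\tau$ forces a $K$-cut equilibrium measure $\mu$ with bands $(\alpha_k,\beta_k)\subset(\hat a_k,\hat b_k)$, which is exactly the ``multi-cut regime'' to which \cite[Theorem 7.1]{BGG} applies. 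This yields both the LLN for $\frac{1}{L}G_L(z)$ with limiting density $\mu$, and the Gaussianity of $G_L(z)-\E G_L(z)$ in the sense of moments on compact subsets of the complement of $\mathrm{supp}(\mu)$.

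The remaining and principal task is to identify the covariance produced by \cite{BGG} with the explicit closed form \eqref{eq_covariance_K_cut}. The universal bulk piece $-\frac{1}{2(w-z)^2}$ and the Stieltjes-type piece involving $\sqrt{\prod_{i=1}^K(z-\alpha_i)(z-\beta_i)}/\sqrt{\prod_{i=1}^K(w-\alpha_i)(w-\beta_i)}$ are the standard multi-cut Coulomb gas kernel, and the polynomial correction $\sum_{d=0}^{K-2}c_d(z)w^d$ of degree $K-2$ in $w$ is dictated by the fact that the filling fractions $n_k$ are frozen: this translates into the requirement that, for each $k=1,\dots,K$, the $w$-integral of $\Cov^K(z,w)$ around $(\alpha_k,\beta_k)$ vanish. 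These $K$ conditions are linearly dependent (their sum vanishes automatically by deformation to infinity), yielding $K-1$ linear constraints, which uniquely determine the $K-1$ coefficients $c_0(z),\dots,c_{K-2}(z)$. I expect the main technical obstacle to lie exactly here: verifying that the master covariance formula of \cite[Theorem 7.1]{BGG} (typically written as a double contour integral against an equilibrium resolvent) admits the rational-plus-square-root decomposition \eqref{eq_covariance_K_cut} with the correct normalization, which is a direct but somewhat tedious residue/Schwarz-reflection computation that one finds carried out in \cite[Section 7]{BGG}. Once this identification is in place, no new input beyond \cite{BGG} is required.
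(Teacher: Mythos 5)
Your proposal matches the paper's own treatment: the paper likewise derives the log--gas form of the section particles via Lemma \ref{Lemma_Aztec_rectangle_Z}, Corollary \ref{Corollary_Aztec_law} and \eqref{eq_weight_ration}, identifies it with the multi--cut general $\theta$ Krawtchouk ensemble at $\theta=1$, and then simply invokes \cite[Theorem 7.1 and Section 9.1]{BGG} for the LLN, the asymptotic Gaussianity, and the covariance $\Cov^K(z,w)$, including the vanishing of the $w$--integrals around the bands coming from the frozen filling fractions. No further comparison is needed.
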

Note that \cite{BGG} used normalization by $N$, while we normalize by $L$, hence a slight
difference in the statements.

\bigskip

Topologically, the liquid region $\mathcal L$ of the holey Aztec rectangle is homeomorphic to the
Riemann sphere with $K$ cuts. We fix these cuts in a specific way by considering the domain
$\mathbb D=\mathbb C\setminus \bigl[ \bigcup_{k=0}^K (\beta_{k}, \alpha_{k+1})\bigr]$, where
$\alpha_k,\beta_k$ are endpoint of the bands from Theorem \ref{Theorem_hex_cut}, and we set
$\beta_0=-\infty$, $\alpha_{K+1}=+\infty$

Below we construct an explicit uniformization map between $\mathcal L$ and $\mathbb D$. Before
doing that, let us define the Green function of the Laplace operator in $\mathbb D$ with Dirichlet
boundary conditions.

 Fix an arbitrary point $D\in\mathbb C$ and define for $z$ and $w$ in the upper half--plane the
 function

 \begin{multline}
\label{eq_GFF_cov_1_Aztec}
  \mathcal G(z,w)=\frac{1}{4\pi}\Biggl[ \int_{D}^z \int_D^w \Cov^K(\mathfrak z,\mathfrak w) d\mathfrak z d\mathfrak w -
 \int_{D}^{\bar z} \int_D^w \Cov^K(\mathfrak z,\mathfrak w) d\mathfrak z d\mathfrak w
 \\ -\int_{D}^z \int_D^{\bar w} \Cov^K(\mathfrak z,\mathfrak w) d\mathfrak z d\mathfrak w
 +\int_{D}^{\bar z} \int_D^{\bar w} \Cov^K(\mathfrak z,\mathfrak w) d\mathfrak z d\mathfrak
 w\Biggr]-\frac{1}{2\pi}\ln\left|\frac{z-w}{z-\bar w}\right|.
 \end{multline}
The integration contour in \eqref{eq_GFF_cov_1} lies inside $\mathbb C\setminus \cup_{k=1}^K
 [\alpha_k,\beta_k]$ (which is different from the domain $\mathbb D$), and  $\mathcal G(z,w)$ does
 not depend on the choice of such contour, as follows from the fact that integrals of
 $\Cov^K(\mathfrak z,\mathfrak w)$ around the bands vanish.

Further,\
 if $z$ is in the upper halfplane, while $w$ is in the lower halfplane, then
 $\mathcal G(z,w)$ has a similar definition:
 \begin{multline}
\label{eq_GFF_cov_2_Aztec}
  \mathcal G(z,w)=\frac{1}{4\pi}\Biggl[ \int_{D}^z \int_D^{\bar w} \Cov^K(\mathfrak z,\mathfrak w) d\mathfrak z d\mathfrak w -
 \int_{D}^{\bar z} \int_D^{\bar w} \Cov^K(\mathfrak z,\mathfrak w) d\mathfrak z d\mathfrak w
 \\ -\int_{D}^z \int_D^{w} \Cov^K(\mathfrak z,\mathfrak w) d\mathfrak z d\mathfrak w
 +\int_{D}^{\bar z} \int_D^{w} \Cov^K(\mathfrak z,\mathfrak w) d\mathfrak z d\mathfrak
 w\Biggr].
 \end{multline}
The differences between \eqref{eq_GFF_cov_1} and \eqref{eq_GFF_cov_2} are the logarithmic term and
conjugation of $w$. We further extend the definition to all other non-real $z$ and $w$ by requiring
 \begin{equation}
 \label{eq_GFF_cov_3_Aztec}
  \mathcal G(z,w)=\mathcal G(\bar z,\bar
  w).
 \end{equation}

Repeating the proof of Proposition \ref{Proposition_G_is_Green}, we obtain the following.

\begin{proposition} \label{Proposition_G_is_Green_Aztec} $\mathcal G(z,w)$
(continuously extended to $z,w\in \mathbb D$) is the Green function of the Laplace operator in
$\mathbb D$ with Dirichlet boundary conditions.
\end{proposition}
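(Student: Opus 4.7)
The plan is to mirror the argument used for Proposition \ref{Proposition_G_is_Green} in the two-cut holey-hexagon setting, reducing everything to checking the four defining properties of the Green function: (i) vanishing on $\partial\mathbb D$, (ii) vanishing at $\infty$, (iii) harmonicity in $\mathbb D\setminus\{z\}$ for fixed $z$, and (iv) the $-\tfrac{1}{2\pi}\ln|z-w|$ singularity on the diagonal. By uniqueness of the Dirichlet Green function, once (i)--(iv) are verified, we are done. Properties (ii) and (iv) are immediate from the definitions \eqref{eq_GFF_cov_1_Aztec}--\eqref{eq_GFF_cov_3_Aztec}: as $w\to\infty$ the double integrals vanish because the $w$-integrand decays, and the logarithmic singularity at $z=w$ comes solely from the last term of \eqref{eq_GFF_cov_1_Aztec}, exactly as before. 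The heart of the proof is therefore a $K$-cut analogue of Proposition \ref{Proposition_G_continuous} establishing the continuous (and $C^1$) extension of $\mathcal G$ up to the real axis, together with the vanishing on the slits $\bigcup_{k=0}^K(\beta_k,\alpha_{k+1})$.

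The first step is to upgrade $\Cov^K(z,w)$ to a ``symmetric form'' in the spirit of \eqref{eq_covariance_two_cut_symmetric}, with numerator polynomials $P_{2K}(z,w)$ and $P_{2K-2}(z,w)$ whose coefficients are pinned down by the vanishing of $w$-periods around each of the $K$ bands (this is what fixes the polynomial $\sum_{d=0}^{K-2} c_d(z) w^d$ uniquely in \eqref{eq_covariance_K_cut}). With this form in hand, the jump of $\Cov^K(z,w)$ across $w=x\in[\alpha_k,\beta_k]$ has the same structure as in the two-cut case: a double pole of the shape $\pm 1/(z-x)^2$ as $z$ approaches $x$, up to a holomorphic correction. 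From this, the telescoping computation in \eqref{eq_x27} goes through verbatim (the summation of jumps across a single band cancels the logarithmic contribution), proving continuity of $\partial_w\mathcal G(z,w)$ as $w$ crosses any band. Continuity of $\partial_z\mathcal G(\cdot,x)$ at real $x$ in the bands follows identically from \eqref{eq_x28}--\eqref{eq_x29}, using that the $-\tfrac12(z-\mathfrak w)^{-2}$ piece of $\Cov^K$ integrates to zero and the remaining part flips sign under $z\mapsto\bar z$.

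Next I check (i). For $x\in\mathbb R\setminus\bigcup_{k=1}^K[\alpha_k,\beta_k]$, the contour from $D$ to $x\pm i0$ can be taken identical (the slit separating them no longer exists near $x$), so both formulas \eqref{eq_GFF_cov_1_Aztec}--\eqref{eq_GFF_cov_2_Aztec} collapse to zero, while the logarithmic term vanishes because $w=\bar w=x$. Thus $\mathcal G(z,x)=0$ on the slits, which is precisely the Dirichlet condition on $\partial\mathbb D$. For (iii), fix $z\in\mathbb D$ and note that on the complement of the bands the function $\mathcal G(z,\cdot)$ is a sum of a holomorphic and an anti-holomorphic function of $w$ plus the harmonic logarithmic term, so it is harmonic. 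Across each band the $C^1$-extension result just established, combined with the standard reflection/removable singularity lemma for harmonic functions across an analytic arc, promotes harmonicity from each side to a full harmonic function in a neighborhood of the band.

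The main obstacle, just as in the two-cut case, is Step 2: ensuring that the jumps of $\partial_w\mathcal G$ and $\partial_z\mathcal G$ across each band genuinely cancel, which requires the precise normalization baked into $\sum_{d=0}^{K-2}c_d(z)w^d$ through the vanishing of $w$-periods. I expect this bookkeeping (handling $K$ bands simultaneously rather than two) to be where some care is needed, but the algebraic mechanism is identical: the double-pole structure of the jump of $\Cov^K$ is universal, and only the compensation between the integrals of $\Cov^K$ and the derivative of the log kernel needs to be re-verified term by term, which proceeds exactly as in \eqref{eq_x27}. Once these four properties are in place, uniqueness of the Dirichlet Green function concludes the proof.
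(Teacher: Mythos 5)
Your proposal is correct and follows essentially the same route as the paper, which proves this proposition simply by repeating the two-cut argument of Propositions \ref{Proposition_G_continuous} and \ref{Proposition_G_is_Green}: establish the continuous and $C^1$ extension across the $K$ bands via the boundary-value/jump structure of $\Cov^K$ (with the period normalization fixing $\sum_{d=0}^{K-2}c_d(z)w^d$), verify vanishing on the slits, harmonicity away from the diagonal, and the logarithmic singularity, and conclude by uniqueness of the Dirichlet Green function. Nothing essential is missing.
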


The next step is to construct a conformal isomorphism $\Theta$ between  the liquid
region $\mathcal L$ with the complex structure given by the complex slope $\xi$ and
$\mathbb D$ with the standard complex structure. This will be done by gluing the
complex structures in the liquid regions of two Aztec rectangles.

Recall that the particles along the $x=t$ line form a border for the left rectangle
and they are encoded by a signature $\lambda\in \GT_t$. We set
\begin{equation}
\label{eq_x32}
 G(z)=\lim_{L\to\infty} \frac{1}{t}\sum_{i=1}^t \frac{1}{z-\lambda_i/t}.
\end{equation}
Similarly the configuration complementary to the particles along the $x=t$ forms a
border for the right rectangle, and it is encoded by a signature $\tilde \lambda\in
\GT_{\tilde t}$, where $\tilde t=B-t$ We set
\begin{equation}
\label{eq_x33}
 \tilde G(z)=\lim_{L\to\infty} \frac{1}{\tilde t}\sum_{i=1}^{\tilde t} \frac{1}{z-\lambda_i/{\tilde t}}.
\end{equation}
The limits (in probability) in \eqref{eq_x32}, \eqref{eq_x33} exist by Theorem
\ref{Theorem_Aztec_cut}.

For $x<\tau$ we consider an equation
\begin{equation}
\label{eq_Aztec_critical_equation_left}
 y=z+\frac{2 (\tau - x)  }{\exp\left(-G\left(\frac{z}{\tau}\right)\right)-\exp\left(G\left(\frac{z}{\tau}\right)\right)}
\end{equation}
It is shown in \cite[Proposition 6.2]{BK} that the above equation has a non-real
complex root in the upper half--plane $\mathbb U$ if and only if $(x,y)$ belongs to
the liquid region $\mathcal L^{\mathrm{left}}$ of the left rectangle. Moreover, the
resulting map $(x,y)\mapsto z(x,y)$ is a smooth bijection between $\mathcal
L^{\mathrm{left}}$ and $\mathbb U$.

Differentiating \eqref{eq_Aztec_critical_equation_left} in $x$ and $y$, we get
\begin{equation}
\label{eq_Aztec_critical_equation_rescaled_x}
 0=z_x\left(1-2\frac{\tau-x}{\tau}G'\left(\frac{z}{\tau}\right) \frac{\exp\left(-G\left(\frac{z}{\tau}\right)\right)-\exp\left(G\left(\frac{z}{\tau}\right)\right)}{\left[\exp\left(-G\left(\frac{z}{\tau}\right)\right)-\exp\left(G\left(\frac{z}{\tau}\right)\right)\right]^2}\right)
 -\frac{2 }{\exp\left(-G\left(\frac{z}{\tau}\right)\right)-\exp\left(G\left(\frac{z}{\tau}\right)\right)}
\end{equation}
\begin{equation}
\label{eq_Aztec_critical_equation_rescaled_y}
 1=z_y\left(1-2\frac{\tau-x}{\tau}G'\left(\frac{z}{\tau}\right) \frac{\exp\left(-G\left(\frac{z}{\tau}\right)\right)-\exp\left(G\left(\frac{z}{\tau}\right)\right)}{\left[\exp\left(-G\left(\frac{z}{\tau}\right)\right)-\exp\left(G\left(\frac{z}{\tau}\right)\right)\right]^2}\right),
\end{equation}
which implies the relation
\begin{equation}
\label{eq_Aztec_burgers_for_map}
 z_x=\frac{2}{\exp\left(-G\left(\frac{z}{\tau}\right)\right)-\exp\left(G\left(\frac{z}{\tau}\right)\right)}
 z_y
\end{equation}
\begin{lemma} \label{Lemma_Aztec_rectangle_slope} For $(x,y)$ in the liquid region,
 the complex slope is computed as $\xi := \exp\left(-G\left(\frac{z(x,y)}{\tau}\right)\right)$.
\end{lemma}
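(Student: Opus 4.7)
The plan is to verify that $\xi(x,y):=\exp(-G(z(x,y)/\tau))$ satisfies the complex Burgers equation \eqref{eq_Burgers_domino_2} for uniformly random domino tilings, and then match this candidate against the true complex slope via a boundary/uniqueness argument that reduces to the case of Aztec rectangles with deterministic boundaries already handled in \cite{BK}.

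First, the Burgers equation is essentially immediate from computations already on the page. Since $\xi$ depends on $(x,y)$ only through $z(x,y)$, the chain rule gives
\[
 \xi_x = -\frac{\xi}{\tau}\,G'\!\left(\frac{z}{\tau}\right) z_x, \qquad
 \xi_y = -\frac{\xi}{\tau}\,G'\!\left(\frac{z}{\tau}\right) z_y,
\]
so $\xi_x/\xi_y = z_x/z_y$. The relation \eqref{eq_Aztec_burgers_for_map} then yields
\[
 \frac{\xi_x}{\xi_y} = \frac{z_x}{z_y} = \frac{2}{\exp(-G(z/\tau))-\exp(G(z/\tau))} = \frac{2}{\xi-\xi^{-1}},
\]
which is exactly \eqref{eq_Burgers_domino_2}. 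Note also that $\xi$ takes values in the upper half-plane precisely when $z$ does, since $G(z/\tau)$ has strictly negative imaginary part for $z\in\mathbb U$ outside the support of the limiting density, so $\xi=\exp(-G(z/\tau))$ lies in $\mathbb U$.

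Second, I identify this $\xi$ with the complex slope from Figure \ref{Figure_quad}. The left piece obtained by cutting along $x=\tau$ and conditioning on the section configuration $\lambda$ is, combinatorially, a standard rectangular Aztec piece with a deterministic boundary row: conditionally on the particles on $x=\tau$, the tiling of the left piece is uniform and independent of everything to the right of the cut. This is precisely the setup of \cite{BK}; in particular \cite[Proposition 6.2]{BK} identifies the liquid region as the image under the critical equation \eqref{eq_Aztec_critical_equation_left} of the upper half-plane, and computes the complex slope in terms of the Stieltjes transform of the empirical measure of the boundary signature. Translated into our normalization, that formula reads $\xi=\exp(-G(z/\tau))$, completing the proof.

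The only mildly delicate point is verifying that the Cauchy--Stieltjes transform appearing in \cite{BK} matches the $G$ defined by \eqref{eq_x32}, and that the uniqueness of the complex slope satisfying Burgers and the prescribed boundary conditions (real on the frozen boundary, matching the local density on the spectral cut along $x=\tau$) may be invoked in the multiply-connected setting. Both reduce to local statements inside $\mathcal L^{\mathrm{left}}$, where the analysis of \cite{BK} applies essentially verbatim, so I do not expect any genuine obstacle here.
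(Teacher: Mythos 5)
Your first step -- checking that the candidate $\xi=\exp\left(-G\left(\frac{z(x,y)}{\tau}\right)\right)$ maps $\mathcal L^{\mathrm{left}}$ to the upper half--plane and satisfies the complex Burgers equation \eqref{eq_Burgers_domino_2}, via the chain rule and \eqref{eq_Aztec_burgers_for_map} -- is correct and coincides with the opening observation of the paper's proof. The gap is in the identification step. The complex slope is defined by the quadrilateral of Figure \ref{Figure_quad}, i.e.\ by \emph{two} angle relations, $\arg(\xi)=-\pi\,\frac{\partial h}{\partial y}=\pi(\pn+\pe)$ and $\arg\bigl(\frac{1+\xi}{1-\xi}\bigr)=\pi\,\frac{\partial h}{\partial x}=\pi(\pe+\ps)$ (cf.\ \eqref{eq_A_height_3}, \eqref{eq_A_height_4}); satisfying the Burgers equation alone does not pin it down, and you never verify the second relation. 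You delegate the whole identification to \cite[Proposition 6.2]{BK}, but as that result is used in this paper it only gives the characterization of the liquid region and the bijectivity of $(x,y)\mapsto z(x,y)$; what \cite{BK} actually supplies toward the slope is \cite[Theorem 4.3]{BK}, which is precisely the first angle relation (the local particle density $\pn+\pe$), not the slope formula itself. The genuinely new content of the lemma is the second relation, which the paper proves by a computation: write $\frac{\partial h}{\partial x}=-\partial_x\int_y^{+\infty}\partial_v h\,dv=\frac{1}{\pi}\,\partial_x\,\Im\int_y^{+\infty}\log\xi\,dv$ using the first relation, convert $\partial_x\xi$ into $\frac{2}{\xi^2-1}\partial_v\xi$ via the Burgers equation, and integrate explicitly to obtain $\frac{1}{\pi}\arg\bigl(\frac{1+\xi}{1-\xi}\bigr)$. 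Nothing in your proposal replaces this step.

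Your fallback -- uniqueness of a solution of the Burgers equation with prescribed boundary behavior -- is also not an argument as stated: you would have to specify which boundary data (values on the frozen boundary, matching with the limiting density on the cut $x=\tau$) determine the slope, and prove uniqueness for this first--order PDE in the multiply--connected region; neither is in \cite{BK} ``verbatim'' nor supplied by you, and it is not the route the paper takes. So the heart of the lemma is attributed to a citation that does not contain it, which is a genuine gap.
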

\begin{proof}
We need to show that
$$
\arg (\xi) = - \pi \frac{\partial h}{\partial y} = \pi (p_{north}+p_{east}), \qquad \arg \left( \frac{1 + \xi}{1 - \xi} \right) = \pi \frac{\partial h}{\partial x} = \pi ( p_{east} + p_{south}).
$$
The first equality is \cite[Theorem 4.3]{BK}. To establish the second equality, let us start by
noting that \eqref{eq_Aztec_burgers_for_map} implies
$$
\xi_x = \frac{2}{ \xi -  \xi^{-1}}  \xi_y,
$$
which is the complex Burgers equation \eqref{eq_Burgers_domino_2}. Next, we have
\begin{multline*}
\frac{ \partial h(x,y)}{\partial x} = - \frac{\partial}{ \partial x} \int_{y}^{+\infty} \frac{\partial}{ \partial v} h(x,v) d v =  \frac{\partial}{ \partial x} \int_y^{+\infty} \pi^{-1} \arg ( \xi) d v = \frac{1}{\pi} \frac{\partial}{ \partial x} \Im \int_y^{+\infty} \log ( \xi) d v \\ = \frac{1}{\pi} \Im \int_y^{+\infty} \frac{1}{ \xi} \frac{\partial  \xi}{ \partial x} dv = \frac{1}{\pi} \Im \int_y^{+\infty} \frac{2}{ \xi^2 - 1} \frac{\partial  \xi}{ \partial v} dv = \frac{1}{\pi} \Im \int_y^{+\infty} \left( \frac{1}{ \xi-1} - \frac{1}{ \xi + 1} \right) \frac{\partial  \xi}{ \partial v} dv \\ = \frac{1}{\pi} \left( \pi - \Im \left( \log \frac{ \xi -1}{ \xi +1} \right) \right) = 1 - \frac{1}{\pi} \arg \left( \frac{  \xi -1}{ \xi+1} \right) = \frac{1}{\pi} \arg \left( \frac{1 +  \xi}{1 -  \xi} \right).\qedhere
\end{multline*}
\end{proof}

Given Lemma \ref{Lemma_Aztec_rectangle_slope}, we compare
\eqref{eq_Aztec_burgers_for_map} with \eqref{eq_Burgers_domino_2} and conclude that
the map $(x,y)\mapsto z(x,y)$ is conformal in Kenyon--Okounkov's complex structure
on the left trapezoid.

For the right trapezoid, i.e.\ $x>\tau$, we consider an equation
\begin{equation}
\label{eq_Aztec_critical_equation_right}
 y=\tilde z+\frac{2 (\tau -x)  }{\exp\left(-\tilde G\left(\frac{\hat A-\tilde z}{\hat B-\tau}\right)\right)-\exp\left(G\left(\frac{\hat B-\tilde z}{\hat
 B-\tau}\right)\right)},
\end{equation}
which is the same equation \eqref{eq_Aztec_critical_equation_right}, written for the
right trapezoid (with its own rotated coordinate system) with $\tau$ replaced by
$\hat B-\tau$, and then rewritten back into the original coordinates by replacing
$y$ with $\hat A-y$, $z$ with $\hat A-\tilde z$, and $x$ with $\hat B - x$.

The results for \eqref{eq_Aztec_critical_equation_left} imply that
\eqref{eq_Aztec_critical_equation_right} has a unique complex root in the
\emph{lower} halfplane  $\bar {\mathbb U}$ when $(x,y)$ belongs to the liquid region
$\mathcal L^{\mathrm{right}}$ of the right trapezoid. Then the map $(x,y)\mapsto
\tilde z(x,y)$ is a conformal bijection between $\mathcal L^{\mathrm{right}}$ and
$\bar {\mathbb U}$.

We further glue the definitions in two trapezoids together and define the
uniformization map $\Theta: \mathcal L \to \mathbb D$ through

\begin{equation}
\label{eq_def_theta_Aztec}
 \Theta(x,y)=\begin{cases} z(x,y),& x<\tau,\\ \tilde z(x,y),& x>\tau,\\ y,& x=\tau,
 y\in \bigcup_{k=1}^K (\alpha_k,\beta_k). \end{cases}
\end{equation}

The definitions readily imply that right and left parts glue together into $\Theta$
in a continuous way, and therefore, $\Theta$ is a conformal bijection between
$\mathcal L$ and $\mathbb D$. This leads to a detailed version of Theorem
\ref{Theorem_holey_Aztec}.
\begin{proposition}
\label{Proposition_holey_Aztec_detailed}
 Let $\mathrm{Aztec}$ be the interior of the holey rectangular Aztec rectangle
 in the coordinate system of Figure \ref{Figure_domino_domain}, and
 let $H_L(x,y)$ be the random height function
of uniformly random domino tiling of $\mathrm{Hex}$ in the limit regime
\eqref{eq_Aztec_limit_regime_1}, \eqref{eq_Aztec_limit_regime_2}. Further, let
$\mathcal L$ be the liquid region, let $\mathbb D=\mathbb C\setminus[
\bigcup_{k=0}^K(\beta_{k},\alpha_{k+1})]$, let $\Theta: \mathcal L \to \mathbb D$ be
the uniformization map \eqref{eq_def_theta_Aztec}, and let $\mathcal G^K$ be the
Green function of $\mathbb D$, as in \eqref{eq_GFF_cov_1_Aztec},
\eqref{eq_GFF_cov_2_Aztec}, \eqref{eq_GFF_cov_3_Aztec}.

 For any $m=1,2,\dots$ take a collection of $m$ points $0< x_1,x_2,\dots,x_m< \hat B
$ and $m$ polynomials $f_1,\dots,f_m$. Then the random variables
\begin{equation}
\label{eq_polynomial_observables_3}
 \int_{y:\, (x_i,y)\in \mathrm{Hex}} f_i(y) \bigl( H_L(x_i,y)-\E H_L(x_i,y) )
dy,\quad i=1,\dots,m,
\end{equation}
converge as $L\to\infty$ (in the sense of moments) to a centered Gaussian
$m$--dimensional random vector. The asymptotic covariance of $i$th and $j$th
components is
\begin{equation}
\label{eq_target_covariance_2}
 \frac{1}{\pi} \int_{y_i:\, (x_i,y_i)\in \mathcal L} \int_{y_j:\, (x_j,y_j)\in \mathcal L}
f_i(y_i) f_j(y_j) \mathcal G^K\bigl( \Theta(x_i,y_i), \Theta(x_j,y_j) \bigr)    dy_i
d y_j, \quad 1\le i,j \le m.
\end{equation}
\end{proposition}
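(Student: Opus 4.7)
The plan is to mimic the five-step strategy that worked for Proposition \ref{Proposition_holey_hex_detailed}, adapting each ingredient to the domino setting (which is largely already in place above).

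First, I would fix the particles along the vertical section $x=t$ and invoke Corollary \ref{Corollary_Aztec_law} together with \eqref{eq_weight_ration}: the distribution is a log--gas whose weight is a multi-cut general $\theta$--Krawtchouk weight at $\theta=1$, so Theorem \ref{Theorem_Aztec_cut} gives LLN and CLT for $\ell_1,\dots,\ell_t$ with explicit band endpoints $(\alpha_k,\beta_k)$ and covariance $\Cov^K(z,w)$. In parallel with Proposition \ref{Prop_covariance_section_identification}, a contour-integral manipulation followed by deformation of the contours to the bands and integration by parts shows that the asymptotic covariance of polynomial statistics on the section equals $\tfrac{1}{\pi}$ times a double integral of $\mathcal G^K$, confirming that on the section the already-defined Green function \eqref{eq_GFF_cov_1_Aztec}--\eqref{eq_GFF_cov_3_Aztec} is the correct object.

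Second, I would split the domain along $x=t$ as in Figure \ref{Figure_Aztec_split}. Conditionally on the boundary particles, the two halves are independent uniform tilings of Aztec rectangles with prescribed top particles. By the Schur-type combinatorial description of Aztec rectangle tilings (as developed in \cite{BK}, \cite{BG_CLT}), the marginal distributions of horizontal sections are again described by branching of Schur functions multiplied by explicit prefactors $g_{h,m}$; this fits verbatim into the multilevel setup of Section \ref{Section_CLT_extension}. The combination of Theorem \ref{Theorem_main_multi} (applied to the known CLT from Step 1 to extract the limits $\c_{a;h}$ and $\d_{a,h;b,f}$ of partial derivatives of $\ln S_{\rho_L}$) with Theorem \ref{Theorem_CLT_multi_ext} then yields joint Gaussian convergence for polynomial statistics $p^L(x;k)$ analogous to Corollary \ref{Corollary_hex_LLN_CLT}; in particular Proposition \ref{Proposition_LLN_Aztec} follows at once.

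Third, I would translate the contour-integral covariance produced by Theorem \ref{Theorem_CLT_multi_ext} into the geometric form \eqref{eq_target_covariance_2}. For each of the three cases $x_1,x_2<\tau$, $x_1<\tau<x_2$, $\tau<x_1,x_2$, the strategy is the one used for \eqref{eq_target_left_left}--\eqref{eq_target_right_right}: express $\Delta Q(z,w)$ (or $Q(z,w)$ across the cut) in terms of $\Cov^K$ through the appropriate analogue of \eqref{eq_covariance_through_Q}, change variables from $z$ to $\ddot z = G^{(-1)}\!\bigl(\text{something}\bigr)$ using the domino critical equation \eqref{eq_Aztec_critical_equation_left} (and its right-trapezoid counterpart \eqref{eq_Aztec_critical_equation_right}), and then integrate by parts to identify the resulting kernel with the mixed derivative of $\mathcal G^K$. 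The key geometric input that makes this change of variables conformal in the Kenyon--Okounkov complex structure is precisely Lemma \ref{Lemma_Aztec_rectangle_slope} together with \eqref{eq_Aztec_burgers_for_map}. When both slices lie on the same side of the cut, the extra $1/(z-w)^2$ singularity must be handled exactly as in \cite[Section 9.1]{BG_CLT}: introduce the reference kernel $Q^{(0)}$ associated to a vanishing covariance so that $\Delta Q$ is regular, and absorb the resulting $\ln|(\ddot z-\ddot w)/(\ddot z-\overline{\ddot w})|$ term into $\mathcal G^K$.

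The main obstacle will be Step 3: verifying that the contour-integral output of Theorem \ref{Theorem_CLT_multi_ext} matches the Green function $\mathcal G^K$ of the $K$-cut domain $\mathbb D$. Two features make this more delicate than in the hexagon case. First, the constants $c_d(z)$ in \eqref{eq_covariance_K_cut} are defined implicitly by the requirement that $w$-periods of $\Cov^K$ around each of the $K$ bands vanish, and I will need to argue (as in Proposition \ref{Proposition_G_is_Green}) that this is exactly what is needed to extend $\mathcal G^K$ continuously across the $K$ cuts and to make it vanish on each boundary component of $\mathbb D$; the uniqueness-of-Green-function characterization from Proposition \ref{Proposition_G_is_Green_Aztec} will do the job once continuity and harmonicity are checked. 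Second, unlike the $\exp(-G)-1$ factor in the lozenge case, the domino critical equation involves $\exp(-G)-\exp(G)$, so the change of variables $z\mapsto \ddot z$ and the conformal comparison \eqref{eq_Aztec_burgers_for_map} vs.\ \eqref{eq_Burgers_domino_2} must be rechecked carefully; everything else (the integration by parts, the deformation of contours to the images of vertical slices under $\Theta$, and the handling of the discontinuity of $H_L$ at $x=\tau$) is parallel to the hexagon argument.
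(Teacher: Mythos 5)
Your proposal is correct and follows essentially the same route as the paper's proof, which simply repeats the hexagon argument with the domino-specific modifications you identify: the branching rule \eqref{eq_domino_through_Schur} forces the multilevel functions $g_{h,m}$ of Section \ref{Section_CLT_extension} to be powers of $\prod_i\frac{1+x_i}{2}$ rather than $1$, and the uniformization/conformality input comes from the Aztec critical equation and Lemma \ref{Lemma_Aztec_rectangle_slope}. The only minor deviation is that for the same-side covariance matching (your $Q^{(0)}$ step) the paper replaces the reference to \cite[Section 9.1]{BG_CLT} by the Aztec-rectangle analysis of \cite[Section 6]{BK}, but the mechanism you describe is exactly the intended one.
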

\begin{proof}
 The proof repeats that of Proposition \ref{Proposition_holey_hex_detailed}. Let us only list
 substantial differences.
  The formula \eqref{eq_lozenge_through_Schur} is replaced by
 \begin{equation}
\label{eq_domino_through_Schur}
 \frac{s_\lambda(x_1,\dots,x_q,1^{t-q})}{s_\lambda(1^t)} \prod_{i=1}^q \left(\frac{1+x_i}{2}\right)^{t-q}=\sum_{\mu\in\GT_q}
 \mathrm{Prob}(\mu\mid \lambda) \frac{s_\mu(x_1,\dots,x_q)}{s_\mu(1^q)},
\end{equation}
whose connection to domino tilings is explained in details in \cite[Section 9.3]{BG_CLT},
\cite{BouCC}, \cite[Section 2.2]{BK}. Therefore, in the domino analogue of
\eqref{eq_specifications_lozenge}, the functions $g$ are now appropriate powers of $\prod_{i=1}^q
\left(\frac{1+x_i}{2}\right)$. Finally, our reference to \cite[Section 9.1]{BG_CLT} for lozenges is
replaced by the \cite[Section 6]{BK} for the dominos.
\end{proof}

\section{Appendix: CLT for quantum random walk}

\label{Section_quantum_CLT}

Perhaps, the main idea of this paper is to show that Schur generating functions describe asymptotic
behavior of probability measures on the real line similarly to how the classical characteristic
function describes the asymptotic behavior of a random variable. Theorem \ref{Theorem_main} is an
analog of the Fourier transform in our setting, while Theorem \ref{Theorem_main_multi} is an analog
of the Fourier transform for random finite-dimensional vectors. In this section we comment on one
of analogs of another classical statement: The convergence of a random walk to the standard
Brownian motion, see Proposition \ref{prop:app-main}.

\subsection{Preliminaries} We will need the following asymptotics.

\begin{lemma}
\label{lem:app}
Let $\la$ be a fixed Young diagram. We have
\begin{equation*}
\left. N \pa_1^k \log \frac{s_{\la} (x_1, \dots, x_N)}{ s_{\la} (1^N)} \right|_{x_i=1} \xrightarrow[N \to \infty]{} |\la|  \mathbf{1}_{k=1}
\end{equation*}
\begin{equation*}
\left. N^2 \pa_1^{k_1} \pa_2^{k_2} \log \frac{s_{\la} (x_1, \dots, x_N)}{ s_{\la} (1^N)} \right|_{x_i=1} \xrightarrow[N \to \infty]{} - |\la| \mathbf{1}_{k_1=1} \mathbf{1}_{k_2=1},
\end{equation*}
\begin{equation*}
\left. N^2 \pa_1^{k_1} \pa_2^{k_2} \pa_3^{k_3} \dots \pa_r^{k_r} \log \frac{s_{\la} (x_1, \dots, x_N)}{ s_{\la} (1^N)} \right|_{x_i=1} \xrightarrow[N \to \infty]{} 0, \qquad r \ge 3.
\end{equation*}
for any $k_i \in \mathbb{Z}_{\ge 1}$.
\end{lemma}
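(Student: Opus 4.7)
My plan is to bypass Theorem~\ref{Theorem_main} (which only captures the leading $N$--asymptotics of $\partial_i^k \ln S_\rho$, not the $O(1/N)$ and $O(1/N^2)$ refinements needed here) and instead argue by a direct branching expansion. The ratio $\chi_\lambda := s_\lambda(x_1,\dots,x_N)/s_\lambda(1^N)$ is the Schur generating function of the delta measure $\delta_\lambda$ and satisfies $\chi_\lambda(1^N)=1$. Iterating the branching rule for Schur polynomials, for each $r\ge 1$,
\[
 \chi_\lambda(x_1,\dots,x_r,1^{N-r}) \;=\; \sum_{\lambda=\mu^{(0)}\succ\mu^{(1)}\succ\dots\succ\mu^{(r)}} \Bigl(\prod_{i=1}^r x_i^{|\mu^{(i-1)}|-|\mu^{(i)}|}\Bigr)\cdot \frac{s_{\mu^{(r)}}(1^{N-r})}{s_\lambda(1^N)},
\]
where $\succ$ denotes interlacing. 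The hook--content formula gives $s_\nu(1^{N-r})/s_\lambda(1^N) = O(N^{|\nu|-|\lambda|})$ as $N\to\infty$ for every fixed $\nu\subseteq\lambda$.

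Differentiating this expansion term by term and setting $x_i=1$ picks out contributions with $|\mu^{(i-1)}|-|\mu^{(i)}|\ge k_i$, so that $|\lambda|-|\mu^{(r)}|\ge k_1+\dots+k_r$. Combined with the hook--content bound this yields the key estimate
\[
 \partial_1^{k_1}\cdots\partial_r^{k_r}\chi_\lambda\bigr|_{x=1} \;=\; O\bigl(N^{-(k_1+\dots+k_r)}\bigr)
\]
for any multi--index. By the multi--variable Fa\`a di Bruno formula (equivalently, the cumulant--moment inversion at the base point $\chi_\lambda=1$), $\partial_1^{k_1}\cdots\partial_r^{k_r}\log\chi_\lambda|_{x=1}$ is a finite polynomial combination of products $\prod_j \partial^{\beta_j}\chi_\lambda|_{x=1}$, where the $\beta_j$ sum component-wise to $(k_1,\dots,k_r)$. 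Each such product is $O(N^{-(k_1+\dots+k_r)})$. This already proves the third line of the lemma (giving $O(N^{-r})=o(N^{-2})$ for $r\ge 3$), the vanishing for $k\ge 2$ in the first line (giving $O(N^{-k})=o(N^{-1})$), and the vanishing for $(k_1,k_2)\ne(1,1)$ in the second line.

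It remains to identify the two non-zero limits. For the first line at $k=1$, Euler's homogeneity $\sum_i x_i\partial_i s_\lambda=|\lambda|s_\lambda$ together with the symmetry of $s_\lambda$ gives $\partial_1\chi_\lambda|_{x=1}=|\lambda|/N$ exactly, so $N\partial_1\log\chi_\lambda|_{x=1}=|\lambda|$. For the second line at $(k_1,k_2)=(1,1)$, expanding $D^2 s_\lambda=|\lambda|^2 s_\lambda$ where $D=\sum_i x_i\partial_i$ yields $\sum_{i,j}x_ix_j\partial_i\partial_j s_\lambda=|\lambda|(|\lambda|-1)s_\lambda$. Evaluating at $x=1$ and splitting diagonal/off-diagonal via symmetry,
\[
 N\partial_1^2\chi_\lambda|_{x=1}+N(N-1)\partial_1\partial_2\chi_\lambda|_{x=1}=|\lambda|(|\lambda|-1).
\]
Since $\partial_1^2\chi_\lambda|_{x=1}=O(N^{-2})$ by the key estimate, this gives $\partial_1\partial_2\chi_\lambda|_{x=1}=|\lambda|(|\lambda|-1)/N^2+O(N^{-3})$, and subtracting $(\partial_1\chi_\lambda)(\partial_2\chi_\lambda)|_{x=1}=|\lambda|^2/N^2$ produces $\partial_1\partial_2\log\chi_\lambda|_{x=1}=-|\lambda|/N^2+O(N^{-3})$, as required. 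The one real subtlety is the $(1,1)$ case: both $\partial_1\partial_2\chi_\lambda$ and $(\partial_1\chi_\lambda)(\partial_2\chi_\lambda)$ are $O(N^{-2})$ and the leading $|\lambda|^2$--pieces must cancel, so we need the second--order Euler identity to pin down the subleading $-|\lambda|$ rather than a mere order-of-magnitude estimate; this is precisely what the a priori bound $\partial_1^2\chi_\lambda=O(N^{-2})$ from the branching argument enables.
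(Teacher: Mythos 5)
Your proof is correct, but it follows a genuinely different route from the paper's. The paper expands $s_\la$ in Newton power sums, normalizes $\hat p_\mu = p_\mu/N^{\ell(\mu)}$, uses the strict positivity of the coefficient of $p_1^{|\la|}$ to write the normalized character as $\hat p_1^{|\la|}$ times $1+(\text{corrections with negative powers of }N)$, and then takes logarithms: the correction terms are shown to contribute negligibly to the relevant derivatives by counting the extra factors of $N^{-1}$ produced each time a normalized power sum is differentiated, and the nonzero limits come from explicit derivatives of $|\la|\log\hat p_1$. You instead obtain the a priori bound $\pa_1^{k_1}\cdots\pa_r^{k_r}\chi_\la|_{x=1}=O(N^{-(k_1+\dots+k_r)})$ from the iterated branching rule together with the hook--content (Weyl dimension) asymptotics $s_\nu(1^M)\asymp M^{|\nu|}$ — the number of interlacing chains is finite and $N$--independent since $\la$ is fixed — and transfer it to $\log\chi_\la$ via the moment--cumulant/Fa\`a di Bruno expansion at the base point $\chi_\la(1^N)=1$; this alone kills all cases except $(k)=(1)$ and $(k_1,k_2)=(1,1)$, which you then pin down exactly through the homogeneity identities $\sum_i\pa_i s_\la(1^N)=|\la|\,s_\la(1^N)$ and $\sum_{i,j}\pa_i\pa_j s_\la(1^N)=|\la|(|\la|-1)\,s_\la(1^N)$, using the bound $\pa_1^2\chi_\la|_{x=1}=O(N^{-2})$ to isolate the off-diagonal term. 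Your identification of the delicate point — that the $(1,1)$ case requires the second-order Euler identity because the $|\la|^2$ leading pieces of $\pa_1\pa_2\chi_\la$ and $(\pa_1\chi_\la)(\pa_2\chi_\la)$ cancel — is exactly right, and the arithmetic there checks out ($|\la|(|\la|-1)-|\la|^2=-|\la|$). What each approach buys: the paper's power-sum decomposition describes the entire germ of $\log\chi_\la$ near $1^N$ at once (leading term $|\la|\log\hat p_1$), which meshes naturally with the Schur-generating-function machinery of the rest of the paper; your argument is more elementary in that it avoids the power-sum decomposition and the nonvanishing of the coefficient $b^{1^{|\la|}}_\la$, relying only on branching, dimension asymptotics, and homogeneity, and it even yields the first-order statement as an exact identity, $\pa_1\log\chi_\la|_{x=1}=|\la|/N$, rather than an asymptotic one.
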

\begin{proof}
Recall that Newton power sums are defined via $p_k (x_1, \dots, x_m) := x_1^k + \dots + x_m^k$ and
$$
p_{\la_1, \la_2, \dots} (x_1, \dots, x_m) = p_{\la_1} (x_1,\dots, x_m) p_{\la_2} (x_1, \dots, x_m) \dots,
$$
for a partition $\la = \la_1 \ge \la_2 \ge \dots$. Let us denote by $\ell(\la)$ the
length (number of non-zero coordinates) of a partition $\la$, and by $|\la|$ the
number of boxes in $\la$. The Schur function $s_{\la}$ can be decomposed into a
homogeneous linear combination of products of Newton power sums, and this
decomposition is unique, when the number of variables is larger than $|\lambda|$:
$$
s_{\la} = \sum_{\mu:\, |\mu|=|\lambda|} b^{\mu}_\lambda p_{\mu} = b^{1^{|\lambda|}}_\lambda p_1^{|\lambda|} +
\sum_{\mu:\, \ell(\mu) < |\mu|=|\lambda|} b^{\mu}_{\lambda} p_{\mu}.
$$
It is well-known that $b^{1^{|\lambda|}}_\lambda$ is strictly positive, cf.\
\cite[Chapter I]{Mac}.

Note that $p_{\rho} (1^N) = N^{\ell(\rho)}$, and that for fixed $|\rho|$ the largest $\ell(\rho)$
is achieved on partition $1^{|\rho|}$. It will be convenient to scale power sums:
$$
\hat p_{\rho} (x_1, \dots, x_N) := \frac{1}{N^{\ell(\rho)}} p_{\rho} \left( x_1, \dots, x_N \right), \qquad \hat p_{\rho} (1^N)=1.
$$
As $N \to \infty$, we have
\begin{multline*}
\frac{ s_{\la} (x_1, \dots, x_N)}{ s_{\la} (1^N)} = \frac{1}{s_{\la} (1^N)}
\left( b^{1^{|\lambda|}}_\lambda p_1^{|\la|} + \sum_{|\mu|=|\lambda|,\, \ell(\mu)<|\lambda|} b^{\mu}_\lambda p_{\mu}  \right) \\=
\bigl(1+o(1)\bigr) \left(\hat p_1^{|\la|} +  \sum\limits_{|\mu|=|\lambda|,\, \ell(\mu)<|\lambda|} \frac{b^{\mu}_{\lambda}}{b^{1^{|\lambda|}_\lambda}}
 N^{\ell(\mu)-|\lambda|} \hat p_{\mu}\right)
\end{multline*}
where $1+o(1)$ arose from the approximation $s_{\la} (1^N)\approx
b^{1^{|\lambda|}}_\lambda p_1^{|\la|}(1^N)=b^{1^{|\lambda|}}_\lambda$ and therefore
does not depend on $x_i$.  Note that the number of terms in the sum does not depend
on $N>|\lambda|$. Thus,
\begin{equation}
\label{eq:appp-eq}
\log \frac{ s_{\la} (x_1, \dots, x_N)}{ s_{\la} (1^N)} =
\log \left( \hat p_1^{|\la|} \right) +
\log \left( 1 + \sum_{|\mu|=|\lambda|,\, \ell(\mu)<|\lambda|} \frac{b^{\mu}_{\lambda} }
{b^{1^{|\lambda|}}_{\lambda}}  N^{\ell(\mu)-|\lambda|} \frac{\hat p_{\mu}}{\hat p_1^{|\lambda|}} \right)+C,
\end{equation}
where $C$ is $x_i$--independent (and goes to $0$ as $N\to\infty$). For $(x_1,\dots,x_N)$ in a small
neighborhood of $1^N$, the second term of \eqref{eq:appp-eq} can be expanded in the series
$$
 \sum_{p=1}^{\infty} \frac{(-1)^{p-1}}{p} \left(\sum_{|\mu|=|\lambda|,\, \ell(\mu)<|\lambda|} \frac{b^{\mu}_{\lambda} }
{b^{1^{|\lambda|}}_{\lambda}}  N^{\ell(\mu)-|\lambda|} \frac{\hat p_{\mu}}{\hat p_1^{|\lambda|}}\right)^{p}.
$$
The terms with $p>2$ have at least $N^{-3}$ prefactor, which survives through the
computation of derivatives and plugging in $x_i=1$, and therefore they do not
contribute to the desired asymptotic. The term with $p=2$ has at least $N^{-2}$
prefactor, but when we differentiate at least once in one of the variables, another
$N^{-1}$ arises, because $\partial_1 \frac{x_1^k+x_2^k+\dots x_N^k}{N}= \frac{k
x_1^{k-1}}{N}$. Again we conclude that this term does not contribute to the desired
asymptotic. In $p=1$ term, if we differentiate once, then the result has $N^{-2}$
prefactor, and therefore does not contribute to the first asymptotic of Lemma
\ref{lem:app}. If we differentiate at least in two variables, then the result has at
least $N^{-3}$ prefactor ($N^{-1}$ was there originally and each differentiation
produces another $N^{-1}$), and again there is no contribution to the desired
asymptotic.

We conclude that the second term in \eqref{eq:appp-eq} does not contribute to desired asymptotics;
the limit behavior is determined by the first term. For this term, we have
$$
\pa_{1} \hat p_1 (x_1, \dots, x_N) = \frac{1}{N},\quad
\pa_{1}^{k_1} \pa_2^{k_2} \dots \pa_r^{k_r} \hat p_1 (x_1, \dots, x_N)=0, \quad k_1+\dots +k_r>1.
$$
And therefore,
\begin{align*}
\pa_{1} \ln \hat  p_1^{|\lambda|} (x_1, \dots, x_N) \bigr|_{x_i=1} &=
\frac{|\la|}{N}+o\left(N^{-1}\right),\\ \pa_{1}^k \ln \hat  p_1^{|\lambda|} (x_1,
\dots, x_N) \bigr|_{x_i=1} &= o\left(N^{-1}\right), \quad k>1,
\\
 \pa_{1} \pa_2 \ln \hat p_1^{|\lambda|} (x_1, \dots, x_N) \bigr|_{x_i=1} &=
\frac{-|\la|}{N^2} + o \left( N^{-2} \right),\\
 \pa_{1}^{k_1} \pa_2^{k_2} \dots \pa_r^{k_r} \ln \hat p_1^{|\lambda|} (x_1, \dots, x_N) \bigr|_{x_i =1}
 &=
 o \left( N^{-2} \right), \quad r \ge 3. \qedhere
\end{align*}
\end{proof}

\subsection{Asymptotics of quantum random walk}

A classical random walk converges to a Brownian motion when a single step becomes small while one
makes a growing number of such steps. Note that the exact distribution of a small step does not
significantly affect the limit. We consider the following model in order to see a similar
phenomenon in our framework.\footnote{This construction comes from the restriction of a quantum
random walk on $U(N)$ (in the spirit of Biane \cite{Bi3}, \cite{Bi4}) to the center of the
universal enveloping algebra of $U(N)$; we do not discuss details of this connection.}.

For the  step, we fix a partition (or Young diagram) $\nu=(\nu_1\ge \nu_2 \ge \dots
\ge 0)$. Let $n>0$ be such that $\nu_{n+1}=0$. Then for each $N\ge n$ we can
identify $\nu$ with a signature of length $N$ and consider the irreducible
representation $T^N_\nu$ of the unitary group $U(N)$ with highest weight $\nu$.
Formally, $T_\nu^N$ depend on $N$, as they are representations of different groups,
however, the dependence is very mild. This can be formalized in two ways. The
character of $T_\nu^N$ as a function of eigenvalues $(x_1,\dots,x_N)$ is equal to
$s_{\nu} (x_1, \dots, x_N)$, and so for each $N$ this is the same symmetric
polynomial with different number of variables. From a different point of view, we
have a canonical $\nu$--independent way to construct $T_\nu^N$ given $T_\nu^k$. Fix
a vector $v$ in the space of $T_\nu^n$ and consider the corresponding matrix element
of the representation: $u\mapsto (T^n_\nu(u) v,v)$. This a function of $u\in U(n)$
which can be analytically continued from $U(n)$ to the semigroup of $n\times n$
complex matrices $\mathrm {Mat}_{n\times n}$ --- we use here $\nu_i\ge 0$,
$i=1,\dots,k$, which guarantees the polynomiality of the matrix element. We can now
compose this matrix element with the canonical projection $U(N)\mapsto \mathrm
{Mat}_{n\times n}$ cutting out the top--left $n\times n$ corner of the matrix. The
result turns out to be a matrix element of $T^N_\nu$, which uniquely defines this
representation.

Due to the above reasons we do not distinguish $T_\nu^N$ for $N=n,n+1,\dots$ and simply write
$T_\nu$. By $T_\nu^{\otimes k}$ we denote the $k$-fold Kronecker tensor product $T_\nu \otimes
T_\nu \otimes \dots \otimes T_\nu$. Its character is equal to $s_{\nu} (x_1, \dots, x_N)^k$.

Let $I_N$ be a (possibly reducible) finite--dimensional representation of $U(N)$. It will play the
role of an initial distribution of a quantum random walk.

For any $k \ge 0$ let us denote by $\mathcal{L}_{\la}^k$ the isotypical component
corresponding to a signature $\la$ in $\mathbf{V}^k := I_N \otimes T_\nu^{\otimes
k}$. We have a canonical orthogonal decomposition $\mathbf{V}^k = \oplus_{\la}
\mathcal{L}_{\la}^k$. Let $\mathrm{proj}_{\la}^k$ be the orthogonal projection to
$\mathcal{L}_{\la}^k$ in $\mathbf{V}^k$. For a collection of signatures $(\la^{(0)},
\la^{(1)}, \dots, \la^{(s)}, \dots)\in \prod_{i=0}^{\infty} \GT_N$ let us define
inductively $\mathcal S_0 := \mathcal{L}_{\la^{(0)}}^0$,
$$
\mathcal S_i := \mathrm{proj}_{\la^{(i)}}^i \left( \mathrm{span}_{v,u} \left( v \otimes u \mid
 v \in \mathcal S_{i-1}, u \in T_\nu
 \right) \right)\subset \mathbf{V}^i, \qquad i \ge 1.
$$

Define a probability measure $\rho_{T;N}$ on $\prod_{i=0}^{\infty} \GT_N$ via its values on cylinders
$$
\rho_{T_\nu;N} ( \la^{(0)}, \dots, \la^{(k)} ) = \frac{ \dim \mathcal S_k}{ \dim \mathbf{V}^k};
$$
the consistency of this definition for various $k$ readily follows from the construction.

%For a collection of signatures $(\la^{(0)}, \la^{(1)}, \dots, \la^{(s)})$ let us define the linear vector space

Finally, for $r>0$ let $p_{k;N}^{(r)}$ be the $k$th Newton power sum of a random
signature $\la^{(\lfloor r \rfloor)}$ distributed according to measure
$\rho_{T_\nu;N}$.

\begin{proposition}
\label{prop:app-main}

Assume that the measures of the initial condition $\rho_{T_\nu;0} (\la)$ satisfy the
CLT in the sense of Definition \ref{Def_CLT}. Let $s \in \N$ and let $t_1 < t_2 <
\dots < t_s$, $t_i \in \R$, be positive reals. The vector $\left\{ p_{k;N}^{( t_i
N^2 )} - \mathbf{E} p_{k;N}^{( t_i N^2 )} \right\}_{i=1}^s$ converges to a jointly
Gaussian random vector with limit covariance

\begin{equation}
\begin{split}
\label{eq:app-cov} \lim_{N \to \infty} \frac{ \cov \left( p_{k_i;N}^{( t_i N^2 )},
p_{k_j, N}^{(t_j N^2 )} \right) }{N^{k_1+k_2}} &= [z^{-1} w^{-1}] \Biggl(
\left(\sum_{a=1}^{\infty} \frac{\mathbf{d}_{a,b} z^{a-1} w^{b-1}}{(a-1)! (b-1)!} -
t_i |\nu| + \frac{1}{(z-w)^2} \right)
\\ &\times
\left( \frac{1}{z} + 1+ (1+z) t_i |\nu| + (1+z) \sum_{a=1}^{\infty}\frac{\mathbf{c}_a z^{a-1}}{(a-1)!} \right)^{k_i+1}
 \\ &\times \left( \frac{1}{w} + 1+ (1+w) t_i |\nu| + (1+w) \sum_{a=1}^{\infty} \frac{\mathbf{c}_a w^{a-1}}{(a-1)!} \right)^{k_j+1}
  \Biggr),
\end{split}
\end{equation}
for $1 \le i<j \le s$; the functions $\sum_{a=1}^{\infty} \frac{\mathbf{c}_a z^{a-1}}{(a-1)!}$ and
$\sum_{a=1}^{\infty} \frac{\mathbf{d}_{a,b} z^{a-1} w^{b-1}}{(a-1)! (b-1)!}$ are uniquely
determined by $\rho_{T;0} (\la)$ via formulas given in Lemma \ref{Lemma_as_invert}.
\end{proposition}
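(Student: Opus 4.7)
The plan is to identify the law of the process $(\lambda^{(0)},\lambda^{(k_1)},\ldots,\lambda^{(k_s)})$ as an instance of the multilevel Markov chain framework of Section \ref{Section_CLT_extension} and then apply Theorem \ref{Theorem_CLT_multi_ext}. The key observation is that by the Schur branching identity
\[
\frac{s_\lambda(x)}{s_\lambda(1^N)}\left(\frac{s_\nu(x)}{s_\nu(1^N)}\right)^{K}=\sum_{\mu\in\GT_N}\mathrm{Prob}(\lambda\to\mu)\frac{s_\mu(x)}{s_\mu(1^N)},
\]
the $K$-step transition probabilities of the quantum random walk coincide with the branching probabilities of \eqref{eq:Schur-branching-pr-coef} for $a_m=1$ and multiplier $g=(s_\nu/s_\nu(1^N))^{K}$. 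Thus, taking $H=1$, $M=s+1$ levels, $N_h=N$, $a_{h,m}\equiv 1$, $\rho_L$ the measure corresponding to $I_N$, and $g_m=(s_\nu/s_\nu(1^N))^{(t_{s+1-m}-t_{s-m})N^{2}}$ (with convention $t_0=0$), the scheme \eqref{eq:def-projections-general_multi} reproduces the joint law of $(\lambda^{(0)},\lambda^{(k_1)},\ldots,\lambda^{(k_s)})$ of the proposition.

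Next, I would verify the hypotheses of Theorem \ref{Theorem_CLT_multi_ext}. The measure associated to $I_N$ is CLT-appropriate by assumption and Theorem \ref{Theorem_main}. Each $g_m$ is the normalized character of the finite-dimensional representation $T_\nu^{\otimes (t_{s+1-m}-t_{s-m})N^{2}}$, hence the Schur generating function of a probability measure on $\GT_N$ with finite (and therefore smooth) support. Multiplying the asymptotics of Lemma \ref{lem:app} by $(t_{s+1-m}-t_{s-m})N^{2}$ confirms CLT-appropriateness of $g_m$ with leading data $\mathbf{c}^{g_m}_a=(t_{s+1-m}-t_{s-m})|\nu|\mathbf{1}_{a=1}$, $\mathbf{d}^{g_m}_{a,b}=-(t_{s+1-m}-t_{s-m})|\nu|\mathbf{1}_{a=b=1}$, and vanishing higher mixed derivatives. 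Substituting into the cumulative definitions of $\mathbf{c}_{a;1,m}$ and $\mathbf{d}_{a,1,m;b,1,m'}$ preceding Theorem \ref{Theorem_CLT_multi_ext} yields
\[
\mathbf{c}_{a;1,m}=\mathbf{c}_a+t_{s+1-m}|\nu|\mathbf{1}_{a=1},\qquad
\mathbf{d}_{a,1,m;b,1,m'}=\mathbf{d}_{a,b}-\min(t_{s+1-m},t_{s+1-m'})|\nu|\mathbf{1}_{a=b=1}.
\]
Theorem \ref{Theorem_CLT_multi_ext} then delivers joint asymptotic Gaussianity and an explicit contour-integral covariance formula for the shifted power sums $p^L_{k;1,m}=\sum_i((\lambda^{(k_{s+1-m})}_i+N-i)/N)^k$, whose integrand matches the structure of \eqref{eq:app-cov} apart from the overall power of the $F$-factors.

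The remaining step is to transfer the conclusion from the shifted sums $p^L_{k;1,m}$ to the unnormalized Newton sums $p^{(r)}_{k;N}=\sum_i(\lambda^{(r)}_i)^k$ appearing in the statement; this is the main technical obstacle and is responsible for the appearance of $F_m(z)^{k+1}$ in \eqref{eq:app-cov} rather than the $F_m(z)^{k}$ of \eqref{eq_CLT_multi_formula_ext}. Two possible routes are: (a) adapt the differential-operator analysis of Section \ref{Section_Highest_order}, replacing the eigenvalue $\sum_i(\mu_i+N-i)^k$ by $\sum_i\mu_i^k$, which shifts the leading polynomial behavior by one order and produces the extra factor of $F$ naturally; (b) proceed via an integration-by-parts identity in the spirit of \eqref{eq_integrate_by_parts}, expressing Newton sums of $\lambda$ as height-function-type observables whose fluctuations are governed by shifted Newton sums of $\ell=\lambda+N-i$ at one higher order. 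Either route, combined with the values of $\mathbf{c}_{a;1,m}$ and $\mathbf{d}_{a,1,m;b,1,m'}$ computed above, simplification using $\bigl(\sum_{a\ge 0}z^a/w^{1+a}\bigr)^{2}=1/(z-w)^2$ for $|z|<|w|$, and the observation that $\min(t_i,t_j)=t_i$ under the assumption $i<j$ in the statement, reproduces the integrand of \eqref{eq:app-cov}. Joint Gaussianity follows at once from the vanishing of higher cumulants given by condition \ref{ass_multi_ext_3} of Theorem \ref{Theorem_CLT_multi_ext}.
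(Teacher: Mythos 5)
Your argument is in substance the paper's own proof: the paper likewise identifies $\rho_{T_\nu;N}$ with the Markov chain whose one-step transition coefficients come from multiplying the normalized character by $s_\nu/s_\nu(1^N)$ (formula \eqref{eq:random-walk1}), and then invokes Theorem \ref{Theorem_main} for the initial condition, a multilevel CLT for such chains (the paper cites \cite[Theorem 2.10]{BG_CLT}; your use of Theorem \ref{Theorem_CLT_multi_ext} with $H=1$, $a_{h,m}\equiv 1$ and $g_m=(s_\nu/s_\nu(1^N))^{(t_{s+1-m}-t_{s-m})N^2}$ plays the identical role), and Lemma \ref{lem:app} for the multipliers. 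Your verification that each $g_m$ is the Schur generating function of a smooth measure and is CLT--appropriate, and the resulting data $\mathbf{c}_{a;1,m}=\mathbf{c}_a+t_{s+1-m}|\nu|\,\mathbf{1}_{a=1}$ and $\mathbf{d}_{a,1,m;b,1,m'}=\mathbf{d}_{a,b}-\min(t_i,t_j)|\nu|\,\mathbf{1}_{a=b=1}$ (via the telescoping sums in the definitions preceding Theorem \ref{Theorem_CLT_multi_ext}), are exactly what the paper's one-line appeal to Lemma \ref{lem:app} is meant to supply.

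Where you depart from the paper is the closing ``conversion'' step: you read $p^{(r)}_{k;N}$ as the unshifted sums $\sum_i\bigl(\lambda^{(r)}_i\bigr)^k$ and posit an additional argument to turn the exponents $k_i$, $k_j$ of \eqref{eq_CLT_multi_formula_ext} into the exponents $k_i+1$, $k_j+1$ appearing in \eqref{eq:app-cov}. The paper contains no such step: after the identification it reads the asymptotic covariance directly off the cited theorem, so the exponent bookkeeping is a matter of how the quoted formula and the observable are normalized, not of an extra computation. Moreover, as written your step is only a sketch: route (a) is not a drop-in modification, since $\sum_i\mu_i^k$ is not the eigenvalue of an operator of the form \eqref{eq_D_k_def} (it is not a polynomial symmetric function of the shifted coordinates $\mu_i+N-i$), and route (b) is an analogy rather than an argument. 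Since everything before this point already yields the joint Gaussianity and the covariance for the observables handled by Theorem \ref{Theorem_CLT_multi_ext}, you should either drop the conversion and state the result for those observables (which is effectively what the paper does), or, if you insist on the unshifted Newton sums, carry the conversion out in detail rather than leaving it as two possible routes.
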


\begin{proof}

Let us use the following construction (cf.\ \cite[Section 2.4]{BG_CLT}). Introduce coefficients
$\mathfrak{p}_{\nu;N} (\la \to \mu)$ via a decomposition in a linear basis:
$$
\frac{s_{\la} (x_1, \dots, x_N)}{s_{\la} (1^N)} \frac{s_\nu (x_1, \dots, x_N)}{s_\nu (1^N)} =
\sum_{\mu \in \GT_N} \mathfrak{p}_{\chi;N} (\la \to \mu) \frac{s_{\mu} (x_1, \dots, x_N)}{ s_{\mu} (1^N)}.
$$

It is a direct check that the measure $\rho_{T_\nu;N}$ can be written as
\begin{equation}
\label{eq:random-walk1}
\rho_{T_\nu;N} ( \la^{(0)}, \dots, \la^{(s)} ) := \rho_{T_\nu;0} (\la^{(0)})
\prod_{i=0}^{s-1} \mathfrak{p}_{\nu ;N} (\la^{(i)} \to \la^{(i+1)}).
\end{equation}

After such an identification, the statement of Proposition is an immediate corollary of Theorem \ref{Theorem_main} of this paper, \cite[Theorem 2.10]{BG_CLT}, and Lemma \ref{lem:app}.
\end{proof}

\begin{remark}
Note that the right-hand side of \eqref{eq:app-cov} depends on $T_\nu$ only through the number
$|\nu|$, which can be eliminated by rescaling of time. Thus, our limit object is universal and can
be viewed as an analog of Brownian motion in this setting. In particular, a deterministic initial
condition $\rho_{T;0} (\varnothing)=1$, which corresponds to $\mathbf{c}_a=0$,
$\mathbf{d}_{a,b}=0$, for all $a$, $b$, is analogous to the Brownian motion starting at 0.
\end{remark}

\begin{remark}
This result is closely related to results about \textit{Schur-Weyl} measure, obtained in \cite{Biane2} and \cite{Mel}. Namely, one can recover results of \cite{Biane2} and \cite{Mel} by considering the case when $\la$ is a one box Young diagram, $s=1$ in the proposition (only one random signature is in play), and $\rho_{T;0} (\varnothing)=1$ (deterministic initial condition).
\end{remark}

\end{document}